\pdfoutput=1
\documentclass{amsart}
\usepackage{todonotes}
\usepackage{graphicx, amsmath, amssymb, amsthm}
\usepackage{tikz-cd}
\usepackage{spectralsequences,bbm}
\usepackage{hyperref} 
\usepackage[T1]{fontenc}
\hypersetup{
  colorlinks   = true, 
  urlcolor     = blue, 
  linkcolor    = blue, 
  citecolor   = red 
}
\usepackage{adjustbox}
\usepackage[normalem]{ulem}
\usepackage{contour}
\usepackage{mathtools} 

\contourlength{0.8pt}

\newcommand{\myuline}[1]{%
  \ifmmode
    \uline{\hphantom{#1}}
    \mathllap{\contour{white}{\ensuremath{#1}}}
  \else
    \uline{\phantom{#1}}%
    \llap{\contour{white}{#1}}%
  \fi
}
\usepackage[backend=biber, style=numeric, maxbibnames=99, doi=true,url=false,
    giveninits=true, hyperref]{biblatex}
\addbibresource{slicesynthetic.bib}
\renewbibmacro{in:}{}

\newcommand{\bpcm}[1]{BP^{(\!(C_4)\!)}\langle#1\rangle}

\newcommand{\bpgm}[1]{BP^{(\!(G)\!)}\langle#1\rangle}
\newcommand{\Fil}{\mathrm{Fil}}

\setcounter{tocdepth}{2}


\newcommand{\Z}{{\mathbb  Z}}

\newcommand{\R}{{\mathbb R}}
\newcommand{\F}{{\mathbb F}}





\DeclareMathOperator{\Ext}{Ext}

\DeclareMathOperator{\Fun}{Fun}



\newcommand{\m}[1]{{\protect\underline{#1}}}



\newcommand{\Sp}{\mathcal Sp}


\mathchardef\mhyphen=45



\numberwithin{equation}{section}

\newtheorem{theorem}{Theorem}[section]
\newtheorem{lemma}[theorem]{Lemma}
\newtheorem{corollary}[theorem]{Corollary}

\newtheorem{proposition}[theorem]{Proposition}
\newtheorem{conjecture}[theorem]{Conjecture}

\newtheorem*{theorem*}{Theorem}
\newtheorem*{proposition*}{Proposition}


\theoremstyle{remark}
\newtheorem{remark}[theorem]{Remark}
\newtheorem{example}[theorem]{Example}

\theoremstyle{definition}

\newtheorem{definition}[theorem]{Definition}





\begin{document}

\title{Slice spectral sequences through synthetic spectra}
\author{Christian Carrick}
\address{Mathematisches Institut, Universität Bonn, Bonn, Germany}
\email{carrick@math.uni-bonn.de}
\thanks{The author was supported by the National Science Foundation under Grant No. DMS-2401918.}

\begin{abstract}
    We define a $t$-structure on the category of filtered $G$-spectra such that for a Borel $G$-spectrum $X$ the slice filtration of $X$ is the connective cover of the homotopy fixed-point filtration of $X$. Using this, we show that the slice spectral sequence for the norm $N_{C_2}^GMU_{\mathbb{\R}}$ of Real bordism theory refines canonically to a $\mathbb{E}_\infty$-algebra in $MU$-synthetic spectra, when $G$ is a cyclic $2$-group. Concretely, this gives a map of multiplicative spectral sequences from the classical Adams--Novikov spectral sequence of $\mathbb{S}$ to the slice spectral sequence for $N_{C_2}^GMU_{\mathbb{\R}}$ that respects the higher $\mathbb{E}_\infty$ structure, such as Toda brackets and power operations.

    We give a conjecture on the existence of vanishing lines in the equivariant Adams--Novikov spectral sequence based at tom Dieck's homotopical complex bordism $MU_G$. Conditional on this conjecture, our $t$-structure implies that the slice filtration for $N_{C_2}^GMU_{\mathbb{\R}}$ lifts further to an $\mathcal{O}$-algebra in $MU_G$-synthetic spectra, where $\mathcal{O}$ is the $\mathbb{N}_\infty$-operad with all norms from nontrivial subgroups of $G$.
\end{abstract}

\maketitle
\tableofcontents
\section{Introduction}
The equivariant slice spectral sequence was developed by Hill--Hopkins--Ravenel in their solution to the Kervaire invariant one problem \cite{HHR}, and it has since been at the center of a renaissance in both computations and theory in equivariant stable homotopy. The presence of norms in the slice spectral sequence played an essential role in their solution, and codifying this structure has motivated the study of $G$-symmetric monoidal structures \cite{hillhopkins}\cite{nardinshah}, norms in motivic homotopy theory \cite{bachmannhoyois}, and parametrized higher category theory \cite{barwickintro}. The slice spectral sequence brought about an explosion of progress in computations in chromatic homotopy at the prime $2$, with applications ranging from  Picard groups \cite{BBHS}\cite{HLS} to the Segal conjecture \cite{msz}\cite{carrickcofree}.

The slice spectral sequence was intended to give a more tractable analogue of the homotopy fixed point spectral sequences of higher real $K$-theories. That is, it gives a sort of \emph{connective model} of the latter with good finiteness and sparseness properties, vanishing "gap" regions, and strong vanishing lines on the $E_2$-page. Our first result makes this precise. Here, $\Fil(\Sp^G)$ is the functor $\infty$-category $\Fun(\Z^{\mathrm{op}},\Sp^G)$, the group $G$ is finite, and $\mathrm{HFP}(X)$ and $\mathrm{Slice}(X)$ are equivariant filtrations implementing the homotopy fixed point and slice spectral sequences of a $G$-spectrum $X$, respectively. We postpone the precise definition of these objects to Section \ref{sec:longintro}.

\begin{theorem}\label{thm:introthmconnectivecover}
    There is an accessible $t$-structure $\tau_{\ge0}^{\mathrm{slice}}$ on $\Fil(\Sp^G)$ that is compatible with the $G$-symmetric monoidal structure such that, for any $X\in \Sp^G$,
    \[\mathrm{Slice}(F({EG}_+,X))\simeq\tau_{\ge0}^{\mathrm{slice}}\mathrm{HFP}(X).\]
\end{theorem}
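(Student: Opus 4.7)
My plan is to promote the slice $t$-structure on $\Sp^G$ (Hill--Hopkins--Ravenel, Ullman) to a Deligne--Beilinson-type $t$-structure on $\Fil(\Sp^G) = \Fun(\Z^{\mathrm{op}}, \Sp^G)$, and then to identify the slice-connective cover of $\mathrm{HFP}(X)$ with $\mathrm{Slice}(F(EG_+, X))$ via a direct slice-theoretic comparison.

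First I would define $\tau^{\mathrm{slice}}_{\ge 0}$ by declaring $X_\bullet$ to be slice-connective iff $X_n$ is slice-$n$-connective in $\Sp^G$ for each $n \in \Z$; equivalently, the connective subcategory is generated under colimits and extensions by the filtered slice cells $\iota_n(G/H_+ \wedge S^{k\rho_H})$ with $k|H| \ge n$, where $\iota_n$ is the left adjoint to evaluation at $n$. Accessibility is then automatic from the small-generator description together with standard presentability arguments for $t$-structures on presentable stable $\infty$-categories.

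For $G$-symmetric monoidal compatibility, the input from $\Sp^G$ is that its slice $t$-structure is itself $G$-symmetric monoidal: slice-connective objects are closed under $\wedge$, and the norm $N^H_K$ sends slice-$n$-connective objects to slice-$[H:K]n$-connective ones. This is a direct consequence of the regular-representation-sphere description of slice cells together with the identity $\Ind_K^H \rho_K \simeq \rho_H$. Passing these properties through the Day-convolution $G$-symmetric monoidal structure on $\Fil(\Sp^G)$ yields the corresponding compatibility for the filtered $t$-structure: tensor products add and norms multiply filtration indices in exactly the way needed.

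The main step, which I expect to be the principal obstacle, is the equivalence $\tau^{\mathrm{slice}}_{\ge 0}\mathrm{HFP}(X) \simeq \mathrm{Slice}(Y)$ for $Y := F(EG_+, X)$. The filtered spectrum $\mathrm{Slice}(Y)$, with $n$-th term $P_{\ge n} Y$, is slice-connective by definition, and there is a natural map $\mathrm{Slice}(Y) \to \mathrm{HFP}(X)$ refining the identity of $Y$ in filtration $0$. It remains to show that this map's cofiber is slice-coconnective in the filtered sense, which reduces to the slice-theoretic claim that the layers of the HFP filtration (function spectra out of finite skeleta of $EG$) have bounded slice dimension, and specifically that the fiber of $\mathrm{HFP}(X)_n \to Y$ lies in $\tau^{\mathrm{slice}}_{<n}(\Sp^G)$. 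The heart of this is that for $Y$ Borel, the slice content beyond the HFP filtration is precisely captured by finite free $G$-cells in bounded topological dimension, and the technical difficulty is obtaining uniform control on these bounds independently of $X$.
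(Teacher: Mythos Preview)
Your definition of the $t$-structure, its accessibility, and its $G$-symmetric monoidal compatibility are exactly what the paper does: one generates by the filtered slice cells $\iota_n(G/H_+\wedge S^{k\rho_H})$ with $k|H|\ge n$, and monoidality follows from the identity $\Ind_K^H\rho_K\simeq\rho_H$. The paper packages this as an instance of a general ``slice filtration on a functor category'' formalism and checks norm-compatibility on generators, just as you propose.

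On the main equivalence your outline is correct but you overestimate the difficulty and slightly misidentify the key step. Two remarks:

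\textbf{The comparison map.} You assert there is a natural map $\mathrm{Slice}(Y)\to\mathrm{HFP}(X)$, but this is not automatic: the slice and Whitehead filtrations are not nested in either direction. The paper (following Ullman) introduces an auxiliary \emph{mixed} filtration containing both, yielding a zig-zag $\mathrm{Slice}(Y)\to\mathrm{Mixed}(Y)\leftarrow\mathrm{Wh}(Y)$ that becomes an equivalence after Borelification; this is how the map is actually produced.

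\textbf{The coconnectivity of the cofiber.} Your reduction to showing that the fiber of $\mathrm{HFP}(X)_n\to Y$ is slice $<n$ is valid (via the octahedral axiom and closure of coconnectives under limits), but the ``technical difficulty'' you anticipate does not exist. There is no need for uniform bounds on slice dimensions of cells of $EG$. The entire argument is: for $Z$ slice $\ge n$ one has $\mathrm{Res}^G_e Z$ is $n$-connective in $\Sp$, hence by the free--cofree adjunction
\[
[Z,\,F(EG_+,\tau_{<n}X)]\;\cong\;[\mathrm{Res}^G_e Z,\,\tau_{<n}\mathrm{Res}^G_e X]\;=\;0.
\]
Thus $F(EG_+,\tau_{<n}X)$ is slice $<n$, and you are done. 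The paper organizes this as showing that $P_{\ge n}\big(F(EG_+,\tau_{\ge n}X)\big)\to P_{\ge n}\big(F(EG_+,X)\big)$ is an equivalence, proved by the same one-line adjunction after reducing along the skeletal filtration of $EG_+$; the two formulations are interchangeable.
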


We call the $t$-structure of Theorem \ref{thm:introthmconnectivecover} the \emph{slice $t$-structure}. An immediate consequence is that when $X\simeq F({EG}_+,X)$ (i.e. when $X$ is Borel complete), the slice filtration (and, in particular, the slice spectral sequence) of $X$ is determined functorially by the homotopy fixed point filtration of $X$. This includes many examples of interest, such as Atiyah's Real $K$-theory $K_\R$ and the Fujii--Landweber Real bordism spectrum $MU_\R$. Our main interest is in the slice spectral sequence for the norms 
\[MU^{(\!(G)\!)}:=N_{C_2}^GMU_\R\]
of Real bordism theory for $G$ a cyclic $2$-group. In \cite{carrickcofree}, we showed that these $G$-spectra are Borel complete, so we have that
\[\mathrm{Slice}(MU^{(\!(G)\!)})\simeq\tau_{\ge0}^{\mathrm{slice}}\mathrm{HFP}(MU^{(\!(G)\!)}).\]

Theorem \ref{thm:introthmconnectivecover} contributes to a profitable shift in perspective in homotopy theory, whereby one replaces a study of \emph{spectral sequences} with a study of \emph{filtrations} that correspond to them. This perspective shift is discussed well by Antieau \cite{antieau} and van Nigtevecht \cite{sven}, for example. The idea is that replacing spectral sequences with filtrations gives one the sort of flexibility gained by studying spectra over homology theories.

It is not difficult to lift the slice spectral sequence to the filtered level, and we show how to implement the slice filtration as a $G$-commutative monoid in $\Fil(\Sp^G)$ in Section \ref{sec:equivarianttstructures}. 

\begin{theorem}
    There is a $G$-symmetric monoidal structure on the $G$-$\infty$-category $\Fil(\Sp^G)$ of filtered $G$-spectra such that, for a $G$-commutative monoid $R$ in $\Sp^G$, the filtration $\mathrm{Slice}(R)$ is a $G$-commutative monoid in $\Fil(\Sp^G)$.
\end{theorem}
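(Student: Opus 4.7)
The plan is to equip $\Fil(\Sp^G) = \Fun(\Z^{\mathrm{op}}, \Sp^G)$ with a $G$-symmetric monoidal structure by parametrized Day convolution, and then deduce that $\mathrm{Slice}(R)$ is $G$-commutative by combining the compatibility of $\tau_{\ge 0}^{\mathrm{slice}}$ with this structure (Theorem \ref{thm:introthmconnectivecover}) with $G$-commutativity of the input filtration to $\tau_{\ge 0}^{\mathrm{slice}}$.

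For the construction, view $\Z^{\mathrm{op}}$ as an ordinary symmetric monoidal poset under addition, promoted to a constant $G$-symmetric monoidal $\infty$-category. Applying parametrized Day convolution in the sense of Nardin--Shah yields a $G$-symmetric monoidal structure on $\Fun(\Z^{\mathrm{op}}, \Sp^G)$ whose underlying tensor product is the usual convolution $(X \star Y)_n \simeq \mathrm{colim}_{i+j \le n}\, X_i \otimes Y_j$, and whose norms lift the genuine norms of $\Sp^G$ to filtered objects degree-wise. In particular, the constant filtration functor $c \colon \Sp^G \to \Fil(\Sp^G)$, being the left Kan extension along the $G$-symmetric monoidal inclusion $\ast \hookrightarrow \Z^{\mathrm{op}}$, is $G$-symmetric monoidal, so $c(R)$ inherits a $G$-commutative monoid structure from $R$. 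The same formal argument applied to a cellular filtration of $EG_+$ shows that $\mathrm{HFP}(R)$ is also $G$-commutative.

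For the multiplicativity of the slice filtration, I would identify $\mathrm{Slice}(R)$ with $\tau_{\ge 0}^{\mathrm{slice}} c(R)$, reducing the conclusion to the statement that $\tau_{\ge 0}^{\mathrm{slice}}$ preserves $G$-commutative monoids. Theorem \ref{thm:introthmconnectivecover} asserts that $\tau_{\ge 0}^{\mathrm{slice}}$ is compatible with the $G$-symmetric monoidal structure on $\Fil(\Sp^G)$; this compatibility means that the $G$-subcategory of $G$-connective filtrations is closed under the parametrized tensor product (including all norms), so $\tau_{\ge 0}^{\mathrm{slice}}$ is lax $G$-symmetric monoidal and therefore sends $G$-commutative monoids to $G$-commutative monoids.

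The main obstacle is setting up the parametrized Day convolution for $\Fil(\Sp^G)$ precisely enough that $c$ (and likewise the $\mathrm{HFP}$ filtration) is manifestly $G$-symmetric monoidal, and so that the compatibility statement in Theorem \ref{thm:introthmconnectivecover} encodes closure under all indexed norms rather than just the underlying non-equivariant tensor product. Concretely, one must explicitly assemble the $G$-$\infty$-category of filtered $G$-spectra, specifying its restriction and norm functors, and then align the $G$-symmetric monoidal compatibility of the slice $t$-structure with closure of connective filtrations under this indexed tensor structure. Once this foundational package is in place, the preservation of $G$-commutative monoids by $\mathrm{Slice}$ follows formally from the chain of $G$-symmetric monoidal operations above.
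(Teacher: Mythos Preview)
Your high-level strategy is the same as the paper's: put a $G$-symmetric monoidal structure on $\Fil(\myuline{\Sp})$, observe that $\mathrm{const}$ is lax $G$-symmetric monoidal, and show that $\tau_{\ge 0}^{\mathrm{slice}}$ is lax $G$-symmetric monoidal. However, two points of your execution diverge from the paper in ways worth noting.

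First, you build the $G$-symmetric monoidal structure via Nardin--Shah parametrized Day convolution. The paper instead uses Hilman's construction (Proposition~\ref{prop:kaiffunctor}, Theorem~\ref{thm:gsymstructureonfil}), which produces $\Fun(\myuline{I}^{\mathrm{op}},\myuline{\Sp})$ as a presentably $G$-symmetric monoidal category via a free--forgetful adjunction with $\myuline{\mathrm{Cat}}$. The paper explicitly remarks that it is \emph{not known} whether these two structures agree, because it is not known whether the Yoneda functor is $G$-symmetric monoidal for parametrized Day convolution. So your approach is not wrong, but you would have to establish independently the properties you need (that $\mathrm{colim}$ is $G$-symmetric monoidal, that norms behave as expected on generators, etc.), rather than invoke them.

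Second, your justification for $c$ being $G$-symmetric monoidal is incorrect: the constant filtration is \emph{not} a left Kan extension along $*\hookrightarrow\Z^{\mathrm{op}}$; it is restriction along $\Z^{\mathrm{op}}\to *$, i.e.\ the \emph{right} adjoint to $\mathrm{colim}$. It is therefore only \emph{lax} $G$-symmetric monoidal (indeed it does not preserve the unit, which is $Y(0)$, not $\mathrm{const}(\mathbb{S})$). Lax is all you need to push $G$-commutative monoids forward, so the conclusion survives, but the argument must be corrected. Finally, the actual content behind ``$\tau_{\ge0}^{\mathrm{slice}}$ is compatible with the $G$-symmetric monoidal structure'' is Proposition~\ref{prop:gmonoidalslicefiltrations}: one checks on the explicit generators $\mathrm{Ind}_L^H\mathbb{S}^{k\rho_L}\otimes Y(i)$ that restrictions and norms land in the appropriate connective subcategories, using Lemma~\ref{lemma:normsofbigradedspheres} and the HHR distributive law. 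You gesture at this but do not indicate how the verification goes.
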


This already makes many tools available to the slice spectral sequence: one may use the $\tau$-formalism to efficiently record hidden extensions, exotic restrictions/transfers, differential stretching, and filtration jumps; one may form (co)limits of diagrams of slice filtrations in $\Fil(\Sp^G)$ to obtained "modified" slice filtrations; and one has the various versions of the \emph{synthetic Leibniz rule} for differentials, as in \cite[Section 2.5]{smf} and \cite[Section 6]{kervaire}. One may, moreover, use the $G$-symmetric monoidal $\infty$-category
\[\mathrm{Mod}_{\Fil(\Sp^G)}(\mathrm{Slice}(MU^{(\!(G)\!)}))\]
as a kind of synthetic category for slice spectral sequences of $MU^{(\!(G)\!)}$-modules, along the lines of \cite{mmf}.

However, we show that $\mathrm{Slice}(MU^{(\!(G)\!)})$ has far more structure. In what follows, we let $\mathrm{AN}(\mathbb S)$ be the Adams--Novikov filtration of the sphere, which may be implemented as an $\mathbb E_\infty$-algebra in $\Fil(\Sp)$ via décalage as in \cite{mmf}, or via the even filtration of Hahn--Raksit--Wilson \cite{even}. The $\infty$-category $\mathrm{Syn}_{MU}$ denotes Pstragowski's category of $MU$-synthetic spectra \cite{piotr}.

\begin{theorem}\label{thm:introthmsyntehtic}
    There is a map of $\mathbb{E}_\infty$-algebras 
    \[\mathrm{AN}(\mathbb S)\to \mathrm{Slice}(MU^{(\!(G)\!)})^G\]
    in $\Fil(\Sp)$. That is, there is a lift of $\mathrm{Slice}(MU^{(\!(G)\!)})^G$ along the forgetful functor
    \[\mathrm{CAlg}(\mathrm{Syn}_{MU})\to \mathrm{CAlg}(\Fil(\Sp)).\]
\end{theorem}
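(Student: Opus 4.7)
The plan is to show that the $\mathbb{E}_\infty$-algebra $\mathrm{Slice}(MU^{((G))})^G$ in $\Fil(\Sp)$ admits a lift to $\mathrm{CAlg}(\mathrm{Syn}_{MU})$; by the initial property of $\nu\mathbb{S}\simeq \mathrm{AN}(\mathbb{S})$ in synthetic $\mathbb{E}_\infty$-algebras, the desired $\mathbb{E}_\infty$-map from $\mathrm{AN}(\mathbb{S})$ is then automatic. I would route this through the Hahn--Raksit--Wilson even filtration $\tau_{\ge \star}^{\mathrm{ev}}$, which by \cite{even} takes values in $\mathrm{CAlg}(\mathrm{Syn}_{MU})$ and satisfies $\tau_{\ge \star}^{\mathrm{ev}}\mathbb{S}\simeq \mathrm{AN}(\mathbb{S})$.

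The first step is to reduce from the slice filtration to the HFP filtration. Theorem \ref{thm:introthmconnectivecover}, combined with the Borel completeness of $MU^{((G))}$ established in \cite{carrickcofree}, gives
\[
\mathrm{Slice}(MU^{((G))}) \simeq \tau_{\ge 0}^{\mathrm{slice}}\,\mathrm{HFP}(MU^{((G))})
\]
as $G$-commutative monoids in $\Fil(\Sp^G)$, by the $G$-symmetric monoidality of the slice $t$-structure. Applying the lax symmetric monoidal functor $(-)^G$ realizes $\mathrm{Slice}(MU^{((G))})^G$ as the slice-connective cover of the $G$-fixed points of $\mathrm{HFP}(MU^{((G))})$, all in $\mathrm{CAlg}(\Fil(\Sp))$.

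Second, I would identify this with $\tau_{\ge \star}^{\mathrm{ev}}(MU^{((G))})^G$. The fixed points of $\mathrm{HFP}(MU^{((G))})$ form the cosimplicial descent tower for $\mathbb{S}\to (MU^{((G))})^G$, and its associated graded pieces --- via the HHR slice filtration \cite{HHR} and the pure structure of the norms of $MU_\R$ --- are Thom-spectrum summands whose homotopy reproduces the $MU_*MU$-cobar complex. Passing to the slice connective cover then corresponds to the positive part of the even filtration. Both truncations are produced as monoidal $t$-structure operations (Theorem \ref{thm:introthmconnectivecover} for the slice side, \cite{even} for the even side), so the resulting equivalence is one of $\mathbb{E}_\infty$-algebras in $\Fil(\Sp)$. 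HRW functoriality on the unit $\mathbb{S}\to (MU^{((G))})^G$ then provides the synthetic $\mathbb{E}_\infty$-algebra structure and, with it, the desired map.

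The main obstacle is the coherent multiplicative matching of slice graded pieces with $MU$-cobar summands after $G$-fixed points. An $E_2$-level comparison between the slice and Adams--Novikov spectral sequences is classically accessible via the Hu--Kriz algebraic slice filtration, but upgrading this to an $\mathbb{E}_\infty$-equivalence of filtered algebras is what enables the synthetic lift. Theorem \ref{thm:introthmconnectivecover} is essential here: without the intrinsic realization of $\mathrm{Slice}$ as a \emph{monoidal} connective cover of $\mathrm{HFP}$, one would be forced to construct the $\mathbb{E}_\infty$-comparison by hand, and would lose control of precisely the higher coherence (power operations, Toda brackets) whose preservation is the content of the synthetic upgrade.
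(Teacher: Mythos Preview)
Your second step contains a genuine gap: the identification of $\mathrm{Slice}(MU^{((G))})^G$ with $\tau_{\ge\star}^{\mathrm{ev}}\bigl((MU^{((G))})^G\bigr)$ is false. The paper explicitly notes (citing \cite{MUhomology}) that $\mathrm{SliceSS}(\bpgm{m})$ is \emph{not} isomorphic to $\mathrm{ANSS}(\bpgm{m}^G)$ except in the single case $k_\R$; the slice spectral sequence is only a \emph{modified} ANSS. Concretely, $\mathrm{HFP}(MU^{((G))})^G$ is not the $MU$-descent tower for $\mathbb{S}\to (MU^{((G))})^G$: its associated graded computes $H^*(G;\pi_*MU^{\wedge |G/C_2|})$, not the $MU_*MU$-cobar complex for the fixed-point ring. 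So your claim that ``associated graded pieces \ldots\ reproduce the $MU_*MU$-cobar complex'' does not hold, and the attempted match with the even filtration collapses.

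The paper's route avoids this entirely by never trying to identify $\mathrm{Slice}(MU^{((G))})^G$ with a canonical filtration on the fixed points. Instead, it works by adjunction: one constructs a map $\mathrm{inf}_e^G\mathrm{AN}(\mathbb{S})\to \mathrm{HFP}(MU^{((G))})$ in $\mathrm{CAlg}(\Fil(\Sp^G))$ using d\'ecalage and the complex-orientability of the underlying spectrum (this is the content of \cite{CDvN}), and then checks that $\mathrm{inf}_e^G\mathrm{AN}(\mathbb{S})$ is connective in the $\mathcal{O}$-slice $t$-structure. The latter follows because $\Phi^H\mathrm{inf}_e^G\mathrm{AN}(\mathbb{S})\simeq \mathrm{AN}(\mathbb{S})$ for all $H$, and $\mathrm{AN}(\mathbb{S})$ lies below $y=x$ by the classical ANSS vanishing line. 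The map then factors through the connective cover $\mathrm{Slice}(MU^{((G))})$, and taking $(-)^G$ gives the result. The synthetic lift comes not from the even filtration but from the equivalence $\mathrm{Syn}_{MU}\simeq \mathrm{Mod}_{\Fil(\Sp)}(\mathrm{AN}(\mathbb{S}))$: having an $\mathrm{AN}(\mathbb{S})$-algebra structure \emph{is} the synthetic structure.
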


Given Theorem \ref{thm:introthmconnectivecover}, the proof of Theorem \ref{thm:introthmsyntehtic} is quite straightforward. By adjunction one must produce a map
\[\mathrm{inf}_e^G\mathrm{AN}(\mathbb S)\to \mathrm{Slice}(MU^{(\!(G)\!)})\]
in $\mathrm{CAlg}(\Fil(\Sp^G))$, where $\inf_e^G$ is the inflation functor, left adjoint to genuine fixed points $(-)^G$. One has a map
\[\mathrm{inf}_e^G\mathrm{AN}(\mathbb S)\to \mathrm{HFP}(MU^{(\!(G)\!)})\]
in $\mathrm{CAlg}(\Fil(\Sp^G))$ by work of the author, Davies, and van Nigtevecht \cite{CDvN}. It suffices then to check that $\mathrm{inf}_e^G\mathrm{AN}(\mathbb S)$ is connective in the slice $t$-structure, and this follows from the classical vanishing line of slope 1 in the ANSS.

Theorem \ref{thm:introthmsyntehtic} gives lifts of $\mathrm{Slice}(MU^{(\!(G)\!)})^H$ to $\mathrm{Syn}_{MU}$ for all $H\subset G$, compatible with restrictions and transfers. This makes $\mathrm{Slice}(MU^{(\!(G)\!)})$ a commutative algebra in a more structured category.

\begin{definition}
    Let $\mathrm{Syn}_{MU}^G$ be the $G$-$\infty$-category 
    \[\mathrm{Mack}_G(\mathrm{Syn}_{MU})\]
    of \emph{$MU$-synthetic spectral Mackey functors}.
\end{definition}

With this definition, Theorem \ref{thm:introthmsyntehtic} refines to the statement that there exists a lift of $\mathrm{Slice}(MU^{(\!(G)\!)})$ along the forgetful map
\[\mathrm{CAlg}(\mathrm{Syn}_{MU}^G)\to\mathrm{CAlg}(\Fil(\Sp^G)).\]
In fact, for any module $M$ over $MU^{(\!(G)\!)}$, $\mathrm{Slice}(M)$ is naturally a module over $\mathrm{Slice}(MU^{(\!(G)\!)})$, so one has a canonical lift of $\mathrm{Slice}(M)$ to $\mathrm{Syn}_{MU}^G$. That is, $\mathrm{Slice}(M)$ has the structure of an $MU$-synthetic spectral Mackey functor.

We would like to emphasize now the concrete consequences of Theorem \ref{thm:introthmsyntehtic}.

\begin{corollary}\label{cor:introcorollarymapfromANSS}
    There is a map of multiplicative spectral sequences 
    \[\mathrm{ANSS}(\mathbb{S})\to\mathrm{SliceSS}(MU^{(\!(G)\!)})\]
    from the Adams--Novikov spectral sequence of the sphere to the slice spectral sequence of $MU^{(\!(G)\!)}$ converging to the unit map
    \[\pi_*\mathbb{S}\to\pi_*^GMU^{(\!(G)\!)}.\]
\end{corollary}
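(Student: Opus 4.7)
The plan is to extract the statement from Theorem \ref{thm:introthmsyntehtic} by applying the associated graded / spectral sequence functor to the map of filtered $\mathbb{E}_\infty$-algebras
\[\mathrm{AN}(\mathbb{S})\to\mathrm{Slice}(MU^{(\!(G)\!)})^G\]
produced there. Concretely, the functor $\Fil(\Sp)\to\mathrm{SpSeq}$ sending a filtered spectrum to its associated spectral sequence is lax symmetric monoidal, so it carries maps of $\mathbb{E}_\infty$-algebras to maps of multiplicative spectral sequences. Applying it to the above map gives a map from the spectral sequence of $\mathrm{AN}(\mathbb{S})$ to the spectral sequence of $\mathrm{Slice}(MU^{(\!(G)\!)})^G$ that is compatible with products on each page and with Leibniz rules for differentials.

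It then remains to identify each side with the classically named spectral sequence and check that the abutment map is the unit. On the source, $\mathrm{AN}(\mathbb{S})$ is, by construction (either via décalage or via the Hahn--Raksit--Wilson even filtration), an $\mathbb{E}_\infty$-filtered refinement of the Adams--Novikov filtration, and its associated spectral sequence is the classical ANSS converging to $\pi_*\mathbb{S}$. On the target, $\mathrm{Slice}(MU^{(\!(G)\!)})$ is the slice filtration implemented at the filtered level as a $G$-commutative monoid, and taking genuine fixed points term by term gives a filtration of the spectrum $(MU^{(\!(G)\!)})^G$ whose associated spectral sequence is by definition the slice spectral sequence of $MU^{(\!(G)\!)}$ converging to $\pi_*^GMU^{(\!(G)\!)}$.

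Finally, the abutment of the map is obtained by passing to the colimit of the filtrations and then to homotopy groups. Since the map of Theorem \ref{thm:introthmsyntehtic} is a map of $\mathbb{E}_\infty$-algebras in $\Fil(\Sp)$, the induced map on colimits is a unital map of ring spectra $\mathbb{S}\to(MU^{(\!(G)\!)})^G$, which is necessarily the unit of the latter. Taking $\pi_*$ gives the asserted unit map $\pi_*\mathbb{S}\to\pi_*^GMU^{(\!(G)\!)}$, and compatibility of the spectral sequence functor with filtered colimits ensures that the map of spectral sequences converges to this map.

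I do not anticipate a substantive obstacle here, since all the work is in Theorem \ref{thm:introthmsyntehtic}; the only thing to verify carefully is that the chosen filtered model of the ANSS (either décalage or the even filtration) is the one whose associated spectral sequence literally agrees with the classical ANSS as a multiplicative spectral sequence, which is already established in \cite{CDvN} and the references cited around the statement of Theorem \ref{thm:introthmsyntehtic}.
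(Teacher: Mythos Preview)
Your proposal is correct and matches the paper's approach: the corollary is stated without a separate proof, as an immediate consequence of Theorem~\ref{thm:introthmsyntehtic} (and its more general form, Theorem~\ref{thm:mapfrominflationtoslice}) obtained by passing from the map of $\mathbb{E}_\infty$-algebras in $\Fil(\Sp)$ to the associated multiplicative spectral sequences. Your identification of the source and target filtrations and of the abutment as the unit map is exactly the implicit reasoning the paper relies on.
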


This gives one considerable computational control over the slice spectral sequence. One can use known information about $\mathrm{ANSS}(\mathbb{S})$ to determine differentials and hidden extensions in $\mathrm{SliceSS}(MU^{(\!(G)\!)})$ (or in the slice spectral sequence of an $MU^{(\!(G)\!)}$-module like $\mathrm{SliceSS}(\bpgm{m})$) in low degrees and then propagate this information using the multiplicative structure; see the computation of the descent spectral sequence for $Tmf$ by the author, Davies, and van Nigtevecht \cite{smf} for an example of this. It also makes it possible to directly detect greek letter elements in $MU^{(\!(G)\!)}$ using the detection arguments of the author and Davies in \cite{j1}\cite{j2prime3}\cite{j2prime2}. We intend to return to such applications in future work.

We also use Theorem \ref{thm:introthmconnectivecover} to connect the slice spectral sequence to the equivariant Adams--Novikov spectral sequence. Following Hausmann's proof \cite{hausmann} of Greenlees' conjecture on the homotopy groups $\pi_*MU_G$ of tom Dieck's homotopical complex bordism, there has been a surge of interest in developing equivariant chromatic homotopy via $MU_G$; see \cite{behrenscarlisle} and \cite{markuslennart} for example. In particular, there is an interest in understanding the Adams--Novikov spectral sequence in $\Sp^G$ based at $MU_G$. In Section \ref{sec:longintro}, we give two conjectures on this spectral sequence related to the existence of vanishing lines on the $E_2$-page. Given these conjectures, one has structured $MU_G$-synthetic lifts of $\mathrm{Slice}(MU^{(\!(G)\!)})$, in a precise sense which we discuss in Section \ref{sec:longintro}. For now, we state the concrete consequences of these conjectures in terms of the corresponding spectral sequences.

\begin{theorem}\label{thm:introthmintegergradedANSSG}
    Assuming Conjecture \ref{conjectureweak}, there is a map of integer graded multiplicative spectral sequences
    \[
    \mathrm{ANSS}_G(\mathbb{S})\to\mathrm{SliceSS}(MU^{(\!(G)\!)})
    \]
    converging to the unit map
    \[\pi_*^G\mathbb{S}\to\pi_*^GMU^{(\!(G)\!)}.\]
\end{theorem}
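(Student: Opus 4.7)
The plan is to mimic the proof sketch of Theorem \ref{thm:introthmsyntehtic}, replacing the classical Adams--Novikov filtration $\mathrm{AN}(\mathbb{S})$ by an equivariant Adams--Novikov filtration $\mathrm{AN}_G(\mathbb{S})$ based at $MU_G$. First I would construct $\mathrm{AN}_G(\mathbb{S})$ as an $\mathbb{E}_\infty$-algebra in $\Fil(\Sp^G)$, either as the $MU_G$-based even filtration of $\mathbb{S}$ in $\Sp^G$ in the style of Hahn--Raksit--Wilson \cite{even}, or by décalage applied to the $MU_G$-cobar resolution. Either construction is functorial in $\mathbb{E}_\infty$-ring maps out of $\mathbb{S}$, and in particular yields a map $\mathrm{AN}_G(\mathbb{S}) \to \mathrm{AN}_G(MU^{(\!(G)\!)})$ in $\mathrm{CAlg}(\Fil(\Sp^G))$.

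Next I would produce a map of $\mathbb{E}_\infty$-algebras $\mathrm{AN}_G(\mathbb{S}) \to \mathrm{HFP}(MU^{(\!(G)\!)})$ in $\Fil(\Sp^G)$, by extending the construction of Carrick--Davies--van Nigtevecht \cite{CDvN} to the equivariant setting. Concretely, one uses the $\mathbb{E}_\infty$-ring map $MU_G \to MU^{(\!(G)\!)}$ (whose existence is standard) and the Borel completeness of $MU^{(\!(G)\!)}$ established in \cite{carrickcofree} to identify an $MU_G$-based filtration on $MU^{(\!(G)\!)}$ as a refinement of $\mathrm{HFP}(MU^{(\!(G)\!)})$, giving the desired map.

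The crucial step is to show that $\mathrm{AN}_G(\mathbb{S})$ is connective in the slice $t$-structure $\tau_{\ge 0}^{\mathrm{slice}}$, so that the preceding map factors through $\tau_{\ge 0}^{\mathrm{slice}}\mathrm{HFP}(MU^{(\!(G)\!)}) \simeq \mathrm{Slice}(MU^{(\!(G)\!)})$ by Theorem \ref{thm:introthmconnectivecover}. This is precisely where Conjecture \ref{conjectureweak} enters: it posits a vanishing line on the $E_2$-page of $\mathrm{ANSS}_G(\mathbb{S})$ that serves the role played by the classical slope-$1$ vanishing line in the proof sketch of Theorem \ref{thm:introthmsyntehtic}. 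Translating that vanishing line into slice-connectivity of $\mathrm{AN}_G(\mathbb{S})$, as opposed to mere underlying connectivity, is the main obstacle: one must verify that the slope bound controls the filtration quotients in the appropriate $\rho$-weighted slice cells and remains compatible upon restriction to every subgroup, so that each stage of the filtration actually lands in the right slice-connective subcategory.

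Finally, applying genuine fixed points to the resulting map $\mathrm{AN}_G(\mathbb{S}) \to \mathrm{Slice}(MU^{(\!(G)\!)})$ in $\mathrm{CAlg}(\Fil(\Sp^G))$ and passing to associated graded produces the claimed map of integer-graded multiplicative spectral sequences. Convergence to the unit map $\pi_*^G\mathbb{S} \to \pi_*^G MU^{(\!(G)\!)}$ is then automatic, since the underlying map of filtered $G$-spectra recovers the unit equipped with compatible filtrations on source and target.
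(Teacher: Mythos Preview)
Your overall architecture matches the paper's: define $\mathrm{AN}_G(\mathbb{S})$ via d\'ecalage (Definition~\ref{def:decalage}), produce a map of $\mathbb{E}_\infty$-algebras $\mathrm{AN}_G(\mathbb{S})\to\mathrm{HFP}(MU^{(\!(G)\!)})$ (Proposition~\ref{prop:hfpssisMUGsynthetic}), and then use a connectivity statement together with Theorem~\ref{thm:introthmconnectivecover} to factor through $\mathrm{Slice}(MU^{(\!(G)\!)})$. Two remarks, one minor and one essential.

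Minor: the paper's construction of $\mathrm{AN}_G(\mathbb{S})\to\mathrm{HFP}(R)$ does not use a ring map $MU_G\to MU^{(\!(G)\!)}$; it only needs the unit $\mathbb{S}\to MU_G$ and the fact that the cosimplicial comparison $\mathrm{Wh}(R)\to\mathrm{Tot}(\mathrm{Wh}(MU_G^{\otimes\bullet+1}\otimes R))$ becomes an equivalence after Borelification whenever $\mathrm{Res}^G_eR$ is complex-orientable. This is both simpler and more general than what you propose.

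Essential: your plan to show that $\mathrm{AN}_G(\mathbb{S})$ is connective in the \emph{slice} $t$-structure cannot work, and Conjecture~\ref{conjectureweak} does not imply it. By the geometric-connectivity criterion (Remark~\ref{rmk:geometricconnectivity}), slice connectivity requires $\Phi^H\mathrm{AN}_G(\mathbb{S})$ to lie below the line $y=(|H|-1)x$ for \emph{every} $H\subset G$, including $H=\{e\}$. But $\Phi^{\{e\}}\mathrm{AN}_G(\mathbb{S})=\mathrm{Res}^G_e\mathrm{AN}_G(\mathbb{S})\simeq\mathrm{AN}(\mathbb{S})$, and the $H=\{e\}$ condition asks for connectivity in the $y=0$ linear $t$-structure, i.e.\ $\pi_{x,y}(\mathrm{AN}(\mathbb{S})/\tau)\cong\Ext^{y,x+y}_{(MU_*,MU_*MU)}(MU_*,MU_*)=0$ for all $y>0$; this is false (e.g.\ $\eta$ lives in $\Ext^{1,2}$). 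Conjecture~\ref{conjectureweak} gives vanishing lines only for nontrivial cyclic $2$-groups and says nothing about the trivial subgroup. The paper's fix is to replace the slice $t$-structure by the auxiliary $\mathcal{O}$-slice $t$-structure of Definition~\ref{def:twofilteredtstructures}, which imposes the line condition only at \emph{nontrivial} $H$; the conjecture then yields $\mathcal{O}$-slice connectivity of $\mathrm{AN}_G(\mathbb{S})$ (this is Theorem~\ref{thm:integersliceisMUGsyntheticgivenconjecture}). The step your outline is missing is Lemma~\ref{lem:tstructuresagreeonhfpss}, which shows that on filtrations of the form $\mathrm{HFP}(X)$ the slice and $\mathcal{O}$-slice connective covers coincide, so one still obtains the desired factorization through $\mathrm{Slice}(MU^{(\!(G)\!)})\simeq\tau_{\ge0}^{\mathcal{O}}\mathrm{HFP}(MU^{(\!(G)\!)})$.
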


 \begin{theorem}\label{thm:introthmROANSSG}
    Assuming Conjecture \ref{conjecturemain}, there is a map of multiplicative $RO(G)$-graded spectral sequences
    \[\mathrm{ANSS}_G(\mathbb
    S)\to \mathrm{SliceSS}(MU^{(\!(G)\!)})\]
    converging to the unit map
    \[\pi_\bigstar^G\mathbb{S}\to \pi_\bigstar^GMU^{(\!(G)\!)}.\]
\end{theorem}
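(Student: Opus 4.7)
The strategy is parallel to the proofs of Theorems \ref{thm:introthmsyntehtic} and \ref{thm:introthmintegergradedANSSG}, but carried out entirely internal to $\Sp^G$. The plan is to construct a map
\[
\mathrm{ANSS}_G(\mathbb{S}) \to \mathrm{Slice}(MU^{(\!(G)\!)})
\]
of $\mathbb{E}_\infty$-algebras in $\Fil(\Sp^G)$ and then pass to $RO(G)$-graded homotopy of the underlying $G$-spectra.

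First, I would construct a map $\mathrm{ANSS}_G(\mathbb{S}) \to \mathrm{HFP}(MU^{(\!(G)\!)})$ in $\mathrm{CAlg}(\Fil(\Sp^G))$. This should arise from an equivariant analogue of the construction of \cite{CDvN}: both filtrations are cobar-type constructions, the left built from the Hopf algebroid $MU_G$ and the right from $MU^{(\!(G)\!)}$, and the unit map $MU_G \to MU^{(\!(G)\!)}$ furnishes a comparison of the corresponding cosimplicial resolutions in a manner compatible with the $\mathbb{E}_\infty$-structure.

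Second, I would factor this through the slice filtration by applying the slice-connective cover. By Theorem \ref{thm:introthmconnectivecover} and the Borel completeness of $MU^{(\!(G)\!)}$ established in \cite{carrickcofree},
\[
\mathrm{Slice}(MU^{(\!(G)\!)}) \simeq \tau_{\ge 0}^{\mathrm{slice}} \mathrm{HFP}(MU^{(\!(G)\!)}),
\]
so it suffices to verify that $\mathrm{ANSS}_G(\mathbb{S})$ is slice-connective in $\Fil(\Sp^G)$. This is precisely the content of Conjecture \ref{conjecturemain}: the $RO(G)$-graded vanishing line in the $MU_G$-based equivariant Adams--Novikov $E_2$-page translates directly into membership of $\mathrm{ANSS}_G(\mathbb{S})$ in $\Fil(\Sp^G)_{\ge 0}^{\mathrm{slice}}$, since slice-connectivity in the $t$-structure of Theorem \ref{thm:introthmconnectivecover} is detected on $RO(G)$-graded homotopy groups. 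The compatibility of $\tau_{\ge 0}^{\mathrm{slice}}$ with the $G$-symmetric monoidal structure, also furnished by Theorem \ref{thm:introthmconnectivecover}, ensures that the resulting factorization is a map of $\mathbb{E}_\infty$-algebras in $\Fil(\Sp^G)$.

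The main obstacle, apart from the conjectural vanishing line itself, is pinning down precisely what vanishing bound is required. The weaker Conjecture \ref{conjectureweak} gives slice-connectivity only after applying genuine fixed points, which suffices for the integer-graded Theorem \ref{thm:introthmintegergradedANSSG} but not for the $RO(G)$-graded upgrade, because the slice $t$-structure on $\Fil(\Sp^G)$ is genuinely sensitive to all virtual representation spheres simultaneously. Once the factorization is in hand, taking $\pi_\bigstar^G$ of the resulting map of filtered $\mathbb{E}_\infty$-algebras produces the desired map of $RO(G)$-graded multiplicative spectral sequences, which converges on $E_\infty$-pages to the unit map $\pi_\bigstar^G \mathbb{S} \to \pi_\bigstar^G MU^{(\!(G)\!)}$.
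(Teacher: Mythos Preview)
Your high-level outline---map to $\mathrm{HFP}$, identify $\mathrm{Slice}$ as a connective cover, invoke the conjecture for connectivity---matches the paper's strategy, but two genuine gaps prevent the argument from going through as written.

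First, the $RO(G)$-graded $MU_G$-based ANSS is not implemented by an object of $\Fil(\Sp^G)$. The paper defines $\mathrm{AN}^{RO}_G(\mathbb{S})$ via the $RO$-indexed Whitehead tower (Definition~\ref{def:ROwhitehead}), and this lives in $\Fun(RO(G)^{\mathrm{op}},\Sp^G)$. Taking $\pi_V^G$ of the $\Z$-filtered $\mathrm{AN}_G(\mathbb{S})$ does \emph{not} recover this spectral sequence: the associated graded of the integer Whitehead tower sees only integer Postnikov sections, whereas the $RO$-Whitehead tower at $V$ involves $\Sigma^V\tau_{\ge 0}\Sigma^{-V}(-)$. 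The paper bridges the two settings via the adjunction $\mathrm{Total}\dashv\mathrm{dim}^*$ of Proposition~\ref{prop:total}: one constructs $\mathrm{AN}^{RO}_G(\mathbb{S})\to\mathrm{HFP}^{RO}(MU^{(\!(G)\!)})\simeq\mathrm{dim}^*\mathrm{HFP}(MU^{(\!(G)\!)})$ in the $RO$-filtered category (Proposition~\ref{prop:ROhfpssisMUsynthetic}) and then transposes to a map $\mathrm{Total}(\mathrm{AN}^{RO}_G(\mathbb{S}))\to\mathrm{HFP}(MU^{(\!(G)\!)})$ in $\Fil(\Sp^G)$. Conjecture~\ref{conjecturemain} is precisely the statement that $\mathrm{Total}(\mathrm{AN}^{RO}_G(\mathbb{S}))$ is connective in the $\mathcal{O}$-slice $t$-structure; it is not literally a vanishing-line statement about an $RO(G)$-graded $E_2$-page, though the paper expects such a reformulation to exist.

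Second, you invoke connectivity in the slice $t$-structure, but the relevant objects are never slice-connective: the condition at $H=\{e\}$ would force $\Phi^{\{e\}}$ to lie in the $y=0$ linear $t$-structure, which already fails for $\mathrm{AN}(\mathbb{S})$. The paper works instead with the $\mathcal{O}$-slice $t$-structure of Definition~\ref{def:twofilteredtstructures}, which omits the trivial-subgroup condition, and appeals to Lemma~\ref{lem:tstructuresagreeonhfpss} to show that the two connective covers of $\mathrm{HFP}(R)$ nevertheless agree. This is why the output is only an $\mathcal{O}$-algebra structure (norms from nontrivial subgroups), not a full $G$-$\mathbb{E}_\infty$ structure.
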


 

\subsection{Acknowledgments}
We would like to thank Lucas Piessevaux, Emma Brink, William Balderrama, Markus Hausmann, Lennart Meier, Kaif Hilman, Jack Davies, Sven van Nigtevecht, Ryan Quinn, Mike Hill, Branko Juran, and Natalie Stewart for helpful comments and conversations. We thank especially Lucas Piessevaux for explaining the $RO(G)$-graded Whitehead tower and the corresponding implementation of the $MU_G$-based ANSS; and we thank Kaif Hilman for explaining his work on geometric fixed points in a $G$-$\infty$-categorical context.

\subsection{Notation and conventions}
\begin{enumerate}
    \item We use the language of filtered spectra and filtrations, in particular the $\tau$-formalism. We recommend one reads about the $\tau$-formalism and filtered spectra in \cite{sven}.
    \item For a filtration F of an object $X$, we use the notation $\mathrm{F}(X)$ and $\mathrm{FSS}(X)$ to denote the filtration and corresponding spectral sequence, respectively. That is, $\mathrm{FSS}(X)$ denotes the $\tau$-Bockstein SS corresponding to $\mathrm{F}(X)$. For example, $\mathrm{AN}(\mathbb{S})$ denotes the Adams--Novikov filtration of $\mathbb S$, and $\mathrm{ANSS}(\mathbb{S})$ denotes the Adams--Novikov spectral sequence of $\mathbb S$.
    \item For a filtered object $X:\Z^{\mathrm{op}}\to\mathcal{C}$ and an integer $n$, we let $X^n$ denote the value of the functor $X$ at $n$. We denote the $n$-th associated graded of $X$ by
    \[\mathrm{gr}^nX:=(X/\tau)^n.\]
    \item For a filtered spectrum $X$ and a class $a\in \pi_{*,*}X$, we denote the mod $\tau$ projection of $a$ by $[a]$.
    \item For a filtered spectrum $X$, we denote by $\delta$ the boundary map in the multiplication by $\tau$ cofiber sequence
    \[\Sigma^{0,1}X\xrightarrow{\tau}X\xrightarrow{[-]}X/\tau\xrightarrow{\delta}\Sigma^{1,-2}X.\]
    See \cite{smf} for a discussion of this boundary map and its relationship to differentials in the $\tau$-Bockstein SS of $X$.
    \item We say a filtered spectrum $X$ is \emph{complete} if $\varprojlim X^n=0$.
    \item For a $G$-spectrum $X$, we use $\mathrm{Slice}^nX\to X$ to denote the $n$-th slice cover of $X$ (usually denote $P_nX\to X$). We use $\mathrm{Wh}^nX\to X$ to denote the $n$-th equivariant Whitehead cover of $X$ (usually denote $\tau_{\ge n}X\to X$), i.e. the $n$-th cover with respect to the standard $t$-structure on $X$ given by Mackey functor homotopy groups.
    \item The bigraded sphere $\mathbb{S}^{n,s}$ is the filtered spectrum
    \[
    \cdots\to0\to \mathbb{S}^n=\mathbb{S}^n=\cdots
    \]
    starting at component $n+s$, so that for $X\in\Fil(\Sp)$
    \[\pi_{n,s}X:=[\mathbb{S}^{n,s},X]=\pi_nX^{n+s}.\]
    \item Similarly, for $V$ a $G$-representation, let $\mathbb{S}^{V,s}$ be the filtered $G$-spectrum
    \[
    \cdots\to0\to \mathbb{S}^V=\mathbb{S}^V=\cdots
    \]
    starting at component $\dim(V)+s$, so that for $X\in\Fil(\Sp^G)$
    \[\pi_{V,s}X:=[\mathbb{S}^{n,s},X]=\pi_VX^{\dim(V)+s}.\]
    \item For a subgroup $H\subset G$ and a normal subgroup $N\subset G$, we use the notation $\mathrm{Res}_H^G$, $\mathrm{Ind}_H^G$, $N_H^G$, $\Phi^H$, $(-)^H$, and $\mathrm{inf}_{G/N}^G$ to denote the restriction, induction, norm, geometric fixed points, genuine fixed points, and inflation functors on $\Sp^G$. In a filtered equivariant context, apart from $N_H^G$, these functors are applied pointwise. The norm $N_H^G$ in these contexts comes from an implicit $G$-symmetric monoidal structure.
    \item For a set of objects $S$ in a presentable stable $\infty$-category $\mathcal{C}$, the notation $\langle S\rangle$ denotes the smallest subcategory of $\mathcal{C}$ containing $S$ and closed under colimits and extensions.
    \item The group $G$ will always be finite. When discussing the norms $MU^{(\!(G)\!)}$, the group $G$ will always be a cyclic $2$-group.
    
\end{enumerate}

\section{Outline of results and methods}\label{sec:longintro}

Constructing the map of spectral sequences
\[\mathrm{ANSS}(\mathbb S)\to\mathrm{SliceSS}(MU^{(\!(G)\!)})\]
of Corollary \ref{cor:introcorollarymapfromANSS} directly seems quite difficult. The two spectral sequences arise from completely different constructions; one is an Adams spectral sequence arising from a descent object and the other is more like an Atiyah--Hirzebruch spectral sequence, arising from a sequence of $t$-structures on $\Sp^G$. We begin with a discussion of why, then, one would expect such a map to exist, and use this to motivate our definition of the slice $t$-structure.

\subsection{The slice spectral sequence and connective models} Intuition from Ravenel's solution to the Kervaire invariant problem for primes $p\ge 5$ \cite{ravodd} suggested that the $2$-primary Kervaire invariant problem could be solved by understanding the homotopy fixed point spectral sequences for certain Hopkins--Miller higher real $K$-theories. These theories, known also as the $EO_n$'s are formed by taking the homotopy fixed points of Morava $E$-theory with respect to finite subgroups of the Morava stabilizer group. These spectral sequences, however, can be extremely difficult to compute. They are not degreewise finitely generated, and in general possess no good vanishing lines or regions on the $E_2$-page. 

At height $2$, $EO_2$ is a $K(2)$-local model of the topological modular forms spectrum $TMF$. The computations of Hopkins--Mahowald showed that the Adams--Novikov spectral sequence for the connective topological modular forms spectrum $tmf$ was much more approachable than the HFPSS for $EO_2$ \cite{hopkinsmahowald}. In particular, $\mathrm{ANSS}(tmf)$ is a finite type, first quadrant spectral sequence with a vanishing line of slope 1 on the $E_2$-page. A general approach to producing connective models of $EO_n$ theories was thus of great interest, and Hill--Hopkins--Ravenel gave a technique to do this in certain cases via genuine equivariant homotopy theory. These connective theories, known as $\bpgm{m}$, were constructed as certain quotients of $MU^{(\!(G)\!)}$. See Sections 1 and 2 of \cite{carrickhill} for a discussion of these theories.

In contrast with $tmf$, producing the ANSS of these theories was not approachable and thus Hill--Hopkins--Ravenel developed the slice spectral sequence as a suitable replacement. While the Detection theorem of Hill--Hopkins--Ravenel is proved using $\mathrm{HFPSS}(\bpgm{m})$, the proofs of their Gap and Periodicity theorems crucially require $\mathrm{SliceSS}(\bpgm{m})$. In fact, the Gap theorem is directly visible on the $E_2$-page of the slice spectral sequence due to its vanishing regions. Similarly, the Periodicity theorem follows from a family of differentials proven inductively using the vanishing lines of slope $|G|-1$ on the $E_2$-page of the slice spectral sequence.

However, the slice spectral sequences of these theories do indeed behave much like the Adams--Novikov spectral sequences of theories like $tmf$. For example, direct computation shows that the slice spectral sequence of $k_\R=BP^{(\!(C_2)\!)}\langle 1\rangle$ is isomorphic (in integer degrees) to $\mathrm{ANSS}(ko)$. This leads to the question of whether an identification $\mathrm{SliceSS}(\bpgm{m})\cong\mathrm{ANSS}(\bpgm{m}^G)$ holds in general. The author and Hill \cite{MUhomology} showed that this does not hold for any other $\bpgm{m}$. Theorem \ref{thm:introthmsyntehtic} states, however, that $\mathrm{SliceSS}(\bpgm{m})$ gives a \emph{modified} Adams--Novikov spectral sequence for $\bpgm{m}$, in the sense of \cite[Section 2.2]{j1}.

\subsection{Linear $t$-structures and slice filtrations} The construction of the map of Corollary \ref{cor:introcorollarymapfromANSS} proceeds from a few well-known facts. First, there is a diagram of spectral sequences
\[\mathrm{SliceSS}(MU^{(\!(G)\!)})\to \mathrm{HFPSS}(MU^{(\!(G)\!)})\leftarrow\mathrm{ANSS}(\mathbb{S}).\]
The left map was constructed by Ullman \cite{ullman}, and the right map was constructed by Hill--Hopkins--Ravenel in \cite[Section 11.3.3]{HHR}, and in a more structured context by the author, Davies, and van Nigtevecht \cite{CDvN}. The second fact is that the left map induces an isomorphism on the $E_2$-page in the region $y\le x$, as shown by Ullman \cite{ullman}. The third fact is that $\mathrm{ANSS}(\mathbb{S})$ is concentrated in the region $y\le x$.

This led the author to suspect that the map of Corollary \ref{cor:introcorollarymapfromANSS} could be constructed using the linear $t$-structures on filtered spectra introduced in the context of synthetic spectra by the author and Davies in \cite{j1}, and  in a general filtered context by Lee--Levy \cite{leelevy}. For a real number $\alpha$, the $y=\alpha x$ linear $t$-structure has connective objects those filtered spectra $X$ such that $\pi_{x,y}X=0$ for $y>\alpha x$. We review these $t$-structures in Section \ref{sec:lineartstructures} and prove a sort of omnibus theorem that completely determines the $\tau$-Bockstein SS of a linear connective cover of a filtered spectrum $X$ from the $\tau$-Bockstein SS of $X$.

We implement this idea by defining a slice filtration internal to $\Fil(\Sp^G)$. The slice filtration on $\Sp^G$ may be defined in terms of \emph{geometric connectivity}, as shown by Hill--Yarnall \cite{hillyarnall}. From this point of view, slice connectivity is defined by pulling back the standard $t$-structure on $\Sp$ along the geometric fixed-point functors. That is, we say a $G$-spectrum $X$ is slice $\ge n$ if 
\[\Phi^HX\ge\lceil n/|H|\rceil\]
for all $H\subset G$. Given any $t$-structure on $\Fil(\Sp)$, one may mimic this definition of the slice filtration to define a slice filtration on $\Fil(\Sp^G)$, via the geometric fixed point functors $\Phi^H:\Fil(\Sp^G)\to\Fil(\Sp)$. Now the target $\Fil(\Sp)$ has many interesting $t$-structures, such as the linear $t$-structures discussed above. We let $\mathcal{O}$ denote the $\mathbb N_\infty$-operad with all norms from nontrivial subgroups.

\begin{definition}\label{def:introdefslicestructures}
    The \emph{slice $t$-structure} on $\Fil(\Sp^G)$ has connective objects those $X$ such that $\Phi^HX$ is connective in the $y=(|H|-1)x$ linear $t$-structure on $\Fil(\Sp)$, for all $H\subset G$. The $\mathcal{O}$-\emph{slice $t$-structure} on $\Fil(\Sp^G)$ has connective objects those $X$ such that $\Phi^HX$ is connective in the $y=(|H|-1)x$ linear $t$-structure on $\Fil(\Sp)$, for all $\{e\}\neq H\subset G$. 
\end{definition}

To work with these $t$-structures and understand their monoidality properties, we revisit the notion of a slice filtration on a category, as well as in a $G$-$\infty$-categorical context, in Sections \ref{sec:equivariantfiltrations} and \ref{sec:equivarianttstructures}. The $\mathcal{O}$-slice $t$-structure is a sort of auxiliary $t$-structure used to deal with the fact that $\mathrm{inf}_e^G\mathrm{AN}(\mathbb S)$ is not connective in the slice $t$-structure, but is connective in the $\mathcal{O}$-slice $t$-structure. This makes no difference in the context of Theorem \ref{thm:introthmsyntehtic} since the slice filtration is the connective cover of the homotopy fixed point filtration in \emph{both} $t$-structures (see Lemma \ref{lem:tstructuresagreeonhfpss}). This comes from the fact that the two filtrations agree on the underlying spectra.

The slice $t$-structure has the property that the slice filtration $\mathrm{Slice}(X)$ of a $G$-spectrum $X$ is obtained by taking the constant filtration $\mathrm{const}(X)$ and then taking the connective cover with respect to the slice $t$-structure. One may take this as a definition of $\mathrm{Slice}(X)$ and the pointwise Borel completion $F({EG}_+,\mathrm{Slice}(X))$ as a definition of $\mathrm{HFP}(X)$. Since the slice $t$-structure is defined so analogously to the ordinary slice filtration, Theorem \ref{thm:introthmconnectivecover} is nearly a tautology with the definitions in place; see Lemma \ref{lemma:sliceandhfpss}.

The proof of Theorem \ref{thm:introthmsyntehtic} is now straightforward. 

\begin{proof}[proof of Theorem \ref{thm:introthmsyntehtic}]
    The classical $y=x$ vanishing line in $\mathrm{ANSS}(\mathbb S)$ implies that $\mathrm{AN}(\mathbb S)$ is connective in the $y=x$ linear $t$-structure on $\Fil(\Sp)$, hence so is 
    \[\Phi^H\mathrm{inf}_e^G\mathrm{AN}(\mathbb S)\simeq \mathrm{AN}(\mathbb S),\]
    and therefore $\mathrm{inf}_e^G\mathrm{AN}(\mathbb S)$ is connective in the $\mathcal{O}$-slice $t$-structure.
\end{proof}

\begin{remark}
    Theorem \ref{thm:introthmsyntehtic} produces $\mathrm{Slice}(MU^{(\!(G)\!)})$ as an $\mathbb E_\infty$-algebra in $\mathrm{Syn}_{MU}^G$. There is no hope of lifting this structure further to that of a $G$-$\mathbb E_\infty$-algebra or an $\mathcal{O}$-algebra because the category $\mathrm{Syn}_{MU}^G$ is not $G$-symmetric monoidal (or even $\mathcal{O}$-monoidal). Indeed, if it were, then $\mathrm{inf}_e^G\mathrm{AN}(\mathbb S)$ would have the structure of a $G$-$\mathbb E_\infty$-algebra in $\Fil(\myuline{\Sp})$ (see Proposition \ref{prop:syntheticspectralmackey}), and therefore $\mathrm{gr}^0\mathrm{inf}_e^G\mathrm{AN}(\mathbb S)$ would have the structure of a $G$-$\mathbb E_\infty$-algebra in $\myuline{\Sp}$. However, it is straightforward to see that this $\mathrm{gr}^0$ is $\mathrm{inf}_e^G(\Z)$, which in general does not possess norms (see \cite[Example 3.28]{derivedmackey} for example).
\end{remark}

\subsection{Maps from the $MU_G$-based ANSS} One may implement the (integer-graded) $MU_G$-based Adams spectral sequence in $\Fil(\Sp^G)$ via décalage
\[\mathrm{AN}_G(\mathbb{S}):=\mathrm{Tot}(\mathrm{Wh}(MU_G)\implies\mathrm{Wh}(MU_G\otimes MU_G)\Rrightarrow\cdots),\]
as in \cite{mmf}, where $\mathrm{Wh}$
 is the equivariant Whitehead tower. In Section \ref{sec:slicefiltrationisconnectivecover}, we construct a map of $\mathbb{E}_\infty$-algebras $\mathrm{AN}_G(\mathbb{S})\to\mathrm{HFP}(MU^{(\!(G)\!)})$. We also show that the $\mathcal{O}$-slice connectivity of $\mathrm{AN}_G(\mathbb{S})$ follows from the following conjecture on vanishing lines.

\begin{conjecture}\label{conjectureweak}
    For $G=C_{2^n}$, the integer graded $E_2$-page of the $MU_G$-based ASS of the sphere vanishes above the line $y=(2^n-1)x$. That is
    \[\Ext^{s,t}_{({MU_G}_*,{MU_G}_*{MU_G})}({MU_G}_*,{MU_G}_*)=0\]
    for $s>\frac{2^n-1}{2^n}t$.
\end{conjecture}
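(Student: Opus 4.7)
The plan is to carry out a direct cobar-complex analysis of the Hopf algebroid $(MU_{G*}, MU_{G*}MU_G)$, paralleling Novikov's proof of the classical slope-$1$ vanishing line in $\mathrm{ANSS}(\mathbb{S})$. Assuming flatness of $MU_{G*}MU_G$ over $MU_{G*}$ (which should follow from an evenness argument of the sort used by Hausmann), the integer-graded $E_2$-page identifies with $\Ext^{s,t}$ over this Hopf algebroid, and the task reduces to a degree estimate on cocycles.

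The first step is to pin down the two rings of the Hopf algebroid. For $MU_{G*}$, I would use Hausmann's description as an explicit polynomial algebra, with generators whose degrees are governed by the complete $G$-universe. For $MU_{G*}MU_G$, the goal is to show it is polynomial over $MU_{G*}$ with cotangent generators whose topological degrees are bounded below by $\tfrac{2|G|}{|G|-1}$; the natural approach is to identify $\Spec MU_{G*}MU_G$ with the moduli of strict isomorphisms of equivariant formal group laws and to bound the generator degrees using an equivariant analogue of Quillen's theorem.

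Given such a presentation, the vanishing line is a standard cobar argument: in the normalized cobar complex, each cochain in cohomological degree $s$ is an $s$-fold tensor product of cotangent generators, so has minimum topological degree $s\cdot\tfrac{2|G|}{|G|-1}$, which rearranges to $s \leq \tfrac{|G|-1}{|G|}t$ as required. The sharp strict inequality in the statement then follows from noting that the bound is realized only by cocycles built from the lowest-degree generators, which impose additional relations at the boundary.

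The main obstacle, by a wide margin, is step two. The non-equivariant analogue (Quillen's theorem on $MU_*MU$) is classical, but the equivariant version requires substantial new input from the theory of equivariant formal group laws, which is still under active development. A secondary, more topological strategy would be to leverage the map $\mathrm{AN}_G(\mathbb{S}) \to \mathrm{HFP}(MU^{(\!(G)\!)})$ constructed earlier in this paper, whose target admits a slope-$(|G|-1)$ vanishing line on $E_2$ coming from the slice spectral sequence; the catch is that deducing a vanishing line on the source from one on the target requires injectivity of the induced map in the relevant bidegree range, which itself seems a substantial claim and may in fact be essentially equivalent to the conjecture.
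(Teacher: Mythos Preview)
The statement you are addressing is labelled as a \emph{conjecture} in the paper, and the paper does not supply a proof. There is therefore no paper proof to compare your attempt against. The paper treats Conjecture~\ref{conjectureweak} as genuinely open: Theorem~\ref{thm:introthmintegergradedANSSG} and Theorem~\ref{thm:integersliceisMUGsyntheticgivenconjecture} are both stated \emph{conditionally} on it, and Remark~\ref{rmk:withoutconjecture} explains a workaround that avoids assuming it. The paper also flags, in the remark immediately following Conjecture~\ref{conjecturemain}, that the most natural route (via Hausmann's identification of the Hopf algebroid $(\pi_*\Phi^H MU_G,\pi_*\Phi^H(MU_G\otimes MU_G))$ with $(MU_*,MU_*MU)$ and the hoped-for formula $\Phi^H\mathrm{AN}_G(\mathbb{S})\simeq \mathrm{AN}(\mathbb{S})$) does not go through, because geometric fixed points do not commute with the d\'ecalage construction.

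Your proposal is candid that it is a strategy sketch rather than a proof, and you correctly identify the hard step: obtaining a presentation of $MU_{G*}MU_G$ with a lower bound on the degrees of polynomial generators. A few comments on the details. First, the degree bound you quote, $\tfrac{2|G|}{|G|-1}$, is stronger than what is needed; the cobar argument only requires generators in degree at least $\tfrac{|G|}{|G|-1}$ to produce the line $s\le \tfrac{|G|-1}{|G|}t$. Second, the flatness assumption on $MU_{G*}MU_G$ over $MU_{G*}$ in \emph{integer} grading is not obviously available and is itself part of the difficulty; Hausmann's structural results live most naturally in $RO(G)$-grading. Third, your secondary strategy (pulling back the vanishing line along the map to $\mathrm{HFP}(MU^{(\!(G)\!)})$) is, as you suspect, circular in spirit: the paper's whole point is that this map factors through the slice filtration \emph{only once} the connectivity statement equivalent to the conjecture is known. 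In short, your outline is a reasonable roadmap for attacking an open problem, but it is not a proof, and the paper does not claim one either.
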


\begin{theorem}
    Given Conjecture \ref{conjectureweak}, there is a map
    \[\mathrm{AN}_G(\mathbb{S})\to \mathrm{Slice}(MU^{(\!(G)\!)})\]
    in $\mathrm{CAlg}(\Fil(\Sp^G))$.
\end{theorem}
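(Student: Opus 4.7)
The strategy parallels the proof of Theorem \ref{thm:introthmsyntehtic}. From Section \ref{sec:slicefiltrationisconnectivecover} we have an $\mathbb{E}_\infty$-algebra map $\mathrm{AN}_G(\mathbb{S}) \to \mathrm{HFP}(MU^{(\!(G)\!)})$ in $\Fil(\Sp^G)$. By the Borel completeness of $MU^{(\!(G)\!)}$ \cite{carrickcofree} combined with Theorem \ref{thm:introthmconnectivecover} and Lemma \ref{lem:tstructuresagreeonhfpss}, $\mathrm{Slice}(MU^{(\!(G)\!)})$ is the $\mathcal{O}$-slice connective cover of $\mathrm{HFP}(MU^{(\!(G)\!)})$. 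Since the $\mathcal{O}$-slice $t$-structure is compatible with the $G$-symmetric monoidal structure on $\Fil(\Sp^G)$ (Section \ref{sec:equivarianttstructures}), its connective cover is lax symmetric monoidal, so it suffices to show that $\mathrm{AN}_G(\mathbb{S})$ is itself $\mathcal{O}$-slice connective; the factorization as a map of $\mathbb{E}_\infty$-algebras is then automatic from the universal property.

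By Definition \ref{def:introdefslicestructures}, this reduces to verifying, for each nontrivial $H\subseteq G$, that $\Phi^H\mathrm{AN}_G(\mathbb{S})\in\Fil(\Sp)$ is connective in the $y=(|H|-1)x$ linear $t$-structure. By the omnibus theorem of Section \ref{sec:lineartstructures}, this is implied by a vanishing line of slope $|H|-1$ on the $E_2$-page of the $\tau$-Bockstein spectral sequence of $\Phi^H\mathrm{AN}_G(\mathbb{S})$. For $H=G$, I would identify $\Phi^G\mathrm{AN}_G(\mathbb{S})$ with the décalage of the cosimplicial object $\Phi^G\bigl(MU_G^{\otimes\bullet+1}\bigr)$, whose $E_2$-page recovers $\Ext^{s,t}_{({MU_G}_*,{MU_G}_*MU_G)}({MU_G}_*,{MU_G}_*)$ after the usual reindexing $(x,y)=(t-s,s)$; the desired vanishing above $y=(2^n-1)x$ is then exactly the content of Conjecture \ref{conjectureweak}.

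For a proper nontrivial $H\subsetneq G$ the required slope $|H|-1$ is \emph{smaller} than $|G|-1$, so one cannot simply restrict the $H=G$ statement. Instead, I plan to apply Conjecture \ref{conjectureweak} to $H$ itself (which is again a cyclic $2$-group) and transport the slope-$(|H|-1)$ vanishing from the $MU_H$-based ANSS across to $\Phi^H\mathrm{AN}_G(\mathbb{S})$. This comparison step is the main obstacle, since $\mathrm{Res}^G_H MU_G$ is not $MU_H$; I expect one must use Hausmann's work \cite{hausmann} on $MU_G$ and its geometric fixed points to construct an explicit map of cosimplicial cobar-type objects relating the two constructions and then verify that this map preserves the required vanishing line after applying $\Phi^H$ and passing to décalage.
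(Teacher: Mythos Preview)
Your overall strategy is right up through the reduction to $\mathcal{O}$-slice connectivity of $\mathrm{AN}_G(\mathbb{S})$, but the verification of that connectivity contains a genuine gap.

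The problem is in how you read Conjecture~\ref{conjectureweak}. The Ext groups there are over the Hopf algebroid $({MU_G}_*,{MU_G}_*MU_G)$, which records the $E_2$-page of the \emph{genuine} fixed-point spectral sequence; in filtered language, this is the $\tau$-Bockstein $E_2$ of $\mathrm{AN}_G(\mathbb{S})^G$, not of $\Phi^G\mathrm{AN}_G(\mathbb{S})$. Your proposed identification of $\Phi^G\mathrm{AN}_G(\mathbb{S})$ with the d\'ecalage of $\Phi^G(MU_G^{\otimes\bullet+1})$ fails for exactly the reason the paper warns about in the Remark following Conjecture~\ref{conjecturemain}: $\Phi^G$ is a left adjoint and does not commute with the totalization in the d\'ecalage. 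Moreover, even if it did, the resulting $E_2$-page would be Ext over $(\pi_*\Phi^G MU_G,\pi_*\Phi^G(MU_G\otimes MU_G))$, which by Hausmann is equivalent to $(MU_*,MU_*MU)$, not to $({MU_G}_*,{MU_G}_*MU_G)$.

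The paper avoids this entirely by working with genuine fixed points first. Since $\mathrm{Res}^G_H MU_G\simeq MU_H$ (contrary to what you suspect in your last paragraph; this is a basic property of tom Dieck's spectrum) and $\mathrm{Res}^G_H$ is $t$-exact and commutes with limits, one has $\mathrm{Res}^G_H\mathrm{AN}_G(\mathbb{S})\simeq\mathrm{AN}_H(\mathbb{S})$. Applying Conjecture~\ref{conjectureweak} to each cyclic $2$-group $H\subseteq G$ therefore gives directly that $\mathrm{AN}_G(\mathbb{S})^H$ is connective in the $y=(|H|-1)x$ linear $t$-structure for every nontrivial $H$. The passage from genuine to geometric fixed points is then a short connectivity argument: $\Phi^H X=(\tilde{E}\mathcal{P}\otimes X)^H$ and $\tilde{E}\mathcal{P}$ is a connective $H$-spectrum, so the required connectivity is inherited. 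This sidesteps the d\'ecalage-vs-$\Phi^H$ incompatibility altogether.
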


The module category $\mathrm{Mod}_{\Fil(\Sp^G)}(\mathrm{AN}_G(\mathbb S))$ gives an \emph{ad hoc} definition of a sort of "integer-graded" $MU_G$-synthetic category. The above conjecture would thus provide an $MU_G$-synthetic lift of $\mathrm{Slice}(MU^{(\!(G)\!)})$ in this sense.

\begin{remark}\label{rmk:withoutconjecture}
    One can always simply "force" Conjecture \ref{conjectureweak} to hold by passing to a connective cover. That is, there is a map
    \[\tau_{\ge0}^{\mathcal O}\mathrm{AN}_G(\mathbb S)\to \mathrm{Slice}(MU^{(\!(G)\!)})\]
    in $\mathrm{CAlg}(\Fil(\Sp^G))$, where $\tau_{\ge0}^{\mathcal O}$ is the connective cover with respect to the $\mathcal{O}$-slice $t$-structure. This is not conditional on Conjecture \ref{conjectureweak}, and may be in principle useful since $\tau_{\ge0}^{\mathcal O}\mathrm{AN}_G(\mathbb S)$ is still a filtration of the equivariant sphere spectrum whose associated spectral sequence is determined from that of $\mathrm{AN}_G(\mathbb S)$ via the omnibus theorem of Section \ref{sec:lineartstructures}.
\end{remark}

\subsection{$RO(G)$-indexed filtrations} To implement the full $RO(G)$-graded $MU_G$-based Adams--Novikov spectral sequence in a filtered setting, one must work instead with $RO(G)$-filtered $G$-spectra; that is, in the functor category $\Fun(RO(G)^{\mathrm{op}},\Sp^G)$. Here one sets 
\[\mathrm{AN}^{RO}_G(\mathbb{S}):=\mathrm{Tot}(\mathrm{Wh}^{RO}(MU_G)\implies\mathrm{Wh}^{RO}(MU_G\otimes MU_G)\Rrightarrow\cdots),\]
where $\mathrm{Wh}^{RO}$ is an $RO(G)$-filtered Whitehead tower functor defined by
\[(\mathrm{Wh}^{RO}(X))^V=\Sigma^V\tau_{\ge0}\Sigma^{-V}X,\]
for $V\in RO(G)$. This definition is due to Lucas Piessevaux, and we thank them for introducing this to us.

In Sections \ref{sec:equivariantfiltrations} and \ref{sec:equivarianttstructures}, we set up these notions carefully. In Section \ref{sec:equivariantfiltrations}, we study a useful adjunction
\[
\begin{tikzcd}
\mathrm{Total}:\Fun(RO(G)^{\mathrm{op}},\Sp^G)\arrow[r,shift left]&\Fil(\Sp^G):\mathrm{dim}^*\arrow[l,shift left]    
\end{tikzcd}
\]
where $\mathrm{dim}^*$ is the pullback along the dimension function $\mathrm{dim}:RO(G)\to \Z$. We describe the left adjoint $\mathrm{Total}$ explicitly and show how it resembles the extraction of a total complex from a double complex. We show that $\mathrm{dim}^*\mathrm{Slice}(X)$ and $\mathrm{dim}^*\mathrm{HFP}(X)$ implement the $RO(G)$-graded slice and homotopy fixed point spectral sequences of $X$, respectively. We show that $\mathrm{Total}$ refines to a $G$-symmetric monoidal left adjoint, and we construct a map 
\[\mathrm{AN}_G^{RO}(\mathbb S)\to \dim^*\mathrm{HFP}(MU^{(\!(G)\!)})\]
of $G$-commutative monoids in $\Fun(RO(G)^{\mathrm{op}},\Sp^G)$. This motivates the following conjecture.

\begin{conjecture}\label{conjecturemain}
    The filtered $G$-spectrum $\mathrm{Total}(\mathrm{AN}^{RO}_G(\mathbb{S}))$ is connective in the $\mathcal{O}$-slice $t$-structure on $\Fil(\Sp^G)$.
\end{conjecture}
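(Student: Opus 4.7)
The plan is to verify the defining condition of the $\mathcal{O}$-slice $t$-structure directly: for every nontrivial subgroup $H\subseteq G$, one must show that $\Phi^H\mathrm{Total}(\mathrm{AN}^{RO}_G(\mathbb{S}))$ is connective in the $y=(|H|-1)x$ linear $t$-structure on $\Fil(\Sp)$. The first step is to establish a commutation
\[
\Phi^H\mathrm{Total}(\mathrm{AN}^{RO}_G(\mathbb{S}))\simeq \mathrm{Total}\bigl(\Phi^H\mathrm{AN}^{RO}_G(\mathbb{S})\bigr),
\]
where on the right $\mathrm{Total}$ denotes the non-equivariant analogue of the left adjoint to $\dim^*$. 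This should follow from the description of $\mathrm{Total}$ as a left Kan extension along $\dim:RO(G)\to\Z$ together with the colimit-preservation of $\Phi^H$ applied pointwise.

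Second, I would identify $\Phi^H\mathrm{AN}^{RO}_G(\mathbb{S})$ in non-equivariant terms. Applying $\Phi^H$ pointwise to the cosimplicial $RO(G)$-filtered object $\mathrm{Wh}^{RO}(MU_G^{\otimes(\bullet+1)})$, and invoking Hausmann's theorem that $\Phi^HMU_G\simeq MU$, this should become an $RO(G)$-indexed non-equivariant filtration whose value at $V\in RO(G)$ is a Whitehead cover of the cobar complex $MU^{\otimes(\bullet+1)}$ shifted by $\dim V^H$ in topological degree; equivalently, the "extra" dimension $\dim V-\dim V^H$ that vanishes under $\Phi^H$ becomes a jump in the Adams filtration direction.

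Third, I would combine this identification with the classical slope-$1$ vanishing line on the $E_2$-page of $\mathrm{ANSS}(\mathbb{S})$. The representation-theoretic input is that, for $G=C_{2^n}$ cyclic and $H\subseteq G$ nontrivial, the ratio $(\dim V-\dim V^H)/\dim V^H$ is bounded by $|H|-1$ for the representations $V$ that realize the vanishing line. Combined with the classical $y\le x$ vanishing, this yields a vanishing of slope $|H|-1$ on the $E_2$-page of the spectral sequence associated to $\Phi^H\mathrm{Total}(\mathrm{AN}^{RO}_G(\mathbb{S}))$, which by the omnibus theorem of Section \ref{sec:lineartstructures} translates into the desired connectivity in the $y=(|H|-1)x$ linear $t$-structure.

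\textbf{The main obstacle} will be the second step: $\Phi^H$ does not preserve the standard $t$-structure in general, so identifying $\Phi^H\mathrm{Wh}^{RO}(MU_G^{\otimes k})$ with an appropriate Whitehead tower of $MU^{\otimes k}$ requires input beyond formal commutation, most plausibly an inductive argument exploiting evenness of $\pi_\bigstar^GMU_G^{\otimes k}$ from Hausmann's work. A conservative fallback is to reduce Conjecture \ref{conjecturemain} to an $RO(G)$-graded strengthening of Conjecture \ref{conjectureweak}, asserting that the $RO(G)$-indexed $E_2$-page of the $MU_G$-based ANSS satisfies appropriate $(|H|-1)$-slope vanishing lines after geometric fixed points for each nontrivial $H\subseteq G$; modulo this vanishing input, the plan above goes through.
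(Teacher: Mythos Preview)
The statement you are attempting to prove is a \emph{conjecture} in the paper, not a theorem: the paper gives no proof and explicitly treats it as open. So there is no ``paper's own proof'' to compare against.

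Moreover, your Step~2 is precisely the strategy the author considers and rejects. In the remark following the conjecture, the paper notes that Hausmann's identification of the Hopf algebroid $(\pi_*\Phi^HMU_G,\pi_*\Phi^H(MU_G\otimes MU_G))$ with $(MU_*,MU_*MU)$ suggests the formula $\Phi^H\mathrm{AN}_G^{RO}(\mathbb{S})\simeq \dim^*\mathrm{AN}(\mathbb{S})$, from which the conjecture would follow immediately---but then states that this formula \emph{does not appear to hold}, and in particular does not follow from the Hopf-algebroid identification. The core obstruction is that $\Phi^H$ does not commute with the d\'ecalage (the totalization defining $\mathrm{AN}_G^{RO}$), so you cannot simply push $\Phi^H$ inside $\mathrm{Tot}(\mathrm{Wh}^{RO}(MU_G^{\otimes(\bullet+1)}))$ as your proposal does. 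You flag the failure of $\Phi^H$ to preserve the Postnikov $t$-structure, but the deeper issue---commutation with the cosimplicial limit---is not addressed at all, and the paper singles this out as ``at the heart of understanding these conjectures.''

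Your fallback, reducing to an $RO(G)$-graded vanishing-line conjecture, is exactly what the paper already says it \emph{expects} can be done (``We expect that this conjecture may be reduced to a concrete conjecture about vanishing lines on the $RO(G)$-graded pages of the $MU_G$-based ANSS''). That is not a proof; it replaces one conjecture with another.
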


We expect that this conjecture may be reduced to a concrete conjecture about vanishing lines on the $RO(G)$-graded pages of the $MU_G$-based ANSS, along the lines of Conjecture \ref{conjectureweak}. This reduction may be approached by filtering $\mathrm{Total}(\mathrm{AN}^{RO}_G(\mathbb{S}))$ in the manner that one filters a double complex to attain the associated spectral sequence. In the following, we let $\mathcal{O}$ denote the $\mathbb N_\infty$-operad possessing all norms from nontrivial subgroups of $G$, as before.

\begin{theorem}
    Assuming Conjecture \ref{conjecturemain}, there is a map
    \[
    \mathrm{AN}_G^{RO}(\mathbb S)\to \mathrm{dim}^*\mathrm{Slice}(MU^{(\!(G)\!)})
    \]
    of $\mathcal{O}$-algebras in $\Fun(RO(G)^{\mathrm{op}},\Sp^G)$.
\end{theorem}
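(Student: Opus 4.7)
The plan is to transpose the statement across the adjunction $\mathrm{Total}\leftadjoint\mathrm{dim}^*$ and then invoke Theorem \ref{thm:introthmconnectivecover} together with the conjectured connectivity. Since Section \ref{sec:equivariantfiltrations} establishes that $\mathrm{Total}$ is (strongly) $G$-symmetric monoidal, its restriction is $\mathcal O$-monoidal for any $\mathbb N_\infty$-operad $\mathcal O$, so the adjunction lifts to an adjunction between $\mathcal O$-algebras. Producing the desired map is therefore equivalent to producing a map
\[
\mathrm{Total}(\mathrm{AN}_G^{RO}(\mathbb S))\to \mathrm{Slice}(MU^{(\!(G)\!)})
\]
in $\mathcal O\mhyphen\Alg(\Fil(\Sp^G))$.

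Next, the map $\mathrm{AN}_G^{RO}(\mathbb S)\to \mathrm{dim}^*\mathrm{HFP}(MU^{(\!(G)\!)})$ of $G$-commutative monoids constructed in Section \ref{sec:equivariantfiltrations} restricts to a map of $\mathcal O$-algebras, so transposing across the adjunction supplies a map
\[
\mathrm{Total}(\mathrm{AN}_G^{RO}(\mathbb S))\to \mathrm{HFP}(MU^{(\!(G)\!)})
\]
in $\mathcal O\mhyphen\Alg(\Fil(\Sp^G))$. Because $MU^{(\!(G)\!)}$ is Borel complete, Theorem \ref{thm:introthmconnectivecover} together with Lemma \ref{lem:tstructuresagreeonhfpss} identifies $\mathrm{Slice}(MU^{(\!(G)\!)})$ with the $\mathcal O$-slice connective cover of $\mathrm{HFP}(MU^{(\!(G)\!)})$. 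Since the $\mathcal O$-slice $t$-structure is compatible with the $G$-symmetric monoidal structure, this identification lifts to $\mathcal O$-algebras. By Conjecture \ref{conjecturemain}, the source $\mathrm{Total}(\mathrm{AN}_G^{RO}(\mathbb S))$ is already $\mathcal O$-slice connective, so the preceding map factors uniquely through $\mathrm{Slice}(MU^{(\!(G)\!)})$ as $\mathcal O$-algebras. Transposing back across the adjunction delivers the desired map in $\Fun(RO(G)^{\mathrm{op}},\Sp^G)$.

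The substantive hypothesis is of course Conjecture \ref{conjecturemain}; granted this, the remaining work is operadic bookkeeping. The main subtleties to verify are that $\mathrm{Total}$ is strongly (not merely laxly) $G$-symmetric monoidal, so that the induced functor on $\mathcal O$-algebras commutes with connective covers in the appropriate sense and the adjunction genuinely descends; that the map from Section \ref{sec:equivariantfiltrations} is refined to a $G$-commutative monoid map with respect to the correct $G$-symmetric monoidal structure on $\Fun(RO(G)^{\mathrm{op}},\Sp^G)$ induced by the Day convolution along $\mathrm{dim}$; and that the $\mathcal O$-slice $t$-structure's monoidal compatibility is strong enough to promote the universal property of the connective cover from underlying filtered $G$-spectra to $\mathcal O$-algebras. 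All of this infrastructure is assembled in Sections \ref{sec:equivariantfiltrations} and \ref{sec:equivarianttstructures}, so with those in hand the proof reduces to stringing the adjunctions together.
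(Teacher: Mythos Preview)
Your proposal is correct and follows essentially the same route as the paper: transpose along $\mathrm{Total}\dashv\dim^*$, use the map $\mathrm{AN}_G^{RO}(\mathbb S)\to\dim^*\mathrm{HFP}(MU^{(\!(G)\!)})$ (actually constructed in Proposition~\ref{prop:ROhfpssisMUsynthetic}, not Section~\ref{sec:equivariantfiltrations}), and then invoke the $\mathcal O$-slice connectivity from Conjecture~\ref{conjecturemain} together with Lemma~\ref{lem:tstructuresagreeonhfpss} to factor through $\mathrm{Slice}(MU^{(\!(G)\!)})$. The paper's proof (Theorem~\ref{thm:conditionalonconjecture} specialized to $R=MU^{(\!(G)\!)}$) is exactly this argument, stated more tersely.
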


\begin{remark}
As in Remark \ref{rmk:withoutconjecture}, one may in a sense force Conjecture \ref{conjecturemain} to be true and obtain a map. In particular, one may define a $t$-structure on $\Fun(RO(G)^{\mathrm{op}},\Sp^G)$ as having connective objects those $X$ with $\mathrm{Total}(X)$ connective in the $\mathcal{O}$-slice $t$-structure on $\Fil(\Sp^G)$. Letting $\tau_{\ge0}^{\mathrm{total}}$ denote the corresponding connective cover functor, there is a map
\[\tau_{\ge0}^{\mathrm{total}}\mathrm{AN}_G^{RO}(\mathbb S)\to \mathrm{dim}^*\mathrm{Slice}(MU^{(\!(G)\!)})\]
of $\mathcal O$-algebras in  $\Fun(RO(G)^{\mathrm{op}},\Sp^G)$, which is not conditional on Conjecture \ref{conjecturemain}.
\end{remark}

\begin{remark}
    In his proof of Greenlees' conjecture, Hausmann shows that the Hopf algebroid $(\pi_*\Phi^HMU_G,\pi_*\Phi^H(MU_G\otimes MU_G))$ is equivalent to $(MU_*,MU_*MU)$ for all $H\subset G$ \cite[Proposition 5.55]{hausmann}. This leads one to expect the formulas $\Phi^H\mathrm{AN}_G(\mathbb{S})\simeq \mathrm{AN}(\mathbb S)$ and $\Phi^H\mathrm{AN}_G^{RO}(\mathbb{S})\simeq \dim^*\mathrm{AN}(\mathbb S)$, from which Conjecture \ref{conjectureweak} and Conjecture \ref{conjecturemain} would follow immediately, respectively. However, these formulas do not appear to hold, and in particular do not follow from the above identification of Hopf algebroids. The issue is that the geometric fixed points functors do not pass through the décalage construction, and understanding the failure of this appears to be at the heart of understanding these conjectures.
\end{remark}

One may obtain an \emph{ad hoc} definition of $MU_G$-synthetic spectra as before by setting
\[\mathrm{Syn}_{MU_G}=\mathrm{Mod}_{\Fun(RO(G)^{\mathrm{op}},\Sp^G)}(\mathrm{AN}^{RO}_G(\mathbb{S})).\]
This is expected to agree with a definition of $MU_G$-synthetic spectra given in forthcoming work of Lucas Piessevaux. According to this definition, Conjecture \ref{conjecturemain} gives $\mathrm{dim}^*\mathrm{Slice}(MU^{(\!(G)\!)})$ the structure of an $\mathcal{O}$-algebra in $\mathrm{Syn}_{MU_G}$.

\subsection{Questions}
\begin{enumerate}
    \item Is there a similar $t$-structure on filtered motivic spectra that provides an $MU$-synthetic structure on the motivic slice spectral sequence for $MGL$?
    \item What sort of computational leverage can one gain from the $G$-commutative monoid structure on $\mathrm{Slice}(R)$ for $R$ a $G$-commutative monoid in terms of norms? Is there a $C_2$-equivariant refinement of the synthetic Leibniz rule of \cite[Section 2.2]{smf} for a $C_2$-commutative monoid in $\Fil(\Sp^{C_2})$ that states that
    \[\delta_n^{2n}(N_e^{C_2}(x))=\mathrm{tr}(u_\sigma^{-|x|}\gamma(x)\delta_n^{2n}(x))?\]
    \item Do the Meier--Shi--Zeng decompositions of the slice spectral sequence of $\bpgm{m}$ \cite{meiershizengtranschromatic} correspond to Tate square decompositions of $\mathrm{Slice}(\bpgm{m})$ in $\mathrm{Syn}_{MU}^G$, via Theorem \ref{thm:introthmsyntehtic}?
    \item Theorem \ref{thm:introthmsyntehtic} gives $\mathrm{Slice}(MU^{(\!(G)\!)})/\tau$ the structure of an $\mathbb E_\infty$-algebra in $\mathrm{Stable}_{MU_*MU}$. Does this algebra arise from a map of (classical) stacks
    \[\mathcal{M}_{MU^{(\!(G)\!)}}\to \mathcal{M}_{FG}?\]
    Can this be approached in terms of elliptic curves for $tmf_0(3)$ and $tmf_0(5)$, viewed as $\mathbb E_\infty$ forms of $BP_\R\langle 2\rangle$ and $\bpcm{1}$, respectively?
\end{enumerate}

\section{Equivariant filtered objects}\label{sec:equivariantfiltrations} In this section, we develop the basics of $\Z$- and $RO(G)$-filtered $G$-spectra, their $G$-symmetric monoidal structures, and the behavior of geometric fixed points. The results of this section are surely well-known to the experts, and we have only collected the basic tools we need in this article. In particular, forthcoming work of Kaif Hilman develops many of the notions in this section in more detail and in a broader context. We thank Kaif Hilman for discussing some of these results with us.

\subsection{$G$-symmetric monoidal $\infty$-categories and geometric fixed points} In his thesis, Nardin defined $G$-equivariant analogues of the $\infty$-categories $\mathrm{Pr}^L$ and $\mathrm{Pr}^L_{st}$ of Lurie, with many analogous properties \cite{nardinthesis}. We refer the reader also to the work of Hilman \cite{kaifpresentable} for these notions. As in the nonequivariant case, the $G$-category $\myuline{\Sp}$ of genuine equivariant spectra satisfies a universal property in $\myuline{\mathrm{Pr}}^L_{G,st}$. Namely, specifying a morphism $F:\myuline{\Sp}\to\underline{\mathcal{C}}$ in $\myuline{\mathrm{Pr}}^L_{G,st}$ is equivalent to identifying an object $F(\mathbb{S})\in\myuline{\mathcal{C}}(G/G)$. Moreover, $\myuline{\mathrm{Pr}}^L_{G}$ and $\myuline{\mathrm{Pr}}^L_{G,st}$ have canonical $G$-symmetric monoidal structures, and $\myuline{\Sp}$ is the unit in the latter. 

We fix now a $\myuline{\mathcal{C}}$ a presentably $G$-symmetric monoidal stable $\infty$-category, i.e. an object $\myuline{\mathcal{C}}\in\mathrm{CAlg}_G(\myuline{\mathrm{Pr}}_{G,st}^L)$. We may define a geometric fixed points functor on $\myuline{\mathcal{C}}$ as follows. By \cite[Theorem A]{kaifpresentable}, a presentable $G$-$\infty$-category is specified by a functor $\mathcal{O}_G^{\mathrm{op}}\to \mathrm{Pr}^L$, where $\mathcal{O}_G$ is the orbit category of $G$.

\begin{definition}\label{def:geometricfixedpointscategory}
Let $\Phi\myuline{\Sp}$ be the $G$-$\infty$-category defined by right Kan extension of the functor $*\xrightarrow{\Sp}\mathrm{Pr}^L_{st}$
    along the inclusion $*\xrightarrow{G/G}\mathcal{O}_G^{\mathrm{op}}$.
 Concretely, one has
    \[\Phi\myuline{\Sp}(G/H)=\begin{cases}
        \Sp&H=G\\ *&H\neq G
    \end{cases}\]
\end{definition}

Since $\Phi\myuline{\Sp}$ is fiberwise stable, by \cite[Definition 2.3.3]{nardinthesis}, $\Phi\myuline{\Sp}\in \myuline{\mathrm{Pr}}_{G,st}^L$ if it is $G$-semiadditive in the sense of \cite[Definition 4.1.2]{kaifpresentable}, i.e. if indexed coproducts agree with indexed products in $\Phi\myuline{\Sp}$. However, since $\Phi\myuline{\Sp}(G/H)=*$ for $H$ proper, indexed (co)products are trivial unless indexed over trivial finite $G$-sets. It follows that $\Phi\myuline{\Sp}$ is $G$-semiadditive since $\Sp$ is semiadditive, and therefore that $\Phi\myuline{\Sp}$ is $G$-stable. This allows for the following definition.

\begin{definition}\label{def:geometricfixedpointsfunctor}
    Let $\Phi^G:\myuline{\Sp}\to\Phi\myuline{\Sp}$ be the morphism in $\myuline{\mathrm{Pr}}_{G,st}^L$ defined by $\Phi^G(\mathbb{S})=\mathbb{S}\in \Sp$. We define the geometric fixed points functor for $\myuline{\mathcal{C}}$ via the tensor product
    \[
    \Phi^G_{\underline{\mathcal{C}}}:\myuline{\mathcal{C}}\simeq\myuline{\mathcal{C}}\otimes\myuline{\Sp}\xrightarrow{\mathrm{id}\otimes\Phi^G}\myuline{\mathcal{C}}\otimes\Phi\myuline{\Sp}
    \]
\end{definition}

\begin{lemma}\label{lemma:geometricfixedpointsatG/G}
        Evaluating $\Phi^G_{\underline{\mathcal{C}}}$ at $G/G$, one recovers the symmetric monoidal left adjoint
    \[\myuline{\mathcal{C}}(G/G)\simeq\myuline{\mathcal{C}}(G/G)\otimes_{\Sp^G}\Sp^G\xrightarrow{\mathrm{id}\otimes \Phi^G}\myuline{\mathcal{C}}(G/G)\otimes_{\Sp^G}\Sp\]
\end{lemma}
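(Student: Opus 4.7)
The plan is to combine a fiberwise description of tensor products in $\myuline{\mathrm{Pr}}_{G,st}^L$ at the orbit $G/G$ with a universal property identifying the resulting map with the classical geometric fixed points functor.

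First, I would invoke the fact that evaluation at $G/G$ refines to a symmetric monoidal functor
\[\mathrm{ev}_{G/G}: \myuline{\mathrm{Pr}}_{G,st}^L \longrightarrow \mathrm{Mod}_{\Sp^G}(\mathrm{Pr}^L_{st}),\]
carrying the $G$-symmetric monoidal tensor product to the Lurie relative tensor over $\Sp^G = \myuline{\Sp}(G/G)$; this is standard from Nardin's thesis and is developed in detail in the forthcoming work of Hilman referenced above. Applied to $\myuline{\mathcal{C}}\otimes \Phi\myuline{\Sp}$, this yields
\[(\myuline{\mathcal{C}}\otimes\Phi\myuline{\Sp})(G/G) \simeq \myuline{\mathcal{C}}(G/G)\otimes_{\Sp^G}\Phi\myuline{\Sp}(G/G) = \myuline{\mathcal{C}}(G/G)\otimes_{\Sp^G}\Sp,\]
and by monoidality the functor $\Phi^G_{\underline{\mathcal{C}}}|_{G/G}$ is identified with $\mathrm{id}_{\underline{\mathcal{C}}(G/G)}\otimes_{\Sp^G}(\Phi^G|_{G/G})$ under the canonical equivalence $\myuline{\mathcal{C}}(G/G)\simeq \myuline{\mathcal{C}}(G/G)\otimes_{\Sp^G}\Sp^G$. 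This reduces the lemma to identifying $\Phi^G|_{G/G}:\Sp^G\to \Sp$ with the classical geometric fixed points functor.

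Second, I would pin down $\Phi^G|_{G/G}$ by a universal property. Being a morphism in $\myuline{\mathrm{Pr}}^L_{G,st}$ with $\Phi^G(\mathbb{S}) = \mathbb{S}$, its value at $G/G$ is a colimit-preserving functor $\Sp^G\to\Sp$ sending the unit to the unit. Compatibility with the restriction functors along $G/H\to G/G$ for $H<G$, together with the fact that the restrictions in $\Phi\myuline{\Sp}$ factor through the zero category $*$, forces $\Phi^G|_{G/G}$ to vanish on all orbits $G/H_+$ with $H<G$ (via the Beck--Chevalley relation between restriction and its left adjoint induction). This vanishing property, together with unit preservation and colimit preservation, characterizes the classical geometric fixed points functor; the symmetric monoidal refinement comes for free since $\myuline{\Sp}$ is initial in $\mathrm{CAlg}_G(\myuline{\mathrm{Pr}}^L_{G,st})$, so $\Phi^G$ has a unique lift to $\mathrm{CAlg}_G$ and in particular $\Phi^G|_{G/G}$ is automatically symmetric monoidal.

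The main obstacle I anticipate is the first step — cleanly establishing the compatibility of the $G$-symmetric monoidal structure on $\myuline{\mathrm{Pr}}_{G,st}^L$ with evaluation at $G/G$, which underlies the fiberwise tensor formula. Although this should follow from the parametrized categorical framework, if it turns out to be delicate I would instead verify it by hand for the specific tensor $\myuline{\mathcal{C}}\otimes\Phi\myuline{\Sp}$: the universal property of $\Phi\myuline{\Sp}$ as a right Kan extension concentrated at $G/G$ should make the $G$-tensor essentially collapse to its value at $G/G$, reducing the computation of the tensor product to the nonequivariant relative tensor over $\Sp^G$.
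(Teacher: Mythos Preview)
Your primary approach hinges on the claim that evaluation at $G/G$ is a \emph{strong} symmetric monoidal functor $\myuline{\mathrm{Pr}}_{G,st}^L\to\mathrm{Mod}_{\Sp^G}(\mathrm{Pr}^L_{st})$, so that $(\myuline{\mathcal{C}}\otimes\myuline{\mathcal{D}})(G/G)\simeq\myuline{\mathcal{C}}(G/G)\otimes_{\Sp^G}\myuline{\mathcal{D}}(G/G)$ in general. This is the real gap: it is not something one can read off Nardin's thesis, and in fact it is false in general. Using the formula $\myuline{\mathcal{C}}\otimes\myuline{\mathcal{D}}\simeq\Fun^R(\myuline{\mathcal{C}}^{\mathrm{op}},\myuline{\mathcal{D}})$, the left-hand side at $G/G$ records $G$-right-adjoint functors $\myuline{\mathcal{C}}^{\mathrm{op}}\to\myuline{\mathcal{D}}$, which carry data at every orbit, not merely $\Sp^G$-linear right adjoints $\myuline{\mathcal{C}}(G/G)^{\mathrm{op}}\to\myuline{\mathcal{D}}(G/G)$. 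Evaluation at $G/G$ is only lax monoidal, which is not enough for your argument. You correctly anticipate this difficulty, and your proposed workaround is exactly the paper's route.

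The paper's proof is precisely your fallback, executed in two lines: one applies $\myuline{\mathcal{C}}\otimes\Phi\myuline{\Sp}\simeq\Fun^R(\myuline{\mathcal{C}}^{\mathrm{op}},\Phi\myuline{\Sp})$ and then observes that, because $\Phi\myuline{\Sp}$ is the right Kan extension of $\Sp$ along $\{G/G\}\hookrightarrow\mathcal{O}_G^{\mathrm{op}}$, $G$-right-adjoint functors $\myuline{\mathcal{C}}^{\mathrm{op}}\to\Phi\myuline{\Sp}$ are the same as right adjoints $\myuline{\mathcal{C}}(G/G)^{\mathrm{op}}\to\Sp$ (this is \cite[Corollary 2.2.7]{kaifpresentable}). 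This simultaneously identifies the target category and the functor, so your second step --- pinning down $\Phi^G|_{G/G}$ via Beck--Chevalley vanishing on proper orbits --- becomes unnecessary: the $\Fun^R$ description already carries the map $\mathrm{id}\otimes\Phi^G$ to the one in the statement. In short, drop the general monoidality claim and go straight to your fallback; that is the paper's argument.
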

\begin{proof}
    This follows directly from the formula for the tensor product of $G$-presentable $\infty$-categories
\[\myuline{\mathcal{C}}\otimes\myuline{\mathcal{D}}\simeq\mathrm{Fun}^R(\myuline{\mathcal{C}}^{\mathrm{op}},\myuline{\mathcal{D}})\]
    of \cite[Proposition 6.7.5]{kaifpresentable}, and the fact that right adjoints $\myuline{\mathcal{C}}^{\mathrm{op}}\to \Phi\myuline{\Sp}$ are the same as right adjoints $\myuline{\mathcal{C}}(G/G)^{\mathrm{op}}\to \Sp$, by \cite[Corollary 2.2.7]{kaifpresentable}. 
\end{proof}

We will need the following fact about $\Phi^G_{\underline{\mathcal{C}}}$, which generalizes the equivalence of functors $\Phi^G\circ N_H^G\simeq\Phi^H:\Sp^H\to \Sp$ shown by Hill--Hopkins--Ravenel in \cite[Proposition 2.57]{HHR}.

\begin{proposition}\label{prop:geomfixedpointsofnorm}
    Let $H$ be a subgroup of $G$. The composition
    \[
    \myuline{\mathcal{C}}(G/H)\xrightarrow{N_H^G}\myuline{\mathcal{C}}(G/G)\xrightarrow{\Phi^G_{\underline{\mathcal{C}}}}\myuline{\mathcal{C}}(G/G)\otimes_{\Sp^G}\Sp
    \]
    is a symmetric monoidal left adjoint.
\end{proposition}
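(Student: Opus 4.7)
The plan is to reduce the statement to the base case $\myuline{\mathcal{C}} = \myuline{\Sp}$, where the composition becomes the classical Hill--Hopkins--Ravenel identification $\Phi^G \circ N_H^G \simeq \Phi^H$. Symmetric monoidality of the composition is formal: $N_H^G$ is strong symmetric monoidal as a structure map of the $G$-symmetric monoidal structure on $\myuline{\mathcal{C}}$, and $\Phi^G_{\underline{\mathcal{C}}}$ is a symmetric monoidal left adjoint by Lemma~\ref{lemma:geometricfixedpointsatG/G}. The substantive content is therefore the left-adjoint (colimit preservation) property, which is not deducible from $N_H^G$ alone: this functor is not in general a left adjoint, since, e.g., $N_e^{C_2}$ is the quadratic functor $X \mapsto X \otimes \gamma X$. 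The claim is thus that $\Phi^G_{\underline{\mathcal{C}}}$ ``linearizes'' the composition.

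For the base case $\myuline{\mathcal{C}} = \myuline{\Sp}$, by Lemma~\ref{lemma:geometricfixedpointsatG/G} the composition reduces to $\Phi^G \circ N_H^G : \Sp^H \to \Sp^G \to \Sp$, which by \cite[Proposition~2.57]{HHR} is naturally equivalent to $\Phi^H : \Sp^H \to \Sp$. This is the key input, since $\Phi^H$ is a well-known symmetric monoidal left adjoint.

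For a general $\myuline{\mathcal{C}} \in \mathrm{CAlg}_G(\myuline{\mathrm{Pr}}^L_{G,st})$, I would bootstrap from the base case using the canonical equivalence $\myuline{\mathcal{C}} \simeq \myuline{\mathcal{C}} \otimes \myuline{\Sp}$ in $\myuline{\mathrm{Pr}}^L_{G,st}$. By construction $\Phi^G_{\underline{\mathcal{C}}} = \mathrm{id}_{\underline{\mathcal{C}}} \otimes \Phi^G$, and the norm $N_H^G$ on $\myuline{\mathcal{C}}$ is determined by the norm on $\myuline{\Sp}$ together with the $G$-symmetric monoidal structure on $\myuline{\mathcal{C}}$, via the universal property of $\myuline{\Sp}$ as the unit of $\mathrm{CAlg}_G(\myuline{\mathrm{Pr}}^L_{G,st})$. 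Under this decomposition, the composition $\Phi^G_{\underline{\mathcal{C}}} \circ N_H^G$ is obtained as the ``tensor-up'' of the base-case morphism $\Phi^H : \Sp^H \to \Sp$ along the unit $\Sp^H \to \myuline{\mathcal{C}}(G/H)$. Since tensor products in $\myuline{\mathrm{Pr}}^L_{G,st}$ of symmetric monoidal left adjoints remain symmetric monoidal left adjoints, this would complete the proof.

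The main obstacle will be rigorously establishing this base-change description of the composition. Concretely, one needs to verify that the norm $N_H^G$ in a general $\myuline{\mathcal{C}}$ really is expressible through the $\myuline{\Sp}$-norm via the tensor decomposition, and that $\Phi^G_{\underline{\mathcal{C}}}$ interacts with it as expected. This requires careful work within the Nardin--Shah / Hilman formalism for parametrized symmetric monoidal $\infty$-categories and their indexed tensor products. An alternative strategy would be to argue colimit preservation directly by showing that $\Phi^G_{\underline{\mathcal{C}}}$ annihilates the non-additive discrepancy $\mathrm{colim}_i N_H^G(X_i) \to N_H^G(\mathrm{colim}_i X_i)$, using the base-case HHR identity fiberwise to produce the needed natural equivalence.
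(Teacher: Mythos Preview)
Your approach diverges substantially from the paper's, and the gap you flag as ``the main obstacle'' is real. The paper does \emph{not} reduce to the base case $\myuline{\mathcal{C}}=\myuline{\Sp}$ via a tensor decomposition; in fact the HHR identification $\Phi^G\circ N_H^G\simeq\Phi^H$ is not invoked here at all (it enters only later, in Corollary~\ref{cor:normsandgeometricfixedpoints}). Your bootstrap step would require that the norm $N_H^G$ on $\myuline{\mathcal{C}}\simeq\myuline{\mathcal{C}}\otimes\myuline{\Sp}$ decomposes as a tensor of norms, but norms are not colimit-preserving, so there is no reason the Lurie tensor product in $\mathrm{Pr}^L$ should interact with them this simply. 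Making this precise likely requires at least as much parametrized machinery as a direct argument, and may be circular.

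The paper's argument is entirely different and works uniformly for all $\myuline{\mathcal{C}}$ without any reduction. The three ingredients are: (i) the distributive law for norms in a presentably $G$-symmetric monoidal $\infty$-category, which rewrites $N_H^G$ of an $H$-colimit as a $G$-indexed colimit (see \cite[Recollection 2.1.31]{kaif}); (ii) $\Phi^G_{\underline{\mathcal{C}}}$ is a $G$-left adjoint, hence preserves $G$-colimits; (iii) the target $\myuline{\mathcal{C}}\otimes\Phi\myuline{\Sp}$ has value $*$ at every proper subgroup, so any $G$-diagram into it is determined by its value at $G/G$, and $G$-colimits there collapse to ordinary colimits. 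Chaining these: an ordinary colimit in $\myuline{\mathcal{C}}(G/H)$ becomes, after $N_H^G$, a $G$-colimit; after $\Phi^G_{\underline{\mathcal{C}}}$, still a $G$-colimit in the target; and by (iii) this is just an ordinary colimit. Your closing ``alternative strategy'' gestures in this direction but misses the decisive mechanism, namely (iii): it is the triviality of $\Phi\myuline{\Sp}$ away from $G/G$ that kills the induced terms in the distributive formula and linearizes the composite.
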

\begin{proof}
    The functors $N_H^G$ and $\Phi^G$ are symmetric monoidal so it remains to show that the composition preserves colimits. This follows from the distributivity of norms in $\myuline{\mathcal{C}}$ with respect to $G$-colimits, the fact that $\Phi^G_{\underline{\mathcal{C}}}$ is a $G$-left adjoint, and the fact that $G$-colimits in $\myuline{\mathcal{C}}\otimes\Phi\myuline{\Sp}$ are given by ordinary colimits. The first fact holds by assumption for $\myuline{\mathcal{C}}$ and expresses the composition of $N_H^G$ with an $H$-colimit diagram as in \cite[Recollection 2.1.31]{kaif}. The second fact is by definition of $\Phi^G_{\underline{\mathcal{C}}}$. The third fact follows as $(\myuline{\mathcal{C}}\otimes\Phi\myuline{\Sp})(G/H)=*$ for $H$ a proper subgroup, whereby one sees that ($G$-colimits of) $G$-diagrams $\myuline{I}\to \myuline{\mathcal{C}}\otimes\Phi\myuline{\Sp}$ are equivalent to (colimits of) diagrams $\myuline{I}(G/G)\to (\myuline{\mathcal{C}}\otimes\Phi\myuline{\Sp})(G/G)$.  
\end{proof}

\subsection{Equivariant filtered objects} 
We fix a $G$-symmetric monoidal poset $\myuline{I}$, i.e. a product preserving functor
\[\myuline{I}:\mathrm{Span}(\mathrm{Fin}_G)\to \mathrm{Posets}\]
from the Span category of the category of finite $G$-sets to the category of posets. Letting $\mathrm{Cat}$ denote the category of $\infty$-category of $\infty$-categories, the product preserving inclusion  $\mathrm{Posets}\hookrightarrow\mathrm{Cat}$ allows us to regard $\myuline{I}$ as a $G$-symmetric monoidal $\infty$-category. For a $G$-symmetric monoidal poset $\myuline{I}$, we denote by $\myuline{I}^\delta$ the composition
    \[\mathrm{Span}(\mathrm{Fin}_G)\xrightarrow{\underline{I}} \mathrm{Posets}\xrightarrow{\delta}\mathrm{Posets},\]
    where $\mathrm{Posets}\xrightarrow{\delta}\mathrm{Posets}$ is the (product-preserving) functor sending a poset to its underlying set regarded as a discrete poset.

\begin{example}
    We will be most interested in the following examples of $G$-symmetric monoidal posets:
    \begin{enumerate}
        \item $\myuline{\Z}$ and $\myuline{\Z}^\delta$, where $\myuline{\Z}$ is the constant Mackey functor at $\Z$ with the usual poset structure on the integers.
        \item $\myuline{RO}$, where $\myuline{RO}$ is the Mackey functor assigning the real representation ring $RO(H)$ to $G/H$, where for $H$ representations $V,W,V',W'$, $[V]-[W]\le[V']-[W']\in RO(H)$ if and only if there is an embedding $V\oplus W'\hookrightarrow V'\oplus W$ of $H$-representations. One checks this is well-defined using the semisimplicity of $\mathbb{R}[G]$. In fact, the latter fact gives an isomorphism of $RO(G)$ with a product of copies of $\Z$ indexed by the (isomorphism classes) of irreducible representations of $G$, and this definition makes this an isomorphism of posets.
        \item
        $\myuline{RU}$, where $\myuline{RU}$ is the Mackey functor assigning the complex representation ring $RU(H)$ to $G/H$, with poset structure defined as in the $\myuline{RO}$ case.
    \end{enumerate}
\end{example}

For $\myuline{\mathcal{C}}$ a presentably $G$-symmetric monoidal stable $G$-$\infty$-category, a result of Hilman yields a $G$-symmetric monoidal structure on the $G$-functor categories 
\[\Fun(\myuline{I}^{\mathrm{op}},\myuline{\mathcal{C}})\]
extending the Day convolution monoidal structure on each of the functor categories $\Fun(\myuline{I}(G/H)^{\mathrm{op}},\myuline{\mathcal{C}}(G/H))$. In the following, let $\myuline{\mathrm{Cat}}$ denote the $G$-$\infty$-category of (small) $G$-$\infty$-category (denoted by $\myuline{\mathrm{Cat}}_{\mathcal{T}}^{{\underline{\mathrm{Idem}}}(\omega)}$ in \cite[Proposition 6.7.5]{kaifpresentable}).

\begin{proposition}\label{prop:kaiffunctor}
    There is a lax $G$-symmetric monoidal functor
\[\myuline{\mathrm{Cat}}\to \myuline{\mathrm{Pr}}_{L,st}^G\]
sending $\myuline{I}\mapsto\Fun(\myuline{I}^{\mathrm{op}},\myuline{\Sp})$, which is left adjoint to the forgetful functor. In particular there is an induced functor
\[\mathrm{CAlg}_G(\myuline{\mathrm{Cat}})\to\mathrm{CAlg}_G( \myuline{\mathrm{Pr}}_{L,st}^G),\]
equipping $\Fun(\myuline{I}^{\mathrm{op}},\myuline{\Sp})$ with a presentably $G$-symmetric monoidal structure, when $\myuline{I}$ has a $G$-symmetric monoidal structure.
\end{proposition}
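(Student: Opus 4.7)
The plan is to factor the desired functor as the composite of two $G$-symmetric monoidal left adjoints: the free $G$-cocompletion functor landing in $\myuline{\mathrm{Pr}}^G_L$, followed by stabilization into $\myuline{\mathrm{Pr}}_{L,st}^G$. Both steps are available from Hilman's work on parametrized presentability \cite{kaifpresentable}, and the bulk of the verification consists in checking that the $G$-symmetric monoidal structures assemble compatibly at the level of $G$-$\infty$-categories.

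First, I would invoke Hilman's parametrized presheaf construction
\[
\mathrm{PSh}_G : \myuline{\mathrm{Cat}} \to \myuline{\mathrm{Pr}}^G_L, \qquad \myuline{I} \mapsto \Fun(\myuline{I}^{\mathrm{op}}, \myuline{\mathrm{Spc}}),
\]
which is left adjoint to the forgetful functor. As a parametrized analogue of the classical free-cocompletion adjunction, it carries a lax $G$-symmetric monoidal structure encoded by parametrized Day convolution: for $\myuline{I},\myuline{J} \in \myuline{\mathrm{Cat}}$ there is a canonical comparison $G$-functor
\[
\mathrm{PSh}_G(\myuline{I}) \otimes \mathrm{PSh}_G(\myuline{J}) \to \mathrm{PSh}_G(\myuline{I} \otimes \myuline{J}),
\]
which is in fact an equivalence via the formula $\myuline{\mathcal{C}}\otimes\myuline{\mathcal{D}}\simeq\Fun^R(\myuline{\mathcal{C}}^{\mathrm{op}}, \myuline{\mathcal{D}})$ of \cite[Proposition 6.7.5]{kaifpresentable}.

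Second, I would compose with the stabilization functor
\[
(-) \otimes \myuline{\Sp} : \myuline{\mathrm{Pr}}^G_L \to \myuline{\mathrm{Pr}}_{L,st}^G,
\]
which is a strong $G$-symmetric monoidal left adjoint: monoidality comes from $\myuline{\Sp}$ being the unit in $\myuline{\mathrm{Pr}}_{L,st}^G$, while left adjointness against the forgetful functor is the defining universal property of the stable parametrized world. The crucial identification for the composite is
\[
\mathrm{PSh}_G(\myuline{I}) \otimes \myuline{\Sp} \simeq \Fun(\myuline{I}^{\mathrm{op}}, \myuline{\Sp}),
\]
which again follows from the $\Fun^R$ formula together with the universal property of $\myuline{\Sp}$. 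The composite is therefore lax $G$-symmetric monoidal and left adjoint to the composite forgetful $\myuline{\mathrm{Pr}}_{L,st}^G \to \myuline{\mathrm{Cat}}$, yielding the first half of the proposition; applying $\mathrm{CAlg}_G$ then yields the induced functor on $G$-commutative algebras, as lax $G$-symmetric monoidal functors preserve $G$-commutative algebra objects.

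The main obstacle is establishing the lax $G$-symmetric monoidal structure on $\mathrm{PSh}_G$ with respect to the symmetric monoidal structure on $\myuline{\mathrm{Cat}}$ coming from spans of finite $G$-sets---this requires tracking the compatibility of parametrized colimits in $\mathrm{PSh}_G(\myuline{I})$ with equivariant norm maps, and is the substantive content of the parametrized Day convolution story. Once this framework is in place, the present proposition is a formal consequence of the Yoneda lemma and the universal property of $\myuline{\Sp}$ as the unit of the $G$-symmetric monoidal structure on $\myuline{\mathrm{Pr}}_{L,st}^G$.
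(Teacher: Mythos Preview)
Your proposal is correct and follows essentially the same approach as the paper: both factor the functor as the parametrized presheaf construction $\myuline{I}\mapsto\Fun(\myuline{I}^{\mathrm{op}},\myuline{\mathcal{S}})$ followed by stabilization via $-\otimes\myuline{\Sp}$, and both use the $\Fun^R$ formula for the tensor product in $\myuline{\mathrm{Pr}}^G_L$ to identify $\Fun(\myuline{I}^{\mathrm{op}},\myuline{\mathcal{S}})\otimes\myuline{\Sp}\simeq\Fun(\myuline{I}^{\mathrm{op}},\myuline{\Sp})$. The paper simply cites the relevant results from Hilman's thesis (Propositions 2.3.9, 2.3.12, and 2.2.23 of \cite{hilmanthesis}) rather than from \cite{kaifpresentable}, but the substance is identical.
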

\begin{proof}
    The claims are obtained by combining Propositions 2.3.9 and 2.3.12 in \cite{hilmanthesis}. In particular, the identification
    \[\Fun(\myuline{I}^{\mathrm{op}},\myuline{\mathcal{S}})\otimes\myuline{\Sp}\simeq \Fun(\myuline{I}^{\mathrm{op}},\myuline{\Sp})\]
    follows as in the nonequivariant case from the equivalences
    \[\myuline{\mathcal{C}}\otimes \myuline{\Sp}\simeq \Fun^R(\myuline{\mathcal{C}}^{\mathrm{op}},\myuline{\Sp}),\]
    of \cite[Proposition 2.2.23]{hilmanthesis}.
\end{proof}

\begin{theorem}\label{thm:gsymstructureonfil}
    There is a presentably $G$-symmetric monoidal $\infty$-category $\Fun(\myuline{I}^{\mathrm{op}},\myuline{\mathcal{C}})$ with the property that for each $H\subset G$, 
\[
\Fun(\myuline{I}^{\mathrm{op}},\myuline{\mathcal{C}})(G/H)=\Fun(\myuline{I}(G/H)^{\mathrm{op}},\myuline{\mathcal{C}}(G/H))
\]
    with its Day convolution symmetric monoidal structure. Moreover, the functors
    \begin{align*}
    \mathrm{colim}&: \Fun(\myuline{I}^{\mathrm{op}},\myuline{\mathcal{C}})\to\myuline{\mathcal{C}}\\
    Y&:\myuline{I}\to \Fun(\myuline{I}^{\mathrm{op}},\myuline{\mathcal{C}})
\end{align*}
have canonical $G$-symmetric monoidal structures. Here $\mathrm{colim}$ is given by applying the functor of the previous proposition to the $G$-symmetric monoidal functor $\myuline{I}\to\myuline{*}$, and the functor $Y$ is given by the unit of the adjunction of the previous proposition.
\end{theorem}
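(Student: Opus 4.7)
The plan is to bootstrap the result from Proposition \ref{prop:kaiffunctor} by tensoring with $\myuline{\mathcal{C}}$. First, since $\myuline{I}$ is a $G$-symmetric monoidal poset, we may regard it (via the inclusion $\mathrm{Posets}\hookrightarrow \myuline{\mathrm{Cat}}$) as a commutative algebra object in $\myuline{\mathrm{Cat}}$. Applying the functor of Proposition \ref{prop:kaiffunctor} produces a presentably $G$-symmetric monoidal stable $G$-$\infty$-category $\Fun(\myuline{I}^{\mathrm{op}},\myuline{\Sp})$. Now define
\[\Fun(\myuline{I}^{\mathrm{op}},\myuline{\mathcal{C}}):=\Fun(\myuline{I}^{\mathrm{op}},\myuline{\Sp})\otimes \myuline{\mathcal{C}},\]
with the tensor product taken in $\mathrm{CAlg}_G(\myuline{\mathrm{Pr}}^L_{G,st})$; this inherits a presentably $G$-symmetric monoidal structure from both factors.

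Next I would verify the fiberwise formula. Using the identification $\myuline{\mathcal{C}}\otimes\myuline{\mathcal{D}}\simeq \Fun^R(\myuline{\mathcal{C}}^{\mathrm{op}},\myuline{\mathcal{D}})$ of \cite[Proposition 6.7.5]{kaifpresentable} together with \cite[Corollary 2.2.7]{kaifpresentable} (which identifies $G$-right adjoints with their value at $G/G$), one checks the fiberwise value
\[\Fun(\myuline{I}^{\mathrm{op}},\myuline{\mathcal{C}})(G/H)\simeq \Fun(\myuline{I}(G/H)^{\mathrm{op}},\myuline{\mathcal{C}}(G/H)).\]
The symmetric monoidal structure on each fiber agrees with Day convolution because the lax symmetric monoidal functor of Proposition \ref{prop:kaiffunctor} is, by construction and Hilman's arguments, compatible with the fiberwise Day convolution; tensoring with $\myuline{\mathcal{C}}$ preserves this fiberwise identification since Day convolution is obtained by left Kan extension along $\myuline{I}\times \myuline{I}\to \myuline{I}$.

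For the $G$-symmetric monoidal structure on $\mathrm{colim}$: the terminal $G$-symmetric monoidal poset $\myuline{*}$ is the unit in $\mathrm{CAlg}_G(\myuline{\mathrm{Cat}})$, so the unique map $\myuline{I}\to \myuline{*}$ is a morphism in $\mathrm{CAlg}_G(\myuline{\mathrm{Cat}})$. Applying the functor of Proposition \ref{prop:kaiffunctor} and then tensoring with $\myuline{\mathcal{C}}$ yields a $G$-symmetric monoidal left adjoint
\[\Fun(\myuline{I}^{\mathrm{op}},\myuline{\mathcal{C}})\longrightarrow \Fun(\myuline{*}^{\mathrm{op}},\myuline{\Sp})\otimes\myuline{\mathcal{C}}\simeq \myuline{\mathcal{C}},\]
which one identifies fiberwise with the colimit functor. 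For the Yoneda map $Y:\myuline{I}\to \Fun(\myuline{I}^{\mathrm{op}},\myuline{\mathcal{C}})$, it is obtained as the unit of the adjunction of Proposition \ref{prop:kaiffunctor} applied to $\myuline{I}$, followed by the unit map $\myuline{\Sp}\to \myuline{\mathcal{C}}$; both are morphisms in $\mathrm{CAlg}_G(\myuline{\mathrm{Cat}})$, so their composite is $G$-symmetric monoidal.

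The main obstacle is bookkeeping rather than content: one must check that Hilman's abstract construction from \cite{hilmanthesis} actually restricts to the expected fiberwise Day convolution on each $\Fun(\myuline{I}(G/H)^{\mathrm{op}},\myuline{\mathcal{C}}(G/H))$, and that the norms on $\Fun(\myuline{I}^{\mathrm{op}},\myuline{\mathcal{C}})$ are given by the indexed Day-convolution-style formula one would expect. Both facts are implicit in the universal property characterization of the lax symmetric monoidal structure in Proposition \ref{prop:kaiffunctor}, but writing out the identifications cleanly in the $G$-parametrized setting is the bulk of the work.
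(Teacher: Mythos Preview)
Your proposal is correct and matches the paper's approach: the theorem is treated as a direct consequence of Proposition \ref{prop:kaiffunctor}, with the general $\myuline{\mathcal{C}}$ case obtained by tensoring $\Fun(\myuline{I}^{\mathrm{op}},\myuline{\Sp})\otimes\myuline{\mathcal{C}}$, the colimit functor coming from $\myuline{I}\to\myuline{*}$, and $Y$ from the adjunction unit. The paper gives no separate proof beyond what is stated in the theorem itself, and your bookkeeping with \cite[Proposition 6.7.5, Corollary 2.2.7]{kaifpresentable} to verify the fiberwise values is exactly the argument already used in the proof of Proposition \ref{prop:kaiffunctor}.
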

\begin{remark}
    Unlike in the non-parametrized setting, it is not known if the $G$-symmetric monoidal structure on $\Fun(\myuline{I}^{\mathrm{op}},\myuline{\mathcal{C}})$ given above coincides with the parametrized Day convolution structure of Nardin--Shah \cite{nardinshah}. This would follow if the Yoneda functor $Y:\myuline{I}\to \Fun(\myuline{I}^{\mathrm{op}},\myuline{\mathcal{C}})$ were $G$-symmetric monoidal with respect to Day convolution. We will not need this however, since the induced structure on 
    \[\Fun(\myuline{I}^{\mathrm{op}}(G/H),\myuline{\mathcal{C}}(G/H))\] coincides with the Day convolution structure, since in the non-parametrized case, the Yoneda functor is known to be symmetric monoidal with respect to Day convolution. We thank Emma Brink for explaining this to us.
\end{remark}

\begin{definition}\label{def:bigraded spheres}
    In the case $\myuline{I}=\myuline{\Z}$ and $\myuline{\mathcal{C}}=\myuline{\Sp}$, for $V\in RO(G)$, we define the bigraded sphere 
    \[\mathbb{S}^{V,s}:=\mathbb{S}^V\otimes Y(s+|V|)\in\Fun(\myuline{\Z}^{\mathrm{op}},\myuline{\Sp}),\]
    i.e. the pointwise tensor of the sphere $\mathbb{S}^V$ with the filtered object
    \[
    \cdots\to0\to \mathbb{S}^0=\mathbb{S}^0=\cdots
    \]
    starting at component $|V|+s$.
\end{definition}

\begin{lemma}\label{lemma:normsofbigradedspheres}
    The norm $N_H^G:\Fil(\Sp^H)\to\Fil(\Sp^G)$ in the $G$-symmetric monoidal $\infty$-category $\Fun(\myuline{\Z}^{\mathrm{op}},\myuline{\Sp})$ satisfies
    \[
    N_H^G(\mathbb{S}^{V,s})=\mathbb{S}^{\mathrm{Ind}_H^G(V),|G/H|s}
    \]
\end{lemma}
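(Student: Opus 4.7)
The plan is to compute $N_H^G(\mathbb{S}^{V,s})$ by decomposing $\mathbb{S}^{V,s} \simeq \mathbb{S}^V \otimes Y(s+|V|)$ as in Definition \ref{def:bigraded spheres}, applying the symmetric monoidality of $N_H^G$ (part of the $G$-symmetric monoidal structure of Theorem \ref{thm:gsymstructureonfil}), and then handling the two tensor factors separately. First I would use that $N_H^G$ is symmetric monoidal to write
\[
N_H^G(\mathbb{S}^{V,s}) \simeq N_H^G(\mathbb{S}^V) \otimes N_H^G(Y(s+|V|)).
\]
The first factor is handled by the classical identification $N_H^G(\mathbb{S}^V) \simeq \mathbb{S}^{\mathrm{Ind}_H^G V}$ in $\Sp^G$, which is built into the $G$-symmetric monoidal structure on $\myuline{\Sp}$ and reflects the fact that representation spheres are norms of representation spheres.

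For the second factor, I would invoke the $G$-symmetric monoidal structure on the Yoneda functor $Y : \myuline{\Z} \to \Fun(\myuline{\Z}^{\mathrm{op}}, \myuline{\Sp})$ from Theorem \ref{thm:gsymstructureonfil} to obtain
\[
N_H^G(Y(s+|V|)) \simeq Y(N_H^G(s+|V|)),
\]
where the second $N_H^G$ is the norm in the $G$-symmetric monoidal poset $\myuline{\Z}$. The key calculation is then to identify this norm as multiplication by $|G/H|$. This is where I would expect the main conceptual (as opposed to formal) work: one must unpack the $G$-symmetric monoidal structure on the constant Mackey functor $\myuline{\Z}$, observing that its fiberwise symmetric monoidal structure is given by addition (consistent with $Y(n) \otimes Y(m) \simeq Y(n+m)$ under Day convolution), so that the norm indexed by $G/H$, being a $G/H$-indexed tensor product in this additive structure, is an iterated sum, i.e.\ multiplication by $|G/H|$.

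Combining these computations and using the dimension identity $|\mathrm{Ind}_H^G V| = |G/H| \cdot |V|$, I get
\[
N_H^G(\mathbb{S}^{V,s}) \simeq \mathbb{S}^{\mathrm{Ind}_H^G V} \otimes Y(|G/H|(s+|V|)) \simeq \mathbb{S}^{\mathrm{Ind}_H^G V} \otimes Y(|G/H| s + |\mathrm{Ind}_H^G V|),
\]
which by Definition \ref{def:bigraded spheres} is $\mathbb{S}^{\mathrm{Ind}_H^G V,\, |G/H| s}$, as desired. The main obstacle, as noted, lies not in the formal manipulation but in identifying the norm on $\myuline{\Z}$ explicitly; once that is pinned down from the definition of norms as indexed tensor products in a $G$-symmetric monoidal structure, the rest is bookkeeping.
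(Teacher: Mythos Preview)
Your proposal is correct and follows essentially the same route as the paper's proof: decompose $\mathbb{S}^{V,s}$ as $\mathbb{S}^V\otimes Y(s+|V|)$, use that $N_H^G$ is symmetric monoidal together with $N_H^G(\mathbb{S}^V)\simeq\mathbb{S}^{\mathrm{Ind}_H^GV}$, and invoke the $G$-symmetric monoidality of $Y$ to compute $N_H^G(Y(s+|V|))=Y(|G/H|(s+|V|))$. The paper phrases the first step slightly differently (via the tensoring of $\Fil(\myuline{\Sp})$ over $\myuline{\Sp}$ and the compatibility of norms with this tensoring), and leaves implicit the identification of the norm in $\myuline{\Z}$ and the dimension formula that you spell out, but the arguments are the same in substance.
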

\begin{proof}
    Since $\Fun(\myuline{\Z}^{\mathrm{op}},\myuline{\Sp})$ is a presentably $G$-symmetric monoidal stable $\infty$-category, it is tensored over $\myuline{\Sp}$, and the norm functor $N_H^G$ commutes with the tensoring. By the usual formula $N_H^G(\mathbb{S}^V)=\mathbb{S}^{\mathrm{Ind}_H^GV}$ in $\myuline{\Sp}$, we therefore have
    \begin{align*}
    N_H^G(\mathbb{S}^{V,s})&=N_H^G(\mathbb{S}^V\otimes Y(s+|V|))\\&=N_H^G(\mathbb{S}^V)\otimes N_H^G(Y(s+|V|))\\
    &=\mathbb{S}^{\mathrm{Ind}_H^GV}\otimes Y(|G/H|(s+|V|)),
    \end{align*}
    using the fact that $Y$ is $G$-symmetric monoidal.
\end{proof}

When $\myuline{I}=\myuline{\Z}$, there is a canonical notion of taking the associated graded of an object $X\in\Fun(\myuline{\Z}^{\mathrm{op}},\myuline{\mathcal{C}})$, namely one has the functor
\begin{align*}
    \Fun(\myuline{\Z}^{\mathrm{op}},\myuline{\mathcal{C}})&\to \Fun((\myuline{\Z}^\delta)^{\mathrm{op}},\myuline{\mathcal{C}})\\
    X&\mapsto (\mathrm{cofib}(X^{n+1}\to X^n))_{n\in\Z}
\end{align*}
This functor also inherits a $G$-symmetric monoidal structure from $\myuline{\mathcal{C}}$.

\begin{theorem}
     There are presentably $G$-symmetric monoidal $\infty$-categories 
     \begin{align*}
        \Fil(\myuline{\mathcal{C}})&:=\Fun(\myuline{\Z}^{\mathrm{op}},\myuline{\mathcal{C}})\\
        \mathrm{gr}(\myuline{\mathcal{C}})&:=\Fun((\myuline{\Z}^\delta)^{\mathrm{op}},\myuline{\mathcal{C}})
     \end{align*}
     with the property that for each $H\subset G$, 
\begin{align*}
        \Fil(\myuline{\mathcal{C}})(G/H)&=\Fil(\myuline{\mathcal{C}}(G/H))\\
        \mathrm{gr}(\myuline{\mathcal{C}})&=\mathrm{gr}(\myuline{\mathcal{C}}(G/H))
     \end{align*}
    with their Day convolution symmetric monoidal structures. Moreover, the functors
    \begin{align*}
        \mathrm{colim}&:\Fil(\myuline{\mathcal{C}})\to\myuline{\mathcal{C}}\\
        \mathrm{gr}&:\Fil(\myuline{\mathcal{C}})\to\mathrm{gr}(\myuline{\mathcal{C}})\\
        \mathrm{forget}&:\mathrm{gr}(\myuline{\mathcal{C}})\to\myuline{\mathcal{C}}
    \end{align*}
     all admit the structure of $G$-symmetric monoidal left adjoints. Here $\mathrm{colim}$ and $\mathrm{forget}$ are given by the functor $\mathrm{colim}$ of the previous theorem in the cases $\myuline{I}=\myuline{\Z}$ and $\myuline{I}=\myuline{\Z}^\delta$, respectively. The functor $\mathrm{gr}$ is the associated graded functor discussed just before the theorem.
\end{theorem}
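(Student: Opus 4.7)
The plan is to handle this in three parts. The presentably $G$-symmetric monoidal structures on $\Fil(\myuline{\mathcal{C}})$ and $\mathrm{gr}(\myuline{\mathcal{C}})$, with their pointwise Day convolution descriptions, follow from Theorem \ref{thm:gsymstructureonfil} applied to the $G$-symmetric monoidal posets $\myuline{\Z}$ and $\myuline{\Z}^\delta$; both are indeed $G$-symmetric monoidal posets, since $\myuline{\Z}$ is the constant Mackey functor on the additive monoid $\Z$, and $\myuline{\Z}^\delta$ inherits such structure via postcomposition with the product-preserving functor $\delta$. The functors $\mathrm{colim}$ and $\mathrm{forget}$ then become $G$-symmetric monoidal left adjoints by applying the same theorem to the unique $G$-symmetric monoidal functors $\myuline{\Z}\to\myuline{*}$ and $\myuline{\Z}^\delta\to\myuline{*}$.

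The subtle case is the associated graded $\mathrm{gr}:\Fil(\myuline{\mathcal{C}})\to \mathrm{gr}(\myuline{\mathcal{C}})$. The plan is to realize it as a composition of two $G$-symmetric monoidal left adjoints. First, form the cofiber $\mathbb{S}^{0,0}/\tau$ of the natural Bockstein-type map $\tau:\mathbb{S}^{0,-1}\to\mathbb{S}^{0,0}$ in $\Fil(\myuline{\mathcal{C}})$ (the bigraded sphere notation is from Definition \ref{def:bigraded spheres}); equip $\mathbb{S}^{0,0}/\tau$ with the canonical $\mathbb{E}_\infty$-algebra structure arising as a pushout of commutative algebras over $\mathrm{Sym}_{\mathbb{S}^{0,0}}(\mathbb{S}^{0,-1})$. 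Second, note that base change along the induced morphism $\mathbb{S}^{0,0}\to \mathbb{S}^{0,0}/\tau$ yields a $G$-symmetric monoidal left adjoint
\[
    -\otimes_{\mathbb{S}^{0,0}} \mathbb{S}^{0,0}/\tau : \Fil(\myuline{\mathcal{C}})\to \mathrm{Mod}_{\mathbb{S}^{0,0}/\tau}(\Fil(\myuline{\mathcal{C}})).
\]
Using that the tensor product preserves cofiber sequences in each variable, one verifies that this functor sends $X$ to the filtered object with $n$-th level $\mathrm{cofib}(X^{n+1}\to X^n)$ and nullhomotopic filtration maps, matching the formula in the statement.

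The main obstacle is then to identify $\mathrm{Mod}_{\mathbb{S}^{0,0}/\tau}(\Fil(\myuline{\mathcal{C}}))$ with $\mathrm{gr}(\myuline{\mathcal{C}})$ as presentably $G$-symmetric monoidal stable $\infty$-categories. Pointwise this is the classical observation that a filtered object on which $\tau$ acts nullhomotopically (so the structure maps $X^{n+1}\to X^n$ are null) is equivalent to its underlying graded object, and this equivalence respects the Day convolution structures. Assembling these pointwise equivalences into a $G$-symmetric monoidal one requires compatibility with norm functors, which I expect to follow from the general distributivity of norms over tensor products and colimits in presentably $G$-symmetric monoidal stable $\infty$-categories, in the spirit of Proposition \ref{prop:geomfixedpointsofnorm}.
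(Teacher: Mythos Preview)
Your strategy is essentially the dual of the paper's: both follow Lurie's \emph{Rotation Invariance}, but the paper identifies $\Fil(\myuline{\Sp})\simeq\mathrm{Mod}_{\mathbb{S}[t]}(\mathrm{gr}(\myuline{\Sp}))$ and then realizes $\mathrm{gr}$ as base change along the augmentation $\mathbb{S}[t]\to\mathbbm{1}_{\mathrm{gr}}$, whereas you try to realize $\mathrm{gr}$ directly as base change along $\mathbbm{1}_{\Fil}\to\mathbb{S}/\tau$. The paper's direction has a concrete advantage in the equivariant setting: $\mathbb{S}[t]$ is \emph{defined} as $\mathrm{Res}(\mathbbm{1})$ for the lax $G$-symmetric monoidal right adjoint $\mathrm{Res}:\Fil(\myuline{\Sp})\to\mathrm{gr}(\myuline{\Sp})$, so it is automatically a $G$-commutative algebra. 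By contrast, your pushout $\mathbbm{1}\otimes_{\mathrm{Sym}(\mathbb{S}^{0,-1})}\mathbbm{1}$ certainly produces a $G$-commutative algebra, but it is not clear without further argument that its underlying object is $\mathrm{cofib}(\tau)$: the free $G$-$\mathbb{E}_\infty$-algebra $\mathrm{Sym}(\mathbb{S}^{0,-1})$ is not a polynomial algebra (each graded piece involves $\Sigma^\infty_+B\Sigma_n$), so the usual Koszul argument does not apply on the nose. In fact the cleanest way to see that this pushout has underlying object $\mathrm{cofib}(\tau)$ is to invoke Lurie's equivalence---which is precisely the content you are trying to establish.

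More importantly, the step you flag as the ``main obstacle'' is where all of the genuinely equivariant work lies, and you have not supplied it. The paper proves the $G$-symmetric monoidal equivalence $F:\Fil(\myuline{\Sp})\simeq\mathrm{Mod}_{\mathbb{S}[t]}(\mathrm{gr}(\myuline{\Sp}))$ by first running Lurie's nonequivariant argument with induced shifted spheres $\mathrm{Ind}_H^G\mathbb{S}^{0,n}$ in place of $\mathbb{S}^{0,n}$ (this handles the underlying equivalence and compatibility with tensor products), and then separately checking that the lax structure map $N_H^G F(X)\to F(N_H^G X)$ is an equivalence. This norm check is done by induction on $|G|$ and isotropy separation: after applying $\Phi^G$, both sides become colimit-preserving in $X$ by Proposition~\ref{prop:geomfixedpointsofnorm}, so one reduces to the case of induced bigraded spheres, where the claim follows from the explicit formula in Lemma~\ref{lemma:normsofbigradedspheres}. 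Your proposal gestures at Proposition~\ref{prop:geomfixedpointsofnorm} but does not carry out this reduction; without it the identification $\mathrm{Mod}_{\mathbb{S}/\tau}(\Fil(\myuline{\mathcal{C}}))\simeq\mathrm{gr}(\myuline{\mathcal{C}})$ as $G$-symmetric monoidal categories remains an assertion rather than a proof.
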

\begin{proof}
    We just need to prove that the functor $\mathrm{gr}$ admits a $G$-symmetric monoidal structure. It suffices prove this in the universal case $\myuline{\mathcal{C}}=\myuline{\Sp}$ via the equivalence
    \[
    \Fun(\underline{I}^{\mathrm{op}},\mathcal{\underline{C}})\simeq \Fun(\underline{I}^{\mathrm{op}},\myuline{\Sp})\otimes\mathcal{\underline{C}},
    \]
    as in Proposition \ref{prop:kaiffunctor}. This is proven in the same way as \cite[Proposition 3.2.1]{rotation} with small modifications, so we will briefly give the details of adapting this to the $G$-equivariant setting. The functor $\myuline{\Z}^\delta\to \myuline{\Z}$ that is the identity on objects is $G$-symmetric monoidal, so by the proposition it induces a $G$-symmetric monoidal left adjoint $L:\mathrm{gr}(\myuline{\Sp})\to \Fil(\myuline{\Sp})$ with $G$-lax-symmetric monoidal right adjoint $\mathrm{Res}:\Fil(\myuline{\Sp})\to \mathrm{gr}(\myuline{\Sp})$, and we let $\mathbb{S}[t]$ denote the $G$-commutative algebra  $\mathrm{Res}(\mathbbm{1})$, with notation as in \cite[Remark 3.1.3,Notation 3.1.4]{rotation}. 
    
    We follow the proof of \cite[Proposition 3.1.6]{rotation} to show that the functor
    \[F:\Fil(\myuline{\Sp})=\mathrm
    {Mod}_{{\Fil({\myuline{\scriptstyle{\Sp}}})}}(\mathbbm{1})\to \mathrm{Mod}_{\mathrm{gr}({\myuline{\scriptstyle{\Sp}}})}(\mathbb{S}[t])\]
    is a $G$-symmetric monoidal equivalence of categories. First, Lurie shows this is an equivalence of categories, which goes through without change by replacing each shifted sphere $\mathbb{S}^{0,n}$ (denoted $\mathbb{S}(n)$ by Lurie) with all induced shifted spheres $\mathrm{Ind}_H^G\mathbb{S}^{0,n}$. It suffices then to show that the $G$-lax symmetric monoidal structure on $F$ is $G$-symmetric monoidal. Lurie's argument goes through again replacing each shifted sphere $\mathbb{S}^{0,n}$ with all induced shifted spheres $\mathrm{Ind}_H^G\mathbb{S}^{0,n}$ to show that
\[F(X)\otimes_{\mathbb{S}[t]}F(Y)\to F(X\otimes Y)\]
    is an equivalence for all $X,Y$. This reduces us to showing that $F$ preserves norms, that is, for any $X\in \Fil(\Sp^H)$, and $H\subset K\subset G$, the map
    \begin{equation}\label{eq:normcomparison}
    N_H^KF(X)\to F(N_H^KX)
    \end{equation}
    is an equivalence. By induction on the order of the group $G$, we may reduce to the case when $K=G$. We may check that this map is an equivalence after applying the geometric fixed point functors $\Phi^K$ for each $K$, and again by induction on $|G|$, we may reduce to the case $K=G$. By use of Proposition \ref{prop:geomfixedpointsofnorm}, we see that both sides of (\ref{eq:normcomparison}) commute with colimits in $X$ after applying $\Phi^K$, so we may as before reduce to the case when $X$ is an (induced) shifted sphere. This follows as in the nonequivariant case by a direct check using Lemma \ref{lemma:normsofbigradedspheres}.
    
    The rest of the proof follows exactly as in the nonequivariant case by repeating the proofs of \cite[Propositions 3.2.3,3.2.5,3.2.7]{rotation} in our setting.
\end{proof}

In the general case, there are many ways one might define an associated graded functor $\Fun(\myuline{I}^{\mathrm{op}},\myuline{\mathcal{C}})\to \Fun((\myuline{I}^\delta)^{\mathrm{op}},\myuline{\mathcal{C}})$. In the $\myuline{RO}$ case, one can opt for a conservative functor to $\myuline{\Z}$-filtered objects that resembles associating the total complex to a double complex (or more generally a multicomplex). After passing to $\myuline{\Z}$-filtered objects, one may apply the associated graded functor on $\Fil(\myuline{\Sp})$, and the composite $\Fun(\myuline{RO}^{\mathrm{op}},\myuline{\Sp})\to\mathrm{gr}(\myuline{\Sp})$ is $G$-symmetric monoidal.

\begin{proposition}\label{prop:total}
    Let $\mathrm{dim}:\myuline{RO}\to\myuline{\Z}$ be the morphism of $G$-symmetric monoidal posets sending a representation $V$ to its dimension $|V|$. Restriction along $\mathrm{dim}$ induces a $G$-lax symmetric monoidal functor
    \[\mathrm{dim}^*:\Fil(\myuline{\mathcal{C}})\to\Fun(\myuline{RO}^{\mathrm{op}},\myuline{\mathcal{C}})\]
    with $G$-symmetric monoidal left adjoint
    \[\mathrm{Total}:\Fun(\myuline{RO}^{\mathrm{op}},\myuline{\mathcal{C}})\to\Fil(\myuline{\mathcal{C}})\]
    where 
    \[\mathrm{Total}^n(X)(G/H)=\bigoplus\limits_{\substack{V\in RO(H)\\|V|=n}}X^V(G/H)\]
\end{proposition}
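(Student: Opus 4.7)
The strategy is to apply the functoriality of Proposition~\ref{prop:kaiffunctor} to the morphism $\mathrm{dim}: \myuline{RO} \to \myuline{\Z}$, regarded as a morphism in $\mathrm{CAlg}_G(\myuline{\mathrm{Cat}})$. This will yield the adjunction $\mathrm{Total} \dashv \mathrm{dim}^*$ together with the desired (lax) $G$-symmetric monoidal structures, and the pointwise formula for $\mathrm{Total}$ will follow by unwinding the pointwise left Kan extension at each orbit $G/H$.

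First, I would verify that $\mathrm{dim}$ is a morphism of $G$-symmetric monoidal posets. At each $G/H$ the map $\mathrm{dim}_H: RO(H) \to \Z$ is an order-preserving ring homomorphism, since an embedding $V \oplus W' \hookrightarrow V' \oplus W$ of $H$-representations forces $|V| + |W'| \le |V'| + |W|$. To promote this to a morphism of product-preserving functors $\mathrm{Span}(\mathrm{Fin}_G) \to \mathrm{Posets}$, one checks naturality under restrictions, transfers, and norms, reducing to the identities $\dim \Res^H_K V = \dim V$ and $\dim \Ind^K_H V = [K:H] \dim V$ together with the multiplicativity of dimension under norms; these encode the standard structure maps of $\myuline{\Z}$ as a constant Green functor.

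Granting this, Proposition~\ref{prop:kaiffunctor} applied in the universal case $\myuline{\mathcal{C}} = \myuline{\Sp}$ produces a $G$-symmetric monoidal left adjoint $\mathrm{Total}$ with right adjoint $\mathrm{dim}^*$, and the result passes to general $\myuline{\mathcal{C}}$ via the tensoring equivalence $\Fun(\myuline{I}^{\mathrm{op}}, \myuline{\mathcal{C}}) \simeq \Fun(\myuline{I}^{\mathrm{op}}, \myuline{\Sp}) \otimes \myuline{\mathcal{C}}$ used in the preceding theorem. The right adjoint is automatically the restriction functor along $\mathrm{dim}$, which is $G$-lax symmetric monoidal by general nonsense.

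The last step, which I expect to be the main obstacle, is the pointwise formula $\mathrm{Total}^n(X)(G/H) = \bigoplus_{V \in RO(H),\, |V|=n} X^V(G/H)$. At $G/H$ the value is a left Kan extension of $X_H$ along $\mathrm{dim}_H: RO(H) \to \Z$, hence a colimit over the slice $\{V \in RO(H): |V| \ge n\}$. Using the identification $RO(H) \cong \prod_\rho \Z$ indexed by irreducible $H$-representations, I would organise this colimit by total dimension so that the computation mimics the total complex of a multi-complex (as suggested by the paragraph preceding the proposition), reducing the verification to a careful description of the transition maps and a check that the resulting assembly indeed has the correct universal property against $\mathrm{dim}^*$.
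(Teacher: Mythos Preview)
Your approach to constructing the adjunction and its monoidal properties is essentially the same as the paper's: both apply Proposition~\ref{prop:kaiffunctor} to the morphism $\dim:\myuline{RO}\to\myuline{\Z}$ to obtain the $G$-symmetric monoidal left adjoint $\mathrm{Total}$ with $G$-lax symmetric monoidal right adjoint $\dim^*$.

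For the explicit formula for $\mathrm{Total}^n$, however, the paper takes a different and slicker route than your direct left Kan extension computation. Rather than organising the colimit over the slice $\{V:|V|\ge n\}$ and analysing the multicomplex-style transition maps, the paper works on the right-adjoint side. It first records that the evaluation functor $\mathrm{ev}_n:\Fil(\Sp^G)\to\Sp^G$ sits in a triple of adjoints, with left adjoint $-\otimes\mathbb{S}^{0,n}$ and right adjoint $-\otimes\widehat{\mathbb{S}(n)}$ (the filtered spectrum that is $\mathbb{S}$ in degrees $\ge n$ and $0$ below). Composing adjunctions, $\mathrm{ev}_n\circ\mathrm{Total}$ is then left adjoint to $\dim^*\circ(-\otimes\widehat{\mathbb{S}(n)})$, and one computes directly that $\dim^*(Z\otimes\widehat{\mathbb{S}(n)})^V$ is $Z$ when $|V|\ge n$ and $0$ otherwise. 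From this description the paper reads off the left adjoint as $\bigoplus_{|V|=n}\mathrm{ev}_V$. This sidesteps the bookkeeping of the slice-category colimit that you correctly flagged as the main obstacle; your approach is in principle workable, but the paper's adjunction trick replaces that analysis with a one-line identification of a representable-type functor.
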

\begin{proof}
The $G$-symmetric monoidal left adjoint $\mathrm{Total}$ is obtained by applying the functor of Proposition \ref{prop:kaiffunctor} to $\mathrm{dim}:\myuline{RO}\to\myuline{\Z}$, which also gives the lax-$G$-structure on its right adjoint, the restriction along $\mathrm{dim}$. 

To compute $\mathrm{Total}^n$, note that for any $G$, there is an adjunction
\[
\begin{tikzcd}
    \Fil(\Sp^G)\arrow[d,"\mathrm{ev}_n"]\\
    \Sp^G\arrow[u,shift left=4ex,"-\otimes\mathbb{S}^{0,n}"]\arrow[u,shift right=4ex,"-\otimes\widehat{\mathbb{S}(n)}"']
\end{tikzcd}
\]
where $\mathrm{ev}_n$ denotes the evaluation of a filtered $G$-spectrum $X$ at the $n$-th component $X^n$, and $\widehat{\mathbb{S}(n)}$ is the filtered $G$-spectrum
\[\cdots =\mathbb{S}=\mathbb{S}\to0\to\cdots\]
with 
\[(\widehat{\mathbb{S}(n)})^i\begin{cases}
    \mathbb{S}&i\ge n\\0&i<n
\end{cases}\]
Indeed, the description of the lefthand adjunction follows by the Yoneda lemma since $\mathbb{S}^{0,n}=Y(n)$. For the righthand adjunction, since colimits are computed pointwise in $\Fil(\Sp^G)$ it follows that $\mathrm{ev}_n$ preserves compact objects, so its right adjoint preserves colimits. The description of its right adjoint may then be checked on the induced spheres $\mathrm
{Ind}_H^G\mathbb{S}$, where it is seen directly.

By composition of adjoints it follows that the functor $\mathrm{ev}_n\circ \mathrm{Total}$ is left adjoint to the composite $\mathrm{dim}^*\circ (-\otimes\widehat{\mathbb{S}(n)})$, and one checks directly that, for $Z\in\Sp^G$,
\[\mathrm{dim}^*(Z\otimes\widehat{\mathbb{S}(n)})^V=\begin{cases}
    Z&|V|\ge n\\0&|V|<n
\end{cases}\]
By the same argument used to compute the right adjoint of $\mathrm{ev}_n$, one sees that the left adjoint to $\mathrm{dim}^*\circ (-\otimes\widehat{\mathbb{S}(n)})$ is given by 
\[\bigoplus\limits_{\substack{V\in RO(G)\\|V|=n}}\mathrm{ev}_V\qedhere\]
\end{proof}

\begin{remark}
    The analogue of Proposition \ref{prop:total} holds with $RU$ in place of $RO$ and the complex dimension function in place of $\mathrm{dim}$.
\end{remark}

\begin{remark}\label{rmk:Totalisafiltrationofwhat}
    For $X\in \Fun(\myuline{RO},\myuline{\Sp})$, the filtered object $\mathrm{Total}(X)$ is a filtration of $\mathrm{colim} (X)$ because $\mathrm{dim}^*\circ\mathrm{const}=\mathrm{const}$ and $\tau^{-1}\circ\mathrm{Total}$ is left adjoint to $\mathrm{dim}^*\circ\mathrm{const}$.
\end{remark}

\subsection{Geometric fixed points and dilation} There is an equivalence of functors $\Phi^G\circ N_H^G\simeq\Phi^H:\Sp^H\to \Sp$, and as we saw in Proposition \ref{prop:geomfixedpointsofnorm}, for a general $G$-symmetric monoidal stable $\infty$-category $\myuline{\mathcal{C}}$, we at least know that  $\Phi^G\circ N_H^G$ preserves colimits. In the filtered case, the classical formula $\Phi^G\circ N_H^G\simeq\Phi^H$ holds up to \emph{dilation}. To define this, we first need to recall a universal property of filtered objects.

\begin{proposition}\label{prop:universalpropertyfiltered}
    Let $I$ be a symmetric monoidal poset and $\mathcal{C}$ a presentably symmetric monoidal stable $\infty$-category. There is a symmetric monoidal equivalence
    \[\Fun(I^{\mathrm{op}},\mathcal{C})\simeq \mathcal{C}\otimes \Fun(I^{\mathrm{op}},\Sp).\]
    Moreover, for any presentably symmetric monoidal stable category $\mathcal{D}$, the functor $Y$ induces a natural equivalence
    \[\mathrm{Fun}^{\otimes}(I,\mathcal{D})\times \mathrm{LFun}^{\otimes}(\mathcal{C},\mathcal{D})\simeq\mathrm{LFun}^{\otimes}(\Fun(I^{\mathrm{op}},\mathcal{C}),\mathcal{D}),\]
    where $\mathrm{Fun}^{\otimes}$ denotes symmetric monoidal functors, and $\mathrm{LFun}^{\otimes}$ denotes symmetric monoidal colimit preserving functors.
\end{proposition}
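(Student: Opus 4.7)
The plan is to establish this in two steps, both via universal properties in the $\infty$-category $\mathrm{CAlg}(\mathrm{Pr}_{L,st})$ of presentably symmetric monoidal stable $\infty$-categories.

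First I would prove the equivalence
\[\Fun(I^{\mathrm{op}},\mathcal{C})\simeq \mathcal{C}\otimes \Fun(I^{\mathrm{op}},\Sp).\]
The nonequivariant analogue of Proposition \ref{prop:kaiffunctor} (or rather the classical results of HA §4.8 that Hilman equivariantly generalized) gives a symmetric monoidal equivalence $\Fun(I^{\mathrm{op}},\mathcal{C})\simeq \mathcal{P}(I)\otimes \mathcal{C}$, where $\mathcal{P}(I)$ carries the Day convolution monoidal structure. Since $\mathcal{C}$ is stable, $\mathcal{C}\simeq \mathcal{C}\otimes \Sp$, and since $\otimes$ is symmetric monoidal on $\mathrm{Pr}_{L,st}$ we get
\[\mathcal{P}(I)\otimes \mathcal{C}\simeq \mathcal{P}(I)\otimes \Sp\otimes \mathcal{C}\simeq \Fun(I^{\mathrm{op}},\Sp)\otimes \mathcal{C},\]
using the identification $\mathcal{P}(I)\otimes\Sp\simeq\Fun(I^{\mathrm{op}},\Sp)$ as the stabilization of the presheaf category.

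For the universal property, the key observation is that $\otimes$ is the coproduct in $\mathrm{CAlg}(\mathrm{Pr}_{L,st})$, so for any presentably symmetric monoidal stable $\mathcal{D}$,
\[\mathrm{LFun}^{\otimes}(\Fun(I^{\mathrm{op}},\Sp)\otimes \mathcal{C},\mathcal{D})\simeq\mathrm{LFun}^{\otimes}(\Fun(I^{\mathrm{op}},\Sp),\mathcal{D})\times \mathrm{LFun}^{\otimes}(\mathcal{C},\mathcal{D}).\]
The universal property of Day convolution (HA 4.8.1.10, using that $\mathcal{D}$ is presentable and stable so that symmetric monoidal functors $\mathcal{P}(I)\to \mathcal{D}$ factor through $\Fun(I^{\mathrm{op}},\Sp)$) identifies
\[\mathrm{LFun}^{\otimes}(\Fun(I^{\mathrm{op}},\Sp),\mathcal{D})\simeq \mathrm{Fun}^{\otimes}(I,\mathcal{D}),\]
with the equivalence given by restriction along the Yoneda embedding $Y\colon I\to \Fun(I^{\mathrm{op}},\Sp)$. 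Combining these identifications with the first part yields the claimed equivalence, and tracing through the equivalences shows that the resulting map is induced by $Y\colon I\to \Fun(I^{\mathrm{op}},\mathcal{C})$ in the sense stated.

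The main obstacle will be making sure all identifications are compatible with symmetric monoidal structures, in particular that the equivalence $\mathcal{P}(I)\otimes\Sp\simeq\Fun(I^{\mathrm{op}},\Sp)$ respects Day convolution on both sides, and that the cocontinuity/stability hypotheses on $\mathcal{D}$ are sufficient to reduce symmetric monoidal colimit-preserving functors out of $\Fun(I^{\mathrm{op}},\Sp)$ to symmetric monoidal functors out of $I$. These are standard but somewhat delicate; everything else is bookkeeping with universal properties in $\mathrm{CAlg}(\mathrm{Pr}_{L,st})$.
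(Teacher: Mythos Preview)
Your proposal is correct and follows essentially the same approach as the paper: both use the coproduct property of $\otimes$ in $\mathrm{CAlg}(\mathrm{Pr}^L_{st})$ to reduce to the case $\mathcal{C}=\Sp$, then invoke the symmetric monoidal equivalence $\Fun(I^{\mathrm{op}},\Sp)\simeq\Sp(\mathrm{Psh}(I))$ together with the universal property of Day convolution on presheaves. The paper phrases the first equivalence via the formula $\mathcal{C}\otimes\mathcal{D}\simeq\Fun^R(\mathcal{C}^{\mathrm{op}},\mathcal{D})$ and handles the stabilization step by noting that stabilization is a smashing localization in $\mathrm{Pr}^L$, but these are exactly the ingredients you identify.
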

\begin{proof}
    The first statement follows as in Proposition \ref{prop:kaiffunctor} from the formula $\mathcal{C}\otimes\mathcal{D}\simeq\mathrm{Fun}^R(\mathcal{C}^\mathrm{op},\mathcal{D})$ for the tensor product in $\mathrm{Pr}^L$. Since $\otimes$ is the coproduct in $\mathrm{CAlg}(\mathrm{Pr}^L_{st})$, the first statement allows us to reduce the second statement to the case $\mathcal{C}=\Sp$, which states
    that
    \[
    \mathrm{LFun}^{\otimes}(\Fun(I^{\mathrm{op}},\Sp),\mathcal{D})\simeq \Fun^\otimes(I,\mathcal{D}).
    \]
    Note that there is a symmetric monoidal equivalence $\Fun(I^{\mathrm{op}},\Sp)\simeq\Sp(\mathrm{Psh}(I))$. Using that stabilization is a smashing localization in $\mathrm{Pr}^L$, we see that
    \[
    \mathrm{LFun}^{\otimes}(\Fun(I^{\mathrm{op}},\Sp),\mathcal{D})\simeq \mathrm{LFun}^{\otimes}(\mathrm{Psh}(I),\mathcal{D}),
    \]
    and now the claim follows by universal property of Day convolution on presheaf categories.
\end{proof}

\begin{definition}\label{def:dilation}
    Let $\mathcal{C}$ be a presentably symmetric monoidal stable $\infty$-category, and let $p:I\to J$ be a morphism of symmetric monoidal posets. We define the \emph{dilation operator}
    \[\mathcal{D}^p:\Fun(I^{\mathrm{op}},\mathcal{C})\to \Fun(J^{\mathrm{op}},\mathcal{C})\]
 to be the symmetric monoidal colimit preserving functor  defined by the symmetric monoidal functor
\[I\xrightarrow{p}J\xrightarrow{Y}\Fun(J^{\mathrm{op}},\Sp)\to\Fun(J^{\mathrm{op}},\mathcal{C}),\]
where the last functor is postcomposition with the unique symmetric monoidal colimit preserving functor $\Sp\to \mathcal{C}$, and 
\[f_\mathcal{C}:\mathcal{C}\simeq\mathcal{C}\otimes\Sp\xrightarrow{\mathrm{id}\otimes f}\mathcal{C}\otimes\Fun(J^{\mathrm{op}},\Sp)\simeq\Fun(J^{\mathrm{op}},\mathcal{C}),\]
under the equivalences of Proposition \ref{prop:universalpropertyfiltered}, where $f$ is the unique symmetric monoidal colimit preserving functor
\[\Sp\to \Fun(J^{\mathrm{op}},\mathcal{C}).\]
\end{definition}

\begin{example}\label{ex:dilationexamples}
    Consider the multiplication map $n:\Z\to \Z$. This induces a dilation operator $\mathcal{D}^n:\Fil(\mathcal{C})\to \Fil(\mathcal{C})$. This may be equivalently defined as satisfying $(\mathcal{D}^nX)^m=X^{\lceil\frac{m}{n}\rceil}$, as may be checked using Proposition \ref{prop:universalpropertyfiltered}.
    
        This construction appears in the case $n=2$ in the work of Meier--Shi--Zeng \cite[Definition 3.5]{meiershizengtranschromatic}, where it is called the doubling operator and denoted $\mathcal{D}$.
\end{example}


In the following, we will use the notation $\Phi^G$ for the geometric fixed points functor of Definition \ref{def:geometricfixedpointsfunctor} on $\Fun(\myuline{I}^{\mathrm{op}},\myuline{\Sp})$. By Lemma \ref{lemma:geometricfixedpointsatG/G}, this coincides at the $G/G$ level with the functor given by applying $\Phi^G:\Sp^G\to \Sp$ pointwise. Moreover, for $\underline{I}$ a $G$-symmetric monoidal poset, we let $\mathcal{D}_H^G$ be the dilation operator induced by the norm $\underline{I}(G/H)\to \underline{I}(G/G)$. 

\begin{corollary}\label{cor:normsandgeometricfixedpoints}
 There is an equivalence
    \[\Phi^G\circ N_H^G\simeq \mathcal{D}_H^G\circ\Phi^H\]
    of symmetric-monoidal colimit preserving functors 
    \[\Fun(\myuline{I}(G/H)^{\mathrm{op}},\Sp^H)\to\Fun(\myuline{I}(G/G)^{\mathrm{op}},\Sp).\]
\end{corollary}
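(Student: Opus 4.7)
The plan is to exploit the universal property of Proposition \ref{prop:universalpropertyfiltered} to reduce the desired equivalence of functors of filtered objects to agreement on two much simpler pieces of input data: the $G$-symmetric monoidal poset $\underline{I}$ and the underlying category of spectra. Both $\Phi^G \circ N_H^G$ and $\mathcal{D}_H^G \circ \Phi^H$ are symmetric monoidal colimit-preserving functors $\Fun(\underline{I}(G/H)^{\mathrm{op}},\Sp^H) \to \Fun(\underline{I}(G/G)^{\mathrm{op}},\Sp)$: for the first composite this follows from Proposition \ref{prop:geomfixedpointsofnorm} combined with Lemma \ref{lemma:geometricfixedpointsatG/G}, and for the second composite it is immediate from Definition \ref{def:dilation} together with the pointwise symmetric monoidal colimit-preserving property of $\Phi^H$. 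Thus by Proposition \ref{prop:universalpropertyfiltered}, to produce a natural equivalence between the two composites it suffices to exhibit compatible equivalences of their restrictions along $Y \colon \underline{I}(G/H) \to \Fun(\underline{I}(G/H)^{\mathrm{op}},\Sp^H)$ and along the symmetric monoidal colimit-preserving unit inclusion $\Sp^H \to \Fun(\underline{I}(G/H)^{\mathrm{op}},\Sp^H)$.

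For the poset piece, both functors send $i \in \underline{I}(G/H)$ to $Y(N_H^G(i))$. For $\Phi^G \circ N_H^G$ this uses that the Yoneda functor $Y \colon \underline{I} \to \Fun(\underline{I}^{\mathrm{op}},\underline{\Sp})$ is $G$-symmetric monoidal (Theorem \ref{thm:gsymstructureonfil}), so that $N_H^G(Y(i)) \simeq Y(N_H^G(i))$ in $\Fun(\underline{I}(G/G)^{\mathrm{op}},\Sp^G)$, followed by the fact that $\Phi^G$ is computed pointwise on filtered objects (Lemma \ref{lemma:geometricfixedpointsatG/G}) and sends the unit sphere to the unit sphere. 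For $\mathcal{D}_H^G \circ \Phi^H$ this is immediate: $\Phi^H$ applied pointwise sends $Y(i)$ to $Y(i)$ with $\Sp$-coefficients, and $\mathcal{D}_H^G$ is by Definition \ref{def:dilation} precisely the functor satisfying $\mathcal{D}_H^G(Y(i)) \simeq Y(N_H^G(i))$.

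For the spectrum piece, both functors send $X \in \Sp^H$ (included as $X \otimes Y(\mathbbm{1}_{\underline{I}(G/H)})$) to the image of $\Phi^H(X)$ under the canonical symmetric monoidal colimit-preserving inclusion $\Sp \to \Fun(\underline{I}(G/G)^{\mathrm{op}},\Sp)$. For $\mathcal{D}_H^G \circ \Phi^H$ this is built directly into Definition \ref{def:dilation}, which specifies the spectrum-part input data to be the canonical functor $\Sp \to \Fun(\underline{I}(G/G)^{\mathrm{op}},\Sp)$ precomposed with $\Phi^H$. For $\Phi^G \circ N_H^G$ this is the content of the classical Hill--Hopkins--Ravenel identification $\Phi^G \circ N_H^G \simeq \Phi^H \colon \Sp^H \to \Sp$ cited in the paragraph preceding Proposition \ref{prop:geomfixedpointsofnorm}, combined once more with the pointwise nature of $\Phi^G$ on constant-type filtrations.

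The main subtlety to verify is that the universal property from Proposition \ref{prop:universalpropertyfiltered} applies to the composite $\Phi^G \circ N_H^G$: this requires knowing that this composite is both symmetric monoidal and colimit preserving even though $N_H^G$ alone need not preserve colimits. This is exactly what Proposition \ref{prop:geomfixedpointsofnorm} provides. Once this is in hand, the rest of the argument is a bookkeeping exercise comparing the data extracted by Proposition \ref{prop:universalpropertyfiltered}, and the naturality in $\underline{I}$ of all the constructions involved gives the equivalence of functors as claimed.
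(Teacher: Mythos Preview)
Your proposal is correct and follows essentially the same approach as the paper: both reduce via Proposition \ref{prop:universalpropertyfiltered} to checking the equivalence after precomposing with the Yoneda functor $Y_H$ and with the spectrum inclusion $f_{\Sp^H}$, using Proposition \ref{prop:geomfixedpointsofnorm} to ensure $\Phi^G\circ N_H^G$ is colimit preserving, the $G$-symmetric monoidality of $Y$ for the poset piece, and the classical Hill--Hopkins--Ravenel identity $\Phi^G\circ N_H^G\simeq\Phi^H$ for the spectrum piece. The paper's proof spells out the chain of equivalences on the spectrum side a bit more explicitly, but the structure of the argument is the same.
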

\begin{proof}
    Note that by Proposition \ref{prop:geomfixedpointsofnorm}, the lefthand side is colimit preserving. By Proposition \ref{prop:universalpropertyfiltered}, it therefore suffices to produce an equivalence of symmetric monoidal functors as in the statement of the corollary after precomposing with $f_{\Sp^H}:\Sp^H\to \Fil(\Sp^H)$ and $Y_H:\myuline{I}(G/H)\to \Fil(\Sp^H)$, respectively. The former follows from the fact that $f_{\Sp^H}:\Sp^H\to \Fun(\myuline{I}(G/H)^{\mathrm{op}},\Sp^H)$ refines to the unique $G$-symmetric monoidal left adjoint $\myuline{\Sp}\to \Fun(I^{\mathrm{op}},\Sp)$, so that 
\begin{align*}
    \Phi^G\circ N_H^G\circ f_{\Sp^H}&\simeq \Phi^G\circ f_{\Sp^G}\circ N_H^G\\
    &\simeq f_{\Sp}\circ \Phi^G\circ N_H^G\\
    &\simeq f_{\Sp}\circ \Phi^H\\
    &\simeq \Phi^H\circ f_{\Sp^H}\\&\simeq \mathcal{D}_H^G\circ\Phi^H\circ f_{\Sp^H},
\end{align*}
where we use the equivalence $\Phi^G\circ N_H^G\simeq \Phi^H$ of functors $\Sp^H\to \Sp$.

The latter case follows similarly from the fact that 
\[Y_H:\myuline{I}(G/H)\to \Fun(\myuline{I}(G/H)^{\mathrm{op}},\Sp^H)\]
refines to the $G$-symmetric monoidal functor $Y:\myuline{I}\to\Fun(\underline{I}^{\mathrm{op}},\Sp)$. Indeed, this implies that
\begin{align*}
    \Phi^G\circ N_H^G\circ Y_H&\simeq \Phi^G\circ Y_G\circ N_H^G\simeq Y\circ N_H^G,
\end{align*}
where the latter $N_H^G$ is the norm $\myuline{I}(G/H)\to \myuline{I}(G/G)$ in $\myuline{I}$. One must therefore produce an equivalence of symmetric monoidal functors between 
\[\myuline{I}(G/H)\xrightarrow{N_H^G}\myuline{I}(G/G)\xrightarrow{Y} \Fun(\myuline{I}(G/G)^{\mathrm{op}},\Sp)\]
and
\[\myuline{I}(G/H)\xrightarrow{Y} \Fun(\myuline{I}(G/H)^{\mathrm{op}},\Sp)\xrightarrow{\mathcal{D}_H^G}\Fun(\myuline{I}(G/G)^{\mathrm{op}},\Sp),\]
which is how $\mathcal{D}_H^G$ is defined.
\end{proof}

\section{The $RO(G)$-graded Whitehead filtration and equivariant $t$-structures}\label{sec:equivarianttstructures} In this section, we revisit the notion of a slice filtration on a category in the context of $t$-structures on filtered objects. We set these up also in a $G$-$\infty$-categorical context so as to obtain $G$-symmetric monoidal structures on slice connective covers.

\subsection{Filtered $t$-structures and slice filtrations} Fix $I$ a symmetric monoidal poset and $\mathcal{C}$ a presentably symmetric monoidal stable $\infty$-category. By \cite[Proposition 1.4.4.11]{HA}, given a small collection of objects $\{g_\alpha\}$ in $\mathcal{C}$, there is a $t$-structure on $\mathcal{C}$ whose connective objects consist of the closure $\mathcal{C}'$ of $\{g_\alpha\}$ under colimits and extensions. When we speak of $t$-structures on $\mathcal{C}$, we will always mean a $t$-structure that arises in this way, and we will refer to a $t$-structure both by its corresponding category of connective objects $\mathcal{C}'$ and by its connective cover functor $\tau_{\ge0}$. 

Following the notation of the previous section, we let $Y$ denote the functor
    \[Y:I\to \mathrm{Fun}(I^{\mathrm{op}},\Sp)\to \mathrm{Fun}(I^{\mathrm{op}},\mathcal{C}),\]
    where the first functor is ($\Sigma^\infty_+$ of) the Yoneda embedding and the latter is postcomposition with the unique symmetric monoidal left adjoint $\Sp\to\mathcal{C}$.

\begin{proposition}\label{prop:slicefiltration}
    For each $i\in I$, fix a $t$-structure $\mathcal{C}_i$ on $\mathcal{C}$ generated by a small set of objects $\{g_{\alpha,i}\}$ with connective cover functor $\tau_{\ge0}^i$. We have the following
    \begin{enumerate}
        \item There is a $t$-structure on $\mathrm{Fun}(I^\mathrm{op},\mathcal{C})$ such that $X\in\mathrm{Fun}(I^\mathrm{op},\mathcal{C})$ is connective if and only if $X^i\in\mathcal{C}_i$ for all $i\in I$. This $t$-structure is generated by the set \[\{g_{\alpha,i}\otimes Y(i)\}_{i,\alpha}\]
        ranging over all $\alpha$ and $i$.
    \end{enumerate}
    Suppose now that whenever $i\ge j$, one has an inclusion $\mathcal{C}_i\subset\mathcal{C}_j$, and let $\mathrm{Fun}(I^\mathrm{op},\mathcal{C})_{\ge0}$ and $\tau_{\ge0}$ denote the corresponding connective objects and connective cover functor on $\mathrm{Fun}(I^\mathrm{op},\mathcal{C})$, respectively. Then one has the following:
    \begin{enumerate}
    \setcounter{enumi}{2}
        \item  The functor $\tau_{\ge0}$ satisfies the formula
    \[
    (\tau_{\ge0}X)^i=\tau_{\ge0}^i(X^i)
    \]
    for $X\in \mathrm{Fun}(I^\mathrm{op},\mathcal{C})$, for all $i\in I$.   
    \item Suppose the unit $\mathbbm{1}\in\mathcal{C}_0$ and that $g_{\alpha,i}\otimes g_{\beta,j}\in \mathcal{C}_{i\otimes j}$, for all $i,j,\alpha,\beta$.
    Then the $t$-structure $\tau_{\ge0}$ is compatible with the monoidal structure in the sense that the inclusion $\mathrm{Fun}(I^\mathrm{op},\mathcal{C})_{\ge0}\hookrightarrow\mathrm{Fun}(I^\mathrm{op},\mathcal{C})$ is a symmetric monoidal left adjoint with respect to Day convolution.
     \item If, for all $i\in I$, the $t$-structure generated by $\mathcal{C}_i$ is (left/right) complete, then the $t$-structure on $\Fun(I^{\mathrm{op}},\mathcal{C})$ is (left/right) complete.
    \end{enumerate}

\end{proposition}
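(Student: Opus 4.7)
The plan is to apply the standard criterion for constructing $t$-structures from small sets of generators (Lurie, \emph{Higher Algebra} Proposition 1.4.4.11) to the set $\{g_{\alpha,i}\otimes Y(i)\}$, producing a $t$-structure on $\Fun(I^{\mathrm{op}},\mathcal{C})$ whose connective part is the closure of these generators under colimits and extensions. To identify this with the pointwise-connective subcategory in (1), we show two inclusions. For the forward direction, the pointwise-connective subcategory is closed under colimits and extensions (each $\mathcal{C}_i$ is so), and it contains every generator, since $(g_{\alpha,i}\otimes Y(i))^j$ equals $g_{\alpha,i}$ when $i\le j$ (which lies in $\mathcal{C}_j$ by the monotonicity $\mathcal{C}_i\subset\mathcal{C}_j$) and is $0$ otherwise. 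For the converse, we use the $\mathcal{C}$-enriched coend presentation $X\simeq\int^{i\in I}X^i\otimes Y(i)$, which exhibits any pointwise-connective $X$ as a colimit of objects $X^i\otimes Y(i)$ with $X^i\in\mathcal{C}_i$; since each such $X^i$ is built from $\{g_{\alpha,i}\}$ by colimits and extensions, $X^i\otimes Y(i)$ lies in the generated subcategory, and hence so does $X$.

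For part (3), we define a candidate functor by $(\widetilde{\tau}_{\ge 0}X)^i:=\tau_{\ge 0}^i X^i$. Functoriality on $I^{\mathrm{op}}$ is where the monotonicity matters: for $i\ge j$, the composite $\tau_{\ge 0}^i X^i\to X^i\to X^j$ lifts uniquely through $\tau_{\ge 0}^j X^j$ because $\tau_{\ge 0}^iX^i\in\mathcal{C}_i\subset\mathcal{C}_j$. By construction $\widetilde{\tau}_{\ge 0}X$ is pointwise-connective and the cofiber of the counit $\widetilde{\tau}_{\ge 0}X\to X$ is pointwise coconnective. Applying the dual pointwise characterization of coconnective objects, which follows from the adjunction $\mathrm{Map}_{\Fun}(g_{\alpha,i}\otimes Y(i),Z)\simeq \mathrm{Map}_{\mathcal{C}}(g_{\alpha,i},Z^i)$, identifies the candidate with the genuine connective cover of $X$.

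For part (4), by the standard criterion for compatibility of a $t$-structure with a symmetric monoidal structure, it suffices to check that the unit is connective and that tensor products of generators are connective. The unit $\mathbbm{1}_{\mathcal{C}}\otimes Y(0)$ of the Day convolution is connective by the hypothesis $\mathbbm{1}\in\mathcal{C}_0$. Since the Yoneda functor $Y$ is symmetric monoidal for Day convolution, $Y(i)\otimes Y(j)\simeq Y(i\otimes j)$, so
\[(g_{\alpha,i}\otimes Y(i))\otimes(g_{\beta,j}\otimes Y(j))\simeq (g_{\alpha,i}\otimes g_{\beta,j})\otimes Y(i\otimes j)\]
is again a generator, now indexed by $i\otimes j$, using the hypothesis $g_{\alpha,i}\otimes g_{\beta,j}\in\mathcal{C}_{i\otimes j}$.

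For part (5), the pointwise formula from (3) applies to every shifted $t$-structure, yielding $(\tau_{\ge n}X)^i=\tau_{\ge n}^iX^i$ and $(\tau_{\le n}X)^i=\tau_{\le n}^iX^i$. Right-completeness then reduces to $\bigcap_n\mathcal{C}_i^{\ge n}=0$ in each $\mathcal{C}_i$, and left-completeness reduces to $X^i\simeq\lim_n\tau_{\le n}^iX^i$ in each $\mathcal{C}_i$, since limits in $\Fun(I^{\mathrm{op}},\mathcal{C})$ are computed pointwise. The main obstacle we anticipate is establishing the pointwise characterization in (1) cleanly: the enriched Yoneda coend is the key input, and one must use the monotonicity $\mathcal{C}_i\subset\mathcal{C}_j$ for $i\ge j$ in an essential way to keep the argument within the generated subcategory.
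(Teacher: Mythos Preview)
Your argument is essentially correct and largely parallel to the paper's, but the route for item (3) differs and your version has a small coherence gap worth flagging.

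For (3), the paper computes the left adjoint of the composite $\mathrm{ev}_i\circ\tau_{\ge0}:\Fun(I^{\mathrm{op}},\mathcal{C})\to\mathcal{C}_i$. It identifies the left adjoint of $\mathrm{ev}_i:\Fun(I^{\mathrm{op}},\mathcal{C})\to\mathcal{C}$ as $L_i(c)=c\otimes Y(i)$, checks using monotonicity that $L_i$ carries $\mathcal{C}_i$ into $\Fun(I^{\mathrm{op}},\mathcal{C})_{\ge0}$, and concludes that $L_i\circ\iota_i$ is left adjoint to $\mathrm{ev}_i\circ\tau_{\ge0}$. Since $L_i\circ\iota_i$ is also left adjoint to $\tau^i_{\ge0}\circ\mathrm{ev}_i$, uniqueness of right adjoints gives the formula. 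This sidesteps any coherence issues entirely.

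Your approach instead builds the candidate $\widetilde{\tau}_{\ge0}X$ by hand. The step ``the composite lifts uniquely through $\tau^j_{\ge0}X^j$'' is correct on $\pi_0$ of mapping spaces, but assembling these lifts into a coherent functor $I^{\mathrm{op}}\to\mathcal{C}$ needs a word: the space of lifts is contractible (since $\tau^i_{\ge0}X^i\in\mathcal{C}_j$), so coherence comes for free, but you should say so. Alternatively, you can avoid the construction altogether: once you have the pointwise characterization of connectives from (1) and of coconnectives from the adjunction $\mathrm{Map}(g_{\alpha,i}\otimes Y(i),Z)\simeq\mathrm{Map}(g_{\alpha,i},Z^i)$, the fiber sequence $(\tau_{\ge0}X)^i\to X^i\to(\tau_{\le-1}X)^i$ already has first term in $\mathcal{C}_{i,\ge0}$ and last term in $\mathcal{C}_{i,\le-1}$, so uniqueness of the $t$-structure decomposition in $\mathcal{C}$ forces $(\tau_{\ge0}X)^i\simeq\tau^i_{\ge0}X^i$.

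Two minor corrections: in your (1), $(g_{\alpha,i}\otimes Y(i))^j=g_{\alpha,i}$ when $j\le i$ (not $i\le j$), and then monotonicity in the form $\mathcal{C}_i\subset\mathcal{C}_j$ for $i\ge j$ applies. In (4), the tensor of two generators is not literally ``again a generator'' but rather of the form $c\otimes Y(i\otimes j)$ with $c\in\mathcal{C}_{i\otimes j}$, which is pointwise connective by the same computation as in (1). Your co-Yoneda argument for (1) is more explicit than the paper's one-line citation of \cite[1.4.4.11]{HA}, and you are right that the pointwise characterization already uses monotonicity, even though the paper places (1) before that hypothesis.
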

\begin{proof}
    Item (1) of the proposition follows directly from \cite[Proposition 1.4.4.11]{HA}. For (2), let $\mathrm{ev}_i:\mathrm{Fun}(I^{\mathrm{op}},\mathcal{C})\to \mathcal{C}$ denote the evaluation functor at $i\in I$. We determine the functor
    \[\mathrm{Fun}(I^{\mathrm{op}},\mathcal{C})\xrightarrow{\tau_{\ge0}}\mathrm{Fun}(I^{\mathrm{op}},\mathcal{C})_{\ge0}\xrightarrow{\mathrm{ev}_i}\mathcal{C}\]
    by computing its left adjoint. First, note that since colimits and extensions in $\mathrm{Fun}(I^{\mathrm{op}},\mathcal{C})$ are determined pointwise, one has a factorization
    \[
    \begin{tikzcd}
        \mathrm{Fun}(I^{\mathrm{op}},\mathcal{C})_{\ge0}\arrow[r]\arrow[d]&\mathcal{C}_i\arrow[d]\\
        \mathrm{Fun}(I^{\mathrm{op}},\mathcal{C})\arrow[r,"\mathrm{ev}_i"]&\mathcal{C}
    \end{tikzcd}
    \]
    where the vertical arrows are the inclusions. Conversely, the left adjoint $L_i$ to the functor $\mathrm{ev}_i$ is given by the composite
    
\begin{align*}
L_i:\mathcal{C}\simeq\{i\}\times\mathcal{C}\hookrightarrow I\times\mathcal{C}&\xrightarrow{Y\times\mathrm{id}}\mathrm{Fun}(I^{\mathrm{op}},\mathcal{C})\times\mathcal{C}\\
&\xrightarrow{\mathrm{id}\times\mathrm{const}}\mathrm{Fun}(I^{\mathrm{op}},\mathcal{C})\times \mathrm{Fun}(I^{\mathrm{op}},\mathcal{C})\\
    &\xrightarrow{\otimes}\mathrm{Fun}(I^{\mathrm{op}},\mathcal{C})
\end{align*}
and it follows directly from the assumption that $i\ge j\implies \mathcal{C}_i\subset\mathcal{C}_j$ that one has a factorization
\[
\begin{tikzcd}
    \mathcal{C}_i\arrow[r,"L_i|_{\mathcal{C}_i}"]\arrow[d]&\mathrm{Fun}(I^{\mathrm{op}},\mathcal{C})_{\ge0}\arrow[d]\\
    \mathcal{C}\arrow[r,"L_i"]&\mathrm{Fun}(I^{\mathrm{op}},\mathcal{C})
\end{tikzcd}
\]    
    where the vertical arrows are the inclusions. It follows that $L_i|_{\mathcal{C}_i}$ is left adjoint to $\mathrm{Fun}(I^{\mathrm{op}},\mathcal{C})_{\ge0}\xrightarrow{\mathrm{ev}_i}\mathcal{C}_i$, and that the composite
    \[\mathcal{C}_i\hookrightarrow\mathcal{C}\xrightarrow{L_i}\mathrm{Fun}(I^{\mathrm{op}},\mathcal{C})\]
    is left adjoint to 
    \[\mathrm{Fun}(I^{\mathrm{op}},\mathcal{C})\xrightarrow{\tau_{\ge0}}\mathrm{Fun}(I^{\mathrm{op}},\mathcal{C})_{\ge0}\xrightarrow{\mathrm{ev}_i}\mathcal{C}_i,\]
    which produces the desired formula.
    
    For (3), we need to check that $\mathrm{Fun}(I^{\mathrm{op}},\mathcal{C})_{\ge0}$ contains the unit $Y(\mathbbm{1})$ and is closed under tensor products. The former follows from the fact that $\mathbbm{1}\in\mathcal{C}_0$ and the key assumption that $i\ge j\implies \mathcal{C}_i\subset\mathcal{C}_j$. For the latter, we must check that, if $X^i,Y^i\in\mathcal{C}_i$ for all $i\in I$, then
    \[(X\otimes Y)^i=\mathrm{colim}_{j\otimes k\ge i}(X^j\otimes Y^k)\in\mathcal{C}_i.\]
    This follows by the assumption on generators and the fact that $\mathcal{C}_i$ is closed under colimits. Finally, (4) follows from the connective cover formula of (2) and \cite[1.2.1.19]{HA}.
    \end{proof}

\begin{definition}\label{def:slicefiltration}
We call an $I$-indexed sequence of $t$-structures $\mathcal{C}_i$ satisfying
\[i\ge j\implies\mathcal{C}_i\subset\mathcal{C}_j\]
and item (4) of Proposition \ref{prop:slicefiltration} an $I$-indexed \emph{slice filtration} on $\mathcal{C}$. We will use this term to refer to both this data and to the $t$-structure on $\Fun(I^{\mathrm{op}},\mathcal{C})$ given by Proposition \ref{prop:slicefiltration}.   
\end{definition}

\begin{example}\label{example:slicefiltration}
    Let $\mathcal{C}=\Sp^G$ and $I=\Z$ with its standard poset structure. For each $n\in\Z$ the set $\{\mathrm{Ind}_H^G\mathbb{S}^{m\rho_H}:m|H|\ge n\}$ generates a $t$-structure $\mathcal{C}'=\Sp^G_{\ge n}$ on $\Sp^G$ with the property that $\Sp^G_{\ge n+1}\subset\Sp^G_{\ge n}$ for all $n$. This gives a slice filtration on $\Sp^G$ in the above sense, and it recovers the usual equivariant slice filtration, as the composite
    \[\Sp^G\xrightarrow{\mathrm{const}}\Fil(\Sp^G)\xrightarrow{\tau_{\ge0}}\Fil(\Sp^G)\]
    sends a $G$-spectrum to its slice tower. We return to this example in Definition \ref{def:twofilteredtstructures}
\end{example}

We will need also a $G$-$\infty$-categorical version of Proposition \ref{prop:slicefiltration} for the purposes of obtaining $G$-lax symmetric monoidal structures on the connective cover functors $\tau_{\ge0}:\mathrm{Fun}(I^\mathrm{op},\mathcal{C})\to \mathrm{Fun}(I^\mathrm{op},\mathcal{C})_{\ge0}$ from Proposition \ref{prop:slicefiltration}.

\begin{proposition}\label{prop:gmonoidalslicefiltrations}
   Fix $\myuline{I}$ a $G$-symmetric monoidal poset and $\myuline{\mathcal{C}}$ a presentably $G$-symmetric monoidal stable $\infty$-category, and for each $H\subset G$, fix an $\myuline{I}(G/H)$-indexed slice filtration on $\myuline{\mathcal{C}}(G/H)$. For $H\subset G$, let $\{g_{\alpha,i}^H\otimes Y(i)\}_{i,\alpha}$ be the generators of $\Fun(\myuline{I}(G/H)^{\mathrm{op}},\myuline{\mathcal{C}}(G/H))_{\ge0}$ as in Proposition \ref{prop:slicefiltration}. Suppose that for any  subgroups $H,H'\subset G$ and any map of $G$-sets $f:G/H\to G/H'$, one has inclusions
\[f^*(\{g_{\alpha,i}^{H'}\otimes Y(i)\}_{i,\alpha})\subset \underline{\mathcal{C}}(G/H)_{f^*(i)}\]
   for all $i\in\myuline{I}(G/H')$ and that for any $H\subset H'\subset G$, one has inclusions
   \[
   N_{H}^{H'}(\{g_{\alpha,i}^H\otimes Y(i)\}_{i,\alpha})\subset \myuline{\mathcal{C}}(G/H')_{N_H^{H'}(i)}
   \]
   for all $i\in\myuline{I}(G/H)$. Then the assignment 
   \[\Fun(\myuline{I}^{\mathrm{op}},\myuline{\mathcal{C}})_{\ge0}(G/H):=\Fun(\myuline{I}(G/H)^{\mathrm{op}},\myuline{\mathcal{C}}(G/H))_{\ge0}\]
   defines a $G$-symmetric monoidal subcategory of $\Fun(\myuline{I}^{\mathrm{op}},\myuline{\mathcal{C}})$, and 
   the connective cover functors of Proposition \ref{prop:slicefiltration} on $\Fun(\myuline{I}(G/H)^{\mathrm{op}},\myuline{\mathcal{C}}(G/H))$ assemble into a lax $G$-symmetric monoidal right adjoint
   \[\tau_{\ge0}:\Fun(\myuline{I}^{\mathrm{op}},\myuline{\mathcal{C}})\to \Fun(\myuline{I}^{\mathrm{op}},\myuline{\mathcal{C}})_{\ge0}\]
\end{proposition}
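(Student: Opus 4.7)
The plan is to split the argument into two moves: first, verify that the pointwise assignment defines a $G$-symmetric monoidal subcategory of $\Fun(\myuline{I}^{\mathrm{op}},\myuline{\mathcal{C}})$; second, deduce the lax $G$-symmetric monoidal structure on the pointwise right adjoint by formal adjoint-functor considerations. At each fibre $G/H$, the subcategory $\Fun(\myuline{I}(G/H)^{\mathrm{op}},\myuline{\mathcal{C}}(G/H))_{\ge 0}$ is a symmetric monoidal subcategory whose inclusion is colimit-preserving by Proposition \ref{prop:slicefiltration}(3), which applies since by Definition \ref{def:slicefiltration} each of the given slice filtrations satisfies the monoidality hypothesis.

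It remains to check that the pointwise assignment is preserved by the $G$-symmetric monoidal structure maps of $\Fun(\myuline{I}^{\mathrm{op}},\myuline{\mathcal{C}})$, namely the restrictions $f^\ast$ along $G$-set maps and the norms $N_H^{H'}$. Restrictions act pointwise on functor categories and are exact and colimit-preserving, so hypothesis (1) together with closure of each connective subcategory under colimits and extensions immediately yields stability under restriction. The main obstacle is stability under norms, since norms are neither additive nor colimit-preserving in the ordinary sense. The idea here is to exploit that $N_H^{H'}$ is symmetric monoidal, and hence preserves tensor products, and that it is a $G$-left adjoint, and hence distributes over indexed colimits. Combined with the fact from the first step that tensor products of connective objects remain connective at each fibre, these two properties imply that the class of objects with connective norm is closed under $G$-colimits, tensor products, and extensions, the last of these by expressing the norm of a cofiber sequence via the iterated distributivity formula relating norms, indexed tensors and indexed colimits. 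Hypothesis (2) places the generators of the connective subcategory in this class, and the above closure properties then propagate connectivity across the entire subcategory; alternatively, one can appeal directly to Hilman's general formalism for transferring closure properties across norms in presentably $G$-symmetric monoidal $\infty$-categories.

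Once the inclusion $\Fun(\myuline{I}^{\mathrm{op}},\myuline{\mathcal{C}})_{\ge 0}\hookrightarrow\Fun(\myuline{I}^{\mathrm{op}},\myuline{\mathcal{C}})$ is established as a $G$-symmetric monoidal left adjoint, its pointwise right adjoint $\tau_{\ge 0}$ assembles into a morphism of $G$-$\infty$-categories and inherits a canonical lax $G$-symmetric monoidal structure, by the parametrized version of the standard fact that right adjoints to symmetric monoidal functors are canonically lax symmetric monoidal, as developed for instance in Hilman's thesis \cite{hilmanthesis}.
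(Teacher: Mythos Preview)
Your overall two-step structure matches the paper's: first establish that the pointwise connective subcategories form a $G$-symmetric monoidal subcategory, then invoke the parametrized adjoint lifting to obtain the lax structure on $\tau_{\ge 0}$. Your handling of restrictions is fine and agrees with the paper, which cites \cite[Lemma 5.12]{CHLL} to conclude that hypothesis (1) makes the inclusion a $G$-left adjoint.

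The substantive difference is in the norm step. The paper does \emph{not} argue via distributivity directly. Instead it runs an induction on $|G|$ organized by isotropy separation: for $X$ connective at level $H$, one writes
\[
E\mathcal{P}_+\otimes N_H^G(X)\to N_H^G(X)\to \tilde{E}\mathcal{P}\otimes N_H^G(X),
\]
observes that $\tilde{E}\mathcal{P}\otimes N_H^G(-)$ genuinely preserves colimits (this is Proposition~\ref{prop:geomfixedpointsofnorm}), so the rightmost term is connective because it is for the generators; and then handles $E\mathcal{P}_+\otimes N_H^G(X)$ via the skeletal filtration of $E\mathcal{P}_+$, whose cells are induced from proper subgroups, reducing to the inductive hypothesis.

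Your distributivity route is in the right spirit but is sketchier and contains a slip. First, norms are \emph{not} $G$-left adjoints; they are symmetric monoidal but only \emph{distribute} over indexed colimits via the HHR formula, which is a different (and weaker) statement. Second, to turn distributivity into ``the class of $X$ with $N_H^G(X)$ connective is closed under colimits and extensions'' you must identify the pieces produced by the distributive law: these are inductions of tensor products of norms $N_H^{K}$ for intermediate $H\subset K\subset G$, so the argument still requires an explicit induction on $|G|$ together with the already-established facts that inductions and tensor products preserve connectivity. You gesture at this (``iterated distributivity formula relating norms, indexed tensors and indexed colimits'') but do not state the induction or name the pieces. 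The paper's isotropy-separation packaging is cleaner precisely because it isolates the one colimit-preserving composite $\tilde{E}\mathcal{P}\otimes N_H^G(-)$ and pushes everything else down to proper subgroups in one move.
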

\begin{proof}
    The assumption that
    \[f^*(\myuline{\mathcal{C}}(G/H')_i)\subset \underline{\mathcal{C}}(G/H)_{f^*(i)}\]
    for all $f:G/H\to G/H'$ guarantees that $\Fun(\myuline{I}^{\mathrm{op}},\myuline{\mathcal{C}})_{\ge0}$ is a sub $G$-category of $\Fun(\myuline{I}^{\mathrm{op}},\myuline{\mathcal{C}})$ and that the inclusion 
    \[\Fun(\myuline{I}^{\mathrm{op}},\myuline{\mathcal{C}})_{\ge0}\to \Fun(\myuline{I}^{\mathrm{op}},\myuline{\mathcal{C}})\]
    is a $G$-left adjoint by applying \cite[Lemma 5.12]{CHLL}. It remains to show that $\Fun(\myuline{I}^{\mathrm{op}},\myuline{\mathcal{C}})_{\ge0}$ is closed under the $G$-symmetric monoidal structure. This is guaranteed levelwise by Proposition \ref{prop:slicefiltration} (3) since we have fixed a slice filtration levelwise, so it remains to show that for each $H\subset H'\subset G$, there is an inclusion
    \[N_H^{H'}(\Fun(\myuline{I}(G/H)^{\mathrm{op}},\myuline{\mathcal{C}}(G/H))_{\ge0})\subset \Fun(\myuline{I}(G/H')^{\mathrm{op}},\myuline{\mathcal{C}}(G/H'))_{\ge0}.\]
    By induction on the order of the group, we may reduce to the case $H'=G$. By assumption, we have that
\[N_H^G(g_{\alpha,i}^H\otimes Y(i))\in \Fun(\myuline{I}(G/G)^{\mathrm{op}},\myuline{\mathcal{C}}(G/G))_{\ge0}\]
   for all $\alpha,i$. For a general $X\in \Fun(\myuline{I}(G/H)^{\mathrm{op}},\myuline{\mathcal{C}}(G/H))_{\ge0}$, consider the cofiber sequence
   \[E\mathcal{P}_+\otimes N_H^G(X)\to N_H^G(X)\to \tilde{E}\mathcal{P}\otimes N_H^G(X).\]
   Since the functor $\tilde{E}\mathcal{P}\otimes N_H^G(-)$ commutes with colimits by Proposition \ref{prop:geomfixedpointsofnorm}, it follows that $\tilde{E}\mathcal{P}\otimes N_H^G(X) \in \Fun(\myuline{I}(G/G)^{\mathrm{op}},\myuline{\mathcal{C}}(G/G))_{\ge0}$, so it remains to show that $E\mathcal{P}_+\otimes N_H^G(X)\in \Fun(\myuline{I}(G/G)^{\mathrm{op}},\myuline{\mathcal{C}}(G/G))_{\ge0}$. Since $E\mathcal{P}_+$ has a filtration with associated graded consisting of sums of $G$-spectra of the form $\mathrm{Ind}_K^G(\mathbb{S}^n)$ for $n\ge0$ and $K$ a proper subgroup, this case follows by induction. 
\end{proof}

\begin{definition}\label{def:normedslicefiltration}
For $\myuline{I}$ and $\underline{\mathcal{C}}$ as above, we call a collection of $\myuline{I}(G/H)$-indexed slice filtrations on $\myuline{\mathcal{C}}(G/H)$ satisfying the conditions of Proposition \ref{prop:gmonoidalslicefiltrations} a \emph{normed $\myuline{I}$-indexed slice filtration} on $\myuline{\mathcal{C}}$.    
\end{definition}

\subsection{The $RO(G)$-graded Whitehead tower and the equivariant AN filtration}\label{subsec:ROwhitehead} We begin by recalling the definition of the Whitehead tower functor. Let $\mathcal{C}_i\subset \Sp^G$ be those $G$-spectra $X$ with $\myuline{\pi}_kX=0$ for $k<i$, i.e. $\mathcal{C}_i=\langle\mathrm{Ind}_H^G\mathbb{S}^i\rangle_{H\subset G}$. The $\mathcal{C}_i$ together form a $\Z$-indexed slice filtration on $\Sp^G$ in the sense of Definition \ref{def:slicefiltration}.

\begin{definition}\label{def:whitehead}
    The Whitehead tower functor is the composite
    \[\mathrm{Wh}:\Sp^G\xrightarrow{\mathrm{const}}\Fun(\Z^{\mathrm{op}},\Sp^G)\xrightarrow{\tau_{\ge0}^{\{\mathcal{C}_i\}}}\Fun(\Z^{\mathrm{op}},\Sp^G)_{\ge0}\]
\end{definition}

This functor has a lax symmetric monoidal structure since the $\{\mathcal{C}_i\}$ define a slice filtration. However, this structure does not extend to that of a normed slice filtration in the sense of Definition \ref{def:normedslicefiltration}. For example, while $\mathbb{S}^1\in \Sp_{\ge 1}$, the $C_2$-spectrum $\mathbb{S}^\rho=N_e^{C_2}(S^1)\notin\Sp^{C_2}_{\ge2}$ as one has the nonzero Euler class $a_\sigma\in \pi_1\mathbb{S}^\rho$. As a consequence, $\mathrm{Wh}(-)$ does not admit a lax $G$-symmetric monoidal structure.

There are at least two ``corrections'' to this issue. One is to use the Hill--Hopkins--Ravenel slice tower in place of the Whitehead tower, as in Example \ref{example:slicefiltration}. Another is to replace the indexing category $\myuline{\Z}$ with $\myuline{RO}$.  For each $H\subset G$ and $V\in RO(H)$, define 
    \[\mathcal{C}_V=\{X\in\Sp^H:\Sigma^{-V}X\text{ is a connective $H$-spectrum}\}.\]
We thank Lucas Piessevaux for explaining the following construction to us.

\begin{proposition}
    There is a normed $\myuline{RO}$-indexed slice filtration on $\myuline{\Sp}$ in the sense of Definition \ref{def:normedslicefiltration} with connective objects at the $V$-th component given by $\mathcal{C}_V\subset \Sp^H$, for $V\in RO(H)$. The connective cover functor corresponding to $\mathcal{C}_V$ is given by
    \[\tau_{\ge0}^V(X)=\Sigma^V\tau_{\ge0}(\Sigma^{-V}X)\]
    for $X\in\Sp^H$, where $\tau_{\ge0}$ is the standard connective cover functor in $\Sp^G$.
\end{proposition}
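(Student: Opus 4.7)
The plan is first to identify each $\mathcal{C}_V \subset \Sp^H$ as the image $\Sigma^V \Sp^H_{\geq 0}$ of the standard connective $t$-structure under the equivalence $\Sigma^V$. In particular it is generated under colimits and extensions by the set $\{\Sigma^V \Ind_K^H \mathbb{S}^n : n \geq 0,\ K \subset H\}$, so \cite[Proposition 1.4.4.11]{HA} furnishes the $t$-structure on $\Sp^H$. The connective cover formula $\tau_{\geq 0}^V(X) = \Sigma^V \tau_{\geq 0}(\Sigma^{-V}X)$ is then immediate, since the inclusion $\Sigma^V\Sp^H_{\geq 0} \hookrightarrow \Sp^H$ is conjugate, via the equivalence $\Sigma^V$, to the standard inclusion and so has left adjoint $\Sigma^V \circ \tau_{\geq 0} \circ \Sigma^{-V}$.

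Next, I would verify the slice filtration condition from Definition \ref{def:slicefiltration}: if $V \geq W$ in $RO(H)$ then $\mathcal{C}_V \subset \mathcal{C}_W$. Using the semisimplicity of $\R[H]$ together with the definition of the poset structure on $\myuline{RO}$, the inequality $V \geq W$ is equivalent to $V - W$ being represented by an honest $H$-representation $U$. For any such $U$, the sphere $\mathbb{S}^U$ is connective in the standard $t$-structure on $\Sp^H$, so smashing with it preserves connectivity. Hence $\Sigma^{-V}X$ connective forces $\Sigma^{-W}X \simeq \Sigma^U \Sigma^{-V}X$ to be connective, giving the required inclusion. Proposition \ref{prop:slicefiltration} then yields the $RO(H)$-indexed slice filtration on $\Sp^H$ for each $H$.

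To obtain the normed structure I would verify the two hypotheses of Proposition \ref{prop:gmonoidalslicefiltrations} on the generators. The restriction condition follows because any $f^*$ built from restrictions and conjugations commutes with suspensions by representations and preserves connectivity in the standard $t$-structure, hence sends $\mathcal{C}_V^{H'}$ into $\mathcal{C}_{f^*V}^H$. The norm condition reduces, using Lemma \ref{lemma:normsofbigradedspheres} together with the multiplicativity $N_H^{H'}(\Sigma^V Y) \simeq \Sigma^{\Ind_H^{H'}V} N_H^{H'}(Y)$, to the statement that $N_H^{H'}$ carries connective $H$-spectra to connective $H'$-spectra. This is a standard fact that may be checked on the generators $\Ind_K^H \mathbb{S}^n$ via the Hill--Hopkins--Ravenel distributivity formula, combined with the observation that norms preserve the sifted colimits one uses to generate all connective spectra from these. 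The main obstacle is exactly this norm-preservation claim: although morally clear, carefully handling the non-cocontinuity of $N_H^{H'}$ requires invoking the indexed-product/distributivity machinery rather than attempting a direct verification across all colimits.
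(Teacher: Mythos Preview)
Your proposal is correct and follows essentially the same approach as the paper: identify $\mathcal{C}_V$ as $\Sigma^V\Sp^H_{\ge0}$ with its evident generating set, read off the connective cover formula from the adjoint equivalence $\Sigma^V:\mathcal{C}_0\rightleftarrows\mathcal{C}_V:\Sigma^{-V}$, and then verify the hypotheses of Proposition~\ref{prop:gmonoidalslicefiltrations} on generators using the Hill--Hopkins--Ravenel distributivity formula for norms of indexed coproducts. The paper's proof is terser and does not spell out the poset condition $V\ge W\Rightarrow\mathcal{C}_V\subset\mathcal{C}_W$ as you do, but the content is the same.
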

\begin{proof}
    One must check the conditions of Proposition \ref{prop:gmonoidalslicefiltrations}, and this is straightforward using the fact that each $\mathcal{C}_V$ is generated by
    \[\{\Sigma^V\mathrm{Ind}_K^H\mathbb{S}\}_{K\subset H}\]
    and the distributive law  of \cite[Proposition A.37]{HHR} expressing norms of indexed coproducts in terms of indexed coproducts of norms (cf. \cite[Proposition 11.1.10]{HHRbook}).
    The description of $\tau_{\ge0}^V$ follows by composing adjunctions and the equivalence 
    \[\begin{tikzcd}
        \Sigma^V:\mathcal{C}_0\arrow[r,shift left]&\mathcal{C}_V:\Sigma^{-V}\arrow[l,shift left]
    \end{tikzcd}.\qedhere\] 
\end{proof}

\begin{definition}[Piessevaux]\label{def:ROwhitehead}
    The $\myuline{RO}$-indexed Whitehead tower functor is the composite
    \[\mathrm{Wh}^{RO}:\myuline{\Sp}\xrightarrow{\mathrm{const}}\Fun(\myuline{RO}^{\mathrm{op}},\myuline{\Sp})\xrightarrow{\tau_{\ge0}^{\{\mathcal{C}_V\}}}\Fun(\myuline{RO}^{\mathrm{op}},\myuline{\Sp})_{\ge0}\]
    $\mathrm{Wh}^{RO}$ has a canonical lax $G$-symmetric monoidal structure as $\mathrm{const}$ is the right adjoint to the $G$-symmetric monoidal functor $\mathrm{colim}:\Fun(\myuline{RO}^{\mathrm{op}},\myuline{\Sp})\to \myuline{\Sp}$ of Theorem \ref{thm:gsymstructureonfil}, and $\tau_{\ge0}$ has the lax $G$-symmetric monoidal structure of Proposition \ref{prop:gmonoidalslicefiltrations}. 
\end{definition}

The monoidality present on Whitehead tower functors allows one to define structured multiplication on the décalages of an Adams type filtration. We apply this to the $MU_G$-based Adams filtration in $\Sp^G$ using our two variants above of the Whitehead tower.

\begin{definition}\label{def:decalage}
We define the filtrations
\[\mathrm{AN}_G(\mathbb{S}):=\mathrm{Tot}(\mathrm{Wh}(MU_G)\implies\mathrm{Wh}(MU_G\otimes MU_G)\Rrightarrow\cdots)\]
in $\mathrm{CAlg}(\Fil(\Sp^G))$,
\[\mathrm{AN}^{RO}_G(\mathbb{S}):=\mathrm{Tot}(\mathrm{Wh}^{RO}(MU_G)\implies\mathrm{Wh}^{RO}(MU_G\otimes MU_G)\Rrightarrow\cdots)\]
in $\mathrm{CAlg}_G(\Fun(\myuline{RO}^{\mathrm{op}},\myuline{\Sp}))$, and
\[\mathrm{AN}(\mathbb{S}):=\mathrm{AN}_{\{e\}}(\mathbb{S})\]
in $\mathrm{CAlg}(\Fil(\Sp))$.
\end{definition}

\subsection{Partially normed slice filtrations} There is a version of the theory of $G$-symmetric monoidal $\infty$-categories and the corresponding $G$-commutative monoids where one has precisely the norms determined by a particular $\mathbb{N}_\infty$-operad. We refer the reader to the appendices of \cite{natalie} for an implementation of such a theory, using the theory of algebraic patterns. Taking the initial $\mathbb{N}_\infty$-operad gives no nontrivial norms and hence the "pointwise monoidal" theory of symmetric monoidal parametrized $\infty$-categories and their commutative algebras. Taking the terminal $\mathbb{N}_\infty$-operad gives all norms and hence the theory of $G$-symmetric monoidal $\infty$-categories and $G$-commutative monoids discussed above. We will need to make use of this theory for the $\mathbb{N}_\infty$-operad parametrizing all norms $N_H^K$ from nontrivial subgroups $\{e\}\neq H\subset K\subset G$.

All we will need from this theory is an analogue of Proposition \ref{prop:gmonoidalslicefiltrations} for an $\mathbb{N}_\infty$-operad $\mathcal{O}$. In the following, we fix a finite group $G$, an $\mathbb{N}_\infty$-operad $\mathcal{O}$, $\myuline{I}$ a $G$-symmetric monoidal poset, a presentably $G$-symmetric monoidal $\infty$-category $\myuline{\mathcal{C}}$, and for each $H\subset G$, an $\myuline{I}(G/H)$-indexed slice filtration on $\myuline{\mathcal{C}}(G/H)$.

\begin{proposition}\label{prop:partiallynormedslicefiltration}
    For $H\subset G$, let $\{g_{\alpha,i}^H\otimes Y(i)\}_{i,\alpha}$ be the generators of 
    \[\Fun(\myuline{I}(G/H)^{\mathrm{op}},\myuline{\mathcal{C}}(G/H))_{\ge0}\]
    as in Proposition \ref{prop:slicefiltration}. Suppose that for any  subgroups $H,H'\subset G$ and any map of $G$-sets $f:G/H\to G/H'$, one has inclusions
\[f^*(\{g_{\alpha,i}^{H'}\otimes Y(i)\}_{i,\alpha})\subset \underline{\mathcal{C}}(G/H)_{f^*(i)}\]
   for all $i\in\myuline{I}(G/H')$ and that for any $H\subset H'\subset G$ such that $H\to H'$ belongs to the transfer system corresponding to $\mathcal{O}$, one has inclusions
   \[
   N_{H}^{H'}(\{g_{\alpha,i}^H\otimes Y(i)\}_{i,\alpha})\subset \myuline{\mathcal{C}}(G/H')_{N_H^{H'}(i)}
   \]
   for all $i\in\myuline{I}(G/H)$. Then the assignment 
   \[\Fun(\myuline{I}^{\mathrm{op}},\myuline{\mathcal{C}})_{\ge0}(G/H):=\Fun(\myuline{I}(G/H)^{\mathrm{op}},\myuline{\mathcal{C}}(G/H))_{\ge0}\]
   defines an $\mathcal{O}$-symmetric monoidal subcategory of $\Fun(\myuline{I}^{\mathrm{op}},\myuline{\mathcal{C}})$, and 
   the connective cover functors of Proposition \ref{prop:slicefiltration} on $\Fun(\myuline{I}(G/H)^{\mathrm{op}},\myuline{\mathcal{C}}(G/H)$ assemble into a lax $\mathcal{O}$-symmetric monoidal right adjoint
   \[\tau_{\ge0}:\Fun(\myuline{I}^{\mathrm{op}},\myuline{\mathcal{C}})\to \Fun(\myuline{I}^{\mathrm{op}},\myuline{\mathcal{C}})_{\ge0}.\]
\end{proposition}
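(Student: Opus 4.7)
The plan is to imitate the proof of Proposition \ref{prop:gmonoidalslicefiltrations} almost verbatim, restricting the norm-closure step to those norms $N_H^{H'}$ whose inclusion $H \subset H'$ belongs to the transfer system of $\mathcal{O}$, and verifying that the inductive argument still closes.

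First, I would use the restriction-compatibility assumption on the generators together with \cite[Lemma 5.12]{CHLL} to conclude that $\Fun(\myuline{I}^{\mathrm{op}},\myuline{\mathcal{C}})_{\ge0}$ defines a sub-$G$-category of $\Fun(\myuline{I}^{\mathrm{op}},\myuline{\mathcal{C}})$ and that the fiberwise inclusions assemble into a $G$-left adjoint. This step uses no $\mathcal{O}$-information and is formally identical to the fully normed case. Fiberwise closure under Day convolution at $G/H$ is then immediate from Proposition \ref{prop:slicefiltration}(3) applied to the chosen slice filtration at level $H$.

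Next, to verify closure under the $\mathcal{O}$-permissible norms $N_H^{H'}$, I would reduce by induction on $|H'|$, using composition-closure of the transfer system, to the case $H' = G$. The generator case then holds by hypothesis. For a general $X$ in the connective subcategory at level $H$, I would invoke the isotropy separation cofiber sequence
\[
E\mathcal{P}_+ \otimes N_H^G(X) \to N_H^G(X) \to \tilde{E}\mathcal{P} \otimes N_H^G(X),
\]
where $\mathcal{P}$ is the family of proper subgroups of $G$. The third term is handled by writing $X$ as a colimit of generators and using that $\tilde{E}\mathcal{P} \otimes N_H^G(-)$ is colimit-preserving via Proposition \ref{prop:geomfixedpointsofnorm}. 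The first term reduces, via the cellular structure of $E\mathcal{P}_+$ and the projection formula, to showing that the restrictions $\mathrm{Res}^G_K N_H^G(X)$ for proper subgroups $K \subsetneq G$ are built from norms living inside $K$, which is the Hill--Hopkins--Ravenel double-coset formula, after which one concludes by induction on $|G|$.

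The main obstacle specific to the $\mathcal{O}$-variant is verifying that the norms appearing in the double-coset decomposition of $\mathrm{Res}^G_K N_H^G$ actually lie in the transfer system of $\mathcal{O}$ restricted to $K$, so that the inductive hypothesis is applicable. This should follow from the pullback-stability and conjugation-invariance axioms for $\mathbb{N}_\infty$-transfer systems. Granted this, the rest of the argument proceeds exactly as in Proposition \ref{prop:gmonoidalslicefiltrations}, and the lax $\mathcal{O}$-symmetric monoidal structure on the right adjoint $\tau_{\ge0}$ then follows by applying the adjoint functor theorem to the $\mathcal{O}$-symmetric monoidal inclusion $\Fun(\myuline{I}^{\mathrm{op}},\myuline{\mathcal{C}})_{\ge0} \hookrightarrow \Fun(\myuline{I}^{\mathrm{op}},\myuline{\mathcal{C}})$.
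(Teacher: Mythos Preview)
Your proposal is correct and follows essentially the same route as the paper. The paper's proof is a one-liner: invoke \cite[Corollary D.5]{natalie} in place of \cite[Lemma 5.12]{CHLL} and then copy the proof of Proposition \ref{prop:gmonoidalslicefiltrations} verbatim. You have unpacked exactly what that entails, including correctly isolating the one genuinely new point, namely that the norms $N_{K\cap gHg^{-1}}^K$ arising from the double-coset decomposition of $\mathrm{Res}^G_K N_H^G$ remain in the transfer system by restriction-stability and conjugation-closure; the paper leaves this implicit in ``copy the proof.'' The one place where you should be slightly more careful is your final step: the general fact that a right adjoint to an $\mathcal{O}$-symmetric monoidal left adjoint is lax $\mathcal{O}$-symmetric monoidal is precisely the content supplied by \cite[Corollary D.5]{natalie}, so you should cite that rather than gesturing at ``the adjoint functor theorem.''
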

\begin{proof}
    By applying \cite[Corollary D.5]{natalie}, one may copy the proof of Proposition \ref{prop:gmonoidalslicefiltrations}.
\end{proof}

\begin{definition}\label{def:partiallynormedslicefiltration}
For $\myuline{I}$, $\underline{\mathcal{C}}$, and $\mathcal{O}$ as above, we call a collection of $\myuline{I}(G/H)$-indexed slice filtrations on $\myuline{\mathcal{C}}(G/H)$ satisfying the conditions of Proposition \ref{prop:partiallynormedslicefiltration} an $\mathcal{O}$-\emph{normed $\myuline{I}$-indexed slice filtration} on $\myuline{\mathcal{C}}$.    
\end{definition}

\section{Linear $t$-structures on filtered spectra}\label{sec:lineartstructures}
We recall the \emph{linear} $t$-structures on filtered spectra introduced in the context of synthetic spectra by the author and Davies in \cite{j1}, and  in a general filtered context by Lee--Levy \cite{leelevy}. For our purposes, we will work with $t$-structures on filtered spectra determined by lines of the form $y=\alpha x$ for $\alpha >-1$ a real number.

\begin{definition}\label{tstructure definition}
    Let the \emph{linear $t$-structure} on $\Fil(\Sp)$ with respect to the line $y=\alpha x$ be the $\Z$-indexed slice filtration on $\Sp$ with $n$-th $t$-structure generated by $\mathbb{S}^{\lceil\frac{n}{\alpha+1}\rceil}$. 
\end{definition}
We will use $\tau_{\ge n}^{y=\alpha x}$ and $\tau_{\le n}^{y=\alpha x}$ to denote the $n$-th cover and truncation functors respectively for the $t$-structure associated with $y=\alpha x$. Proposition \ref{prop:slicefiltration} yields the following.

\begin{proposition}[\cite{j1}]
    \label{tstructuresexist}
    The linear $t$-structure with respect to $y=\alpha x$ on $\Fil(\Sp)$ is an accessible $t$-structure, which is both right and left complete and compatible with the Day convolution symmetric monoidal structure on $\Fil(\Sp)$. 
    
    For any $Y\in\Fil(\Sp)$, a map $f:X\to Y$ exhibits $X$ as $\tau_{\ge0}^{y=\alpha x}Y$ if and only if $\pi_{x,y}X=0$ whenever $y>\alpha x$ and $f$ induces an isomorphism $\pi_{x,y}X\to \pi_{x,y}Y$ for all $y\le\alpha x$.
\end{proposition}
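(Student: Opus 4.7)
The plan is to derive everything directly from Proposition \ref{prop:slicefiltration} applied to the data $(I,\mathcal{C}) = (\Z,\Sp)$ equipped with the $\Z$-indexed collection of $t$-structures $\mathcal{C}_n := \Sp_{\ge \lceil n/(\alpha+1)\rceil}$, and then translate the resulting levelwise statements into statements about bigraded homotopy groups.

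First I would check that this collection is a slice filtration in the sense of Definition \ref{def:slicefiltration}. Since $\alpha+1 > 0$, the map $n \mapsto \lceil n/(\alpha+1)\rceil$ is nondecreasing, so $n \ge m$ gives $\mathcal{C}_n \subset \mathcal{C}_m$. The unit $\mathbb{S}$ lies in $\mathcal{C}_0$, and the generators satisfy
\[
\mathbb{S}^{\lceil i/(\alpha+1)\rceil} \otimes \mathbb{S}^{\lceil j/(\alpha+1)\rceil} \simeq \mathbb{S}^{\lceil i/(\alpha+1)\rceil + \lceil j/(\alpha+1)\rceil} \in \mathcal{C}_{i+j},
\]
using the elementary inequality $\lceil a\rceil + \lceil b\rceil \ge \lceil a+b\rceil$. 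Proposition \ref{prop:slicefiltration} then immediately yields an accessible $t$-structure on $\Fil(\Sp)$ compatible with Day convolution, with the levelwise connective cover formula
\[
(\tau_{\ge 0}^{y=\alpha x} X)^n \;\simeq\; \tau_{\ge \lceil n/(\alpha+1)\rceil} X^n.
\]
Left and right completeness follow from item (5) of that proposition, since each $\mathcal{C}_n$ is a shift of the standard $t$-structure on $\Sp$, which is bicomplete.

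Next I would unpack the characterization of $\tau_{\ge 0}^{y=\alpha x}$ using the bigraded homotopy groups. Since $\pi_{x,y} X = \pi_x X^{x+y}$, connectivity of $X^{x+y}$ at level $\lceil(x+y)/(\alpha+1)\rceil$ says $\pi_{x,y} X = 0$ whenever $x < \lceil(x+y)/(\alpha+1)\rceil$, which is equivalent to $\alpha x < y$. Thus $X$ is connective in the linear $t$-structure if and only if $\pi_{x,y} X = 0$ for all $y > \alpha x$. Applying the connective cover formula above and using that $\tau_{\ge k}$ on $\Sp$ is characterized by isomorphisms on $\pi_{\ge k}$ and vanishing in lower degrees, we get that $\tau_{\ge 0}^{y=\alpha x} Y \to Y$ induces isomorphisms on $\pi_x$ at each level $n = x+y$ exactly when $x \ge \lceil n/(\alpha+1)\rceil$, i.e.\ when $y \le \alpha x$.

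Finally, to conclude the "if" direction of the last claim, I would invoke the uniqueness of the connective cover: any map $f : X \to Y$ with $X$ connective and $\pi_{x,y} X \to \pi_{x,y} Y$ an isomorphism for $y \le \alpha x$ has cofiber with $\pi_{x,y} = 0$ for $y \le \alpha x$, which, combined with the computation above, places the cofiber in the truncated part $(\Fil(\Sp)_{\ge 0})^\perp[1]$; this forces $f$ to factor through (and agree with) the universal map $\tau_{\ge 0}^{y=\alpha x} Y \to Y$. No step looks like a serious obstacle here; the only care needed is the bookkeeping converting between filtration degree $n$, internal degree $x$, and the bigrading $(x,y) = (x, n-x)$, together with the monotonicity of the ceiling function.
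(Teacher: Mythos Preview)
Your proposal is correct and is exactly the approach the paper takes: the paper simply states that Proposition~\ref{prop:slicefiltration} yields the result, and your argument is a careful unpacking of that invocation, verifying the slice-filtration hypotheses for $\mathcal{C}_n=\Sp_{\ge\lceil n/(\alpha+1)\rceil}$ and translating the levelwise connective cover formula into the bigraded characterization. The bookkeeping with the ceiling function and the cofiber argument for the ``if'' direction are handled correctly.
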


For filtered spectra satisfying the following property, connectivity with respect to a linear $t$-structure is equivalent to connectivity on associated graded.

\begin{definition}\label{def:stronglycomplete}
        We say a filtered spectrum $Y$ is \emph{strongly complete} if $Y$ is complete and, for all integers $x,y$, we have
    \[{\varprojlim}^1(\pi_{x,y}Y)/\tau^n=0\]
    so that, by the Milnor sequence,
    \[
    \pi_{x,y}Y\cong \varprojlim(\pi_{x,y}Y)/\tau^n
    \]
\end{definition}

\begin{remark}
    The condition that $Y$ is strongly complete precludes the existence of infinitely $\tau$-divisible elements in $\pi_{*,*}Y$.
\end{remark}

\begin{proposition}\label{prop:connectivitymodtau}
 Let $X\in\Fil(\Sp)$ be strongly complete. Then $X$ is connective in the $y=\alpha x$ linear $t$-structure if and only if $X/\tau$ is connective in the $y=\alpha x$ linear $t$-structure.
\end{proposition}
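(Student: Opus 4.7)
The plan is to unwind both sides of the equivalence by first translating the definition of the linear $t$-structure into pointwise data. By Proposition \ref{prop:slicefiltration}(2) combined with Definition \ref{tstructure definition}, a filtered spectrum $Y$ is connective in the $y=\alpha x$ linear $t$-structure exactly when $Y^n$ is $c_n$-connective as a spectrum for every $n\in\Z$, where $c_n := \lceil n/(\alpha+1)\rceil$. Since $(X/\tau)^n = \mathrm{gr}^n X$, the statement reduces to the equivalence: each $X^n$ is $c_n$-connective if and only if each $\mathrm{gr}^n X$ is $c_n$-connective.

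The forward direction needs no completeness hypothesis. I would just observe that the cofiber sequence $X^{n+1} \to X^n \to \mathrm{gr}^n X$, together with the monotonicity $c_{n+1} \geq c_n$ (which holds because $\alpha > -1$), forces $\mathrm{gr}^n X$ to be $c_n$-connective whenever $X^n$ and $X^{n+1}$ are.

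For the reverse direction, strong completeness does the work. First I would show that each finite quotient $X^n/X^{n+k}$ is $c_n$-connective, by induction on $k$ using its standard filtration with associated graded pieces $\mathrm{gr}^{n+j}X$ for $0\leq j<k$ and the estimate $c_{n+j} \geq c_n$. Second, fixing $i < c_n$, the long exact sequence of the cofiber sequence $X^{n+k}\to X^n \to X^n/X^{n+k}$ identifies $(\pi_{i,n-i}X)/\tau^k$ -- in the sense of Definition \ref{def:stronglycomplete}, where multiplication by $\tau^k$ on $\pi_{i,n-i}X$ is induced by the $k$-fold composition of structure maps $X^{n+k}\to X^n$ -- with the image of $\pi_i X^n$ inside $\pi_i(X^n/X^{n+k})$. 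Since the ambient group vanishes by the first step, this quotient is zero.

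Finally I would invoke strong completeness to obtain
\[\pi_{i,n-i}X \cong \varprojlim_k (\pi_{i,n-i}X)/\tau^k = 0,\]
concluding that $\pi_i X^n = 0$ for every $i < c_n$, as required. The only step that asks for care is the identification of the algebraic $\tau^k$-quotient with the image inside $\pi_*(X^n/X^{n+k})$, but this follows straight from the long exact sequence, so I do not expect it to present a genuine obstacle.
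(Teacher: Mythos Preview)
Your proof is correct and follows essentially the same approach as the paper: both argue the forward direction from the cofiber sequence defining $X/\tau$, and for the reverse direction both show inductively that each $X/\tau^k$ (equivalently, each pointwise quotient $X^n/X^{n+k}$) is connective, then invoke strong completeness to pass to the limit. The only difference is cosmetic: you unpack everything pointwise in each filtration degree, while the paper works with the filtered objects $X/\tau^n$ globally and appeals to the Milnor sequence together with the hypothesis $\pi_{x,y}X\cong\varprojlim(\pi_{x,y}X)/\tau^n$.
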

\begin{proof}
    By completeness we may write $X=\varprojlim X/\tau^n$, and via the cofiber sequences
    \[\Sigma^{0,-n+1}X/\tau\to X/\tau^n\to X/\tau^{n-1}\]
    one concludes that if $X/\tau$ is connective, then $X/\tau^n$ is connective for all $n$. By use of the Milnor sequence and strongly completeness, we see that $X$ is connective. Conversely, if $X$ is connective, the cofiber sequence
    \[X\to X/\tau\to \Sigma^{1,-2}X\]
    implies that $X/\tau$ is also connective.
\end{proof}

In \cite{j1}, we computed the heart of these linear $t$-structures, and we determined the effect of taking connective covers on mod $\tau$ homotopy groups in the case of vertical lines. Here we will spell out the nonvertical case, for which we first introduce some terminology.

\begin{definition}\label{def:linecrossing}
    Let $X\in\Fil(\Sp)$. For the line $y=\alpha x$, we say a differential $d_r(a)=b\neq 0$ in the $\tau$-Bockstein spectral sequence for $X$ is a \emph{line-crossing differential} if $a\in\pi_{x,y}(X/\tau)$, $y\le \alpha x$, and $y+r>\alpha (x-1)$. That is, the source of the differential lies on or below the line $y=\alpha x$, and the target lies strictly above it (see the $d_3$ leaving bidegree $(4,0)$ in Figure \ref{koanss} for an example).

    Let $f:X\to Y$ be a map of filtered spectra and let $b\in\pi_{x,y}Y$ such that $y>\alpha x$ and $[b]\neq0$. Let $l$ be the smallest positive integer such that $\tau^lb\in \pi_{x,\lfloor \alpha x\rfloor}Y$ and suppose that $\tau^lb\neq0$. In this case, we say $b$ \emph{drops} to the line $y=\alpha x$ and a lift $a\in \pi_{x,\lfloor \alpha x\rfloor}Y$ of $\tau^lb$ along $f$ is called a \emph{dropped lift} of $b$ to $X$.
\end{definition}

\begin{theorem}\label{thm:modtauofconnectivecover}
    Let $f:X\to Y$ be a map of filtered spectra. The map $f$ exhibits $X$ as $\tau_{\ge0}^{y=\alpha x}Y$ if and only if the following conditions hold.
    \begin{enumerate}
        \item $X$ is strongly complete.
        \item The filtered spectrum $X/\tau$ is connective in the linear $t$-structure for $y=\alpha x$, so that $\pi_{x,y}(X/\tau)=0$ for $y>\alpha x$.
        \item The map $\tau^{-1}f:\tau^{-1}X\to\tau^{-1}Y$ is an equivalence.
         \item For all $r\ge 2$, the map
    \begin{equation}\label{eq:fonEr}
        E_r-\tau\mathrm{BSS}(X)\to E_r-\tau\mathrm{BSS}(Y)
    \end{equation}
    has the following properties:
    \begin{enumerate}
        \item It is an iso when $y+r\le \alpha (x-1)$. If $y+r>\alpha (x-1)$ and $y\le \alpha x$, the image of this map consists precisely of those classes $b\in\pi_{x,y}(Y/\tau)$ that do not support a line-crossing differential in the $\tau$-BSS of $Y$.
        \item It is an injection in bidegree $(x,y)$ unless $y= \lfloor \alpha x\rfloor$.
    \end{enumerate}
    \end{enumerate}
    
    Suppose $Y$ is strongly complete. If $f$ exhibits $X$ as $\tau_{\ge0}^{y=\alpha x}Y$, the map $f$ has the following additional properties.
    \begin{enumerate}
        \setcounter{enumi}{4}
        \item If $b\in\pi_{x,y}Y$ drops to the line $y=\alpha x$ in the sense of Definition \ref{def:linecrossing}, then $b$ admits a unique dropped lift $a\in\pi_{x,y-l}X$, and $\tau^ka=0\iff \tau^{k+l}b=0$, for all $k\ge0$.
        \item The kernel of the map (\ref{eq:fonEr}) in bidegree $(x,\lfloor \alpha x\rfloor)$ consists precisely of the mod $\tau$-projections of
        dropped lifts of classes $b\in\pi_{x,y}Y$ to $X$ with $\tau^{l+r-2}b\neq0$.
    \item  If $b\in\pi_{x,y}Y$ drops to the line $y=\alpha x$ in the sense of Definition \ref{def:linecrossing}, and $\tau^{l+r-1}b=0$ for $r>1$ minimal with respect to this property, then there is a nonzero line-crossing differential $d_{l+r}(z)=[b]$ in the $\tau$-BSS of $Y$. The class $z$ admits a unique lift $\tilde{z}$ along (\ref{eq:fonEr}), and $d_{r}(\tilde{z})=[a]$, where $a$ is the dropped lift to $X$ of $b$. Conversely, every differential in the $\tau$-BSS of $X$ whose target is the mod $\tau$-projection of a dropped lift arises in this way. 
    \item Every differential in the $\tau$-BSS of $X$ is a lift of a differential in the $\tau$-BSS of $X$ or is of the form appearing in condition (7).
    
\end{enumerate}
\end{theorem}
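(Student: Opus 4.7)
The plan is to reduce the entire theorem to the characterization in Proposition~\ref{tstructuresexist} — that $f:X\to Y$ exhibits $X\simeq\tau_{\ge 0}^{y=\alpha x}Y$ iff $\pi_{x,y}X=0$ for $y>\alpha x$ and $f$ induces an isomorphism $\pi_{x,y}X\to\pi_{x,y}Y$ for $y\le\alpha x$ — combined with a bidegree-by-bidegree analysis of the cofiber sequence $X\to Y\to Z$, where $Z:=\tau_{<0}^{y=\alpha x}Y$, via the $\tau$-Bockstein spectral sequence. Proposition~\ref{prop:connectivitymodtau} is the bridge between connectivity of $X$ (which really characterizes the connective cover) and connectivity of $X/\tau$ (which is condition (2)).

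For the equivalence of (1)--(4) with being the connective cover, in the ``only if'' direction, (1) follows from left completeness of the $t$-structure in Proposition~\ref{tstructuresexist} together with the fact that the generators $\mathbb{S}^{\lceil n/(\alpha+1)\rceil,0}$ are strongly complete; (2) comes from the cofiber sequence $\Sigma^{0,1}X\xrightarrow{\tau}X\to X/\tau$ and compatibility of the $t$-structure with $(1,0)$-suspension; (3) is immediate since each generator satisfies $\tau^{-1}\mathbb{S}^{n,0}=0$, so both $\tau^{-1}X$ and $\tau^{-1}Y$ coincide; and (4) is extracted from the long exact sequence of $X\to Y\to Z$ on $\tau$-BSS pages, using that $Z$ is concentrated strictly above the line. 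In the ``if'' direction, Proposition~\ref{prop:connectivitymodtau} together with (1)--(2) gives that $X$ is connective, hence $\pi_{x,y}X=0$ for $y>\alpha x$; the isomorphism $\pi_{x,y}X\cong\pi_{x,y}Y$ for $y\le\alpha x$ then follows by assembling (3)--(4) through the $\tau$-BSS, using strong completeness to convert the $E_\infty$-comparison to a genuine isomorphism of homotopy groups.

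For the additional statements (5)--(8) assuming $Y$ is strongly complete, (5) is the iso $\pi_{x,\lfloor\alpha x\rfloor}X\cong\pi_{x,\lfloor\alpha x\rfloor}Y$ applied to $\tau^l b$, together with the LES of $X\to Y\to Z$ to control $\tau$-torsion; (7) is obtained by chasing a class $b\in\pi_{x,y}Y$ with minimal $\tau^{l+r-1}$-killer through the LES, where the relation $\tau^{l+r-1}b=0$ forces a bounding class $z$ in the $\tau$-BSS of $Y$ with $d_{l+r}(z)=[b]$, and $z$ lies in the iso range so it must lift uniquely to $\tilde z$ in $X$, yielding $d_r(\tilde z)=[a]$; (6) then records this correspondence as the kernel on the $E_r$-page; and (8) is the dichotomy that every differential in the $\tau$-BSS of $X$ is either lifted from one in $Y$ or of the form $d_r(\tilde z)=[a]$ from (7), which follows since any class in $X/\tau$ is either pulled back from $Y/\tau$ or a dropped-lift projection by (4).

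The main obstacle is the simultaneous bookkeeping for (6)--(8): one must track which $E_r$-classes survive in $X$, $Y$, and $Z$, and how line-crossing differentials in $Y$ redistribute after truncation — some being ``split'' into a genuine differential in $X$ and a $\tau$-divisible surviving class in $Z$. A clean implementation is to first compute the $\tau$-BSS of $Z$ directly from that of $Y$: the classes that survive in $Z$ are precisely the targets of line-crossing differentials from $Y$, and their $\tau^l$-multiples landing on the line reappear in $X$ as dropped lifts. With $Z$ described explicitly, all of (5)--(8) reduce to inspection of the LES of $X\to Y\to Z$ one bidegree at a time, and the bookkeeping in (4) ensures no spurious extra differentials are introduced.
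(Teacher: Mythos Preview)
Your overall strategy—reduce to Proposition~\ref{tstructuresexist} and analyze via the $\tau$-Bockstein—matches the paper's. However, there are two concrete problems.

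First, your justification of (3) in the ``only if'' direction is wrong: $\tau^{-1}\mathbb{S}^{n,0}$ is the constant filtration at $\mathbb{S}^n$, not zero. The correct argument is the one the paper gives: the cofiber $Z=\tau_{<0}^{y=\alpha x}Y$ has $\pi_{x,y}Z=0$ for $y\le\alpha x$, and since $\tau$ lowers the second grading, every class in $\pi_{*,*}Z$ is $\tau$-power torsion, whence $\tau^{-1}Z=0$.

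Second, and more seriously, your treatment of (4) and of the ``if'' direction is a plan rather than a proof, and the missing steps are where the actual work lies. The paper does not organize things around the cofiber $Z$; instead it compares the two long exact sequences coming from the $\tau$-cofiber sequences $X\to X/\tau\to\Sigma^{1,-2}X$ and $Y\to Y/\tau\to\Sigma^{1,-2}Y$, using the five lemma for the $r=2$ case of (4a) and direct element-chasing with the boundary map $\delta$ for the rest. Your alternative—first compute the $\tau$-BSS of $Z$ from that of $Y$, then read off everything from the LES of $X\to Y\to Z$—is not obviously easier: to know $\pi_{*,*}(Z/\tau)$ you need the cofiber of $X/\tau\to Y/\tau$, which is exactly what you are trying to determine, so the argument risks circularity unless you unwind it into essentially the same element-chase the paper performs. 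In particular, the claim ``the classes that survive in $Z$ are precisely the targets of line-crossing differentials from $Y$'' is the content of (4)--(7) rather than an input to it. Likewise, for the ``if'' direction, saying you ``assemble (3)--(4) through the $\tau$-BSS'' skips the injectivity and surjectivity arguments that occupy the first three paragraphs of the paper's proof; those require careful use of the minimal $\tau$-torsion exponent and of (4a)--(4b) at specific bidegrees, and are not automatic from an $E_\infty$-comparison.
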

\begin{proof}
    We begin with the first half of the theorem statement and suppose that conditions (1)-(4) hold. To see that $f$ must be injective on bigraded homotopy groups, suppose that $0\neq a\in\pi_{x,y}X$ and $f(a)=0$. By Proposition \ref{prop:connectivitymodtau}, conditions (1) and (2) guarantee that $X$ is connective in the linear $t$-structure for $y=\alpha x$, so we may assume wlog that $y\le \alpha x$. Since $\tau^{-1}f$ is injective by condition (3), there exists $r>0$ such that $\tau^ra=0$ and we may suppose wlog that $r$ is minimal with respect to this property. By exactness, there exists $z\in\pi_{x+1,y-r-1}X/\tau$ with $\delta(z)=\tau^{r-1}a$. Since $y-r-1<\lfloor\alpha(x+1)\rfloor$, by condition (4b) we have that $f(z)\neq0$, and since $\delta(f(z))=0$, the class $f(z)$ is a nonzero permanent cycle. Now, using connectivity of $X$ to write $a=\tau^k\alpha$ with $k\ge0$ and $[\alpha]\neq0$, we have that $d_{r+k+1}(z)=[\alpha]$, so $f(z)$ is not in the image of $f$ on the $E_{r+k+2}$-page. By (4a), $f(z)$ must therefore support a line crossing differential, contradicting that it is a permanent cycle.

    To see that $f$ is surjective in bidegrees $(x,y)$ with $y\le\alpha x$, let $b\in\pi_{x,y}Y$ with $y\le\alpha x$ and suppose that $\tau^{-1}b\neq0$. Since $\tau^{-1}f$ is surjective by condition (3), there exists $a\in\pi_{x,y-k}X$ with $f(a)=\tau^kb$ and we may take $k>0$ wlog. Using connectivity of $X$ to write $a=\tau^r\alpha$ with $r\ge0$ and $[\alpha]\neq0$, we have $f(\tau^r\alpha)=\tau^kb$. Suppose first that $r\ge k$, then it follows that $\tau^k(b-\tau^{r-k}f(\alpha))=0$, so there exists $z\in \pi_{x+1,y-k-1}Y/\tau$ such that $\delta(z)=\tau^{k-1}(b-\tau^{r-k}f(\alpha))$. The class $z$ is on or below the line $y+2\le\alpha(x-1)$, so by (4a), it admits a lift $\tilde{z}$ to $X/\tau$. We have $f(\delta(\tilde{z}))=\tau^{k-1}(b-\tau^{r-k}f(\alpha))$, so it follows that $\tau^{k-1}b$ is in the image of $f$. Repeating the argument with $\tau^{k-1}b$ in place of $\tau^kb$, we may proceed until $k=0$, and therefore $b$ is in the image of $f$. A similar argument works when $r<k$.

    Suppose then that $b\in\pi_{x,y}Y$ with $y\le\alpha x$, now with $\tau^{k+1}b=0$, and let $k$ be minimal with respect to this property. As before, there exists $z$ with $\delta(z)=\tau^{k}b$, and $z$ admits a lift $\tilde{z}$ to $X/\tau$ by (4a). But then $\tau^kb=f(\delta(\tilde{z}))$ is in the image of $f$, and we may produce a lift of $b$ exactly as in the previous paragraph.

    For the converse, assume that $f$ exhibits $X$ as the connective cover. Conditions (1) and (2) follow for degree reasons from the fact that $X$ is connective in the linear $t$-structure for $y=\alpha x$. For (3), the cofiber $\tau_{\le -1}^{y=\alpha x}Y$ of $f$ has $\pi_{x,y}\tau_{\le -1}^{y=\alpha x}Y=0$ for $y\le \alpha x$. Since $\tau$ has bidegree $(0,-1)$, it follows that every element of $\pi_{*,*}\tau_{\le -1}^{y=\alpha x}Y$ is $\tau$-power torsion, so $\tau^{-1}\tau_{\le -1}^{y=\alpha x}Y=0$ and $\tau^{-1}f$ is an equivalence. For (4), we begin with the case $r=2$. Consider the diagram of long exact sequences
        \[
    \begin{tikzcd}
        \pi_{x,y+1}X\arrow[r]\arrow[d]&\pi_{x,y}X\arrow[r]\arrow[d]&\pi_{x,y}(X/\tau)\arrow[r,"\delta"]\arrow[d]&\pi_{x-1,y+2}X\arrow[d]\arrow[r,"\tau"]&\pi_{x-1,y+1}X\arrow[d]\\
        \pi_{x,y+1}Y\arrow[r]&\pi_{x,y}Y\arrow[r]&\pi_{x,y}(Y/\tau)\arrow[r,"\delta"]&\pi_{x-1,y+2}Y\arrow[r,"\tau"]&\pi_{x-1,y+1}Y
    \end{tikzcd}
    \]
    induced by the multiplication by $\tau$ cofiber sequence. When $y+2\le \alpha (x-1)$, all vertical maps except the middle one are isomorphisms by assumption, hence so is the middle by the five lemma. If $a\in\pi_{x,y}(X/\tau)$ for $y+2>\alpha (x-1)$, then $a$ is a permanent cycle in the $\tau$-BSS for $X$ for degree reasons, as $X/\tau$ is connective. By naturality of the $\tau$-BSS, $f(a)$ is a permanent cycle in the $\tau$-Bockstein SS for $Y$. Conversely, if $b\in \pi_{x,y}(X/\tau)$ for $y+2>\alpha (x-1)$ and $y\le \alpha x$ does not support a line crossing differential, then it is a permanent cycle, so it admits a lift $\tilde{b}\in\pi_{x,y}Y$ along the $\tau$-reduction map. Since $\pi_{x,y}X\to \pi_{x,y}Y$ is an iso in this bidegree, naturality of the mod $\tau$ projection map now implies $b$ is in the image of $f$, as desired. This proves (4a) in the case $r=2$.    

    For (4b), suppose 
    \[a\in\ker(\pi_{x,y}(X/\tau)\to \pi_{x,y}(Y/\tau))\]
    for $y+2>\alpha (x-1)$ and $y<\lfloor \alpha x\rfloor$. For degree reasons, $a$ must be a permanent cycle, so it admits a lift $\tilde{a}\in\pi_{x,y}X$ along the $\tau$-reduction map such that $f(\tilde{a})$ is divisible by $\tau$ by assumption. But since $y<\lfloor \alpha x\rfloor$, $y+1\le \alpha x$ and hence $\pi_{x,y+1}X\to \pi_{x,y+1}Y$ is an iso, so that $\tilde{a}$ must also be divisible by $\tau$. Therefore $a=0$ as desired. The proof of (4) for $r>2$ now proceeds by induction using using what we have just proved as a base case and analogous arguments.

    For the second part of the theorem, we assume now that $Y$ is strongly complete and that $f$ exhibits $X$ as the connective cover.  Condition (5) follows from the fact that $\pi_{x,y}X\to \pi_{x,y}Y$ is an isomorphism for all $y\le\alpha x$. For (6), suppose that $a\in\pi_{x,\lfloor \alpha x\rfloor}(X/\tau)$ is in the kernel of (\ref{eq:fonEr}). The unique lift $\tilde{a}\in\pi_{x,\lfloor \alpha x\rfloor}X$ of $a$ has the property that $f(\tilde{a})$ is divisible by $\tau$, since $[f(\tilde{a})]=0$ by assumption. It follows that $\tilde{a}$ is a dropped lift of a class $b$ in $Y$ since we may use strong completeness of $Y$ to produce the class $b$ with $[b]\neq0$ by dividing $f(\tilde{a})$ by sufficiently many powers of $\tau$. The fact that $a$ is nonzero on $E_r$ implies that $\tau^{r-1}\tilde{a}\neq0$, so by condition (6), we see that $\tau^{l+r-2}b\neq0$. Conversely, every dropped lift $a$ of a class $b$ in $Y$ is the kernel of $f$ mod $\tau$ by definition as $l>0$. For degree reasons $[a]$ must be a permanent cycle and it is hit by a $d_{r'}$ for $r'<r$ if and only if $\tau^{l+r-2}b=0$. Therefore $a$ determines a nonzero class on $E_r$ if $\tau^{l+r-2}b\neq0$.

    For (7), the first statement follows from the usual relationship between $\tau$-power torsion and differentials in the $\tau$-BSS. The class $z$ is in the region of $E_r$ in which (\ref{eq:fonEr}) is an isomorphism, hence it admits the lift $\tilde{z}$. One has $\delta(z)=\tau^{l+r-2}b$ so that $f(\delta(\tilde{z}))=\tau^{l+r-2}b$ , so that $\delta(\tilde{z})=\tau^{r-2}a$ where $a$ is the dropped lift of $b$ to $X$, by injectivity of $f$ in this region. This further implies the differential $d_r(\tilde{z})=[a]$. Conversely, if there is a differential $d_r(\tilde{z})=[a]$ in the $\tau$-BSS of $X$, for $a$ a dropped lift of $b$ in $Y$, then by connectivity of $X$, one has $\delta(\tilde{z})=\tau^{r-2}a$, so that $\delta(f(\tilde{z}))=\tau^{r+l-2}b$. The integer $r$ is minimal with respect to this latter property by the second half of condition (5).

    For (8), suppose there is a differential $d_r(z)=[a]$ in the $\tau$-BSS of $X$ that is not a lift of a $d_r$ in the $\tau$-BSS of $Y$. This means that $f([a])=0$, so by (4b) and (6), the class $a$ must be a dropped lift of a class $b$ in Y, as desired.\end{proof}

\begin{remark}
    We highlight separately the $E_2$-page case of Theorem \ref{thm:modtauofconnectivecover} conditions (4) and (6) since this describes the behavior of the map $f$ on the homotopy groups of the associated graded of these filtrations. We assume that $f:X\to Y$ exhibits $X$ as the connective cover of $Y$ in the $y=\alpha x$ linear $t$-structure and that $Y$ is strongly complete. We then have the following.
    \begin{enumerate}
        \item The map
        \[
        \pi_{x,y}(X/\tau)\to \pi_{x,y}(Y/\tau)
        \]
        is an iso when $y+2\le \alpha (x-1)$, i.e. when the bidegree $(x-1,y+2)$ is on or below the line $y=\alpha x$. If $y+2> \alpha (x-1)$ and $y\le \alpha x$, the image of this map consists precisely of those classes $b\in\pi_{x,y}(Y/\tau)$ that are permanent cycles in the $\tau$-Bockstein SS for $Y$.
        \item The map
        \[
        \pi_{x,y}(X/\tau)\to \pi_{x,y}(Y/\tau)
        \]
        is injective unless $y=\lfloor \alpha x\rfloor$. The kernel of the map in bidegree $(x,\lfloor\alpha x\rfloor)$ consists precisely of the mod $\tau$-projections of dropped lifts of classes $b\in\pi_{x,y}Y$ to $X$, and every class $b\in\pi_{x,y}Y$ that drops to the line $y=\alpha x$ admits a unique such dropped lift.
    \end{enumerate}
\end{remark}

We give an example that illustrates the above theorem, where $Y=\mathrm{AN}(ko)$ is the Adams--Novikov filtration of $ko$, which one may implement as a filtered spectrum via the \emph{signature} functor $\sigma:\mathrm{Syn}_{MU}\to\Fil(\Sp)$ discussed in \cite[Section 1.4]{CDvN}.  Here we take $\alpha=1/2$, and in Figure \ref{koanss}, we show the ANSS for $ko$, with the line $y=(1/2)x$ drawn in green. In Figure \ref{kocover}, we show the $\tau$-Bockstein SS for $\tau_{\ge0}^{y=(1/2)x}\mathrm{AN}(ko)$.

The lines $y=(1/2)x$ and $y+2=(1/2)(x-1)$ are drawn in these figures in green. The black squares (copies of $\Z$) and dots (copies of $\F_2$) in Figure \ref{kocover} map isomorphically to the black squares and dots in Figure \ref{koanss} via the map $f:X\to Y$, the blue square in Figure \ref{kocover} in bidegree $(4,0)$ maps to the black square in Figure \ref{koanss} in bidegree $(4,0)$ via multiplication by $2$. The orange dots in Figure \ref{kocover} in $(1,0),(2,1),(17,8),(18,9)$ are the mod $\tau$ projections of the dropped lifts of the black dots in Figure \ref{koanss} in $(1,1),(2,2),(17,9),(18,10)$, respectively. The two orange $d_2$-differentials leaving bidegrees $(18,6)$ and $(19,7)$ respectively are those of the form described in Proposition \ref{thm:modtauofconnectivecover} (7). Finally, structure lines indicate hidden $\eta$-multiplications of the appropriate filtration jumps, where $\eta\in\pi_{1,-1}\mathbbm{1}$.

\begin{remark}
    The class $1\in \pi_{0,0}\mathrm{AN}(ko)$ lifts to $\pi_{0,0}(\tau_{\ge0}^{y=(1/2)x}\mathrm{AN}(ko))$, but $\eta\in \pi_{1,1}\mathrm{AN}(ko)$ does not. This implies that the map of filtered spectra \[\tau_{\ge0}^{y=(1/2)x}\mathrm{AN}(ko)\to \mathrm{AN}(ko)\]
    does not lift to one of modules over $\mathrm{AN}(\mathbb{S})$, i.e. to one of $MU$-synthetic spectra. This reflects the fact that $\mathrm{AN}(\mathbb{S})$ is not connective in the linear $t$-structure on $\Fil(\Sp)$ associated to $y=(1/2)x$.
\end{remark}

\NewSseqGroup\etatowers {} {
    \foreach \n in {0,...,20} {
    \class(\n+1,\n+1)
    }
    \foreach \m in {0,...,19} {
    \structline[red](\m,\m)(\m+1,\m+1)
    }
}

\NewSseqGroup\etatowersblue {} {
    \foreach \n in {0,...,20} {
    \class[blue](\n+1,\n+1)
    }
    \foreach \m in {0,...,19} {
    \structline[black](\m,\m)(\m+1,\m+1)
    }
}
\NewSseqGroup\etatowersred {} {
    \foreach \n in {0,...,20} {
    \class[red](\n+1,\n+1)
    }
    \foreach \m in {0,...,19} {
    \structline[black](\m,\m)(\m+1,\m+1)
    }
}

\begin{sseqdata}[name = ansskoattwo, Adams grading, classes = {fill, show name=below},
grid = go, xrange ={0}{20},yrange={0}{10},xscale=0.35,yscale=0.7,x tick step =2, y tick step =2,run off differentials = {->},struct lines = red ]

\class[rectangle](0,0)\class[rectangle,"u^2" {right,xshift=-0.25cm,yshift=-0.18cm}](4,0)\class[rectangle](8,0)\class[rectangle](12,0)\class[rectangle](16,0)\class[rectangle](20,0)
\class(-4,-2)
\class(30,15)
\structline[color=green](-4,-2)(30,15)
\class["\eta" {right,xshift=-0.25cm,yshift=-0.18cm}](1,1)
\structline(0,0)(1,1)
\etatowers(1,1)
\etatowers(4,0)
\etatowers(8,0)
\etatowers(12,0)
\etatowers(16,0)
\etatowers(20,0)
\foreach \i in {0,...,10} {
    \d3(4+\i,\i)
    }

\foreach \i in {0,...,10} {
    \d3(12+\i,\i)
    }

\foreach \i in {0,...,10} {
    \d3(20+\i,\i)
    }
\end{sseqdata}

\begin{figure}[!htbp]
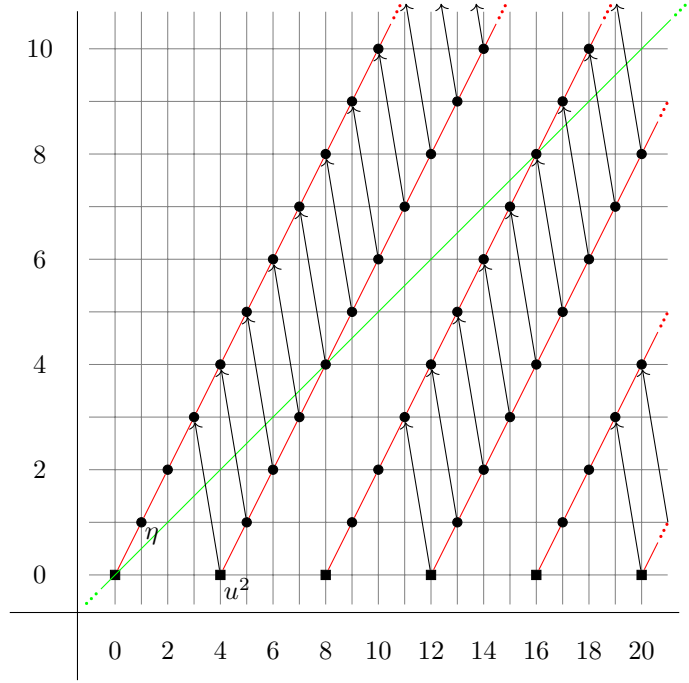

\centering
\printpage[name = ansskoattwo]
\caption{The ANSS for $ko$.}
\label{koanss}
\end{figure}

\begin{sseqdata}[name = ansscover, Adams grading, classes = {fill, show name=below},
grid = go, xrange ={0}{21},yrange={0}{10},xscale=0.35,yscale=0.7,x tick step =2, y tick step =2,run off differentials = {->},struct lines = red ]
\class[rectangle](0,0)
\class(-4,-2)
\class(30,15)
\structline[color=green](-4,-2)(30,15)
\class(3,-1)
\class(23,9)
\structline[color=green](3,-1)(23,9
)
\class[color=orange](1,0)
\class[color=orange](2,1)
\class[rectangle,color=blue](4,0)
\structline(0,0)(1,0
)
\structline(1,0)(2,1)
)

\class[rectangle](8,0)\class[rectangle](12,0)\class[rectangle](16,0)\class[rectangle](20,0)
\etatowers(16,0)
\etatowers(20,0)
\d3(20,0)
\d3(21,1)
\d3(22,2)

\class(13,1)\class(14,2)\class(15,3)\class(16,4)\class(17,5)\class(18,6)\class(19,7)
\structline(12,0)(13,1)
\structline(13,1)(14,2)
\structline(14,2)(15,3)
\structline(15,3)(16,4)
\structline(16,4)(17,5)
\structline(17,5)(18,6)
\structline(18,6)(19,7)

\class(9,1)\class(10,2)\class(11,3)\class(12,4)\class(13,5)\class(14,6)\class(15,7)\class(16,8)
\structline(8,0)(9,1)
\structline(9,1)(10,2)
\structline(10,2)(11,3)
\structline(11,3)(12,4)
\structline(12,4)(13,5)
\structline(13,5)(14,6)
\structline(14,6)(15,7)
\structline(15,7)(16,8)

\d3(12,0)\d3(13,1)\d3(14,2)\d3(15,3)\d3(16,4)\d3(17,5)

\class[color=orange](17,8)
\class[color=orange](18,9)
\structline(16,8)(17,8
)
\structline(17,8)(18,9)
)

\d[orange]2(18,6)\d[orange]2(19,7)
\end{sseqdata}

\begin{figure}[!htbp]
\centering
\printpage[name = ansscover]
\caption{The $\tau$-BSS for $\tau_{\ge0}^{y=(1/2)x}\mathrm{AN}(ko)$.}
\label{kocover}
\end{figure}

We end the section with a simple criterion for checking that a map of filtered spectra induces an equivalence on connective covers, which we will apply in our comparison of the slice and homotopy fixed point filtrations.

\begin{proposition}\label{prop:sufficientforconnectivecover}
Let $f:X\to Y$ be a map of filtered spectra. If $f$ induces equivalences
\begin{enumerate}
    \item $
    \tau_{\ge\lceil \frac{-n}{\alpha +1}\rceil}(X^{-n})\to \tau_{\ge\lceil \frac{-n}{\alpha +1}\rceil}(Y^{-n})$ for all $n$ sufficiently large
    \item $\tau_{\ge0}^{y=\alpha x}(X/\tau)\to \tau_{\ge0}^{y=\alpha x}(Y/\tau)$,
\end{enumerate}
    then $f$ induces an equivalence
    \[
    \tau_{\ge0}^{y=\alpha x}X\to \tau_{\ge0}^{y=\alpha x}Y.
    \]
\end{proposition}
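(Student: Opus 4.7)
My plan is to prove the equivalence on connective covers by showing that the cofiber $C = \mathrm{cofib}(f)$ has $\tau_{\ge 0}^{y=\alpha x} C = 0$. Since $\tau_{\ge 0}^{y=\alpha x}$ is a left adjoint it preserves cofibers, so this will suffice. By the characterization in Proposition \ref{tstructuresexist}, the vanishing $\tau_{\ge 0}^{y=\alpha x} C = 0$ is equivalent to $\pi_{x,y} C = 0$ for all $y \le \alpha x$; setting $m = x+y$, this translates to showing $\pi_x C^m = 0$ for every integer $m$ and every $x \ge \lceil \frac{m}{\alpha+1}\rceil$. I would then prove this by induction on the filtration level $m$, going upward from $-\infty$.

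First I would handle the base case. Since $\tau_{\ge k}$ is a left adjoint on $\Sp$ and thus preserves cofibers, hypothesis (1) gives $\tau_{\ge \lceil \frac{-n}{\alpha+1}\rceil}(C^{-n}) = 0$ for all $n \ge N$ for some $N$, which is exactly the statement that $\pi_x C^m = 0$ for $x \ge \lceil \frac{m}{\alpha+1}\rceil$ for all filtrations $m \le -N$. Next, I would unpack hypothesis (2) using Proposition \ref{tstructuresexist} applied to the cofiber $C/\tau = \mathrm{cofib}(X/\tau \to Y/\tau)$: this yields $\pi_x \mathrm{gr}^m C = 0$ whenever $x \ge \lceil \frac{m}{\alpha+1}\rceil$, for every $m \in \mathbb{Z}$.

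For the inductive step from $m-1$ to $m$, I would feed the cofiber sequence
\[
C^m \to C^{m-1} \to \mathrm{gr}^{m-1} C
\]
into its long exact sequence of homotopy groups
\[
\pi_{x+1}\,\mathrm{gr}^{m-1} C \to \pi_x C^m \to \pi_x C^{m-1} \to \pi_x\,\mathrm{gr}^{m-1} C.
\]
For any $x \ge \lceil \frac{m}{\alpha+1}\rceil$, monotonicity of the ceiling (using $\alpha + 1 > 0$) gives $x \ge \lceil \frac{m-1}{\alpha+1}\rceil$ and so also $x+1 \ge \lceil \frac{m-1}{\alpha+1}\rceil$, whence both $\pi_x\,\mathrm{gr}^{m-1} C$ and $\pi_{x+1}\,\mathrm{gr}^{m-1} C$ vanish by the consequence of (2), while $\pi_x C^{m-1}$ vanishes by the inductive hypothesis. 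The long exact sequence then forces $\pi_x C^m = 0$, closing the induction.

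I do not anticipate any serious obstacle: the argument is essentially bookkeeping with one long exact sequence. The only small points of care are extracting a uniform threshold $N$ from the "$n$ sufficiently large" phrasing of (1) to seed the induction, and noting that $m \mapsto \lceil \frac{m}{\alpha+1}\rceil$ is nondecreasing so that the inductive hypothesis at filtration $m-1$ implies exactly the vanishing needed to conclude at filtration $m$.
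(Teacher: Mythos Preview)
Your overall strategy—pass to the cofiber $C$, show its linear connective cover vanishes, and prove the relevant $\pi_x C^m=0$ by induction on $m$ using the cofiber sequence $C^m\to C^{m-1}\to\mathrm{gr}^{m-1}C$—is close in spirit to the paper's, which instead applies the five lemma directly to the morphism of long exact sequences for $X$ and $Y$. However, your argument has a real error that propagates into a gap.

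The functor $\tau_{\ge 0}^{y=\alpha x}$ (and likewise each $\tau_{\ge k}$ on $\Sp$) is a \emph{right} adjoint to the inclusion of connective objects, not a left adjoint, so it does not preserve cofibers. Your sufficiency step (``$\tau_{\ge 0}^{y=\alpha x}C=0\Rightarrow \tau_{\ge 0}^{y=\alpha x}X\simeq\tau_{\ge 0}^{y=\alpha x}Y$'') happens to be true anyway, since $C$ coconnective forces $[Z,\Sigma^{-1}C]=[Z,C]=0$ for connective $Z$; but the converse direction you use to seed the induction fails. From hypothesis (1) you only get $\pi_x C^m=0$ for $x>\lceil m/(\alpha+1)\rceil$, not for $x\ge\lceil m/(\alpha+1)\rceil$: for instance $X=H\Z[-1]\to Y=0$ has $\tau_{\ge 0}X\simeq\tau_{\ge 0}Y$ but $C=H\Z$ with $\pi_0C\ne 0$. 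The same off-by-one affects what you extract from hypothesis (2), though there it is harmless since your inductive step only needs $\pi_{x+1}\mathrm{gr}^{m-1}C=0$. The base case, however, is genuinely not established.

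This is fixable: for the missing value $x_0=\lceil m_0/(\alpha+1)\rceil$ one can run the injections $\pi_{x_0}C^{m_0}\hookrightarrow\pi_{x_0}C^{m_0-1}\hookrightarrow\cdots$ (each coming from $\pi_{x_0+1}\mathrm{gr}^{m-1}C=0$) down to some $m'\ll 0$ where $\lceil m'/(\alpha+1)\rceil<x_0$, at which point hypothesis (1) does give $\pi_{x_0}C^{m'}=0$. Alternatively, and more simply, avoid the cofiber altogether and argue with the five lemma on the map of long exact sequences for $X$ and $Y$, as the paper does; this sidesteps the information loss entirely.
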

\begin{proof}
    By Proposition \ref{prop:slicefiltration} (2), it suffices to show that $f$ induces an equivalence
    \[
    \tau_{\ge\lceil \frac{n}{\alpha +1}\rceil}(X^n)\to \tau_{\ge\lceil \frac{n}{\alpha +1}\rceil}(Y^n)
    \]
    for all $n$. By the first assumption this holds for all sufficiently small $n$, so we may assume by induction that $f$ induces an equivalence
    \[
    \tau_{\ge\lceil \frac{n-1}{\alpha +1}\rceil}(X^{n-1})\to \tau_{\ge\lceil \frac{n-1}{\alpha +1}\rceil}(Y^{n-1}).
    \]
    
    Fix $k\ge \lceil \frac{n}{\alpha +1}\rceil$ and consider the morphism of long exact sequences
    \[
\adjustbox{center,scale=.94}{
\begin{tikzcd}
    \pi_{k+1}X^{n-1}\arrow[r]\arrow[d]&\pi_{k+1,n-2-k}(X/\tau)\arrow[r]\arrow[d]&\pi_kX^n\arrow[r]\arrow[d]&\pi_kX^{n-1}\arrow[r]\arrow[d]&\pi_{k,n-1-k}(X/\tau)\arrow[d]\\
    \pi_{k+1}Y^{n-1}\arrow[r]&\pi_{k+1,n-2-k}(Y/\tau)\arrow[r]&\pi_kY^n\arrow[r]&\pi_kY^{n-1}\arrow[r]&\pi_{k,n-1-k}(Y/\tau).
\end{tikzcd}
    }
    \]
    The first and fourth vertical arrows are isomorphisms by the inductive hypothesis, and the second and fifth are isomorphisms by assumption (2), so the result follows by the five lemma.
\end{proof}

\begin{remark}
    In practice, we will verify condition (1) of Proposition \ref{prop:sufficientforconnectivecover} in the case that $X$ and $Y$ are \emph{eventually constant} in the sense that there exists $N\in\Z$ such that for all $n\ge N$, the map $X^{-n}\to X^{-n-1}$ is an equivalence. In this case, it suffices to show that $f$ induces an equivalence $\tau^{-1}X\to \tau^{-1}Y$ to establish condition (1).

    If $X$ is a connective spectrum and $E$ a connective Adams-type spectrum, the $E$-based Adams spectral sequence of $X$, regarded as a filtered spectrum via the signature functor $\sigma:\mathrm{Syn}_{E}\to\Fil(\Sp)$ (see \cite[Section 1.4]{CDvN}), is eventually constant. In fact, here $N$ may be taken to be zero. Other examples include (variants of) Whitehead towers for bounded below objects.
\end{remark}

\section{The slice filtration as a connective cover}\label{sec:slicefiltrationisconnectivecover}
In this section, we complete the proofs of our main theorems, by showing that the slice filtration is obtained as a connective cover of the homotopy fixed point filtration, and we use this to construct canonical maps to the slice filtration.

\subsection{The slice filtration and $t$-structures on $\Fil(\Sp^G)$} We will define two $\myuline{\Z}$-indexed slice filtrations on $\myuline{\Sp}$, in the sense of Definition \ref{def:normedslicefiltration}. By evaluating at $G/G$, these give $\Z$-indexed slice filtrations on $\Sp^G$, in the sense of Definition \ref{def:slicefiltration}. In what follows, we let $\mathcal{O}$ denote the $\mathbb{N}_\infty$-operad corresponding to the transfer system with all norms $H\to H'$ for $\{e\}\neq H$.

\begin{definition}\label{def:twofilteredtstructures}
    Let the \emph{slice $t$-structure} on $\Fil(\myuline{\Sp})$ be the $\myuline{\Z}$-indexed slice filtration on $\myuline{\Sp}$ defined by, for each $i\in\Z$ and $H\subset G$, the $t$-structure on $\Sp^H$ generated by 
    \[\{\mathrm{Ind}_L^H\mathbb{S}^{k\rho_L}:k|L|\ge i\}_{L\subset H}\]
    Let the $\mathcal{O}$-\emph{slice $t$-structure} on $\Fil(\myuline{\Sp})$ be the $\myuline{\Z}$-indexed slice filtration on $\myuline{\Sp}$ defined by, for each $i\in\Z$ and $H\subset G$, the $t$-structure on $\Sp^H$ generated by 
    \[\{\mathrm{Ind}_e^H\mathbb{S}^n\}_{n\in\Z}\cup\{\tilde{E}G\otimes \mathrm{Ind}_L^H\mathbb{S}^{k\rho_L}:k|L|\ge i\}_{L\subset H}\]
    We use the notation $\tau_{\ge0}^{\mathrm{slice}}$ and $\tau_{\ge0}^{\mathcal{O}}$ respectively to denote the connective cover functors of these two $t$-structures.
\end{definition}

\begin{remark}\label{rmk:geometricconnectivity}
    In terms of the \emph{geometric connectivity} perspective of Hill--Yarnall \cite{hillyarnall} (see also \cite[Theorem 11.1.27]{HHRbook}), $X\in\Fil(\Sp^G)$ is connective in the slice $t$-structure if and only if $\Phi^HX\in\Fil(\Sp)$ is connective in the $y=(|H|-1)x$ linear $t$-structure for all $H\subset G$. A filtered $G$-spectrum  $X\in\Fil(\Sp^G)$ is connective in the $\mathcal{O}$-slice $t$-structure if and only if $\Phi^HX\in\Fil(\Sp)$ is connective in the $y=(|H|-1)x$ linear $t$-structure for all $\{e\}\neq H\subset G$. Indeed, this follows immediately from \emph{loc. cit.} since these filtered $t$-structures are defined pointwise in terms of the ordinary slice filtration. Alternatively, this may be argued directly by induction on $|G|$ using isotropy separation, as in \emph{loc. cit.} (see also \cite[Chapter 11]{HHRbook}).
\end{remark}

These slice filtrations also have the expected (partially) normed structures.

\begin{proposition}
    The slice $t$-structure defines a normed $\myuline{\Z}$-indexed slice filtration on $\Fil(\myuline{\Sp})$ in the sense of Definition \ref{def:normedslicefiltration}. The $\mathcal{O}$-slice $t$-structure defines an $\mathcal{O}$-normed $\myuline{\Z}$-indexed slice filtration on $\Fil(\myuline{\Sp})$ in the sense of Definition \ref{def:partiallynormedslicefiltration}. 
\end{proposition}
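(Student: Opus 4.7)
The plan is to verify the hypotheses of Proposition~\ref{prop:gmonoidalslicefiltrations} for the slice $t$-structure and of Proposition~\ref{prop:partiallynormedslicefiltration} for the $\mathcal{O}$-slice $t$-structure. In each case there are two conditions: compatibility of restrictions along maps $f:G/H\to G/H'$ with the chosen generating classes, and compatibility of the relevant norms $N_H^{H'}$ with them. Since $\myuline{\Z}$ is the constant Mackey functor at $\Z$ equipped with its natural $G$-symmetric monoidal structure, restriction acts as the identity on the index $i$, while by Lemma~\ref{lemma:normsofbigradedspheres} the norm $N_H^{H'}$ multiplies the index by $|H'/H|$, matching the expected scaling.

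For the restriction condition I would compute $\mathrm{Res}_H^{H'}$ of each generator via the double coset formula: for $H\subset H'$,
\[\mathrm{Res}_H^{H'}\bigl(\mathrm{Ind}_L^{H'}\mathbb{S}^{k\rho_L}\bigr)\simeq\bigoplus_{[g]\in H\backslash H'/L}\mathrm{Ind}_{L_g}^H\mathbb{S}^{k(|L|/|L_g|)\rho_{L_g}},\]
where $L_g=H\cap gLg^{-1}$, using that $\rho_L$ restricts from $gLg^{-1}\cong L$ to $L_g$ as $(|L|/|L_g|)\rho_{L_g}$. Each summand has slice connectivity $k(|L|/|L_g|)\cdot|L_g|=k|L|\ge i$, as required. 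For the $\mathcal{O}$-slice $t$-structure, the free generators $\mathrm{Ind}_e^H\mathbb{S}^n$ restrict to finite direct sums of $\mathrm{Ind}_e^H\mathbb{S}^n$'s (since $H'$ is a free $H$-set), and the $\tilde{E}\mathcal{P}$-smashed generators $\tilde{E}\mathcal{P}\otimes\mathrm{Ind}_L^{H'}\mathbb{S}^{k\rho_L}$ are handled by the same formula together with the fact that restriction preserves the isotropy filtration controlling the geometric part.

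For the norm condition I would reinterpret connectivity via the geometric-fixed-point characterization of Remark~\ref{rmk:geometricconnectivity} and invoke Corollary~\ref{cor:normsandgeometricfixedpoints}, which gives $\Phi^{H'}\circ N_H^{H'}\simeq\mathcal{D}_H^{H'}\circ\Phi^H$ with $\mathcal{D}_H^{H'}$ induced from multiplication by $|H'/H|$ on $\myuline{\Z}$. A direct check then shows that if $\Phi^H(X)\in\Fil(\Sp)$ is connective in the $y=(|H|-1)x$ linear $t$-structure, then $\mathcal{D}_H^{H'}\Phi^H(X)$ is connective in the $y=(|H'|-1)x$ linear $t$-structure; this reduces to the elementary inequality $(|H|-1)\cdot|H'/H|\le|H'|-1$, equivalently $|H'/H|\ge1$. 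For proper subgroups $K'\subsetneq H'$ one applies the analogous double-coset expression for $\Phi^{K'}\circ N_H^{H'}$ and proceeds by induction on $|G|$, each tensor factor being the geometric fixed points of $X$ at an intersection subgroup and contributing the appropriate dilation scaling.

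The hardest part will be the $\mathcal{O}$-slice case for the extra generator $\mathrm{Ind}_e^H\mathbb{S}^n$ under a norm $N_H^{H'}$ with $H\ne e$, since this generator has no slice-theoretic content. In the cyclic $p$-group setting of the paper, the subgroup lattice is totally ordered, so any nontrivial $K'\subset H'$ intersects $H$ nontrivially, and hence in the double-coset formula for $\Phi^{K'}N_H^{H'}$ every factor becomes $\Phi^J(\mathrm{Ind}_e^H\mathbb{S}^n)$ for some $J\ne e$, which vanishes. It follows that $N_H^{H'}(\mathrm{Ind}_e^H\mathbb{S}^n)$ is a free $H'$-spectrum and hence lies in the closure of $\{\mathrm{Ind}_e^{H'}\mathbb{S}^m\}_{m\in\Z}$ under colimits and extensions, confirming the remaining hypothesis and completing the verification.
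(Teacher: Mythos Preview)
Your approach is correct but substantially more elaborate than the paper's. The paper dispatches both cases in one sentence: it observes that the conditions of Propositions~\ref{prop:gmonoidalslicefiltrations} and~\ref{prop:partiallynormedslicefiltration} on generators follow from Lemma~\ref{lemma:normsofbigradedspheres} (which handles the $Y(i)$ factor under norms) together with the HHR distributive law expressing norms of indexed coproducts as indexed coproducts of norms. In other words, rather than passing through geometric fixed points and Corollary~\ref{cor:normsandgeometricfixedpoints}, one computes $N_H^{H'}$ of each generator directly: the distributive law decomposes $N_H^{H'}(\mathrm{Ind}_L^H\mathbb{S}^{k\rho_L})$ into a wedge of induced norms of regular-representation spheres, each of which is visibly a slice cell of dimension at least $|H'/H|\cdot k|L|$, and Lemma~\ref{lemma:normsofbigradedspheres} places it at filtration $|H'/H|i$. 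This is exactly the standard HHR argument that the slice filtration is compatible with norms, lifted to the filtered setting. Your route via the Hill--Yarnall geometric-connectivity characterization and the dilation formula is valid and conceptually pleasant, but it forces you to run the double-coset decomposition for $\mathrm{Res}_{K'}^{H'}N_H^{H'}$ and induct on $|G|$ to handle proper $K'\subsetneq H'$, whereas the direct computation avoids this entirely.

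Your final paragraph is a genuine observation that the paper leaves implicit: the collection of norms $\{H\to H':H\neq e\}$ is only a transfer system when any two nontrivial subgroups of $G$ intersect nontrivially, as holds for cyclic $p$-groups. Without this, the pullback axiom for transfer systems fails (for instance in $S_3$, pulling $C_2\to S_3$ back along $C_3\subset S_3$ would force $e\to C_3$), and indeed your computation shows the norm condition for the free generators can fail. So your restriction to the cyclic $p$-group setting is not overcaution but a necessary hypothesis for the $\mathcal{O}$-slice statement to be well-posed; the paper is tacitly working in that regime.
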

\begin{proof}
    It suffices to check the conditions of Propositions \ref{prop:gmonoidalslicefiltrations} and \ref{prop:partiallynormedslicefiltration} respectively, on the generators of these $t$-structures, which follow immediately from Lemma \ref{lemma:normsofbigradedspheres} and the distributive law  of \cite[Proposition A.37]{HHR}, which expresses norms of indexed coproducts in terms of indexed coproducts of norms (cf. \cite[Proposition 11.1.10]{HHRbook}).
\end{proof}

As in Example \ref{example:slicefiltration}, the slice $t$-structure recovers the usual equivariant slice filtration of a $G$-spectrum in the following manner.

\begin{definition}\label{def:slicetower}
The slice filtration $\mathrm{Slice}(X)$ of $X\in\Sp^G$ is the image of $X$ under the composition
\[\Sp^G\xrightarrow{\mathrm{const}}\Fil(\Sp^G)\xrightarrow{\tau_{\ge0}^{\mathrm{slice}}}\Fil(\Sp^G)\]
Since $\mathrm{const}$ is lax $G$-symmetric monoidal, and the slice $t$-structure defines a normed slice filtration, if $X$ has a $G$-$\mathbb{E}_\infty$-structure, so does $\mathrm{Slice}(X)$. 
\end{definition}

Our main result of this section is a comparison of the slice and homotopy fixed point spectral sequences in a filtered context. We first define the homotopy fixed point filtration and discuss its multiplicative structure. In the following, we let $F({EG}_+,-):\Fil(\Sp^G)\to\Fil(\Sp^G)$ be the functor given by applying $F({EG}_+,-)$ pointwise.

\begin{definition}\label{def:hfpsss}
    The homotopy fixed point filtration $\mathrm{HFP}(X)$ of $X\in\Sp^G$ is the image of $X$ under the composite
    \[\Sp^G\xrightarrow{\mathrm{Wh}}\Fil(\Sp^G)\xrightarrow{F({EG}_+,-)}\Fil(\Sp^G)\]
    where Wh denotes the Whitehead tower functor of Definition \ref{def:whitehead}.
\end{definition}

As discussed in Section \ref{subsec:ROwhitehead}, the Whitehead tower functor is not lax $G$-symmetric monoidal, so one does not expect a priori a lax $G$-symmetric monoidal structure on $\mathrm{HFP}(-)$. However, one does have this structure due to the "monoidal Borelification principle", whereby Borelification $F({EG}_+,-)$ turns naive commutative data into genuine commutative data. This phenomenon was observed by Blumberg--Hill \cite[Theorem 6.26]{blumberghill} and was codified into the setting of $G$-symmetric monoidal $\infty$-categories by Hilman \cite[Section 2.4]{kaif}. In particular, using the naive $\mathbb{E}_\infty$-structure on $\mathrm{HFP}(-)$ and applying \cite[Theorem 2.4.10]{kaif} gives the following.

\begin{proposition}\label{prop:hfpssisGsymmonoidal}
    For $R\in\Sp^G$ a naive $\mathbb{E}_\infty$-algebra, the $\mathbb{E}_\infty$ structure on $\mathrm{HFP}(R)$ refines canonically to a $G$-$\mathbb{E}_\infty$ structure.
\end{proposition}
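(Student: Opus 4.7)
The plan is to produce the naive $\mathbb{E}_\infty$-structure on $\mathrm{HFP}(R)$ by hand and then invoke Hilman's monoidal Borelification theorem to upgrade it to a $G$-$\mathbb{E}_\infty$-structure.

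First, I would verify that the functor $\mathrm{HFP}(-):\Sp^G\to\Fil(\Sp^G)$ is lax symmetric monoidal with respect to the ordinary (non-normed) Day convolution structure. The Whitehead tower $\mathrm{Wh}=\tau_{\ge 0}\circ\mathrm{const}$ is lax symmetric monoidal: the functor $\mathrm{const}$ is the right adjoint to the symmetric monoidal $\mathrm{colim}$, and Proposition \ref{prop:slicefiltration}(4) furnishes the lax structure on the connective cover $\tau_{\ge 0}$, since the standard connective generators $\mathrm{Ind}_H^G\mathbb{S}^n$ tensor into connective objects. The pointwise Borelification $F(EG_+,-)$ is itself lax symmetric monoidal, being the right adjoint to the symmetric monoidal functor $EG_+\otimes-$. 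Composing these, $\mathrm{HFP}(R)$ acquires a naive $\mathbb{E}_\infty$-structure in $\Fil(\Sp^G)$.

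Second, I would apply Hilman's \cite[Theorem 2.4.10]{kaif}, which states that inside a presentably $G$-symmetric monoidal $\infty$-category, the forgetful functor from $G$-$\mathbb{E}_\infty$-algebras to naive $\mathbb{E}_\infty$-algebras restricts to an equivalence on the subcategory of Borel-complete objects. Theorem \ref{thm:gsymstructureonfil} identifies $\Fil(\m{\Sp})$ as a presentably $G$-symmetric monoidal $\infty$-category, and $\mathrm{HFP}(R)$ is pointwise Borel complete by construction, so the theorem produces the canonical $G$-$\mathbb{E}_\infty$-refinement.

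The main obstacle is verifying that the pointwise functor $F(EG_+,-)$ on $\Fil(\Sp^G)$ is the correct ``Borel completion'' in the sense required by Hilman's framework for the $G$-$\infty$-category $\Fil(\m{\Sp})$. Because the $G$-symmetric monoidal structure on $\Fil(\m{\Sp})$ is constructed levelwise from that of $\m{\Sp}$ via Theorem \ref{thm:gsymstructureonfil}, this reduces to the corresponding statement for $\m{\Sp}$ itself, namely that pointwise Borelification is the cofree $G$-construction with the expected monoidal behavior—which is the content of Blumberg--Hill's monoidal Borelification theorem \cite[Theorem 6.26]{blumberghill}. Once this identification is in hand, the upgrade from naive to $G$-$\mathbb{E}_\infty$ is automatic and canonical, as desired.
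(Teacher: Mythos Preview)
Your proposal is correct and follows essentially the same route as the paper: construct the naive $\mathbb{E}_\infty$-structure on $\mathrm{HFP}(R)$ from the lax monoidal structures on $\mathrm{Wh}$ and $F(EG_+,-)$, then invoke Hilman's monoidal Borelification principle \cite[Theorem 2.4.10]{kaif} to upgrade it to a $G$-$\mathbb{E}_\infty$-structure. The paper phrases the second step slightly differently, working through the explicit Borelification adjunction $\Fil(\myuline{\Sp})\rightleftarrows\mathrm{Bor}(\Fil(\myuline{\Sp}))$ and the equivalence $\mathrm{CAlg}_G(\mathrm{Bor}(\Fil(\myuline{\Sp})))\simeq\mathrm{CAlg}(\Fil(\Sp^{BG}))$ rather than citing the equivalence on Borel-complete algebras, but this is a cosmetic difference in how one unpacks the same theorem.
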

\begin{proof}
    The Borelification adjunction 
    \[
    \begin{tikzcd}
    \Fil(\myuline{\Sp})\arrow[rr,shift left]&&\mathrm{Bor}(\Fil(\myuline{\Sp}))\arrow[ll,shift left,"F({EG}_+\text{,}-)"]
    \end{tikzcd} 
    \]
    of \cite[Proposition 2.4.1]{kaif} refines canonically to a $G$-symmetric monoidal adjunction by \cite[Theorem 2.4.10(2)]{kaif}, and there is an equivalence
    \[\mathrm{CAlg}_G(\mathrm{Bor}(\Fil(\myuline{\Sp})))\simeq\mathrm{CAlg}(\Fil(\Sp^{BC_2}))\]
    by \cite[Theorem 2.4.10(3)]{kaif}. The $\mathbb{E}_\infty$-structure on $R$ canonically defines the object $\mathrm{HFP}(R)\in \mathrm{CAlg}(\Fil(\Sp^{BC_2}))$. The desired $G$-$\mathbb{E}_\infty$ structure is then given by regarding $\mathrm{HFP}(R)$ as an object in $\mathrm{CAlg}_G(\mathrm{Bor}(\Fil(\myuline{\Sp})))$ via the above equivalence and applying the right adjoint of the above Borelification adjunction.
\end{proof}

In fact, there is a map of $\mathbb{E}_\infty$-algebras $\mathrm{Slice}(R)\to \mathrm{HFP}(R)$. This map was defined by Ullman \cite[Theorem 9.1]{ullman}, and we follow his ideas to construct this map in our context. For this we will need another filtration, which Ullman calls the \emph{mixed} filtration, to interpolate between $\mathrm{HFP}(-)$ and $\mathrm{Slice}(-)$.

\begin{definition}
    Let the \emph{mixed $t$-structure} on $\Fil(\Sp^G)$ be the $\Z$-indexed slice filtration on $\Sp^G$ defined by, for $i\in\Z$, the $t$-structure on $\Sp^G$ generated by 
    \[\{\mathrm{Ind}_H^G\mathbb{S}^i\}_{H\subset G}\cup \{\mathrm{Ind}_H^G\mathbb{S}^{m\rho_H}|m|H|\ge i\}_{H\subset G}.\]
    Let $\tau_{\ge0}^{\mathrm{mixed}}$ denote the corresponding connective cover functor on $\Fil(\Sp^G)$. For $X\in\Sp^G$, define the mixed filtration $\mathrm{Mixed}(X)$ of $X$ to be the image of $X$ under the composite
    \[\Sp^G\xrightarrow{\mathrm{const}}\Fil(\Sp^G)\xrightarrow{\tau_{\ge0}^{\mathrm{mixed}}}\Fil(\Sp^G)\]
\end{definition}

The mixed filtration is an auxiliary filtration defined so that we can prove the following proposition.

\begin{proposition}\label{prop:spanwithmixedfiltration}
    For any $X\in\Sp^G$, there exists a span in $\Fil(\Sp^G)$
    \[\mathrm{HFP}(X)\to\mathrm{Mixed}(X)\leftarrow\mathrm{Slice}(X)\]
    such that both maps become equivalences upon applying Borelification $F({EG}_+,-)$. If $X$ has a naive $\mathbb{E}_\infty$-structure, this span lifts to $\mathrm{CAlg}(\Fil(\Sp^G))$.
\end{proposition}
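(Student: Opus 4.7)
The plan. Both maps in the span arise from the universal properties of the three $t$-structures on $\Fil(\Sp^G)$ under consideration. The key observation is that, at each level $n$, the mixed connective subcategory of $\Sp^G$ is generated by the union of the standard connective generators $\{\mathrm{Ind}_H^G\mathbb{S}^n\}_{H\subset G}$ and the slice connective generators $\{\mathrm{Ind}_H^G\mathbb{S}^{k\rho_H}:k|H|\ge n\}_{H\subset G}$. Hence both the slice-connective and standard-connective subcategories of $\Fil(\Sp^G)$ sit inside the mixed-connective one as full subcategories closed under colimits and extensions. The corresponding inclusions induce natural transformations $\tau_{\ge 0}^{\mathrm{slice}}\to\tau_{\ge 0}^{\mathrm{mixed}}$ and $\mathrm{Wh}\to\tau_{\ge 0}^{\mathrm{mixed}}\circ\mathrm{const}$; evaluating the first at $X$ yields the map $\mathrm{Slice}(X)\to\mathrm{Mixed}(X)$ directly, and the second yields a comparison $\mathrm{Wh}(X)\to\mathrm{Mixed}(X)$.

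Extending the latter to a map $\mathrm{HFP}(X)=F(EG_+,\mathrm{Wh}(X))\to\mathrm{Mixed}(X)$ is the most delicate step, since $\mathrm{Mixed}(X)$ is not pointwise Borel-complete in general, so the extension does not follow immediately from the universal property of Borelification. My plan is to construct the map levelwise using isotropy separation, exploiting the structure of the mixed filtration to show that the resulting obstruction classes, which live in the $\tilde{E}G$-pieces of the mixed tower, vanish. An alternative approach would be to first identify $F(EG_+,\mathrm{Mixed}(X))$ with $\mathrm{HFP}(X)$ via Theorem \ref{thm:introthmconnectivecover} (so that the Borelification unit $\mathrm{Mixed}(X)\to F(EG_+,\mathrm{Mixed}(X))$ exhibits the desired compatibility) and then dualize the argument.

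Once both maps are constructed, they become equivalences after $F(EG_+,-)$ by Proposition \ref{prop:sufficientforconnectivecover} combined with the vanishing $\Phi^HF(EG_+,-)=0$ for proper $H$: the additional generators distinguishing the mixed $t$-structure from the slice $t$-structure die after Borelification, so all three $t$-structures collapse on Borel-complete filtered $G$-spectra, forcing the comparison maps between their connective covers to be equivalences. Finally, the lift to $\mathrm{CAlg}(\Fil(\Sp^G))$ follows from Proposition \ref{prop:slicefiltration}(3): all three $t$-structures are compatible with the Day convolution symmetric monoidal structure on $\Fil(\Sp^G)$, so each connective cover functor is lax symmetric monoidal; since $\mathrm{const}$ and $F(EG_+,-)$ are also lax symmetric monoidal, naturality of the comparison maps gives a span of $\mathbb{E}_\infty$-algebras whenever $X$ carries an $\mathbb{E}_\infty$-structure.
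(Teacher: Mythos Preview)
Your overall strategy---use the inclusions of generating sets to get comparison maps between connective covers, then check the Borelification claim, then use lax monoidality for the $\mathrm{CAlg}$ lift---matches the paper's. But two points deserve correction.

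\textbf{The map out of $\mathrm{HFP}(X)$.} You correctly observe that the inclusions of generators yield, by composition of adjoints, only a map $\mathrm{Wh}(X)\to\mathrm{Mixed}(X)$, not a map out of $\mathrm{HFP}(X)=F(EG_+,\mathrm{Wh}(X))$. The paper's proof does \emph{exactly} this and nothing more: the span it actually constructs is $\mathrm{Wh}(X)\to\mathrm{Mixed}(X)\leftarrow\mathrm{Slice}(X)$, and after Borelification the left term becomes $\mathrm{HFP}(X)$ by definition. So no further work is needed. Your isotropy-separation/obstruction approach is unnecessary, and your alternative route through Theorem~\ref{thm:introthmconnectivecover} would be circular, since that theorem is proved using this proposition (via Corollary~\ref{cor:mapfromslicetohfpss} and Lemma~\ref{lemma:sliceandhfpss}).

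\textbf{The Borelification equivalence.} Your argument here has a genuine gap. First, it is not true that $\Phi^H F(EG_+,-)=0$ for nontrivial $H$: geometric fixed points of a cofree spectrum compute a Tate construction, not zero (you may be thinking of $\Phi^H(EG_+\otimes -)$). Second, the three $t$-structures do \emph{not} ``collapse on Borel-complete filtered $G$-spectra'' in any useful sense; for instance $\tau_{\ge0}^{\mathrm{slice}}$ and $\tau_{\ge0}^{\mathrm{Wh}}$ genuinely differ on $\mathrm{HFP}(X)$ (this difference is the content of Theorem~\ref{thm:introthmconnectivecover}). The paper's argument is instead: one checks that $\mathrm{Res}^G_e(\mathcal{C}_i^{\mathrm{Slice}})=\mathrm{Res}^G_e(\mathcal{C}_i^{\mathrm{Wh}})=\mathrm{Res}^G_e(\mathcal{C}_i^{\mathrm{mixed}})$, all equal to the standard $(\ge i)$-subcategory of $\Sp$, since every generator restricts to a wedge of spheres of dimension $\ge i$. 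Since $\mathrm{Res}^G_e$ is $t$-exact for each of these $t$-structures (the induced generators $\mathrm{Ind}_e^G\mathbb{S}^m$ are present), the comparison maps in the span are equivalences on underlying filtered spectra, hence equivalences after applying $F(EG_+,-)$. Proposition~\ref{prop:sufficientforconnectivecover} is not relevant here.

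Your $\mathrm{CAlg}$ argument is fine and agrees with the paper.
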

\begin{proof}
    Define
    \begin{align*}
        \mathcal{C}_i^{\mathrm{Wh}}&:=\langle\{\mathrm{Ind}_H^G\mathbb{S}^i\}_{H\subset G}\rangle\\
        \mathcal{C}_i^{\mathrm{Slice}}&:=\langle\{\mathrm{Ind}_H^G\mathbb{S}^{m\rho_H}:m|H|\ge i\}_{H\subset G}\rangle\\
        \mathcal{C}_i^{\mathrm{mixed}}&:=\langle\{\mathrm{Ind}_H^G\mathbb{S}^i,\mathrm{Ind}_H^G\mathbb{S}^{m\rho_H}:m|H|\ge i\}_{H\subset G}\rangle.
    \end{align*}
    By definition, there are inclusions $\mathcal{C}_i^{\mathrm{Slice}}\subset \mathcal{C}_i^{\mathrm{mixed}}$ and $\mathcal{C}_i^{\mathrm{Wh}}\subset \mathcal{C}_i^{\mathrm{mixed}}$ for all $i\in \Z$. These inclusions determine corresponding inclusions of connective objects in the respective $t$-structures defined on $\Fil(\Sp^G)$ by the $\mathcal{C}_i$'s. By composition of adjoints, one has the above span. When $X$ has an $\mathbb{E}_\infty$ structure, the span lifts to span in $\mathbb{E}_\infty$-algebras since each of the inclusions is symmetric monoidal, so that they lift to left adjoints on categories of commutative algebras. Each of the maps induces an equivalence on underlying spectra because, for all $i\in\Z$, one checks that $\mathrm{Res}^G_e(\mathcal{C}_i^{\mathrm{Slice}})=\mathrm{Res}^G_e(\mathcal{C}_i^{\mathrm{Wh}})$ using that $\mathrm{Res}^G_e$ is a left adjoint. It follows that the maps induce equivalences upon Borelification. 
\end{proof}

\begin{corollary}\label{cor:mapfromslicetohfpss}
    For any $X\in\Sp^G$, there exists a map in $\Fil(\Sp^G)$
    \[\mathrm{Slice}(X)\to\mathrm{HFP}(X)\]
    inducing an equivalence upon Borelification. If $X$ has a $G$-$\mathbb{E}_\infty$-structure, this map lifts to $\mathrm{CAlg}_G(\Fil(\myuline{\Sp}))$.
\end{corollary}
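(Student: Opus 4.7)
The plan is to use Proposition \ref{prop:spanwithmixedfiltration} as a bridge and invert the Borelified spoke. First, I would apply the pointwise Borelification $F(EG_+,-)$ to the span
\[\mathrm{HFP}(X)\to\mathrm{Mixed}(X)\leftarrow\mathrm{Slice}(X)\]
of Proposition \ref{prop:spanwithmixedfiltration}, obtaining a zigzag in which both arrows are equivalences. Since $\mathrm{HFP}(X)=F(EG_+,\mathrm{Wh}(X))$ is already Borel-complete, the unit $\mathrm{HFP}(X)\to F(EG_+,\mathrm{HFP}(X))$ is an equivalence by idempotency of Borelification, so together these produce a canonical equivalence $\mathrm{HFP}(X)\simeq F(EG_+,\mathrm{Slice}(X))$. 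Precomposing with the Borelification unit $\eta\colon\mathrm{Slice}(X)\to F(EG_+,\mathrm{Slice}(X))$ yields the desired map $\mathrm{Slice}(X)\to\mathrm{HFP}(X)$. Applying $F(EG_+,-)$ to this composite returns an equivalence: the unit $\eta$ becomes an equivalence after Borelification by idempotency, and the remaining legs of the zigzag were already equivalences.

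For the multiplicative upgrade, I would assume $X$ has a $G$-$\mathbb{E}_\infty$-structure. Then $\mathrm{Slice}(X)$ inherits a $G$-$\mathbb{E}_\infty$-structure by Definition \ref{def:slicetower}, since the slice $t$-structure defines a normed $\myuline{\Z}$-indexed slice filtration and $\mathrm{const}$ is lax $G$-symmetric monoidal. By the monoidal Borelification principle used in the proof of Proposition \ref{prop:hfpssisGsymmonoidal} (i.e., \cite[Theorem 2.4.10]{kaif}), the Borelification adjunction refines to a $G$-symmetric monoidal adjunction, and $\mathrm{CAlg}_G$ of Borel-complete filtered $G$-spectra is identified with $\mathrm{CAlg}(\Fil(\Sp^{BG}))$. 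Hence $\eta$ is a morphism of $G$-commutative algebras, and the span of Proposition \ref{prop:spanwithmixedfiltration}, which lives in $\mathrm{CAlg}(\Fil(\Sp^G))$, yields after Borelification a zigzag of equivalences in $\mathrm{CAlg}(\Fil(\Sp^{BG}))$ that transports canonically to $\mathrm{CAlg}_G$ of Borel-complete objects. Assembling these gives the lift to $\mathrm{CAlg}_G(\Fil(\myuline{\Sp}))$.

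There is no genuine obstacle here; the only subtle point is checking that the $G$-$\mathbb{E}_\infty$-structure on $\mathrm{HFP}(X)$ supplied by Proposition \ref{prop:hfpssisGsymmonoidal} agrees with the one obtained by transporting the naive $\mathbb{E}_\infty$-structure on $F(EG_+,\mathrm{Slice}(X))$ along the equivalence $\mathrm{HFP}(X)\simeq F(EG_+,\mathrm{Slice}(X))$. Both structures arise by the same mechanism — extending a naive $\mathbb{E}_\infty$-structure in Borel-complete filtered $G$-spectra to a $G$-$\mathbb{E}_\infty$-structure via \cite[Theorem 2.4.10(3)]{kaif} — so this identification is essentially forced, and the corollary follows.
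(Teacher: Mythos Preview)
Your proposal is correct and follows essentially the same route as the paper: compose the Borelification unit $\mathrm{Slice}(X)\to F(EG_+,\mathrm{Slice}(X))$ with the equivalence $F(EG_+,\mathrm{Slice}(X))\simeq\mathrm{HFP}(X)$ extracted from the span of Proposition~\ref{prop:spanwithmixedfiltration}, and upgrade to $G$-$\mathbb{E}_\infty$ via the monoidal Borelification principle. The paper's proof is terser but identical in substance, and your explicit remark about matching the two $G$-$\mathbb{E}_\infty$-structures on $\mathrm{HFP}(X)$ is exactly the point the paper addresses by invoking the argument of Proposition~\ref{prop:hfpssisGsymmonoidal}.
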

\begin{proof}
    One has the Borelification map 
    \[\mathrm{Slice}(X)\to F({EG}_+,\mathrm{Slice}(X)),\]
    which lifts to $\mathrm{CAlg}_G(\Fil(\Sp^G))$ when $X$ has a $G$-$\mathbb{E}_\infty$-structure, by \cite[Theorem 2.4.10(2)]{kaif}. By the previous proposition, there is an equivalence 
\[
        F({EG}_+,\mathrm{Slice}(X))\simeq \mathrm{HFP}(X),
\]
        which gives the claimed map, and this is an equivalence of naive $\mathbb{E}_\infty$-algebras
when $X$ has an $\mathbb{E}_\infty$-structure. By applying the monoidal Borelification principle as in the proof of Proposition \ref{prop:hfpssisGsymmonoidal}, this refines canonically to an equivalence of $G$-$\mathbb{E}_\infty$-algebras.
\end{proof}

The map
\[\mathrm{Slice}(X)\to\mathrm{HFP}(X)\]
factors canonically through
\[\mathrm{Slice}(F(EG_+,X))\to\mathrm{HFP}(X)\]
via the diagram
\[
\begin{tikzcd}
    \mathrm{Slice}(X)\arrow[r]\arrow[d]&\mathrm{HFP}(X)\arrow[d,"\simeq"]\\
    \mathrm{Slice}(F(EG_+,X))\arrow[r]&\mathrm{HFP}(F(EG_+,X))
\end{tikzcd}
\]
We show now that the map $\mathrm{Slice}(F(EG_+,X))\to\mathrm{HFP}(X)$ is actually a connective cover in the slice $t$-structure.

\begin{lemma}\label{lemma:sliceandhfpss}
     For all $n\in\Z$ and $X\in\Sp^G$, the natural map $\mathrm{Slice}^n(X)\to X$ induces an equivalence
    \[
    \mathrm{Slice}^n(F(EG_+,\mathrm{Slice}^n(X)))\to \mathrm{Slice}^n(F(EG_+,X))
    \]
\end{lemma}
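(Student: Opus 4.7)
The plan is to deduce the lemma from the general claim that Borelification $F(EG_+,-)$ preserves slice coconnective objects: if $Z\in\Sp^G$ satisfies $\mathrm{Slice}^n Z\simeq 0$, then $\mathrm{Slice}^n F(EG_+,Z)\simeq 0$ as well. Granting this, I let $C$ denote the cofiber of the slice cover $\mathrm{Slice}^n X\to X$, apply $F(EG_+,-)$, and then apply the exact functor $\mathrm{Slice}^n$, obtaining a cofiber sequence
\[
\mathrm{Slice}^n F(EG_+,\mathrm{Slice}^n X) \to \mathrm{Slice}^n F(EG_+,X) \to \mathrm{Slice}^n F(EG_+,C),
\]
and the general claim applied to $Z=C$ kills the right-hand term, yielding the required equivalence.

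To prove the claim, I use the characterization of slice coconnective objects as the right-orthogonal complement of the slice generators: $Y$ is slice $\leq n-1$ if and only if $\mathrm{map}_G(\Ind_H^G\mathbb{S}^{k\rho_H},Y)$ is $(-1)$-coconnective (that is, has vanishing $\pi_j$ for $j\geq 0$) for every subgroup $H\subset G$ and every $k$ with $k|H|\geq n$. Taking $Y=F(EG_+,Z)$, the hom--tensor adjunction combined with Frobenius reciprocity and the identification $\Res_H^G EG_+\simeq EH_+$ rewrite this as
\[
\mathrm{map}_H(\mathbb{S}^{k\rho_H}\wedge EH_+,\,Z|_H)\text{ is }(-1)\text{-coconnective.}
\]
Restriction preserves slice connectivity (using $\rho_K|_L\cong [K{:}L]\rho_L$, the restrictions of the slice generators for $G$ lie in the slice-connective subcategory for the subgroup), so $Z|_H$ is itself $H$-slice $\leq n-1$. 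Inducting on $|G|$, for every proper subgroup $H\subsetneq G$ the inductive hypothesis gives that $F(EH_+,Z|_H)$ is $H$-slice $\leq n-1$, which is exactly the required $(-1)$-coconnectivity after re-applying the adjunction.

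The remaining case $H=G$ reduces to showing $\pi^G_{k\rho_G+j}F(EG_+,Z)=0$ for all $j\geq 0$ and $k|G|\geq n$. The key input is the trivial-subgroup instance of the slice-coconnectivity criterion applied to $Z$, which forces $\pi_m Z|_e=0$ for all $m\geq n$. Filtering $EG$ by its standard free $G$-CW skeleta produces a tower $X_s:=\mathrm{map}_G(\mathbb{S}^{k\rho_G+j}\wedge(\mathrm{sk}_s EG)_+,\,Z)$ whose successive fibers are products of $\Omega^{k|G|+j+s}Z|_e$. Since $k|G|+j+s\geq n$ for all $s\geq 0$, both $\pi_0$ and $\pi_1$ of each fiber vanish, and inductively the same is true for each $X_s$; the Milnor short exact sequence then yields $\pi_0\varprojlim_s X_s=\pi^G_{k\rho_G+j}F(EG_+,Z)=0$. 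The main technical obstacle is keeping track of the bookkeeping in the Borel cellular tower, but once the underlying coconnectivity of $Z|_e$ has been extracted, the computation reduces to a routine Milnor-sequence argument.
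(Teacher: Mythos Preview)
Your approach is valid in outline but contains two small slips, and it is considerably more roundabout than the paper's argument.

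The slips: first, $\mathrm{Slice}^n$ is not an exact endofunctor of $\Sp^G$, so you do not obtain a cofiber sequence by applying it. Your conclusion is still correct, but the reason is that once $F(EG_+,C)$ is slice $\le n-1$, both $[W,F(EG_+,C)]$ and $[\Sigma W,F(EG_+,C)]$ vanish for any slice $\ge n$ test object $W$, so the long exact sequence gives the isomorphism on $[W,-]$ directly. Second, the justification that $Z|_H$ is $H$-slice $\le n-1$ is stated backwards: you need that \emph{induction} $\Ind_L^H$ sends slice generators to slice generators (which is immediate from $\Ind_L^H\Ind_K^L=\Ind_K^H$), so that by the adjunction $\Ind\dashv\Res$ restriction preserves slice coconnectivity. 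The fact that restriction preserves slice \emph{connectivity} is true but not the relevant direction here.

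The paper's proof avoids both the induction on $|G|$ and the case split on $H$. It tests the map directly against slice $\ge n$ objects $Z$, replaces $\mathrm{Slice}^n X$ by the Whitehead cover $\tau_{\ge n}X$ inside $F(EG_+,-)$ (they agree on underlying spectra, by Corollary~\ref{cor:mapfromslicetohfpss}), and then uses the skeletal filtration of $EG_+$ once to reduce to showing that $[\Sigma^k\Res_e^G Z,\tau_{\ge n}(X|_e)]\to[\Sigma^k\Res_e^G Z,X|_e]$ is an isomorphism for $k\ge 0$. This is immediate because $\Res_e^G Z$ is $\ge n$ in the standard $t$-structure on $\Sp$. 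Your detour through the standalone statement ``Borelification preserves slice coconnectivity'' is a pleasant fact to isolate, and your $H=G$ case is essentially the paper's whole argument, but the paper's single reduction to underlying spectra handles all $H$ at once.
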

\begin{proof}
    It suffices to show that if $Z\in\Sp^G$ is slice $\ge n$, then 
    \[[Z,F(EG_+,\mathrm{Slice}^n(X))]_{\Sp^G}\to [Z,F(EG_+,X)]_{\Sp^G}\]
    is an iso. By Corollary \ref{cor:mapfromslicetohfpss}, we may replace $\mathrm{Slice}^n(X)$ with $\tau_{\ge n}X$ here, and
    the skeletal filtration of the $G$-space $EG_+$ reduces this to showing that the map 
    \[[Z,F(\Sigma^kG_+,\tau_{\ge n}X)]_{\Sp^G}\to [Z,F(\Sigma^kG_+,X)]_{\Sp^G}\]
    is an iso for $k\ge0$, which by adjunction is the map
    \[[\Sigma^k\mathrm{Res}^G_eZ,\tau_{\ge n}X^e]_{\Sp}\to [\Sigma^k\mathrm{Res}^G_eZ,\mathrm{Res}^G_eX]_{\Sp}.\]
    This map is an iso because the fact that $Z$ is slice $\ge n$ implies that $\mathrm{Res}^G_eZ$ and therefore $\Sigma^k\mathrm{Res}^G_eZ$ is $\ge n$ in the usual $t$-structure on $\Sp$.
\end{proof}

By definition of the slice tower, the filtered $G$-spectrum $\mathrm{Slice}(F(EG_+,X))$ is connective in the slice $t$-structure. This results in a canonical map
 \[
    \mathrm{Slice}(F(EG_+,X))\to\tau_{\ge0}^{\mathrm{slice}}\mathrm{HFP}(X),
    \]
which is an equivalence pointwise by the lemma, and we have proved the following.

\begin{theorem}\label{thm:sliceasconnectivecover}
    For any $X\in \Sp^G$, the map $\mathrm{Slice}(X)\to\mathrm{HFP}(X)$ factors canonically through an equivalence
    \[
    \mathrm{Slice}(F(EG_+,X))\xrightarrow{\simeq}\tau_{\ge0}^{\mathrm{slice}}\mathrm{HFP}(X).
    \]
       If $X$ has a naive $\mathbb{E}_\infty$-structure, this is an equivalence of $G$-$\mathbb{E}_\infty$-algebras in $\Fil(\myuline{\Sp})$.
\end{theorem}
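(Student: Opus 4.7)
The plan is to combine Corollary \ref{cor:mapfromslicetohfpss} and Lemma \ref{lemma:sliceandhfpss} with the pointwise formula for the slice connective cover from Proposition \ref{prop:slicefiltration}(2), upgrading to $G$-$\mathbb{E}_\infty$-algebras via the monoidal Borelification principle.

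First, I would produce the factorization. Applying Corollary \ref{cor:mapfromslicetohfpss} to $F(EG_+, X)$ gives a map $\mathrm{Slice}(F(EG_+, X)) \to \mathrm{HFP}(F(EG_+, X))$, and the idempotency equivalence $F(EG_+, F(EG_+, Y)) \simeq F(EG_+, Y)$ together with the commuting square induced by the Borelification unit $X \to F(EG_+, X)$ identifies the target canonically with $\mathrm{HFP}(X)$ and shows the composite with $\mathrm{Slice}(X) \to \mathrm{Slice}(F(EG_+, X))$ recovers the map of Corollary \ref{cor:mapfromslicetohfpss}. Now by Definition \ref{def:slicetower}, $\mathrm{Slice}(F(EG_+, X)) = \tau_{\ge 0}^{\mathrm{slice}} \mathrm{const}(F(EG_+, X))$ is connective in the slice $t$-structure, so the universal property produces the unique factorization
\[
\mathrm{Slice}(F(EG_+, X)) \to \tau_{\ge 0}^{\mathrm{slice}} \mathrm{HFP}(X) \to \mathrm{HFP}(X).
\]

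Next, I would check this factorization is an equivalence levelwise. By Proposition \ref{prop:slicefiltration}(2), the level-$n$ component of $\tau_{\ge 0}^{\mathrm{slice}} \mathrm{HFP}(X)$ is $\mathrm{Slice}^n(\mathrm{HFP}(X)^n) = \mathrm{Slice}^n F(EG_+, \tau_{\ge n} X)$, while the level-$n$ component of the source is $\mathrm{Slice}^n F(EG_+, X)$. The equivalence on underlying spectra in Corollary \ref{cor:mapfromslicetohfpss} identifies $F(EG_+, \tau_{\ge n} X) \simeq F(EG_+, \mathrm{Slice}^n X)$, and under this identification, Lemma \ref{lemma:sliceandhfpss} provides precisely the required equivalence. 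Since any map between two equivalent objects that is compatible with the slice $t$-structure universal property is itself the equivalence, the factored map is an equivalence at every level $n$.

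Finally, for the multiplicative refinement, I would assemble the argument inside $\mathrm{CAlg}_G$. When $X$ carries a naive $\mathbb{E}_\infty$-structure, the monoidal Borelification principle (as invoked in Proposition \ref{prop:hfpssisGsymmonoidal}) upgrades $F(EG_+, X)$ to a $G$-$\mathbb{E}_\infty$-algebra, hence $\mathrm{Slice}(F(EG_+, X))$ and $\mathrm{HFP}(X)$ both lie in $\mathrm{CAlg}_G(\Fil(\myuline{\Sp}))$ by Definition \ref{def:slicetower} and Proposition \ref{prop:hfpssisGsymmonoidal}. The lax $G$-symmetric monoidality of the connective-cover functor $\tau_{\ge 0}^{\mathrm{slice}}$ (Proposition \ref{prop:gmonoidalslicefiltrations}) promotes the factored map to a morphism in $\mathrm{CAlg}_G$, and since equivalences in $\mathrm{CAlg}_G$ are detected on underlying filtered $G$-spectra, the levelwise argument above already supplies the equivalence of $G$-$\mathbb{E}_\infty$-algebras. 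The entire substance of the theorem is thus concentrated in Lemma \ref{lemma:sliceandhfpss}; the remaining work is routine bookkeeping with adjunctions and universal properties.
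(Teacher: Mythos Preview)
Your proof is correct and follows essentially the same route as the paper: factor through $\mathrm{Slice}(F(EG_+,X))$ via the Borelification square, observe this object is slice-connective by construction, obtain the canonical map to $\tau_{\ge0}^{\mathrm{slice}}\mathrm{HFP}(X)$, and check it is a levelwise equivalence via Lemma~\ref{lemma:sliceandhfpss} combined with the identification $F(EG_+,\tau_{\ge n}X)\simeq F(EG_+,\mathrm{Slice}^nX)$ from Corollary~\ref{cor:mapfromslicetohfpss}. You are more explicit than the paper in invoking Proposition~\ref{prop:slicefiltration}(2) for the pointwise description of the target and in spelling out the $G$-$\mathbb{E}_\infty$ upgrade, but the substance is identical.
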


\begin{corollary}\label{cor:borelconnectivecover}
    For a Borel-complete $G$-spectrum $X$, the slice filtration of $X$ is the connective cover of the homotopy fixed point filtration of $X$ with respect to the slice $t$-structure.
\end{corollary}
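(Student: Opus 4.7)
The plan is to deduce this immediately from Theorem \ref{thm:sliceasconnectivecover}, which has already done all the real work. By definition, $X$ being Borel-complete means that the Borelification map $X \to F(EG_+, X)$ is an equivalence in $\Sp^G$. Since $\mathrm{Slice}(-)$ is a functor $\Sp^G \to \Fil(\Sp^G)$, applying it to this equivalence yields an equivalence
\[\mathrm{Slice}(X) \xrightarrow{\simeq} \mathrm{Slice}(F(EG_+, X))\]
in $\Fil(\Sp^G)$.

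Composing with the equivalence of Theorem \ref{thm:sliceasconnectivecover}, we obtain
\[\mathrm{Slice}(X) \xrightarrow{\simeq} \mathrm{Slice}(F(EG_+, X)) \xrightarrow{\simeq} \tau_{\ge0}^{\mathrm{slice}} \mathrm{HFP}(X),\]
which is the desired identification. One should also note that $\mathrm{HFP}(-)$ only depends on the Borelification of its input by construction (since $F(EG_+, -)$ is applied pointwise in Definition \ref{def:hfpsss}), so there is no ambiguity in interpreting $\mathrm{HFP}(X)$ on the right-hand side.

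There is essentially no obstacle here; the content is entirely packaged in Theorem \ref{thm:sliceasconnectivecover} and Lemma \ref{lemma:sliceandhfpss}. If desired, one can strengthen the conclusion to a $G$-$\mathbb{E}_\infty$-equivalence whenever $X$ carries a naive $\mathbb{E}_\infty$-structure, again by invoking the multiplicative refinement in Theorem \ref{thm:sliceasconnectivecover}.
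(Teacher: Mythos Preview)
Your proposal is correct and matches the paper's own treatment: the corollary is stated without proof because it follows immediately from Theorem~\ref{thm:sliceasconnectivecover} by substituting the equivalence $X\simeq F(EG_+,X)$ that defines Borel-completeness.
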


If $G$ is a $p$-group, the Segal conjecture states that the $p$-completion of any finite $G$-spectrum is Borel-complete, which gives the following.

\begin{corollary}\label{cor:segalconjecturecorollary}
    Let $X$ be the $p$-completion of a finite $G$-spectrum. Then the slice filtration of $X$ is the connective cover of the homotopy fixed-point filtration of $X$ with respect to the filtered slice $t$-structure, i.e.
    \[
   \mathrm{Slice}(X)\simeq\tau_{\ge0}^{\mathrm{slice}}\mathrm{HFP}(X).
   \]
   If $X$ has a naive $\mathbb{E}_\infty$-structure, this is an equivalence of $G$-$\mathbb{E}_\infty$-algebras in $\Fil(\myuline{\Sp})$.
\end{corollary}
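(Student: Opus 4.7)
The plan is to deduce this corollary directly from Corollary \ref{cor:borelconnectivecover} together with the Segal conjecture, so the only real content is verifying that the hypothesis of Borel-completeness is met in this setting. First, I would recall that the Segal conjecture for $G$ a $p$-group (as proved by Carlsson, building on Lin and Gunawardena) says precisely that for any finite $G$-spectrum $Y$, the $p$-completion $Y_p^\wedge$ is Borel-complete, i.e.\ the canonical map $Y_p^\wedge \to F(EG_+, Y_p^\wedge)$ is an equivalence in $\Sp^G$. Applied to our $X$, this gives that $X \simeq F(EG_+, X)$.

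Given Borel-completeness, the first claim of the corollary is immediate from Corollary \ref{cor:borelconnectivecover}: the equivalence
\[
\mathrm{Slice}(X) \simeq \tau_{\ge 0}^{\mathrm{slice}} \mathrm{HFP}(X)
\]
in $\Fil(\Sp^G)$ is the specialization of Theorem \ref{thm:sliceasconnectivecover} to the Borel-complete case, where $F(EG_+, X) \simeq X$ so that $\mathrm{Slice}(F(EG_+, X)) \simeq \mathrm{Slice}(X)$.

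For the multiplicative refinement, I would invoke the second half of Theorem \ref{thm:sliceasconnectivecover}, which already records that when $X$ carries a naive $\mathbb{E}_\infty$-structure the equivalence $\mathrm{Slice}(F(EG_+, X)) \simeq \tau_{\ge 0}^{\mathrm{slice}} \mathrm{HFP}(X)$ is one of $G$-$\mathbb{E}_\infty$-algebras in $\Fil(\underline{\Sp})$. Combining this with the Borel-completeness equivalence $X \simeq F(EG_+, X)$ (which preserves naive $\mathbb{E}_\infty$-structures, being the unit of the Borelification adjunction that underlies Proposition \ref{prop:hfpssisGsymmonoidal}), we conclude the equivalence of $G$-$\mathbb{E}_\infty$-algebras, as desired.

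There is essentially no obstacle here beyond citing the correct form of the Segal conjecture; the substantive work has all been done in Theorem \ref{thm:sliceasconnectivecover} and Corollary \ref{cor:borelconnectivecover}. The only minor subtlety worth being careful about is that the Segal conjecture is stated for finite $G$-spectra and Borel-completeness is preserved by $p$-completion of finite $G$-spectra when $G$ is a $p$-group; this is exactly the hypothesis in the statement of the corollary, so no further reduction is needed.
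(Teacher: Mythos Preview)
Your proposal is correct and follows exactly the same approach as the paper: the paper simply states before the corollary that ``If $G$ is a $p$-group, the Segal conjecture states that the $p$-completion of any finite $G$-spectrum is Borel-complete,'' and then deduces the result from Corollary~\ref{cor:borelconnectivecover}. You have merely spelled out this one-line deduction in more detail, including the multiplicative refinement via Theorem~\ref{thm:sliceasconnectivecover}.
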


In \cite{carrickcofree}, the author showed that the norms of Real bordism theory $MU^{(\!(G)\!)}$ are Borel complete, which gives the following.

\begin{corollary}\label{cor:sliceMUGisacover}
   The slice filtration of $MU^{(\!(G)\!)}$ is the connective cover of the homotopy fixed point filtration of $MU^{(\!(G)\!)}$ , i.e. there is an equivalence
   \[
   \mathrm{Slice}(MU^{(\!(G)\!)})\simeq\tau_{\ge0}^{\mathrm{slice}}\mathrm{HFP}(MU^{(\!(G)\!)}).
   \]
   of $G$-$\mathbb{E}_\infty$-algebras in $\Fil(\myuline{\Sp})$.
\end{corollary}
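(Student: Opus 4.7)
The plan is to deduce Corollary \ref{cor:sliceMUGisacover} as an immediate application of Corollary \ref{cor:borelconnectivecover} to $X=MU^{(\!(G)\!)}$. Since Corollary \ref{cor:borelconnectivecover} asserts that for any Borel-complete $G$-spectrum $X$ the slice filtration $\mathrm{Slice}(X)$ agrees with $\tau_{\ge0}^{\mathrm{slice}}\mathrm{HFP}(X)$ as $G$-$\mathbb{E}_\infty$-algebras in $\Fil(\myuline{\Sp})$, the only remaining input is that $MU^{(\!(G)\!)}$ is Borel complete.

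First I would invoke the main theorem of \cite{carrickcofree}, where the author established that for $G$ a cyclic $2$-group the norm $MU^{(\!(G)\!)}=N_{C_2}^GMU_{\R}$ of Real bordism theory is Borel complete, i.e. the canonical map $MU^{(\!(G)\!)}\to F(EG_+,MU^{(\!(G)\!)})$ is an equivalence in $\Sp^G$. This is exactly the hypothesis of Corollary \ref{cor:borelconnectivecover}, so applying that result yields the desired equivalence
\[
\mathrm{Slice}(MU^{(\!(G)\!)})\simeq\tau_{\ge0}^{\mathrm{slice}}\mathrm{HFP}(MU^{(\!(G)\!)}).
\]

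For the multiplicative enhancement, it suffices to recall that $MU^{(\!(G)\!)}$ is a $G$-$\mathbb{E}_\infty$-algebra (in particular a naive $\mathbb{E}_\infty$-algebra), so Theorem \ref{thm:sliceasconnectivecover} promotes the canonical map $\mathrm{Slice}(F(EG_+,MU^{(\!(G)\!)}))\xrightarrow{\simeq}\tau_{\ge0}^{\mathrm{slice}}\mathrm{HFP}(MU^{(\!(G)\!)})$ to an equivalence of $G$-$\mathbb{E}_\infty$-algebras in $\Fil(\myuline{\Sp})$, and Borel completeness of $MU^{(\!(G)\!)}$ identifies its source with $\mathrm{Slice}(MU^{(\!(G)\!)})$.

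There is no real obstacle here, since all the heavy lifting has already been done: the filtered connective-cover identification is carried out in Theorem \ref{thm:sliceasconnectivecover} using the mixed filtration of Proposition \ref{prop:spanwithmixedfiltration} and the monoidal Borelification principle, while the Borel completeness of $MU^{(\!(G)\!)}$ is precisely the content of \cite{carrickcofree}. The only thing to note is that both sides of the claimed equivalence are $G$-$\mathbb{E}_\infty$-algebras by Definition \ref{def:slicetower} and Proposition \ref{prop:hfpssisGsymmonoidal}, so the equivalence produced by Theorem \ref{thm:sliceasconnectivecover} is automatically one of $G$-$\mathbb{E}_\infty$-algebras.
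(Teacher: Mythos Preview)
Your proposal is correct and matches the paper's own argument exactly: the paper states this corollary without a formal proof, deducing it immediately from the Borel completeness of $MU^{(\!(G)\!)}$ established in \cite{carrickcofree} together with Theorem \ref{thm:sliceasconnectivecover}/Corollary \ref{cor:borelconnectivecover}. One small quibble: Corollary \ref{cor:borelconnectivecover} as stated does not include the $G$-$\mathbb{E}_\infty$ upgrade, so your first paragraph slightly overstates it; but you correctly supply the multiplicative enhancement via Theorem \ref{thm:sliceasconnectivecover} in the next paragraph, so the argument is complete.
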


The following equivalence is also a useful identification.

\begin{theorem}\label{thm:slicey=xdescription}
    There is an equivalence 
    \[\tau_{\ge0}^{y=x}\mathrm{Slice}(MU^{(\!(G)\!)})^G\simeq \tau_{\ge0}^{y=x}\mathrm{HFP}(MU^{(\!(G)\!)})^G\]
    of commutative algebras in filtered spectra.
\end{theorem}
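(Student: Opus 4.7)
The plan is to apply Proposition~\ref{prop:sufficientforconnectivecover} with $\alpha=1$ to the map
\[
f\colon\mathrm{Slice}(MU^{(\!(G)\!)})^G\longrightarrow\mathrm{HFP}(MU^{(\!(G)\!)})^G
\]
of commutative algebras in $\Fil(\Sp)$ obtained by applying the lax symmetric monoidal functor $(-)^G$ to the $G$-$\mathbb{E}_\infty$-algebra map supplied by Corollary~\ref{cor:mapfromslicetohfpss}. Since $\tau_{\ge0}^{y=x}$ is a lax symmetric monoidal right adjoint (Proposition~\ref{tstructuresexist}), any equivalence of underlying filtered spectra it produces will automatically lift to $\mathrm{CAlg}(\Fil(\Sp))$, reducing the theorem to the two hypotheses of Proposition~\ref{prop:sufficientforconnectivecover}.

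For condition~(1), both filtered spectra are eventually constant since for $n\ll 0$ one has $P^nMU^{(\!(G)\!)}\simeq MU^{(\!(G)\!)}\simeq\tau_{\ge n}MU^{(\!(G)\!)}$, a property preserved by $(-)^G$ and $F(EG_+,-)$. By the remark following Proposition~\ref{prop:sufficientforconnectivecover}, it thus suffices to check that $f$ induces an equivalence after $\tau^{-1}$; both colimits agree with $(MU^{(\!(G)\!)})^G$, where on the homotopy fixed point side we invoke the Borel completeness of $MU^{(\!(G)\!)}$ used in Corollary~\ref{cor:sliceMUGisacover}.

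For condition~(2), since associated gradeds are graded spectra and $\tau_{\ge0}^{y=x}$ truncates each graded piece by the standard connective cover, the requirement is that the map induced by $f$ on
\[
\pi_x(P^{x+y}_{x+y}MU^{(\!(G)\!)})^G\longrightarrow H^y(G;\pi_{x+y}MU^{(\!(G)\!)})
\]
is an isomorphism for all $(x,y)$ with $y\le x$. In the HHR conventions, these are precisely the $E_2$-pages of the slice and homotopy fixed point spectral sequences of $MU^{(\!(G)\!)}$, and the required isomorphism in the region $y\le x$ is the classical comparison of Ullman~\cite{ullman} recalled in the introduction. The main obstacle is aligning the indexing convention of Ullman's theorem with the one used here, so that ``$y\le x$'' on his $E_2$-page corresponds to the $y=x$ linear $t$-structure from Definition~\ref{tstructure definition}; once this bookkeeping is in place, the verification of condition~(2) is immediate, and Proposition~\ref{prop:sufficientforconnectivecover} yields the desired equivalence of commutative algebras.
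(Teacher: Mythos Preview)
Your proposal is correct and follows essentially the same approach as the paper: both apply Proposition~\ref{prop:sufficientforconnectivecover} with $\alpha=1$, verify condition~(1) via the eventual constancy of the two filtrations together with Borel completeness of $MU^{(\!(G)\!)}$, and verify condition~(2) by invoking Ullman's comparison theorem on $E_2$-pages. Your treatment is slightly more explicit about why the resulting equivalence lifts to $\mathrm{CAlg}(\Fil(\Sp))$, which the paper leaves implicit.
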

\begin{proof}
    Note that since $MU^{(\!(G)\!)}$ is Borel-complete and connective, the map
    \[\mathrm{Slice}^n(MU^{(\!(G)\!)})\to\mathrm{HFP}^n(MU^{(\!(G)\!)})\]
    is an equivalence for all $n\le 0$. By Proposition \ref{prop:sufficientforconnectivecover}, it therefore suffices to show that the map $\mathrm{Slice}(MU^{(\!(G)\!)})\to \mathrm{HFP}(MU^{(\!(G)\!)})$ induces an equivalence of filtered spectra
    \[\tau_{\ge0}^{y=x}(\mathrm{Slice}(MU^{(\!(G)\!)})^G/\tau)\to \tau_{\ge0}^{y=x}(\mathrm{Slice}(MU^{(\!(G)\!)})^G/\tau),\]
which was shown by Ullman \cite[Theorem 9.4]{ullman}.    
\end{proof}

\subsection{$MU$-synthetic lifts of the slice filtration} The slice $t$-structure allows us to implement the slice filtration as an $\mathbb{E}_\infty$-algebra in the category of synthetic spectral Mackey functors
\[
\mathrm{Syn}_{MU}^G:=\mathrm{Mack}(G;\mathrm{Syn}_{MU}).
\]
In the following, we will denote by $\mathrm{inf}_e^G$ the inflation functor $\mathrm{Fil}(\Sp)\to \mathrm{Fil}(\Sp^G)$ given by applying $\mathrm{inf}_e^G:\Sp\to\Sp^G$ pointwise to a filtered spectrum. 

\begin{proposition}\label{prop:syntheticspectralmackey}
    There is an equivalence of symmetric monoidal $\infty$-categories
    \[
    \mathrm{Syn}_{MU}^G\simeq \mathrm{Mod}_{\Fil(\Sp^G)}(\mathrm{inf}_e^G\mathrm{AN}(\mathbb{S})). 
    \]
    \end{proposition}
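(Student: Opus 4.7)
The plan is to identify both sides with the relative tensor product $\mathrm{Syn}_{MU} \otimes \Sp^G$ taken in $\mathrm{CAlg}(\mathrm{Pr}^L_{st})$. First, I would invoke the (by now standard) symmetric monoidal equivalence
\[
\mathrm{Syn}_{MU}\simeq \mathrm{Mod}_{\Fil(\Sp)}(\mathrm{AN}(\mathbb{S})),
\]
which presents Pstragowski's category as the base change of $\Fil(\Sp)$ along $\mathbb{S}\to\mathrm{AN}(\mathbb{S})$; this is the viewpoint used throughout \cite{CDvN}, coming from the signature functor and its image being generated by $\mathrm{AN}(\mathbb{S})$-modules.

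Next, using the Guillou--May--Barwick-style identification $\mathrm{Mack}(G;\mathcal{D})\simeq \mathcal{D}\otimes \Sp^G$ valid for any presentably symmetric monoidal stable $\infty$-category $\mathcal{D}$ (where the tensor product is taken in $\mathrm{CAlg}(\mathrm{Pr}^L_{st})$), applied to $\mathcal{D}=\mathrm{Syn}_{MU}$, I would get
\[
\mathrm{Syn}_{MU}^G\simeq \mathrm{Syn}_{MU}\otimes\Sp^G \simeq \mathrm{Mod}_{\Fil(\Sp)}(\mathrm{AN}(\mathbb{S}))\otimes \Sp^G.
\]
Then applying the general base-change formula $\mathrm{Mod}_{\mathcal{C}}(A)\otimes \mathcal{D}\simeq \mathrm{Mod}_{\mathcal{C}\otimes \mathcal{D}}(A\otimes \mathbbm{1}_{\mathcal{D}})$ (\cite[Theorem 4.8.5.16]{HA}-style) yields
\[
\mathrm{Syn}_{MU}^G\simeq \mathrm{Mod}_{\Fil(\Sp)\otimes\Sp^G}\bigl(\mathrm{AN}(\mathbb{S})\otimes \mathbb{S}_G\bigr).
\]

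Finally, I would use the equivalence $\Fil(\Sp)\otimes \Sp^G\simeq \Fil(\Sp^G)$, which is a special case of the tensor product formula used in the proof of Proposition \ref{prop:kaiffunctor} (functor categories out of a small $\infty$-category distribute over the $\mathrm{Pr}^L$-tensor product). Under this symmetric monoidal equivalence, the unique colimit-preserving symmetric monoidal functor $\Fil(\Sp)\to \Fil(\Sp^G)$ is identified with tensoring by $\mathbb{S}_G$ on the right-hand side; this functor is pointwise given by $\mathrm{inf}_e^G\colon \Sp\to\Sp^G$, so $\mathrm{AN}(\mathbb{S})\otimes \mathbb{S}_G$ corresponds to $\mathrm{inf}_e^G\mathrm{AN}(\mathbb{S})$ as a commutative algebra. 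Composing these identifications gives the desired symmetric monoidal equivalence.

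The main point requiring care is checking that each step is symmetric monoidal (not just an equivalence of underlying presentable stable categories), and that the final identification correctly sends $\mathrm{AN}(\mathbb{S})\otimes \mathbb{S}_G$ to $\mathrm{inf}_e^G\mathrm{AN}(\mathbb{S})$ as an object of $\mathrm{CAlg}(\Fil(\Sp^G))$. This last point reduces to observing that under $\Fil(\Sp)\otimes \Sp^G\simeq \Fil(\Sp^G)$, the unit inclusion $\Fil(\Sp)\hookrightarrow \Fil(\Sp)\otimes \Sp^G$ is the pointwise inflation, which is compatible with the commutative algebra structures on both sides.
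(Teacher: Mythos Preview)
Your argument is correct and follows essentially the same strategy as the paper: both identify $\mathrm{Syn}_{MU}^G$ with $\Sp^G\otimes\mathrm{Syn}_{MU}$ via the Mackey-functor/tensor-product description, use $\Fil(\Sp)\otimes\Sp^G\simeq\Fil(\Sp^G)$, and identify the base-changed algebra with $\mathrm{inf}_e^G\mathrm{AN}(\mathbb{S})$. The only organizational difference is that you invoke the nonequivariant equivalence $\mathrm{Syn}_{MU}\simeq\mathrm{Mod}_{\Fil(\Sp)}(\mathrm{AN}(\mathbb{S}))$ as a black box and then apply the general base-change formula for module categories, whereas the paper constructs the adjunction $f^*_G\dashv f^G_*$ explicitly and applies a monadicity criterion (\cite[Proposition A.4]{bhs2}) to obtain the module description directly; the paper then identifies $f^G_*(\mathbbm{1})$ by tracking the right adjoint through the tensor decomposition, which is exactly your final step.
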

    \begin{proof}
    We first note that $\mathrm{Syn}_{MU}$ is compactly generated by the bigraded spheres, by \cite[Theorem 6.3]{piotr}. It follows from \cite[Observation 1.9]{AMGR} that there is a symmetric monoidal equivalence
    \[\mathrm{Syn}_{MU}^G\simeq \Sp^G\otimes \mathrm{Syn}_{MU}.\]
    We therefore define the symmetric monoidal left adjoint $f^*_G:\Fil(\Sp^G)\to \mathrm{Syn}_{MU}^G$ to be the tensor product 
    \[f^*_G:\Fil(\Sp^G)\simeq\Sp^G\otimes\Fil(\Sp)\xrightarrow{\mathrm{id}\otimes f^*}\Sp^G\otimes \mathrm{Syn}_{MU}\simeq \mathrm{Syn}_{MU}^G,\]
    where $f^*$ is the symmetric monoidal left adjoint defined by the symmetric monoidal functor $\Z\to \mathrm{Syn}_{MU}$ sending $i$ to the bigraded sphere $\mathbb{S}^{0,i}$. Let $f_*^G$ denote the right adjoint to $f^*_G$. Combining \cite[Theorem 6.3]{piotr} with \cite[Proposition A.4]{bhs2}, we see that $f_*^G$ factors through a symmetric monoidal equivalence
    \[\mathrm{Mod}_{\Fil(\Sp^G)}(f_*^G(\mathbbm{1}))\simeq\mathrm{Syn}_{MU}^G,\]
    and it remains to identify the $\mathbb{E}_\infty$-algebra $f_*^G(\mathbbm{1})$ with $\mathrm{inf}_e^G\mathrm{ANSS}(\mathbb{S})$. For this, note that since $f_*$ commutes with colimits, the right adjoint $f_*^G$ is given by the tensor product
    \[
    f_*^G:\mathrm{Syn}_{MU}^G\simeq \Sp^G\otimes \mathrm{Syn}_{MU}\xrightarrow{\mathrm{id}\otimes f_*}\Sp^G\otimes\Fil(\Sp)\simeq\Fil(\Sp^G)
    \]
    which implies a commutative diagram
    \[
    \begin{tikzcd}
        \mathrm{Syn}_{MU}\arrow[d,"f_*"]\arrow[r,"\mathrm{inf}_e^G\otimes\mathrm{id}"]&\mathrm{Syn}_{MU}^G\arrow[d,"f_*^G"]\\
        \Fil(\Sp)\arrow[r,"\mathrm{inf}_e^G\otimes\mathrm{id}"]& \Fil(\Sp^G)
    \end{tikzcd}
    \]
    The result now follows from the fact that $f_*(\mathbbm{1})\simeq \mathrm{AN}(\mathbb{S})$ (see \cite[Proposition 1.25]{CDvN}).
\end{proof}

In \cite{CDvN}, the author, Davies, and van Nigtevecht showed how to implement homotopy fixed point spectral sequences and, more generally, descent spectral sequences as commutative algebras in $\mathrm{Syn}_{MU}$. This yields the following, which we reprove in our setting.

\begin{proposition}\label{prop:hfpssisMUsynthetic}
    Let $R\in\Fun(BG,\mathrm{CAlg}(\Sp))$ have the property that $\mathrm{Res}^G_eR$ is complex-orientable. Then $\mathrm{HFP}(R)$ admits a canonical lift along the forgetful functor
    \[\mathrm{CAlg}(\mathrm{Syn}_{MU}^G)\to \mathrm{CAlg}(\Fil(\Sp^G))\]
\end{proposition}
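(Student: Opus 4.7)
By Proposition \ref{prop:syntheticspectralmackey}, providing such a lift is equivalent to producing a canonical map of $\mathbb{E}_\infty$-algebras $\mathrm{inf}_e^G\mathrm{AN}(\mathbb{S}) \to \mathrm{HFP}(R)$ in $\Fil(\Sp^G)$, functorial in $R$. My plan is to apply the non-equivariant synthetic construction pointwise in Borel actions and then Borelify to reach the genuine equivariant setting.

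The adjunction $f^*\dashv f_*$ of Proposition \ref{prop:syntheticspectralmackey} provides, for any $A\in\mathrm{CAlg}(\Sp)$, a canonical map $\mathrm{AN}(\mathbb{S})=f_*f^*\mathbb{S}\to f_*f^*A$ of $\mathbb{E}_\infty$-algebras in $\Fil(\Sp)$, induced by the unit $\mathbb{S}\to A$. By the results of \cite{CDvN}, when $A$ is complex-orientable, the synthetic filtration $f_*f^*A$ is naturally equivalent to the Whitehead tower $\mathrm{Wh}(A)$; this reflects the collapse of the $MU$-based Adams--Novikov spectral sequence, upgraded to the level of filtrations. Since this construction is manifestly $\infty$-functorial in $A$, applying it pointwise to $R\in \Fun(BG,\mathrm{CAlg}(\Sp))$ with $\Res^G_e R$ complex-orientable produces a map
\[\mathrm{AN}(\mathbb{S})\to \mathrm{Wh}(R)\]
of $\mathbb{E}_\infty$-algebras in $\Fun(BG, \Fil(\Sp))$, with $\mathrm{AN}(\mathbb{S})$ carrying the trivial $G$-action.

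Next, I will apply the pointwise Borelification functor $F(EG_+,-): \Fun(BG,\Fil(\Sp))\to \Fil(\Sp^G)$, which by the monoidal Borelification principle of \cite[Theorem 2.4.10]{kaif} invoked in Proposition \ref{prop:hfpssisGsymmonoidal} refines to a lax $G$-symmetric monoidal right adjoint, so in particular preserves $\mathbb{E}_\infty$-algebras. This yields a map
\[F(EG_+,\mathrm{AN}(\mathbb{S}))\to F(EG_+,\mathrm{Wh}(R)) = \mathrm{HFP}(R)\]
of $\mathbb{E}_\infty$-algebras in $\Fil(\Sp^G)$. Precomposing with the Borelification unit
\[\mathrm{inf}_e^G\mathrm{AN}(\mathbb{S})\to F(EG_+,\mathrm{inf}_e^G\mathrm{AN}(\mathbb{S}))\simeq F(EG_+,\mathrm{AN}(\mathbb{S})),\]
which is a map of $\mathbb{E}_\infty$-algebras since Borelification is lax symmetric monoidal, produces the desired map $\mathrm{inf}_e^G\mathrm{AN}(\mathbb{S})\to\mathrm{HFP}(R)$.

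The chief non-formal step is the invocation of the \cite{CDvN} identification $f_*f^*A \simeq \mathrm{Wh}(A)$ for complex-orientable $A$ together with its compatibility with $BG$-actions. The compatibility itself is not a separate verification: the functors $A\mapsto f_*f^*A$ and $A\mapsto \mathrm{Wh}(A)$ are tautologically $\infty$-functorial on $\mathrm{CAlg}(\Sp)$, so any natural equivalence between them lifts at once to $BG$-actions. The content is that the CDvN equivalence is constructed from the cobar filtration of $MU\otimes A$ and uses only the \emph{existence} of a complex orientation rather than a specific choice, so it applies verbatim in the Borel setting.
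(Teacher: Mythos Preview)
Your argument is correct and follows essentially the same route as the paper: reduce via Proposition~\ref{prop:syntheticspectralmackey} to producing an $\mathbb{E}_\infty$-map $\mathrm{inf}_e^G\mathrm{AN}(\mathbb{S})\to\mathrm{HFP}(R)$, use the \cite{CDvN} identification of the d\'ecalage/Adams--Novikov filtration with the Whitehead tower on complex-orientable rings, and then Borelify and precompose with the Borelification unit. The only packaging difference is that the paper carries out the middle step inside $\Sp^G$ via an auxiliary functor $\mathrm{Dec}^G(R)=\mathrm{Tot}(\mathrm{Wh}(R\otimes\mathrm{inf}_e^G(MU)^{\otimes\bullet+1}))$, whereas you work pointwise in $\Fun(BG,\Fil(\Sp))$ and Borelify at the end; these are equivalent since Borel $G$-spectra are exactly $\Fun(BG,\Sp)$.

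One notational slip: writing $f_*f^*A$ for $A\in\mathrm{CAlg}(\Sp)$ does not type-check, since $f^*$ has source $\Fil(\Sp)$, and neither $f^*(\mathrm{const}(A))$ nor $f^*(Y(0)\otimes A)$ is the object you want. What you mean is the Adams--Novikov filtration $\mathrm{AN}(A)=\mathrm{Dec}(A)=\mathrm{Tot}(\mathrm{Wh}(A\otimes MU^{\otimes\bullet+1}))$ (equivalently $\sigma(\nu A)$ in the synthetic language), which is a lax symmetric monoidal functor $\mathrm{CAlg}(\Sp)\to\mathrm{CAlg}(\Fil(\Sp))$ receiving a map from $\mathrm{AN}(\mathbb{S})$ via the unit; the rest of your argument goes through unchanged with this correction.
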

\begin{proof}
By Proposition \ref{prop:syntheticspectralmackey}, it suffices to produce a map 
\[\mathrm{inf}_e^G\mathrm{AN}(\mathbb{S})\to \mathrm{HFP}(R)\]
in $\mathrm{CAlg}(\Fil(\Sp^G))$. Let $\mathrm{CAlg}^{\mathbb{C}}(\Sp)$ denote the full subcategory of $\mathrm{CAlg}(\Sp)$ consisting of those $R$ with the property that $R$ is complex orientable, and denote by $i$ the inclusion $\mathrm{CAlg}^{\mathbb{C}}(\Sp)\hookrightarrow \mathrm{CAlg}(\Sp)$. Consider the functor
\begin{align*}
    \mathrm{Dec}:\mathrm{CAlg}(\Sp)&\to\mathrm{CAlg}(\Fil(\Sp))\\
    R&\mapsto \mathrm{Tot}(\mathrm{Wh}(R\otimes MU^{\otimes\bullet +1}))
\end{align*}
 There is an equivalence of functors $\mathrm{Dec}\circ i\simeq \mathrm{Wh}\circ i$, obtained by combining \cite[Lemma 1.19 and Proposition 1.25]{CDvN}.
 
 Consider now the functor 
\begin{align*}
    \mathrm{Dec}^G:\mathrm{CAlg}(\Sp^G)&\to\mathrm{CAlg}(\Fil(\Sp^G))\\
    R&\mapsto \mathrm{Tot}(\mathrm{Wh}(R\otimes \mathrm{inf}_e^G(MU)^{\otimes\bullet +1}))
\end{align*}
Via the unit $\mathbb{S}\to\mathrm{inf}_e^G(MU)$, there is a map
\[
\mathrm{Wh}(R)\simeq \mathrm{Tot}(\mathrm{const}(\mathrm{Wh}(R))\to \mathrm{Tot}(\mathrm{Wh}(R\otimes \mathrm{inf}_e^G(MU)^{\otimes\bullet +1}))=\mathrm{Dec}^G(R)
\]
in $\mathrm{CAlg}(\Sp^G)$, where $\mathrm{const}(\mathrm{Wh}(R))$ is the constant cosimplicial object at $\mathrm{Wh}(R)$. We have shown this map induces an equivalence on underlying filtered spectra when $\mathrm{Res}^G_eR$ is complex-orientable, so this gives an equivalence
\[\mathrm{HFP}(R)=F({EG}_+,\mathrm{Wh}(R))\simeq F({EG}_+,\mathrm{Dec}^G(R))\]
in $\mathrm{CAlg}(\Fil(\Sp^G))$ in this case. This gives a map
\[F({EG}_+,\mathrm{Dec}^G(\mathbb{S}))\to  F({EG}_+,\mathrm{Dec}^G(R))\simeq\mathrm{HFP}(R)\]
in $\mathrm{CAlg}(\Fil(\Sp^G))$ by functoriality of $\mathrm{Dec}$.

The proof is complete if we can produce an equivalence
\[F({EG}_+,\mathrm{inf}_e^G(\mathrm{AN}(\mathbb{S})))\simeq F({EG}_+,\mathrm{Dec}^G(\mathbb{S})) \]
in $\mathrm{CAlg}(\Fil(\Sp^G))$, as the desired map is then the composite
\[\mathrm{inf}_e^G\mathrm{AN}(\mathbb{S})\to F({EG}_+,\mathrm{inf}_e^G(\mathrm{AN}(\mathbb{S})))\simeq F({EG}_+,\mathrm{Dec}^G(\mathbb{S}))\to\mathrm{HFP}(R)\]
There is a limit comparison map
\[\mathrm{inf}_e^G\mathrm{AN}(\mathbb{S})\to \mathrm{Tot}(\mathrm{inf}_e^G(\mathrm{Wh}(MU^{\otimes\bullet+1})))\]
which induces the identity on underlying filtered spectra. For $R\in\mathrm{CAlg}(\Sp)$, there is a natural map
\[\mathrm{inf}_e^G(\mathrm{Wh}(R))\to\mathrm{Wh}(\mathrm{inf}_e^G(R))\]
in $\mathrm{CAlg}(\Fil(\Sp^G))$, which induces the identity on underlying filtered spectra. This is given by the definition of $\mathrm{Wh}(\mathrm{inf}_e^G(R))$ as a connective cover of $\mathrm{const}(\mathrm{inf}_e^G(R))$ (Definition \ref{def:whitehead}), the
inflated map
\[\mathrm{inf}_e^G(\mathrm{Wh}(R))\to\mathrm{inf}_e^G(\mathrm{const}(R))\simeq \mathrm{const}(\mathrm{inf}_e^G(R)),\]
and the fact that $\mathrm{inf}_e^G(\tau_{\ge n}R)$ is $n$-connective in $\Sp^G$. These maps determine a natural map
\[\mathrm{Tot}(\mathrm{inf}_e^G(\mathrm{Wh}(MU^{\otimes\bullet+1})))\to \mathrm{Tot}(\mathrm{Wh}(\mathrm{inf}_e^G(MU)^{\otimes\bullet+1}))\]
in $\mathrm{CAlg}(\Fil(\Sp^G))$ which induces the identity on underlying filtered spectra and therefore induces an equivalence on Borelification, completing the proof.
\end{proof}

We will combine Proposition \ref{prop:hfpssisMUsynthetic} and Corollary \ref{cor:sliceMUGisacover} to produce a map
\[\mathrm{inf}_e^G\mathrm{AN}(\mathbb{S})\to \mathrm{Slice}(MU^{(\!(G)\!)})\]
in $\mathrm{CAlg}(\Fil(\Sp^G))$. First we need to trade the slice $t$-structure for the $\mathcal{O}$-slice $t$-structure of Definition \ref{def:twofilteredtstructures} in the statement of Corollary \ref{cor:sliceMUGisacover}, since $\mathrm{inf}_e^G\mathrm{AN}(\mathbb{S})$ is not connective in the slice $t$-structure, but it is connective in the $\mathcal{O}$-slice $t$-structure.

\begin{lemma}\label{lem:tstructuresagreeonhfpss}
    For $X\in\Sp^G$, there is a natural equivalence
    \[\tau_{\ge0}^{\mathrm{slice}}\mathrm{HFP}(X)\simeq \tau_{\ge0}^{\mathcal{O}}\mathrm{HFP}(X)\]
    in $\Fil(\Sp^G)$. If $X$ has a naive $\mathbb{E}_\infty$-structure, this is an equivalence of $\mathcal{O}$-commutative monoids in $\Fil(\myuline{\Sp})$.
\end{lemma}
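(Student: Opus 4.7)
The plan is to show directly that $\tau_{\ge 0}^{\mathcal{O}}\mathrm{HFP}(X)$ already satisfies the extra geometric connectivity condition at $H=\{e\}$ which distinguishes the slice from the $\mathcal{O}$-slice $t$-structure in Remark \ref{rmk:geometricconnectivity}. Since every slice-connective object is $\mathcal{O}$-slice-connective, the universal property then produces a canonical factorization $\tau_{\ge 0}^{\mathrm{slice}}\mathrm{HFP}(X) \to \tau_{\ge 0}^{\mathcal{O}}\mathrm{HFP}(X) \to \mathrm{HFP}(X)$ whose middle term is now known to be slice-connective, forcing the first map to be an equivalence.

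The key calculation is that $\mathrm{Res}^G_e$ commutes with $\tau_{\ge 0}^{\mathcal{O}}$. Inspecting the generators of the $\mathcal{O}$-slice $t$-structure on $\Sp^G$ in Definition \ref{def:twofilteredtstructures}, the objects $\mathrm{Ind}_e^G \mathbb{S}^n$ for $n\in\Z$ are connective at \emph{every} level $i\in\Z$. By adjunction, any object right-orthogonal to the $\mathcal{O}$-slice connective generators must have vanishing $\pi_*\mathrm{Res}^G_e$. Therefore, for any $Z\in\Sp^G$, the levelwise coconnective part $\tau_{<i}^{\mathcal{O}}(Z)$ satisfies $\mathrm{Res}^G_e\tau_{<i}^{\mathcal{O}}(Z)=0$, so the fiber sequence $\tau_{\ge i}^{\mathcal{O}}(Z)\to Z\to \tau_{<i}^{\mathcal{O}}(Z)$ yields $\mathrm{Res}^G_e\tau_{\ge i}^{\mathcal{O}}(Z)\simeq \mathrm{Res}^G_e Z$. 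Using the pointwise formula $(\tau_{\ge 0}^{\mathcal{O}}Y)^i=\tau_{\ge i}^{\mathcal{O}}(Y^i)$ from Proposition \ref{prop:slicefiltration}(3), we conclude $\mathrm{Res}^G_e \tau_{\ge 0}^{\mathcal{O}} Y \simeq \mathrm{Res}^G_e Y$ as filtered spectra for any $Y\in\Fil(\Sp^G)$.

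Specializing to $Y=\mathrm{HFP}(X)=F(EG_+,\mathrm{Wh}(X))$ and using that $EG$ is non-equivariantly contractible, we have $\mathrm{Res}^G_e\mathrm{HFP}(X)\simeq\mathrm{Wh}(\mathrm{Res}^G_e X)$, which is connective in the $y=0$ linear $t$-structure on $\Fil(\Sp)$ essentially by the defining connectivity of the Whitehead tower. Combined with the previous step, $\mathrm{Res}^G_e \tau_{\ge 0}^{\mathcal{O}}\mathrm{HFP}(X)$ is also $y=0$-connective, supplying the missing $H=\{e\}$ condition for slice-connectivity via Remark \ref{rmk:geometricconnectivity}, which establishes the underlying equivalence.

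For the monoidal refinement, both $\tau_{\ge 0}^{\mathrm{slice}}$ and $\tau_{\ge 0}^{\mathcal{O}}$ refine to lax $\mathcal{O}$-symmetric monoidal functors via Proposition \ref{prop:partiallynormedslicefiltration}, and the canonical natural transformation between them assembles into a lax $\mathcal{O}$-monoidal one. Applied to the $G$-$\mathbb{E}_\infty$-algebra $\mathrm{HFP}(X)$ of Proposition \ref{prop:hfpssisGsymmonoidal}, this produces a map of $\mathcal{O}$-algebras that is an equivalence on underlying filtered $G$-spectra, hence an equivalence in $\mathrm{CAlg}_{\mathcal{O}}(\Fil(\myuline{\Sp}))$ by conservativity of the forgetful functor. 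The main obstacle is really only the observation about the generators $\mathrm{Ind}_e^G\mathbb{S}^n$ appearing at every filtration level; everything else is formal once that vanishing of underlying coconnective parts is in hand.
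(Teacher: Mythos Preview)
Your proof is correct and rests on the same core observation as the paper's, namely that the generators $\mathrm{Ind}_e^G\mathbb{S}^n$ (for all $n\in\Z$) force the $\mathcal{O}$-slice-coconnective part to have trivial underlying spectrum. The packaging, however, is different. You argue directly that $\tau_{\ge 0}^{\mathcal{O}}\mathrm{HFP}(X)$ is slice-connective by verifying the geometric fixed-point criterion of Remark~\ref{rmk:geometricconnectivity} at the one missing subgroup $H=\{e\}$; since the cofiber $\tau_{<0}^{\mathcal{O}}\mathrm{HFP}(X)$ is $\mathcal{O}$-slice-coconnective and hence slice-coconnective, the map $\tau_{\ge 0}^{\mathcal{O}}\mathrm{HFP}(X)\to\mathrm{HFP}(X)$ is then recognized as the slice cover, and uniqueness finishes. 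The paper instead studies the cofiber $C$ of the comparison map, shows $C\simeq\tilde{E}G\otimes C$ by checking that both $\tau_{<0}^{\mathrm{slice}}\mathrm{HFP}(X)$ and $\tau_{<0}^{\mathcal{O}}\mathrm{HFP}(X)$ have trivial underlying spectrum, and then uses this $\tilde{E}G$-locality to verify $[W,C]=0$ for each type of $\mathcal{O}$-slice generator separately. Your route is shorter and more conceptual, leaning on the Hill--Yarnall criterion rather than an explicit isotropy-separation bookkeeping; the paper's route is more hands-on and makes the role of $\tilde{E}G$ explicit. One small point worth spelling out in your write-up is the half-sentence that $\mathcal{O}$-slice-coconnective implies slice-coconnective (the orthogonality reversal), which is what lets you invoke uniqueness of the slice cover.
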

\begin{proof}
    By definition of these $t$-structures, if $Y\in\Fil(\Sp)$ is connective in the slice $t$-structure, then it is connective in the $\mathcal{O}$-slice $t$-structure. This gives a map
\begin{equation}\label{eq:comparisonofcovers}
       \tau_{\ge0}^{\mathrm{slice}}\mathrm{HFP}(X)\to \tau_{\ge0}^{\mathcal{O}}\mathrm{HFP}(X), 
    \end{equation}
   which is a map of $\mathcal{O}$-commutative monoids in $\Fil(\myuline{\Sp})$ when $X$ has a naive $\mathbb{E}_\infty$-structure, and we claim this map is an equivalence. The map (\ref{eq:comparisonofcovers}) is a slice connective cover, so
   \[[Z,C]_{\Fil(\Sp^G)}=0\]
 for all generators $Z$ of the slice $t$-structure, where $C$ is the cofiber of the map (\ref{eq:comparisonofcovers}). Since both the domain and codomain of (\ref{eq:comparisonofcovers}) are $\mathcal{O}$-slice connective, it suffices to show that the map induces an equivalence on $\mathcal{O}$-slice connective covers, i.e. that $[W,C]_{\Fil(\Sp^G)}=0$ for all generators $W$ of the $\mathcal{O}$-slice $t$-structure. A generator $W$ of the $\mathcal{O}$-slice $t$-structure is either of the form $\mathrm{Ind}_e^G(Y)$ for $Y\in\Fil(\Sp)$ or  $\tilde{E}G\otimes Z$ for $Z$ a generator of the slice $t$-structure. We claim that $C\simeq \tilde{E}G\otimes C$, and this completes the proof, as
 \begin{align*}
    [\mathrm{Ind}_e^G(Y),C]_{\Fil(\Sp^G)}&\cong [\mathrm{Ind}_e^G(Y),\tilde{E}G\otimes C]_{\Fil(\Sp^G)}\\
    &\cong[Y,\mathrm{Res}_e^G(\tilde{E}G\otimes C)]_{\Fil(\Sp)}\\
    &=0 
 \end{align*}

   and
   \begin{align*}
   [\tilde{E}G\otimes Z,C]_{\Fil(\Sp^G)}&\cong[\tilde{E}G\otimes Z,\tilde{E}G\otimes C]_{\Fil(\Sp^G)}\\
   &\cong[Z,\tilde{E}G\otimes C]_{\Fil(\Sp^G)}\\
   &\cong[Z,C]_{\Fil(\Sp^G)}\\
   &=0    
   \end{align*}

   To prove the claim, note that the composition
   \[\tau_{\ge0}^{\mathrm{slice}}\mathrm{HFP}(X)\to \tau_{\ge0}^{\mathcal{O}}\mathrm{HFP}(X)\to \mathrm{HFP}(X)\]
   gives a cofiber sequence
   \[C\to \tau_{<0}^{\mathrm{slice}}\mathrm{HFP}(X)\to \tau_{<0}^{\mathcal{O}}\mathrm{HFP}(X)\]
   so it suffices to prove that $\tau_{<0}^{\mathrm{slice}}\mathrm{HFP}(X)\simeq \tilde{E}G\otimes\tau_{<0}^{\mathrm{slice}}\mathrm{HFP}(X)$ and $\tau_{<0}^{\mathcal{O}}\mathrm{HFP}(X)\simeq \tilde{E}G\otimes\tau_{<0}^{\mathcal{O}}\mathrm{HFP}(X)$. By checking on generators, applying $\mathrm{Res}^G_e$ to the slice $t$-structure gives the $y=0$ linear $t$-structure on $\Fil(\Sp)$, and applying $\mathrm{Res}^G_e$ to the $\mathcal{O}$-slice $t$-structure gives the trivial $t$-structure on $\Fil(\Sp)$, i.e. the $t$-structure where all objects are connective. The filtered spectrum $\mathrm{Res}^G_e\mathrm{HFP}(X)=\mathrm{Wh}(\mathrm{Res}^G_eX)$ is connective in both these $t$-structures, so the claim follows.
\end{proof}

We now have all the pieces in place to prove our main theorem.

\begin{theorem}\label{thm:mapfrominflationtoslice}
    Let $R\in\mathrm{CAlg}(\Sp^G)$ have the property that $\mathrm{Res}^G_eR$ is complex-orientable. Then $\mathrm{Slice}(F({EG}_+,R))$ admits a canonical lift along the forgetful functor
    \[\mathrm{CAlg}(\mathrm{Syn}_{MU}^G)\to \mathrm{CAlg}(\Fil(\Sp^G))\]
\end{theorem}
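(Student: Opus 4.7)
The plan is to reduce, via Proposition~\ref{prop:syntheticspectralmackey}, to producing a map of $\mathbb{E}_\infty$-algebras
\[\mathrm{inf}_e^G\mathrm{AN}(\mathbb{S}) \to \mathrm{Slice}(F(EG_+, R))\]
in $\Fil(\Sp^G)$. The key identification comes from combining Theorem~\ref{thm:sliceasconnectivecover} with Lemma~\ref{lem:tstructuresagreeonhfpss}: for any $R \in \mathrm{CAlg}(\Sp^G)$, there are natural equivalences
\[\mathrm{Slice}(F(EG_+, R)) \simeq \tau_{\ge 0}^{\mathrm{slice}}\mathrm{HFP}(R) \simeq \tau_{\ge 0}^{\mathcal{O}}\mathrm{HFP}(R)\]
of $\mathbb{E}_\infty$-algebras in $\Fil(\Sp^G)$, so it suffices to target the $\mathcal{O}$-slice connective cover of $\mathrm{HFP}(R)$.

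Since $\mathrm{Res}^G_e R$ is complex-orientable, Proposition~\ref{prop:hfpssisMUsynthetic} supplies a canonical map $\varphi: \mathrm{inf}_e^G\mathrm{AN}(\mathbb{S}) \to \mathrm{HFP}(R)$ in $\mathrm{CAlg}(\Fil(\Sp^G))$. The main step is then to verify that the source $\mathrm{inf}_e^G\mathrm{AN}(\mathbb{S})$ is itself connective in the $\mathcal{O}$-slice $t$-structure, so that $\varphi$ lifts uniquely through the connective cover. By Remark~\ref{rmk:geometricconnectivity}, this reduces to checking that $\Phi^H\mathrm{inf}_e^G\mathrm{AN}(\mathbb{S})$ is connective in the $y = (|H|-1)x$ linear $t$-structure on $\Fil(\Sp)$ for every nontrivial $H \subset G$. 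Since both $\Phi^H$ and $\mathrm{inf}_e^G$ are applied pointwise on filtrations and $\Phi^H\circ\mathrm{inf}_e^G$ is the identity on $\Sp$, one has $\Phi^H\mathrm{inf}_e^G\mathrm{AN}(\mathbb{S}) \simeq \mathrm{AN}(\mathbb{S})$. The classical vanishing line of slope $1$ in $\mathrm{ANSS}(\mathbb{S})$ shows that $\mathrm{AN}(\mathbb{S})$ is connective in the $y = x$ linear $t$-structure, and combined with the fact that $\mathrm{AN}(\mathbb{S})$ filters the connective sphere (so $\pi_{x,y}\mathrm{AN}(\mathbb{S}) = 0$ for $x < 0$), this implies connectivity in the $y = (|H|-1)x$ $t$-structure for all $|H| \ge 2$.

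Finally, Proposition~\ref{prop:partiallynormedslicefiltration} guarantees that the inclusion of $\mathcal{O}$-slice connective objects is an $\mathcal{O}$-symmetric monoidal left adjoint, whence $\tau_{\ge 0}^{\mathcal{O}}$ is lax symmetric monoidal and sends $\mathbb{E}_\infty$-algebras to $\mathbb{E}_\infty$-algebras. Applying $\tau_{\ge 0}^{\mathcal{O}}$ to $\varphi$ and using that $\mathrm{inf}_e^G\mathrm{AN}(\mathbb{S})$ is already $\mathcal{O}$-slice connective yields the desired map
\[\mathrm{inf}_e^G\mathrm{AN}(\mathbb{S}) \simeq \tau_{\ge 0}^{\mathcal{O}}\mathrm{inf}_e^G\mathrm{AN}(\mathbb{S}) \xrightarrow{\tau_{\ge 0}^{\mathcal{O}}\varphi} \tau_{\ge 0}^{\mathcal{O}}\mathrm{HFP}(R) \simeq \mathrm{Slice}(F(EG_+, R))\]
in $\mathrm{CAlg}(\Fil(\Sp^G))$, which gives the desired lift. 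The only genuine obstacle is the connectivity verification, and this is handled cleanly by combining the pointwise formula $\Phi^H\circ\mathrm{inf}_e^G \simeq \mathrm{id}$ with the classical slope-$1$ vanishing line of the ANSS; essentially all of the conceptual work has already been absorbed into the preceding $t$-structure machinery.
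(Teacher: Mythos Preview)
Your proof is correct and follows the paper's overall strategy: reduce via Proposition~\ref{prop:syntheticspectralmackey}, identify $\mathrm{Slice}(F(EG_+,R))\simeq\tau_{\ge 0}^{\mathcal{O}}\mathrm{HFP}(R)$ via Theorem~\ref{thm:sliceasconnectivecover} and Lemma~\ref{lem:tstructuresagreeonhfpss}, invoke Proposition~\ref{prop:hfpssisMUsynthetic} for the map to $\mathrm{HFP}(R)$, and then check that $\mathrm{inf}_e^G\mathrm{AN}(\mathbb{S})$ is $\mathcal{O}$-slice connective.

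The only difference is in that last verification. You use the geometric fixed points criterion of Remark~\ref{rmk:geometricconnectivity} together with $\Phi^H\circ\mathrm{inf}_e^G\simeq\mathrm{id}$, reducing to linear connectivity of $\mathrm{AN}(\mathbb{S})$ itself; this is exactly the argument the paper sketches in Section~\ref{sec:longintro}. The paper's formal proof instead works pointwise with the generating sets: from $y=x$ connectivity it deduces $\mathrm{AN}^n(\mathbb{S})\in\langle\mathbb{S}^{\lceil n/2\rceil}\rangle$, then uses isotropy separation to place $\mathrm{inf}_e^G\mathrm{AN}^n(\mathbb{S})$ in the $n$-th $\mathcal{O}$-slice subcategory (the $EG_+$ piece via the induced-sphere generators, the $\tilde{E}G$ piece via the Hill--Yarnall geometric connectivity criterion). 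Your route is shorter; the paper's route avoids the passage from $y=x$ to $y=(|H|-1)x$ connectivity. One small imprecision in your argument: ``filters the connective sphere'' does not by itself yield $\pi_{x,y}\mathrm{AN}(\mathbb{S})=0$ for $x<0$; what you actually need is that $\mathrm{AN}(\mathbb{S})^n\simeq\mathbb{S}$ for $n\le 0$, which combined with the $y=x$ connectivity does indeed force vanishing for all $x<0$.
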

\begin{proof}
    By Proposition \ref{prop:hfpssisMUsynthetic}, there is a map 
    \[\mathrm{inf}_e^G\mathrm{AN}(\mathbb{S})\to \mathrm{HFP}(R)\]
    in $\mathrm{CAlg}(\Fil(\Sp^G))$, and combining Lemma \ref{lem:tstructuresagreeonhfpss} with Theorem \ref{thm:sliceasconnectivecover}, we have that $\mathrm{Slice}(F({EG}_+,R))$ is the $\mathcal{O}$-slice connective cover of $\mathrm{HFP}(R)$. It therefore suffices to show that $\mathrm{inf}_e^G\mathrm{AN}(\mathbb{S})$ is $\mathcal{O}$-slice connective.

    We claim that the filtered spectrum $\mathrm{AN}(\mathbb{S})$ is connective in the $y=x$ linear $t$-structure on $\Fil(\Sp)$. Given this, it follows that $\mathrm{AN}^n(\mathbb{S})$ is contained in the subcategory $\langle\mathbb{S}^{\lceil\frac{n}{2}\rceil}\rangle$ of $\Sp$. Since $\mathrm{inf}_e^G$ is a left adjoint, $\mathrm{inf}_e^G\mathrm{AN}^n(\mathbb{S})$ is in the subcategory $\langle\mathbb{S}^{\lceil\frac{n}{2}\rceil}\rangle$ of $\Sp^G$. The $G$-spectrum $EG_+\otimes\mathrm{inf}_e^G\mathrm{AN}^n(\mathbb{S})$ is in the subcategory of $\Sp^G$ generated by the induced spheres $\mathrm{Ind}_e^G\mathbb{S}^i$, via the skeletal filtration of $EG_+$, and hence $EG_+\otimes\mathrm{inf}_e^G\mathrm{AN}^n(\mathbb{S})$ is connective in the $n$-th $t$-structure in the $\mathcal{O}$-slice filtration. Via isotropy separation, it suffices to show that $\tilde{E}G\otimes\mathrm{inf}_e^G\mathrm{AN}^n(\mathbb{S})$ is connective in the $n$-th $t$-structure in the $\mathcal{O}$-slice filtration. The $G$-spectrum $\tilde{E}G\otimes \mathbb{S}^{\lceil\frac{n}{2}\rceil}$ is contained in $\langle\mathbb{S}^{m\rho_H}:m|H|\ge n\rangle_{H\subset G}$ by applying the geometric connectivity criterion of \cite[Theorem 11.1.27]{HHRbook}, so $\tilde{E}G\otimes \mathbb{S}^{\lceil\frac{n}{2}\rceil}$ is contained in $\langle\tilde{E}G\otimes\mathbb{S}^{m\rho_H}:m|H|\ge n\rangle_{H\subset G}$. This implies now that $\tilde{E}G\otimes\mathrm{inf}_e^G\mathrm{AN}^n(\mathbb{S})$ is contained in $\langle\tilde{E}G\otimes\mathbb{S}^{m\rho_H}:m|H|\ge n\rangle_{H\subset G}$ as desired.

    The fact that $\mathrm{AN}(\mathbb{S})$ is connective in the $y=x$ linear $t$-structure on $\Fil(\Sp)$ boils down to the well known fact that the Adams--Novikov spectral sequence has a $y=x$ vanishing line on the $E_2$-page. Setting $(A,\Gamma)=(MU_*,MU_*MU)$, the $E_2$-page is the cohomology of the cobar complex
    \[A\implies\overline{\Gamma}\Rrightarrow\overline{\Gamma}\otimes_A \overline{\Gamma}\cdots\]
    where $\overline{\Gamma}=\ker(\epsilon:\Gamma\to A)$, by \cite[Corollary A1.2.12]{ravgreen}. Since $\overline{\Gamma}$ is concentrated in degrees $\ge 2$, the $s$-th term $\overline{\Gamma}^{\otimes_As}$ of this cobar complex is concentrated in degrees $\ge 2s$, and it follows that $\Ext_{(A,\Gamma)}^{s,t}(A,A)$ is concentrated in degres $t\ge 2s$, i.e. $s\le t-s$. This implies that $\mathrm{AN}(\mathbb{S})/\tau$ is connective in the $y=x$ linear $t$-structure on $\Fil(\Sp)$, by the identification $\pi_{n,s}\mathrm{AN}(\mathbb{S})/\tau\cong \Ext_{(A,\Gamma)}^{s,n+s}(A,A)$ of \cite[Proposition 3.3]{mmf}. The filtered spectrum $\mathrm{AN}(\mathbb{S})$ is complete because it is defined as a limit of Whitehead towers, which are complete. Moreover, it is strongly complete in the sense of Definition \ref{def:stronglycomplete} since $\mathrm{ANSS}(\mathbb S)$ converges strongly, as shown in \cite[Chapter 2, Section 2]{ravgreen}. Proposition \ref{prop:connectivitymodtau} now implies that
    $\mathrm{AN}(\mathbb{S})$ is also connective in the $y=x$ linear $t$-structure. 
\end{proof}

We have the following concrete consequence for the spectral sequences corresponding to these filtrations.

\begin{corollary}
    Let $R\in\mathrm{CAlg}(\Sp^G)$ have the property that $\mathrm{Res}^G_eR$ is complex-orientable and $R$ is Borel complete. There is a map of multiplicative spectral sequences 
    \[
    \mathrm{ANSS}(\mathbb{S})\to \mathrm{SliceSS}(R)
    \]
    converging to the unit map
    \[
    \pi_*\mathbb{S}\to\pi_*^GR
    \]
\end{corollary}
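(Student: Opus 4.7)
The plan is to unpack the synthetic structure from Theorem~\ref{thm:mapfrominflationtoslice} as a map of filtered $\mathbb{E}_\infty$-algebras, transpose across the $\mathrm{inf}_e^G\dashv(-)^G$ adjunction, and apply the $\tau$-Bockstein spectral sequence functor. Concretely, I would first apply Theorem~\ref{thm:mapfrominflationtoslice} to $R$: since $\mathrm{Res}_e^GR$ is complex-orientable by hypothesis, we obtain a canonical lift of $\mathrm{Slice}(F(EG_+,R))$ to $\mathrm{CAlg}(\mathrm{Syn}_{MU}^G)$. Proposition~\ref{prop:syntheticspectralmackey} identifies this $\infty$-category with $\mathrm{Mod}_{\Fil(\Sp^G)}(\mathrm{inf}_e^G\mathrm{AN}(\mathbb{S}))$, so unpacking the lift produces a map
\[\mathrm{inf}_e^G\mathrm{AN}(\mathbb{S})\to \mathrm{Slice}(F(EG_+,R))\]
in $\mathrm{CAlg}(\Fil(\Sp^G))$. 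The Borel completeness hypothesis gives $F(EG_+,R)\simeq R$, so this becomes a map of filtered $\mathbb{E}_\infty$-algebras $\mathrm{inf}_e^G\mathrm{AN}(\mathbb{S})\to\mathrm{Slice}(R)$.

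Next I would transpose across the $\mathrm{inf}_e^G\dashv (-)^G$ adjunction, applied pointwise to filtered objects. Since $(-)^G$ is lax symmetric monoidal, this adjoint map lies in $\mathrm{CAlg}(\Fil(\Sp))$:
\[\mathrm{AN}(\mathbb{S})\to \mathrm{Slice}(R)^G.\]
Applying the $\tau$-Bockstein spectral sequence functor, which sends a map of $\mathbb{E}_\infty$-algebras in filtered spectra to a map of multiplicative spectral sequences, then yields the desired morphism $\mathrm{ANSS}(\mathbb{S})\to\mathrm{SliceSS}(R)$.

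For the convergence claim, I would identify the induced map on abutments. We have $\mathrm{colim}(\mathrm{AN}(\mathbb{S}))\simeq \mathbb{S}$, while $\mathrm{colim}(\mathrm{Slice}(R))\simeq R$ by the standard strong convergence of the slice spectral sequence, and $(-)^G$ commutes with the relevant filtered colimit so $\mathrm{colim}(\mathrm{Slice}(R)^G)\simeq R^G$. The resulting map $\mathbb{S}\to R^G$ agrees with the unit because each step of the construction is canonical: the lift from Theorem~\ref{thm:mapfrominflationtoslice} is essentially unique once $R$ is fixed, and transposition along a natural adjunction preserves underlying unit maps. The only potentially subtle point is verifying the colimit identification $\mathrm{colim}(\mathrm{Slice}(R)^G)\simeq R^G$, but this is standard for Borel-complete $R$ with complex-orientable underlying spectrum, as the slice tower then has bounded-below layers and $(-)^G$ commutes with the countable filtered colimit.
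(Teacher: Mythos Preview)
Your proposal is correct and matches the paper's approach; the paper itself gives no separate proof, treating the corollary as an immediate consequence of Theorem~\ref{thm:mapfrominflationtoslice}, and your unwinding (extract the map $\mathrm{inf}_e^G\mathrm{AN}(\mathbb{S})\to\mathrm{Slice}(R)$ in $\mathrm{CAlg}(\Fil(\Sp^G))$, transpose along $\mathrm{inf}_e^G\dashv(-)^G$, and pass to $\tau$-Bockstein spectral sequences) is exactly the intended reading.

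One small comment on your final paragraph: the worry about whether $(-)^G$ commutes with the filtered colimit is slightly misplaced, and your proposed resolution (``bounded-below layers'' plus commutation with countable filtered colimits) is not quite the right justification, since $(-)^G$ does not commute with filtered colimits in general. The cleaner route is to identify the map on abutments \emph{before} applying fixed points: the colimit of $\mathrm{inf}_e^G\mathrm{AN}(\mathbb{S})\to\mathrm{Slice}(R)$ in $\Fil(\Sp^G)$ is the unit $\mathbb{S}\to R$ in $\Sp^G$, and the convergence of $\mathrm{SliceSS}(R)$ to $\pi_*^G R$ is an established property of the slice spectral sequence itself rather than something to be rederived from a colimit identification of $\mathrm{Slice}(R)^G$.
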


This is only a shadow of the structure given by the $\mathbb{E}_\infty$ map
    \[
    \mathrm{AN}(\mathbb{S})\to \mathrm{Slice}(R).
    \]
The map on spectral sequences also respects all higher structures furnished by the $\mathbb{E}_\infty$ structures, such as Toda brackets and power operations. Moreover, since $\mathrm{Slice}(R)$ lifts to $\mathrm{Syn}_{MU}^G$, for any $R$-module $M$, $\mathrm{Slice}(M)$ also lifts to $\mathrm{Syn}_{MU}^G$. We give the following example, which was our motivating example.

\begin{corollary}\label{cor:slicebpgm}
    There is a map of spectral sequences
    \[\mathrm{ANSS}(\mathbb{S})\to\mathrm{SliceSS}(\bpgm{m}),\]
    which is a map of modules over $\mathrm{ANSS}(\mathbb{S})$ converging to the unit map 
        \[
\pi_*\mathbb{S}\to\pi_*^G\bpgm{m}.
    \]
\end{corollary}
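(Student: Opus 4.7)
The plan is to deduce this corollary from Theorem \ref{thm:mapfrominflationtoslice} applied to $R = MU^{(\!(G)\!)}$, combined with the canonical $MU^{(\!(G)\!)}$-module structure on $\bpgm{m}$.

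First, I would observe that $MU^{(\!(G)\!)}$ satisfies the hypotheses of Theorem \ref{thm:mapfrominflationtoslice}: it is Borel complete by \cite{carrickcofree}, and its underlying non-equivariant spectrum is a smash power of $MU$, hence complex orientable. That theorem therefore produces a map
\[\iota\colon \mathrm{AN}(\mathbb{S})\longrightarrow \mathrm{Slice}(MU^{(\!(G)\!)})^G\]
in $\mathrm{CAlg}(\Fil(\Sp))$, equivalently a lift of $\mathrm{Slice}(MU^{(\!(G)\!)})^G$ to $\mathrm{CAlg}(\mathrm{Syn}_{MU})$ via Proposition \ref{prop:syntheticspectralmackey}.

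Next, since the slice $t$-structure defines a normed $\underline{\Z}$-indexed slice filtration on $\underline{\Sp}$, the functor $\mathrm{Slice}(-)\colon \Sp^G \to \Fil(\Sp^G)$ is lax $G$-symmetric monoidal. In particular, the $MU^{(\!(G)\!)}$-module structure on $\bpgm{m}$ promotes $\mathrm{Slice}(\bpgm{m})$ to a module over $\mathrm{Slice}(MU^{(\!(G)\!)})$ in $\Fil(\Sp^G)$. Applying the lax monoidal functor $(-)^G$ and then restricting scalars along $\iota$ equips $\mathrm{Slice}(\bpgm{m})^G$ with the structure of a module over $\mathrm{AN}(\mathbb{S})$ in $\Fil(\Sp)$, i.e., a lift to $\mathrm{Syn}_{MU}$.

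Finally, the module structure map $MU^{(\!(G)\!)} \to \bpgm{m}$ induces, by functoriality of $\mathrm{Slice}$ and $(-)^G$, a map $\mathrm{Slice}(MU^{(\!(G)\!)})^G \to \mathrm{Slice}(\bpgm{m})^G$; composing with $\iota$ yields a map
\[\mathrm{AN}(\mathbb{S})\;\longrightarrow\;\mathrm{Slice}(\bpgm{m})^G\]
in $\Fil(\Sp)$, which is a map of $\mathrm{AN}(\mathbb{S})$-modules. Passing to the associated spectral sequences produces the claim. The only point that genuinely needs attention, and which I expect to be the main (if routine) obstacle, is verifying the compatibility of the $\mathrm{AN}(\mathbb{S})$-module structure on $\mathrm{Slice}(\bpgm{m})^G$ coming from slice monoidality with the one descending from $\iota$; but both originate from the single $G$-symmetric monoidal structure on $\Fil(\Sp^G)$ developed in Sections \ref{sec:equivariantfiltrations} and \ref{sec:equivarianttstructures} and exploited in the proof of Theorem \ref{thm:mapfrominflationtoslice}, so the compatibility is automatic.
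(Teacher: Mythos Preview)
Your proposal is correct and follows exactly the approach the paper intends: the paper states this corollary without a separate proof, deducing it from the sentence immediately preceding it (``since $\mathrm{Slice}(R)$ lifts to $\mathrm{Syn}_{MU}^G$, for any $R$-module $M$, $\mathrm{Slice}(M)$ also lifts to $\mathrm{Syn}_{MU}^G$'') applied with $R=MU^{(\!(G)\!)}$ and $M=\bpgm{m}$. Your write-up simply unpacks this in more detail, and the compatibility point you flag is indeed automatic for the reason you give.
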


\begin{remark}\label{rmk:RO(G)gradings}
    In the above results on spectral sequences, we have stated things in terms of integer gradings. However, these statements all hold in full $RO(G)$-grading by replacing $\mathrm{ANSS}(\mathbb{S})$ with the $\mathrm{inf}_e^G(MU)$-based ASS of the equivariant sphere spectrum $\mathbb S$.
\end{remark}

\subsection{$MU_G$-synthetic lifts of the slice filtration} The results of the previous subsection give us a way to produce maps into the slice filtration, by first producing a map to the homotopy fixed point filtration and then checking a connectivity condition. We apply this now to the (integer-graded) $MU_G$-based ASS.

\begin{proposition}\label{prop:hfpssisMUGsynthetic}
    Let $R\in\Fun(BG,\mathrm{CAlg}(\Sp))$ have the property that $\mathrm{Res}^G_eR$ is complex-orientable. Then there is a map
    \[
    \mathrm{AN}_G(\mathbb{S})\to \mathrm{HFP}(R)
    \]
    in $\mathrm{CAlg}(\Fil(\Sp^G))$.
\end{proposition}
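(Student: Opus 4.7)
The plan is to closely follow the template of Proposition \ref{prop:hfpssisMUsynthetic}, replacing the $\mathrm{inf}_e^G(MU)$-based décalage with a genuinely $G$-equivariant $MU_G$-based décalage. First I would define a functor
\[
\mathrm{Dec}^{MU_G}:\mathrm{CAlg}(\Sp^G)\to\mathrm{CAlg}(\Fil(\Sp^G)),\qquad R\mapsto \mathrm{Tot}(\mathrm{Wh}(R\otimes MU_G^{\otimes\bullet+1})),
\]
noting that $\mathrm{Dec}^{MU_G}(\mathbb{S})=\mathrm{AN}_G(\mathbb{S})$ by Definition \ref{def:decalage}, and that functoriality in $R$ applied to the unit $\mathbb{S}\to R$ produces a map of $\mathbb{E}_\infty$-algebras
\[
\mathrm{AN}_G(\mathbb{S})\longrightarrow \mathrm{Dec}^{MU_G}(R).
\]

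Second, the unit $\mathbb{S}\to MU_G$ in $\mathrm{CAlg}(\Sp^G)$ induces a natural map
\[
\mathrm{Wh}(R)\simeq \mathrm{Tot}(\mathrm{const}(\mathrm{Wh}(R)))\longrightarrow \mathrm{Tot}(\mathrm{Wh}(R\otimes MU_G^{\otimes\bullet+1}))=\mathrm{Dec}^{MU_G}(R)
\]
in $\mathrm{CAlg}(\Fil(\Sp^G))$, exactly as in the proof of Proposition \ref{prop:hfpssisMUsynthetic}. The third and key step is to show that the Borelification of this map,
\[
F({EG}_+,\mathrm{Wh}(R))\longrightarrow F({EG}_+,\mathrm{Dec}^{MU_G}(R)),
\]
is an equivalence whenever $\mathrm{Res}^G_e R$ is complex orientable. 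Since $F({EG}_+,-)$ inverts those maps which become equivalences after $\mathrm{Res}^G_e$, it suffices to check this on underlying spectra. The functor $\mathrm{Res}^G_e$ is symmetric monoidal, preserves limits, and commutes with the Whitehead tower (because the standard $t$-structures on $\Sp^G$ and $\Sp$ are defined levelwise on Mackey functor homotopy groups); combined with the identification $\mathrm{Res}^G_e(MU_G)\simeq MU$ in $\mathrm{CAlg}(\Sp)$, the map on underlying becomes
\[
\mathrm{Wh}(\mathrm{Res}^G_e R)\longrightarrow \mathrm{Tot}(\mathrm{Wh}(\mathrm{Res}^G_e R\otimes MU^{\otimes\bullet+1})),
\]
which is exactly the décalage--Adams comparison shown to be an equivalence by \cite[Lemma 1.19, Proposition 1.25]{CDvN} for complex-orientable inputs.

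The desired map of $\mathbb{E}_\infty$-algebras is then the composite
\[
\mathrm{AN}_G(\mathbb{S})\longrightarrow \mathrm{Dec}^{MU_G}(R)\longrightarrow F({EG}_+,\mathrm{Dec}^{MU_G}(R))\xleftarrow{\;\simeq\;} F({EG}_+,\mathrm{Wh}(R))=\mathrm{HFP}(R),
\]
where the middle arrow is the Borelification unit. The main obstacle I anticipate is justifying that $\mathrm{Res}^G_e(MU_G)\simeq MU$ holds at the level of $\mathbb{E}_\infty$-rings, with enough functoriality in $R$ to feed into the décalage comparison; this should follow from the construction of $MU_G$ as an equivariant Thom spectrum whose underlying spectrum is the standard $MU$, but care is needed to ensure all identifications are compatible with the cosimplicial structure used to form $\mathrm{AN}_G(\mathbb{S})$.
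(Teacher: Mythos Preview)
Your proposal is correct and follows essentially the same approach as the paper: both define the $MU_G$-based décalage $\mathrm{Dec}^{MU_G}(R)=\mathrm{Tot}(\mathrm{Wh}(R\otimes MU_G^{\otimes\bullet+1}))$, use functoriality in $R$ to obtain $\mathrm{AN}_G(\mathbb{S})\to\mathrm{Dec}^{MU_G}(R)$, and then compose with the Borelification unit together with the equivalence $F({EG}_+,\mathrm{Wh}(R))\simeq F({EG}_+,\mathrm{Dec}^{MU_G}(R))$ coming from the underlying comparison. Your justification of this last equivalence via $\mathrm{Res}^G_e(MU_G)\simeq MU$ and the décalage comparison of \cite{CDvN} is exactly the argument the paper abbreviates as ``induces an equivalence on underlying filtered spectra,'' and your caveat about the $\mathbb{E}_\infty$ identification $\mathrm{Res}^G_e(MU_G)\simeq MU$ is warranted but standard from the Thom spectrum construction.
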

\begin{proof}
    As in the proof of Proposition \ref{prop:hfpssisMUsynthetic}, we have a map
\[\mathrm{Wh}(R)=\mathrm{Tot}(\mathrm{const}(\mathrm{Wh}(R)))\to \mathrm{Tot}(\mathrm{const}(\mathrm{Wh}(MU_G^{\otimes\bullet+1}\otimes R))),\]
which induces an equivalence on underlying filtered spectra. This gives a diagram
\[
\begin{tikzcd}
&&\mathrm{AN}_G(\mathbb{S})\arrow[dddll,dashed]\arrow[d,equal]\\
&&\mathrm{Tot}(\mathrm{Wh}(MU_G^{\otimes\bullet+1}))\arrow[d]\\
    &&\mathrm{Tot}(\mathrm{Wh}(MU_G^{\otimes\bullet+1}\otimes R))\arrow[d]\\
    \mathrm{HFP}(R)\arrow[r,equal]&F({EG}_+,\mathrm{Wh}(R))\arrow[r,"\simeq"]& F({EG}_+,\mathrm{Tot}(\mathrm{Wh}(MU_G^{\otimes\bullet+1}\otimes R)))
\end{tikzcd}\]
and the dotted arrow completes the proof.
\end{proof}

We now let $G$ be a cyclic $2$-group, as in the context of Conjecture \ref{conjectureweak}.

\begin{theorem}\label{thm:integersliceisMUGsyntheticgivenconjecture}
    Assuming Conjecture \ref{conjectureweak}, there is a map of $\mathcal{O}$-algebras
    \[\mathrm{AN}_G(\mathbb{S})\to\mathrm{Slice}(MU^{(\!(G)\!)})\]
    in $\Fil(\Sp^G)$.
\end{theorem}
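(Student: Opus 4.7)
The plan is to mirror the proof of Theorem \ref{thm:mapfrominflationtoslice}. First, I would apply Proposition \ref{prop:hfpssisMUGsynthetic} with $R = MU^{(\!(G)\!)}$ — whose underlying spectrum $\mathrm{Res}^G_e MU^{(\!(G)\!)} \simeq MU^{\otimes|G|}$ is evidently complex-orientable — to obtain a map
\[
\phi: \mathrm{AN}_G(\mathbb{S}) \to \mathrm{HFP}(MU^{(\!(G)\!)})
\]
in $\mathrm{CAlg}(\Fil(\Sp^G))$. This map must then be upgraded to $\mathcal{O}$-algebras. The target carries a $G$-$\mathbb{E}_\infty$-structure by the monoidal Borelification principle of Proposition \ref{prop:hfpssisGsymmonoidal}, so in particular the structure of an $\mathcal{O}$-algebra. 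For the source, the Whitehead tower $\mathrm{Wh}$ should be lax $\mathcal{O}$-monoidal on $\myuline{\Sp}$ via Proposition \ref{prop:partiallynormedslicefiltration} — the failure of full $G$-symmetric monoidality noted in Section \ref{subsec:ROwhitehead} is localized at norms out of the trivial subgroup, which $\mathcal{O}$ excludes — so the décalage construction endows $\mathrm{AN}_G(\mathbb{S})$ with the structure of an $\mathcal{O}$-algebra refining its $\mathbb{E}_\infty$-structure, and makes $\phi$ a map of $\mathcal{O}$-algebras.

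Next, combining Corollary \ref{cor:sliceMUGisacover} with Lemma \ref{lem:tstructuresagreeonhfpss} identifies
\[
\mathrm{Slice}(MU^{(\!(G)\!)}) \simeq \tau_{\ge 0}^{\mathcal{O}}\mathrm{HFP}(MU^{(\!(G)\!)})
\]
as $\mathcal{O}$-algebras in $\Fil(\myuline{\Sp})$. The desired map will then arise as the universal factorization of $\phi$ through this $\mathcal{O}$-slice connective cover, provided we can verify that $\mathrm{AN}_G(\mathbb{S})$ is itself $\mathcal{O}$-slice connective.

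The main obstacle is establishing this $\mathcal{O}$-slice connectivity. By Remark \ref{rmk:geometricconnectivity}, it suffices to show that $\Phi^H \mathrm{AN}_G(\mathbb{S})$ is connective in the $y=(|H|-1)x$ linear $t$-structure on $\Fil(\Sp)$ for every nontrivial $H \subseteq G = C_{2^n}$. The filtered spectrum $\mathrm{AN}_G(\mathbb{S})$ is complete, as a limit of totalizations of Whitehead towers, and strongly complete in the sense of Definition \ref{def:stronglycomplete} in view of the strong convergence of the $MU_G$-based ANSS. By Proposition \ref{prop:connectivitymodtau}, the condition therefore reduces to the vanishing of $\pi_{x,y}(\Phi^H \mathrm{AN}_G(\mathbb{S})/\tau)$ for $y > (|H|-1)x$. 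At $H = G$ this is precisely the content of Conjecture \ref{conjectureweak}. For a proper nontrivial subgroup $H = C_{2^k} \subsetneq G$, I would argue by induction on the order of the group, using the canonical comparison map from $\mathrm{Res}^G_H \mathrm{AN}_G(\mathbb{S})$ to $\mathrm{AN}_H(\mathbb{S})$ (obtained from restriction of the Whitehead décalage) to import the stronger slope-$(|H|-1)$ vanishing furnished by Conjecture \ref{conjectureweak} applied to the cyclic 2-group $H$ itself.
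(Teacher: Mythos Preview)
Your overall strategy matches the paper's: produce the map to $\mathrm{HFP}(MU^{(\!(G)\!)})$ via Proposition~\ref{prop:hfpssisMUGsynthetic}, identify $\mathrm{Slice}(MU^{(\!(G)\!)})$ as the $\mathcal{O}$-slice connective cover via Corollary~\ref{cor:sliceMUGisacover} and Lemma~\ref{lem:tstructuresagreeonhfpss}, and reduce to $\mathcal{O}$-slice connectivity of $\mathrm{AN}_G(\mathbb{S})$. Your attention to the $\mathcal{O}$-algebra structure on the source is a welcome addition that the paper leaves implicit.

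There is, however, a genuine gap in your connectivity argument: you conflate geometric fixed points $\Phi^H$ with genuine fixed points $(-)^H$. Conjecture~\ref{conjectureweak} is a statement about the $E_2$-page $\Ext_{({MU_G}_*,{MU_G}_*{MU_G})}$, which computes $\pi_{*,*}^G(\mathrm{AN}_G(\mathbb{S})/\tau)$, i.e.\ the \emph{genuine} $G$-fixed points. So your claim that ``at $H=G$ this is precisely the content of Conjecture~\ref{conjectureweak}'' is not correct: the conjecture gives connectivity of $(\mathrm{AN}_G(\mathbb{S}))^H$ in the $y=(|H|-1)x$ linear $t$-structure, not of $\Phi^H\mathrm{AN}_G(\mathbb{S})$. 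The same gap appears in your inductive step for proper $H$. The paper closes this gap with an extra step: once one knows $(\mathrm{AN}_G(\mathbb{S}))^K$ is connective in the $y=(|K|-1)x$ $t$-structure for \emph{all} $K\subset G$ (the nontrivial $K$ by the conjecture, and $K=e$ by the classical $y=x$ vanishing line), one uses that $\tilde{E}\mathcal{F}$ is a connective $G$-spectrum to conclude that $\Phi^H\mathrm{AN}_G(\mathbb{S})=(\tilde{E}\mathcal{P}_H\otimes\mathrm{Res}^G_H\mathrm{AN}_G(\mathbb{S}))^H$ inherits the required connectivity.

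A related technical issue: you invoke Proposition~\ref{prop:connectivitymodtau} for $\Phi^H\mathrm{AN}_G(\mathbb{S})$, which requires strong completeness. But $\Phi^H$ does not preserve limits, so completeness of $\mathrm{AN}_G(\mathbb{S})$ does not obviously pass to $\Phi^H\mathrm{AN}_G(\mathbb{S})$. The paper's order of operations avoids this: one applies the mod $\tau$ reduction at the level of genuine fixed points $(\mathrm{AN}_G(\mathbb{S}))^H$ (which, as a limit, is complete), and only afterward passes to $\Phi^H$ via the $\tilde{E}\mathcal{F}$ argument. Finally, a small simplification: the comparison $\mathrm{Res}^G_H\mathrm{AN}_G(\mathbb{S})\to\mathrm{AN}_H(\mathbb{S})$ you mention is in fact an equivalence, since $\mathrm{Res}^G_H MU_G\simeq MU_H$ and restriction commutes with both Whitehead towers and totalizations.
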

\begin{proof}
    Combining Proposition \ref{prop:hfpssisMUGsynthetic} with Lemma \ref{lem:tstructuresagreeonhfpss} and Theorem \ref{thm:sliceasconnectivecover}, it suffices to show that $\mathrm{AN}_G(\mathbb{S})$ is connective in the $\mathcal{O}$-slice $t$-structure on $\Fil(\Sp^G)$. Using Proposition \ref{prop:connectivitymodtau} it suffices to establish this on associated graded, and the conjecture states that the filtered spectrum $\mathrm{AN}_G(\mathbb S)^H$ is connective in the $y=(|H|-1)x$ linear $t$-structure, for all nontrivial subgroups $H$. Since, for a family of subgroups $\mathcal F$ of $G$, the $G$-spectrum $\tilde{E}\mathcal{F}$ is connective, it follows that the filtered spectrm $\Phi^H\mathrm{AN}_G(\mathbb S)$ is also connective in the $y=(|H|-1)x$ linear $t$-structure, for all nontrivial subgroups $H$. This completes the proof by Remark \ref{rmk:geometricconnectivity}.
    \end{proof}

\begin{remark}
    The $\mathcal{O}$-algebra structure on the map of Theorem \ref{thm:integersliceisMUGsyntheticgivenconjecture} implies that corresponding map of spectral sequences respects norms from nontrivial subgroups. It is possible to make this precise, e.g. in the framework of the spectral sequences with norm structure of Meier--Shi--Zeng \cite{msz}, but we do not pursue this here.
\end{remark}

\subsection{$G$-commutative $MU_G$-synthetic lifts of the slice filtration} We show that, given our conjecture on the $MU_G$-based Adams spectral sequence, there is an analogue of Theorem \ref{thm:mapfrominflationtoslice} in this context. In the following, we let $F({EG}_+,-):\Fun(RO(G)^{\mathrm{op}},\Sp^G)\to\Fun(RO(G)^{\mathrm{op}},\Sp^G)$ be the functor given by applying $F({EG}_+,-)$ pointwise.

\begin{definition}\label{def:ROhfpsss}
    The $\myuline{RO}$-indexed homotopy fixed point filtration $\mathrm{HFP}^{RO}(X)$ of $X\in\Sp^G$ is the image of $X$ under the composite
    \[\Sp^G\xrightarrow{\mathrm{Wh}^{RO}}\Fil(\Sp^G)\xrightarrow{F({EG}_+,-)}\Fil(\Sp^G),\]
    where $\mathrm{Wh}^{RO}$ denotes the Whitehead tower functor of Definition \ref{def:ROwhitehead}.
\end{definition}

\begin{proposition}\label{prop:ROhfpispulledbackalongdim}
    For $X\in\Sp^G$, there is a canonical equivalence
    \[\mathrm{HFP}^{RO}(X)\simeq \mathrm{dim}^*\mathrm{HFP}(X).\]
    If $X$ has a naive $\mathbb{E}_\infty$ structure, this is an equivalence in $\mathrm{CAlg}_G(\Fun(\myuline{RO}^{\mathrm{op}},\myuline{\Sp}))$.
\end{proposition}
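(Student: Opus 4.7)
The plan is to reduce both sides to the Borel category $\Sp^{BG}\simeq \Fun(BG,\Sp)$, where the effect of a representation sphere $\mathbb S^V$ on the standard $t$-structure agrees with that of its underlying suspension $\Sigma^{|V|}$.

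First, I would establish a canonical natural equivalence
\[\Sigma^V\tau_{\ge 0}(\Sigma^{-V}X)\xrightarrow{\simeq}\tau_{\ge |V|}X\]
in $\Sp^{BG}$, for each $V\in RO(G)$ and $X\in \Sp^G$. This holds because $\Sigma^V$ is invertible on $\Sp^{BG}$, and connectivity in the standard $t$-structure on $\Sp^{BG}$ is detected on the underlying spectrum, where $\Sigma^V$ acts as $\Sigma^{|V|}$; thus conjugating $\tau_{\ge 0}$ by $\Sigma^V$ shifts it by $|V|$. In genuine $\Sp^G$ the two objects genuinely differ (their Mackey functor homotopy groups do not agree for a nontrivial $V$), but after restricting to the underlying spectrum with $G$-action they are both canonically the Postnikov truncation $\tau_{\ge |V|}X^e$. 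Moreover, for $V\le W$ in $RO(G)$, both sides are compatible with the canonical connective cover map, giving naturality in $V$.

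Since $F(EG_+,-)$ factors through the Borelification $\Sp^G\to \Sp^{BG}$, applying it to the displayed equivalence yields
\[F(EG_+,\Sigma^V\tau_{\ge 0}(\Sigma^{-V}X))\simeq F(EG_+,\tau_{\ge |V|}X),\]
which by Definitions \ref{def:ROhfpsss} and \ref{def:hfpsss} is precisely $\mathrm{HFP}^{RO}(X)^V\simeq \mathrm{dim}^*\mathrm{HFP}(X)^V$. Assembling over $V\in RO(G)$ and over subgroups $H\subset G$ gives the desired equivalence in $\Fun(\myuline{RO}^{\mathrm{op}},\myuline{\Sp})$.

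For the $G$-$\mathbb E_\infty$ refinement, both constructions are lax $G$-symmetric monoidal: the left-hand side by Definition \ref{def:ROwhitehead} combined with the (pointwise applied) lax $G$-symmetric monoidality of $F(EG_+,-)$, and the right-hand side by combining Proposition \ref{prop:total} with Proposition \ref{prop:hfpssisGsymmonoidal}. The cleanest route is to invoke the monoidal Borelification principle \cite[Theorem 2.4.10]{kaif}, as in the proof of Proposition \ref{prop:hfpssisGsymmonoidal}, which identifies $G$-commutative algebras in Borel filtered $G$-spectra with naive $\mathbb E_\infty$-algebras. The main obstacle will be verifying that the pointwise equivalence above refines to a natural equivalence of lax $G$-symmetric monoidal functors on $\Sp^G$; once this is done, restricting to a naive $\mathbb E_\infty$-algebra $X$ and passing through the Borelification equivalence produces the desired equivalence of $G$-$\mathbb E_\infty$-algebras in $\Fun(\myuline{RO}^{\mathrm{op}},\myuline{\Sp})$.
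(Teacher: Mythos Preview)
Your approach is correct but differs from the paper's. You argue pointwise: for each $V\in RO(G)$ you identify both $F(EG_+,\Sigma^V\tau_{\ge 0}\Sigma^{-V}X)$ and $F(EG_+,\tau_{\ge |V|}X)$ with the $|V|$-connective cover of $F(EG_+,X)$ in the Borel category $\Sp^{BG}$, using that connectivity there is detected on underlying spectra. The paper instead works entirely within $\Fun(\myuline{RO}^{\mathrm{op}},\myuline{\Sp})$: it observes that $\mathrm{Wh}^{RO}(X)$ and $\mathrm{dim}^*\mathrm{Wh}(X)$ are both connective covers of $\mathrm{const}(X)$ with respect to two different $\myuline{RO}$-indexed slice filtrations (generated at $(H,V)$ by $\{\Sigma^V\mathrm{Ind}_K^H\mathbb{S}\}$ and $\{\Sigma^{|V|}\mathrm{Ind}_K^H\mathbb{S}\}$ respectively), then introduces a \emph{mixed} filtration generated by the union of these sets to produce a span
\[\mathrm{Wh}^{RO}(X)\to\mathrm{Mixed}(X)\leftarrow\mathrm{dim}^*\mathrm{Wh}(X)\]
which is the identity on underlying filtered spectra and hence an equivalence on Borelification.

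The trade-off is exactly the one you identified. Your argument is more conceptually direct, but the coherence step you flag as ``the main obstacle''---assembling the pointwise equivalences into a natural equivalence of lax $G$-symmetric monoidal functors---is genuine work. The paper's span construction sidesteps this entirely: the mixed filtration is itself a normed slice filtration, so the span is automatically a diagram of lax $G$-symmetric monoidal functors, and the $G$-$\mathbb{E}_\infty$ statement follows immediately from monoidal Borelification applied to the whole span at once, with no separate coherence verification needed. This is the same maneuver the paper uses in Proposition~\ref{prop:spanwithmixedfiltration} and Corollary~\ref{cor:mapfromslicetohfpss}.
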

\begin{proof}
    The filtration $\mathrm{Wh}^{RO}(X)$ is obtained by applying the connective cover functor to $\mathrm{const}(X)$ with respect to the $\myuline{RO}$-indexed slice filration on $\Sp^G$, generated at the pair $(H,V\in RO(H))$ by $\{\Sigma^V\mathrm{Ind}_K^H\mathbb{S}\}_{K\subset H}$. On the other hand, $\mathrm{dim}^*\mathrm{Wh}(X)$ is seen to be obtained by applying the connective cover functor to $\mathrm{const}(X)$ with respect to the $\myuline{RO}$-indexed slice filration on $\Sp^G$, generated at the pair $(H,V\in RO(H))$ by $\{\Sigma^{|V|}\mathrm{Ind}_K^H\mathbb{S}^{}\}_{K\subset H}$. One may consider a mixed filtration given by the $\myuline{RO}$-indexed slice filration on $\Sp^G$, generated at the pair $(H,V\in RO(H))$ by the union of the two above generating sets. This gives a natural diagram
    \[\mathrm{Wh}^{RO}(X)\to\mathrm{Mixed}(X)\leftarrow\mathrm{dim}^*\mathrm{Wh}(X),\]
    which induces the identity on underlying $RO(G)$-filtered spectra and therefore equivalences on Borelification.
\end{proof}

In light of the previous proposition, we make the following definition.

\begin{definition}\label{def:ROslice}
    For $X\in\Sp$, let
    \[\mathrm{Slice}^{RO}(X):=\mathrm{dim}^*\mathrm{Slice}(X)\in\Fun(\myuline{RO}^{\mathrm{op}},\myuline{\Sp}).\]
\end{definition}

\begin{remark}\label{rmk:definitionofROslice}
    This definition of an $\myuline{RO}$-indexed slice filtration may also be obtained as follows. One defines the $\myuline{RO}$-indexed slice filtration on $\myuline{\Sp}$ with $t$-structure at the pair $(H,V\in RO(H))$ given by the category of slice $|V|$-connective $H$-spectra, i.e. the $t$-structure generated by $\{\mathrm{Ind}_L^G\mathbb{S}^{m\rho_L}:m|L|\ge |V|\}$, as in Definition \ref{def:twofilteredtstructures}. Then, one takes the connective cover of $\mathrm{const}(X)$ with respect to the corresponding $t$-structure on $\Fun(\myuline{RO}^{\mathrm{op}},\myuline{\Sp})$ to obtain $\mathrm{Slice}^{RO}(X)$. This correctly implements the $RO(G)$-graded slice spectral sequence in the usual way, since the $E_2$-page of the latter is defined as $\pi_{V-s}\mathrm{gr}^{|V|}\mathrm{Slice}(X)$.
\end{remark}

It is straightforward now to give a $G$-$\mathbb{E}_\infty$ map $\mathrm{AN}^{RO}_G(\mathbb{S})\to \mathrm{HFP}^{RO}(R)$ when $R$ is a naive $\mathbb{E}_\infty$-algebra with $\mathrm{Res}^G_eR$ complex-orientable, along the lines of Proposition \ref{prop:hfpssisMUsynthetic}. This gives $\mathrm{HFP}^{RO}(R)$ an $MU_G$-synthetic structure, according to the following \emph{ad hoc} definition, along the lines of \cite{mmf}.

\begin{definition}\label{def:MUGsynthetic}
    Define the $G$-symmetric monoidal $\infty$-category of $MU_G$-synthetic spectra by
    \[\mathrm{Syn}_{MU_G}=\mathrm{Mod}_{\Fun(\underline{RO}^{\mathrm{op}},\myuline{\scriptstyle{\Sp}})}(\mathrm{AN}^{RO}_G(\mathbb{S})).\]
\end{definition}

\begin{proposition}\label{prop:ROhfpssisMUsynthetic}
    Let $R\in\Fun(BG,\mathrm{CAlg}(\Sp))$ have the property that $\mathrm{Res}^G_eR$ is complex-orientable. Then $\mathrm{HFP}^{RO}(R)$ admits a canonical lift along the forgetful functor
    \[\mathrm{CAlg}_G(\mathrm{Syn}_{MU_G})\to \mathrm{CAlg}_G(\Fil(\Sp^G)).\]
\end{proposition}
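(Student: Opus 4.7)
The plan is to adapt the argument of Proposition \ref{prop:hfpssisMUsynthetic} to the $\underline{RO}$-graded setting, crucially exploiting that $\mathrm{Wh}^{RO}$ is lax $G$-symmetric monoidal whereas the integer-graded $\mathrm{Wh}$ is not. By Definition \ref{def:MUGsynthetic} together with the standard equivalence between module categories over a commutative algebra and the corresponding undercategory in commutative algebras, producing the desired lift amounts to constructing a map
\[\mathrm{AN}_G^{RO}(\mathbb{S}) \to \mathrm{HFP}^{RO}(R)\]
in $\mathrm{CAlg}_G(\Fun(\myuline{RO}^{\mathrm{op}},\myuline{\Sp}))$.

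First I would introduce, in analogy with $\mathrm{Dec}^G$ from Proposition \ref{prop:hfpssisMUsynthetic}, a functor
\[\mathrm{Dec}^{G,RO}(R) := \mathrm{Tot}(\mathrm{Wh}^{RO}(R \otimes MU_G^{\otimes \bullet+1})),\]
now landing in $\mathrm{CAlg}_G(\Fun(\myuline{RO}^{\mathrm{op}},\myuline{\Sp}))$. The $G$-commutative algebra structure uses the lax $G$-symmetric monoidal structure on $\mathrm{Wh}^{RO}$ (Definition \ref{def:ROwhitehead}) together with the monoidal Borelification principle of \cite[Theorem 2.4.10]{kaif}, exactly as in Proposition \ref{prop:hfpssisGsymmonoidal}. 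This is precisely the step that fails for $\mathrm{Wh}$ in place of $\mathrm{Wh}^{RO}$, and is the reason the $\underline{RO}$-graded Whitehead tower was introduced in Section \ref{subsec:ROwhitehead}. Observe that with this definition, $\mathrm{Dec}^{G,RO}(\mathbb{S}) \simeq \mathrm{AN}^{RO}_G(\mathbb{S})$ tautologically.

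Next, the unit maps $\mathbb{S} \to R$ and $\mathbb{S} \to MU_G$ induce two canonical morphisms of $G$-commutative algebras:
\[\mathrm{AN}^{RO}_G(\mathbb{S}) \to F(EG_+,\mathrm{Dec}^{G,RO}(R)) \leftarrow F(EG_+,\mathrm{Wh}^{RO}(R)) = \mathrm{HFP}^{RO}(R),\]
the first by functoriality of $\mathrm{Dec}^{G,RO}$ composed with the Borelification unit, and the second arising from the constant-cosimplicial map $\mathrm{const}(\mathrm{Wh}^{RO}(R)) \to \mathrm{Wh}^{RO}(R \otimes MU_G^{\otimes \bullet+1})$. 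The crux is that the second arrow is an equivalence whenever $\mathrm{Res}^G_e R$ is complex-orientable: since $F(EG_+,-)$ is determined by its effect on underlying filtered spectra, this reduces via Proposition \ref{prop:ROhfpispulledbackalongdim} (after applying $\mathrm{Res}^G_e$ and the pullback $\mathrm{dim}^*$) to the classical nonequivariant fact (cf.\ \cite[Lemma 1.19 and Proposition 1.25]{CDvN}) that the Whitehead filtration of a complex-orientable ring spectrum coincides with its $MU$-Adams filtration. Inverting this equivalence and composing with the first arrow yields the required map.

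The main obstacle I anticipate is the careful bookkeeping needed to check that every structure map in the construction assembles into a coherent $G$-symmetric monoidal diagram, rather than merely a naively symmetric monoidal one. This amounts to verifying that the lax $G$-symmetric monoidal structure of $\mathrm{Wh}^{RO}$ propagates through the $\mathrm{Tot}$ and $F(EG_+,-)$ constructions in the expected way; once in place, the rest of the argument is a formal adaptation of Proposition \ref{prop:hfpssisMUsynthetic}, and is in fact cleaner since it bypasses the inflation-versus-genuine comparison of $\mathrm{AN}$ filtrations required there.
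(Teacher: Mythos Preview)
Your proposal is correct and follows essentially the same route as the paper: reduce to constructing a $G$-$\mathbb{E}_\infty$ map $\mathrm{AN}_G^{RO}(\mathbb{S})\to\mathrm{HFP}^{RO}(R)$, build it via the span $\mathrm{AN}_G^{RO}(\mathbb{S})\to F(EG_+,\mathrm{Dec}^{G,RO}(R))\xleftarrow{\simeq} \mathrm{HFP}^{RO}(R)$, and invoke complex-orientability of the underlying spectrum to see that the Borelified comparison is an equivalence. Your write-up is in fact more explicit than the paper's on the reduction to the nonequivariant statement and on the role of the lax $G$-symmetric monoidal structure of $\mathrm{Wh}^{RO}$.
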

\begin{proof}
It suffices to produce a map 
\[\mathrm{AN}_G^{RO}(\mathbb{S})\to \mathrm{HFP}^{RO}(R)\]
in $\mathrm{CAlg}_G(\Fil(\myuline{\Sp}))$. As in the proof of Proposition \ref{prop:hfpssisMUsynthetic}, we have a map
\[\mathrm{Wh}^{RO}(R)=\mathrm{Tot}(\mathrm{const}(\mathrm{Wh}^{RO}(R)))\to \mathrm{Tot}(\mathrm{const}(\mathrm{Wh}^{RO}(MU_G^{\otimes\bullet+1}\otimes R))),\]
which induces an equivalence on underlying $RO(G)$-filtered spectra. This gives a diagram
\[
\begin{tikzcd}
&&\mathrm{AN}^{RO}_G(\mathbb{S})\arrow[dddll,dashed]\arrow[d,equal]\\
&&\mathrm{Tot}(\mathrm{Wh}^{RO}(MU_G^{\otimes\bullet+1}))\arrow[d]\\
    &&\mathrm{Tot}(\mathrm{Wh}^{RO}(MU_G^{\otimes\bullet+1}\otimes R))\arrow[d]\\
    \mathrm{HFP}^{RO}(R)\arrow[r,equal]&F({EG}_+,\mathrm{Wh}^{RO}(R))\arrow[r,"\simeq"]& F({EG}_+,\mathrm{Tot}(\mathrm{Wh}^{RO}(MU_G^{\otimes\bullet+1}\otimes R)))
\end{tikzcd}\]
and the dotted arrow completes the proof.
\end{proof}

By adjunction and use of Proposition \ref{prop:ROhfpispulledbackalongdim}, the map of $G$-$\mathbb{E}_\infty$ rings $\mathrm{AN}^{RO}_G(\mathbb{S})\to \mathrm{HFP}^{RO}(R)$ of Proposition \ref{prop:ROhfpssisMUsynthetic} gives a map of $G$-$\mathbb{E}_\infty$ rings $\mathrm{Total}(\mathrm{AN}^{RO}_G(\mathbb{S}))\to \mathrm{HFP}(R)$ in $\Fil(\myuline{\Sp})$. This gives now the following, where as before $\mathcal{O}$ is the $\mathbb{N}_\infty$-operad corresponding to the transfer system with all norms $H\to H'$ for $\{e\}\neq H\subset H'\subset G$, and $\mathrm{CAlg}_{\mathcal{O}}$ denotes the category of $\mathcal{O}$-algebras.

\begin{theorem}\label{thm:conditionalonconjecture}
    Let $R\in\Fun(BG,\mathrm{CAlg}(\Sp))$ have the property that $\mathrm{Res}^G_eR$ is complex-orientable. Assuming Conjecture \ref{conjecturemain}, $\mathrm{Slice}^{RO}(F({EG}_+,R))$ admits a canonical lift along the forgetful functor
    \[\mathrm{CAlg}_{\mathcal{O}}(\mathrm{Syn}_{MU_G})\to \mathrm{CAlg}_{\mathcal{O}}(\Fun(\myuline{RO}^{\mathrm{op}},\myuline{\Sp})).\]
\end{theorem}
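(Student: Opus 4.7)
The strategy is to mirror the proof of Theorem \ref{thm:mapfrominflationtoslice}, but transposed along the adjunction $\mathrm{Total}\dashv \dim^*$ of Proposition \ref{prop:total} so that the conjectured $\mathcal{O}$-slice connectivity can be checked on the $\Z$-indexed side. Concretely, by Definition \ref{def:MUGsynthetic}, the desired lift amounts to producing a map
\[
\mathrm{AN}^{RO}_G(\mathbb{S})\longrightarrow \mathrm{Slice}^{RO}(F({EG}_+,R))
\]
in $\mathrm{CAlg}_{\mathcal{O}}(\Fun(\myuline{RO}^{\mathrm{op}},\myuline{\Sp}))$; since $\mathrm{Slice}^{RO}(-)=\dim^*\mathrm{Slice}(-)$ by Definition \ref{def:ROslice} and the functor $\dim^*$ is lax $G$-symmetric monoidal (hence lax $\mathcal{O}$-symmetric monoidal), it suffices by adjunction to produce a map
\[
\mathrm{Total}(\mathrm{AN}^{RO}_G(\mathbb{S}))\longrightarrow \mathrm{Slice}(F({EG}_+,R))
\]
in $\mathrm{CAlg}_{\mathcal{O}}(\Fil(\myuline{\Sp}))$, using that $\mathrm{Total}$ is $G$-symmetric monoidal by Proposition \ref{prop:total}.

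I would then obtain this map in two steps. First, Proposition \ref{prop:ROhfpssisMUsynthetic} produces a $G$-$\mathbb{E}_\infty$ (hence $\mathcal{O}$-algebra) map $\mathrm{AN}^{RO}_G(\mathbb{S})\to \mathrm{HFP}^{RO}(R)$, and Proposition \ref{prop:ROhfpispulledbackalongdim} identifies the target with $\dim^*\mathrm{HFP}(R)$. Transposing along $\mathrm{Total}\dashv \dim^*$ yields an $\mathcal{O}$-algebra map
\[
\mathrm{Total}(\mathrm{AN}^{RO}_G(\mathbb{S}))\longrightarrow \mathrm{HFP}(R)
\]
in $\Fil(\Sp^G)$. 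Second, Conjecture \ref{conjecturemain} asserts precisely that the source is connective in the $\mathcal{O}$-slice $t$-structure, so applying the lax $\mathcal{O}$-symmetric monoidal connective cover functor $\tau_{\ge0}^{\mathcal{O}}$ (Proposition \ref{prop:partiallynormedslicefiltration}) and using the universal property of the connective cover produces the desired factorization through $\tau_{\ge0}^{\mathcal{O}}\mathrm{HFP}(R)$.

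Finally, I would identify $\tau_{\ge0}^{\mathcal{O}}\mathrm{HFP}(R)$ with $\mathrm{Slice}(F({EG}_+,R))$ as $\mathcal{O}$-algebras: Theorem \ref{thm:sliceasconnectivecover} gives $\mathrm{Slice}(F({EG}_+,R))\simeq \tau_{\ge0}^{\mathrm{slice}}\mathrm{HFP}(R)$ as $G$-$\mathbb{E}_\infty$-algebras, and Lemma \ref{lem:tstructuresagreeonhfpss} upgrades this to an equivalence $\tau_{\ge0}^{\mathrm{slice}}\mathrm{HFP}(R)\simeq \tau_{\ge0}^{\mathcal{O}}\mathrm{HFP}(R)$ of $\mathcal{O}$-algebras. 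Composing, transposing back along the adjunction, and applying $\dim^*$ delivers the desired map of $\mathcal{O}$-algebras in $\Fun(\myuline{RO}^{\mathrm{op}},\myuline{\Sp})$, and hence the claimed lift.

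The only nontrivial input is Conjecture \ref{conjecturemain}; all other pieces are assembled from bookkeeping about which functors preserve $\mathcal{O}$-algebra structures. The main place to be careful is checking that every step of the transposition preserves $\mathcal{O}$-multiplicativity: $\mathrm{Total}$ is $G$-symmetric monoidal and $\dim^*$ is lax $G$-symmetric monoidal by Proposition \ref{prop:total}, the unit and counit of this adjunction are then automatically compatible with $\mathcal{O}$-algebra structures, the connective cover $\tau_{\ge0}^{\mathcal{O}}$ is lax $\mathcal{O}$-symmetric monoidal by Proposition \ref{prop:partiallynormedslicefiltration}, and the equivalence of Lemma \ref{lem:tstructuresagreeonhfpss} is an equivalence of $\mathcal{O}$-algebras. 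The expected main obstacle — reducing the $\mathcal{O}$-slice connectivity of $\mathrm{Total}(\mathrm{AN}^{RO}_G(\mathbb{S}))$ to vanishing lines in the $\mathrm{ANSS}_G$, analogous to the argument for the integer-graded case in Theorem \ref{thm:integersliceisMUGsyntheticgivenconjecture} — is precisely what is being assumed as Conjecture \ref{conjecturemain}.
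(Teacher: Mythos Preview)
Your proposal is correct and follows essentially the same route as the paper's proof: transpose along the $\mathrm{Total}\dashv\dim^*$ adjunction to reduce to producing an $\mathcal{O}$-algebra map $\mathrm{Total}(\mathrm{AN}^{RO}_G(\mathbb{S}))\to \mathrm{Slice}(F({EG}_+,R))$, then use Proposition~\ref{prop:ROhfpssisMUsynthetic} for the map to $\mathrm{HFP}(R)$ and invoke Conjecture~\ref{conjecturemain} together with Lemma~\ref{lem:tstructuresagreeonhfpss} and Theorem~\ref{thm:sliceasconnectivecover} to factor through the slice cover. Your write-up is in fact more explicit than the paper's about why each step preserves $\mathcal{O}$-algebra structures, which is a welcome clarification.
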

\begin{proof}
    By adjunction it suffices to produce a map of $\mathcal{O}$-algebras
    \[\mathrm{Total}(\mathrm{AN}^{RO}_G(\mathbb{S}))\to \mathrm{Slice}(F({EG}_+,R)).\]
    The conjecture states that $\mathrm{Total}(\mathrm{AN}^{RO}_G(\mathbb{S}))$ is connective in the $\mathcal{O}$-slice $t$-structure, so by Lemma \ref{lem:tstructuresagreeonhfpss}, it suffices to produce a map of $\mathcal{O}$-algebras
        \[\mathrm{Total}(\mathrm{AN}^{RO}_G(\mathbb{S}))\to \mathrm{HFP}(R),\]
        which is given by Proposition \ref{prop:ROhfpssisMUsynthetic}.
\end{proof}

\printbibliography

@article{HHR,
  title={On the non-existence of elements of Kervaire invariant one},
  author={M. A. Hill and M. J. Hopkins and D. C. Ravenel},
  journal={Annals of Mathematics},
  volume={184},
  number={1},
  pages={1--262},
  year={2016}
}

@article{hillyarnall,
    author = {Michael A. Hill and Carolyn Yarnall},
    title = {A new formulation of the equivariant slice filtration with applications to $C_p$-slices},
    journal = {Proc. Amer. Math. Soc.},
    volume={146},
    pages={3605--3614},
    year = {2018}
}

@article{CDvN,
    author = {Carrick, Christian and Morgan Davies, Jack and van Nigtevecht, Sven},
    title = {Descent Spectral Sequences Through Synthetic Spectra},
    journal = {International Mathematics Research Notices},
    volume = {2025},
    number = {15},
    pages = {rnaf216},
    year = {2025},
    month = {07},
    issn = {1073-7928},
    doi = {10.1093/imrn/rnaf216},
    url = {https://doi.org/10.1093/imrn/rnaf216}
}

@unpublished{HA,
  title = {Higher {{Algebra}}},
  author = {Lurie, Jacob},
  date = {2017-09-18},
  url = {https://www.math.ias.edu/~lurie/papers/HA.pdf},
  urldate = {2024-09-24}
}

@unpublished{rotation,
  title = {Rotation invariance in algebraic $K$-theory},
  author = {Lurie, Jacob},
  url = {https://www.math.ias.edu/~lurie/papers/Waldhaus.pdf}
}

@misc{meiershizengtranschromatic,
      title={Transchromatic phenomena in the equivariant slice spectral sequence}, 
      author={Lennart Meier and XiaoLin Danny Shi and Mingcong Zeng},
      year={2024},
      eprint={2403.00741},
      archivePrefix={arXiv},
      primaryClass={math.AT},
      url={https://arxiv.org/abs/2403.00741}, 
}

@misc{AMGR,
      title={Derived Mackey functors and $C_{p^n}$-equivariant cohomology}, 
      author={David Ayala and Aaron Mazel-Gee and Nick Rozenblyum},
      year={2023},
      eprint={2105.02456},
      archivePrefix={arXiv},
      primaryClass={math.AT},
      url={https://arxiv.org/abs/2105.02456}, 
}

@misc{CHLL,
      title={Normed equivariant ring spectra and higher Tambara functors}, 
      author={Bastiaan Cnossen and Rune Haugseng and Tobias Lenz and Sil Linskens},
      year={2024},
      eprint={2407.08399},
      archivePrefix={arXiv},
      primaryClass={math.AT},
      url={https://arxiv.org/abs/2407.08399}, 
}

@Misc{smf,
 Author = {Carrick, Christian and Davies, Jack Morgan and van Nigtevecht, Sven},
 Title = {The descent spectral sequence for topological modular forms},
 Year = {2024},
 HowPublished = {Preprint, {arXiv}:2412.01640 (2024)},
 URL = {https://arxiv.org/abs/2412.01640},
 arXiv = {arXiv:2412.01640}
}

@article{kaifpresentable,
      title={Parametrised Presentability over Orbital Categories}, 
      journal={Applied Categorical Structures},
      author={Kaif Hilman},
      year={2024},
volume ={32},
number = {15}
}

@unpublished{leelevy,
	author = {David {Lee} and Ishan {Levy}},
	date-added = {2023-11-25 17:41:43 +0100},
	date-modified = {2023-11-25 17:43:22 +0100},
	month = {July},
	note = {arXiv preprint arXiv:2307.04248},
	title = {Topological Hochschild homology of the image of j},
	year = {2023}}

@Misc{j1,
 Author = {Carrick, Christian and Davies, Jack Morgan},
 Title = {A synthetic approach to detecting $v_1$-periodic families},
 Year = {2024},
 HowPublished = {Preprint, {arXiv}:2401.16508 [math.{AT}] (2024)},
 Keywords = {55T05,55T15,55T25,55N20,55P42},
 URL = {https://arxiv.org/abs/2401.16508},
 arXiv = {arXiv:2401.16508},
 note = {accepted in Transactions of the American Mathematical Society}
}

@Article{piotr,
 Author = {Pstragowski, Piotr},
 Title = {Synthetic spectra and the cellular motivic category},
 FJournal = {Inventiones Mathematicae},
 Journal = {Invent. Math.},
 ISSN = {0020-9910},
 Volume = {232},
 Number = {2},
 Pages = {553--681},
 Year = {2023},
 Language = {English},
 DOI = {10.1007/s00222-022-01173-2},
 Keywords = {14Fxx,55Pxx,55Txx},
 zbMATH = {7676257}
}

@misc{bhs2,
      title={Galois reconstruction of Artin-Tate $\mathbb{R}$-motivic spectra}, 
      author={Robert Burklund and Jeremy Hahn and Andrew Senger},
      year={2022},
      eprint={2010.10325},
      archivePrefix={arXiv},
      primaryClass={math.AT},
      url={https://arxiv.org/abs/2010.10325}, 
}

@misc{kaif,
      title={Parametrised noncommutative motives and equivariant cubical descent in algebraic K-theory}, 
      author={Kaif Hilman},
      year={2024},
      eprint={2202.02591},
      archivePrefix={arXiv},
      primaryClass={math.KT},
      url={https://arxiv.org/abs/2202.02591}, 
}

@article{MUhomology,
    author = {Christian Carrick and Michael A. Hill},
    title = {On $MU$-homology of connective models of higher real $K$-theories},
    journal = {Proceedings of the American Mathematical Society},
    year = {2025},
    note = {To appear}
}

@article{carrickcofree,
  title={Cofreeness in real bordism theory and the Segal conjecture},
  author={Christian Carrick},
  journal={Proceedings of the American Mathematical Society},
volume = {150},
  year={2021},
number = {7},
pages = {3161--3175}
}

@misc{j2prime3,
      title={Nonvanishing of products in $v_2$-periodic families at the prime $3$}, 
      author={Christian Carrick and Jack Morgan Davies},
      year={2024},
      eprint={2410.02564},
      archivePrefix={arXiv},
      primaryClass={math.AT},
      url={https://arxiv.org/abs/2410.02564}, 
}

@misc{j2prime2,
      title={On periodic families in the stable stems of height two}, 
      author={Christian Carrick and Jack Morgan Davies},
      year={2025},
      eprint={2506.20507},
      archivePrefix={arXiv},
      primaryClass={math.AT},
      url={https://arxiv.org/abs/2506.20507}, 
}

@article{even,
author = {Jeremy Hahn and Arpon Raksit and Dylan Wilson},
title = {{A motivic filtration on the topological cyclic homology of commutative ring spectra}},
journal = {Annals of Mathematics},
publisher = {Department of Mathematics of Princeton University},
year = {2025}
}

@phdthesis{hilmanthesis,

  author = {Hilman, Kaif},
  
  title = {Norms and periodicities in
genuine equivariant Hermitian K–theory},
  
 school = {University of Copenhagen},
  
  year = {2022},
  
}

@misc{sven,
      title={An introduction to filtered and synthetic spectra}, 
      author={Sven van Nigtevecht},
      year={2025},
      eprint={2509.21127},
      archivePrefix={arXiv},
      primaryClass={math.AT},
      url={https://arxiv.org/abs/2509.21127}, 
}

@misc{kervaire,
      title={On the Last Kervaire Invariant Problem}, 
      author={Weinan Lin and Guozhen Wang and Zhouli Xu},
      year={2025},
      eprint={2412.10879},
      archivePrefix={arXiv},
      primaryClass={math.AT},
      url={https://arxiv.org/abs/2412.10879}, 
}

@misc{antieau,
      title={Spectral sequences, d\'ecalage, and the Beilinson t-structure}, 
      author={Benjamin Antieau},
      year={2024},
      eprint={2411.09115},
      archivePrefix={arXiv},
      primaryClass={math.AT},
      url={https://arxiv.org/abs/2411.09115}, 
}

@phdthesis{ullman,

  author = {Ullman, John Richard},
  
  title = {On the regular slice spectral sequence},
  
 school = {Massachusetts Institute of Technology},
  
  year = {2013},
  
}

@phdthesis{nardinthesis,

  author = {Nardin, Denis},
  
  title = {Stability and distributivity over orbital $\infty$-categories},
  
 school = {Massachusetts Institute of Technology},
  
  year = {2017},
  
}

@article{Z,
    author = {Prasit Bhattacharya and Philip Egger},
    title = {A class of $2$-local finite spectra which admit a $v_2^1$-self-map},
    journal = {Advances in Mathematics},
    volume = {360},
    year = {2020}
}

@article{blumberghill,
title = {Operadic multiplications in equivariant spectra, norms, and transfers},
journal = {Advances in Mathematics},
volume = {285},
pages = {658-708},
year = {2015},
issn = {0001-8708},
doi = {https://doi.org/10.1016/j.aim.2015.07.013},
url = {https://www.sciencedirect.com/science/article/pii/S000187081500256X},
author = {Andrew J. Blumberg and Michael A. Hill}
}

@misc{natalie,
      title={On tensor products with equivariant commutative operads}, 
      author={Natalie Stewart},
      year={2025},
      eprint={2504.02143},
      archivePrefix={arXiv},
      primaryClass={math.AT},
      url={https://arxiv.org/abs/2504.02143}, 
}

@article{hausmann,
 author = {Markus Hausmann},
 journal = {Annals of Mathematics},
 number = {3},
 pages = {841--910},
 title = {Global group laws and equivariant bordism rings},
 volume = {195},
 year = {2022}
}

@article{mmf,
 author = {Bogdan Gheorghe and Daniel C. Isaksen and Achim Krause and Nicolas Ricka},
 journal = {J. Eur. Math. Soc.},
 number = {10},
 pages = {3597--3628},
 title = {$\mathbb{C}$-motivic modular forms},
 volume = {24},
 year = {2022}
}

@misc{bachmannhoyois,
      title={Norms in motivic homotopy theory}, 
      author={Tom Bachmann and Marc Hoyois},
      year={2020},
      eprint={1711.03061},
      archivePrefix={arXiv},
      primaryClass={math.AG},
      url={https://arxiv.org/abs/1711.03061}, 
}

@article{msz,
title = {The localized slice spectral sequence, norms of Real bordism, and the Segal conjecture},
journal = {Advances in Mathematics},
volume = {412},
pages = {108804},
year = {2023},
issn = {0001-8708},
doi = {https://doi.org/10.1016/j.aim.2022.108804},
url = {https://www.sciencedirect.com/science/article/pii/S0001870822006211},
author = {Lennart Meier and XiaoLin Danny Shi and Mingcong Zeng}
}

@article{HLS,
title = {Picard groups and duality for real Morava E–theories},
journal = {Algebraic \& Geometric Topology},
volume = {21},
issue = {6},
pages = {2703--2760},
year = {2021},
author = {Drew Heard and Guchuan Li and XiaoLin Danny Shi}
}

@article{BBHS,
title = {Invertible $K(2)$-local $E$-modules in $C_4$-spectra},
journal = {Algebraic \& Geometric Topology},
volume = {20},
issue = {7},
pages = {3423--3503},
year = {2020},
author = {Agnès Beaudry and Irina Bobkova and Michael A. Hill and Vesna Stojanoska}
}

@misc{hillhopkins,
      title={Equivariant symmetric monoidal structures}, 
      author={Michael A. Hill and Michael J. Hopkins},
      year={2016},
      eprint={1610.03114},
      archivePrefix={arXiv},
      primaryClass={math.AT},
      url={https://arxiv.org/abs/1610.03114}, 
}

@misc{barwickintro,
      title={Parametrized higher category theory and higher algebra: A general introduction}, 
      author={Clark Barwick and Emanuele Dotto and Saul Glasman and Denis Nardin and Jay Shah},
      year={2016},
      eprint={1608.03654},
      archivePrefix={arXiv},
      primaryClass={math.AT},
      url={https://arxiv.org/abs/1608.03654}, 
}

@book {ravgreen,
    AUTHOR = {Ravenel, Douglas C.},
     TITLE = {Complex cobordism and stable homotopy groups of spheres},
    SERIES = {Pure and Applied Mathematics},
    VOLUME = {121},
 PUBLISHER = {Academic Press, Inc., Orlando, FL},
      YEAR = {1986},
     PAGES = {xx+413},
      ISBN = {0-12-583430-6; 0-12-583431-4},
   MRCLASS = {55-02 (55Qxx 57-02)},
  MRNUMBER = {860042},
MRREVIEWER = {Joseph\ Neisendorfer},
}

@book {HHRbook,
    AUTHOR = {Hill, Michael A. and Hopkins, Michael J. and Ravenel, Douglas
              C.},
     TITLE = {Equivariant stable homotopy theory and the {K}ervaire
              invariant problem},
    SERIES = {New Mathematical Monographs},
    VOLUME = {40},
 PUBLISHER = {Cambridge University Press, Cambridge},
      YEAR = {2021},
     PAGES = {ix+870},
      ISBN = {978-1-108-83144-4},
   MRCLASS = {55P91 (55N22 55P42 55Q45 55T15 55U35 57R15 57R91)},
  MRNUMBER = {4273305},
MRREVIEWER = {J.\ P. C. Greenlees},
       DOI = {10.1017/9781108917278},
       URL = {https://doi.org/10.1017/9781108917278},
}

@article {markuslennart,
    AUTHOR = {Markus Hausmann and Lennart Meier},
     TITLE = {Invariant prime ideals in equivariant Lazard rings},
   JOURNAL = {Geometry \& Topology},
    VOLUME = {29},
    ISSUE = {7},
      YEAR = {2025},
     PAGES = {3813–-3871}
}

@article {derivedmackey,
    AUTHOR = {Irakli Patchkoria and Beren Sanders and Christian Wimmer},
     TITLE = {The spectrum of derived Mackey functors},
   JOURNAL = {Transactions of the American Mathematical Society},
    VOLUME = {375},
      YEAR = {2022},
     PAGES = {4057--4105}
}

@misc{nardinshah,
      title={Parametrized and equivariant higher algebra}, 
      author={Denis Nardin and Jay Shah},
      year={2022},
      eprint={2203.00072},
      archivePrefix={arXiv},
      primaryClass={math.AT},
      url={https://arxiv.org/abs/2203.00072}, 
}

@incollection{hopkinsmahowald,
  title = {From Elliptic Curves to Homotopy Theory},
  booktitle = {Topological Modular Forms},
  author = {Hopkins, Michael J. and Mahowald, Mark E.},
  date = {2014},
  series = {Mathematical {{Surveys}} and {{Monographs}}},
  volume = {201},
  pages = {261--285},
  publisher = {American Mathematical Society},
  url = {http://www.ams.org/surv/201},
  urldate = {2024-10-08},
  isbn = {978-1-4704-1884-7}
}

@misc{carrickhill,
      title={On higher real $K$-theories and finite spectra}, 
      author={Christian Carrick and Michael A. Hill},
      year={2025},
      eprint={2507.07051},
      archivePrefix={arXiv},
      primaryClass={math.KT},
      url={https://arxiv.org/abs/2507.07051}, 
}

@article {ravodd,
    AUTHOR = {Ravenel, Douglas C.},
     TITLE = {The non-existence of odd primary {A}rf invariant elements in
              stable homotopy},
   JOURNAL = {Math. Proc. Cambridge Philos. Soc.},
  FJOURNAL = {Mathematical Proceedings of the Cambridge Philosophical
              Society},
    VOLUME = {83},
      YEAR = {1978},
    NUMBER = {3},
     PAGES = {429--443},
      ISSN = {0305-0041},
   MRCLASS = {55E45},
  MRNUMBER = {474291},
MRREVIEWER = {Donald W. Kahn},
       DOI = {10.1017/S0305004100054712},
       URL = {https://doi-org.proxy.library.uu.nl/10.1017/S0305004100054712},
}

@misc{behrenscarlisle,
      title={Periodic phenomena in equivariant stable homotopy theory}, 
      author={Mark Behrens and Jack Carlisle},
      year={2025},
      eprint={2406.19352},
      archivePrefix={arXiv},
      primaryClass={math.AT},
      url={https://arxiv.org/abs/2406.19352}, 
}

\end{document}